\g@addto@macro{\UrlBreaks}{\UrlOrds}
\providecommand{\noopsort}[1]{} 
\def\qed{\unskip\quad \hbox{\vrule\vbox
to 6pt {\hrule width 4pt\vfill\hrule}\vrule} }
\newcommand{\bez}{\nopagebreak\hspace*{\fill}
 \nolinebreak$\qed$\vspace{5mm}\par}
\newtheorem{Th}{Theorem}[section]
\newtheorem{Prop}[Th]{Proposition}
\newtheorem{Lemma}[Th]{Lemma}
\newtheorem{Conj}{Conjecture}
\newtheorem{Cor}[Th]{Corollary}
\newtheorem{Thx}{Theorem}
\newtheorem{Corx}[Thx]{Corollary}
\newtheorem{Corxx}{Corollary}
\theoremstyle{definition}
\newtheorem{Remark}[Th]{Remark}
\newtheorem{Def}{Definition}[section]
\newtheorem{Question}{Question}
\newcommand{\beq}{\begin{equation}}
\newcommand{\eeq}{\end{equation}}
\def\scalar(#1,#2){(#1\mid#2)}
\renewcommand{\hat}{\widehat}
\newcommand{\eps}{\varepsilon}
\newcommand{\raz}{\mathds{1}}
\newcommand{\ca}{{\cal A}}
\newcommand{\cb}{{\cal B}}
\newcommand{\cc}{{\cal C}}
\newcommand{\cd}{{\cal D}}
\newcommand{\cf}{{\mathscr{F}}}
\newcommand{\xbm}{(X,{\cal B},\mu)}
\newcommand{\zdr}{(Z,{\cal D},\rho)}
\newcommand{\ycn}{(Y,{\cal C},\nu)}
\newcommand{\zdk}{(Z,{\cal D},\kappa)}
\newcommand{\ot}{\otimes}
\newcommand{\ov}{\overline}
\newcommand{\bs}{\mathbb{S}}
\newcommand{\Q}{\mathbb{Q}}
\newcommand{\R}{{\mathbb{R}}}
\newcommand{\T}{{\mathbb{T}}}
\newcommand{\C}{{\mathbb{C}}}
\newcommand{\Z}{{\mathbb{Z}}}
\newcommand{\N}{{\mathbb{N}}}
\newcommand{\E}{{\mathbb{E}}}
\newcommand{\EE}{{\mathbb{E}}}
\newcommand{\PP}{{\mathbb{P}}}
\newcommand{\D}{{\mathbb{D}}}
\newcommand{\vep}{\varepsilon}
\newcommand{\K}{{\cal K}}
\newcommand{\mob}{\boldsymbol{\mu}}
\newcommand{\lio}{\boldsymbol{\lambda}}
\newcommand{\tend}[3][]{\xrightarrow[#2\to#3]{#1}}
\newcommand{\bfu}{\boldsymbol{u}}
  \renewcommand{\bfu}{u}%
\newcommand{\bfv}{\boldsymbol{v}}
\newcommand{\bfw}{\boldsymbol{w}}
\newcommand{\bp}{{\bar P}}
\newcommand{\bq}{{\bar Q}}
\newcommand{\diam}{\mathop{\mathrm{diam}}}
\renewcommand{\P}{\mathscr{P}}
\renewcommand{\Q}{\mathscr{Q}}
\newcommand{\ind}[1]{\mathds{1}_{#1}}
\begin{document}

\title{On arithmetic functions orthogonal to deterministic sequences}
\author{Adam Kanigowski\and Joanna Ku\l aga-Przymus \and Mariusz Lema\'nczyk \and Thierry de la Rue}

\maketitle

\begin{abstract}
We prove Veech's conjecture on the equivalence of Sarnak's conjecture on M\"obius orthogonality with a Kolmogorov type property of Furstenberg systems of the M\"obius function. This yields a combinatorial condition on the M\"obius function itself which is equivalent to Sarnak's conjecture. As a matter of fact, our arguments remain valid in a larger context: we characterize all bounded arithmetic functions orthogonal to all topological systems whose all ergodic measures yield systems from a fixed characteristic class (zero entropy class is an example of such a characteristic class) with the characterization persisting in the logarithmic setup. As a corollary, we obtain that the logarithmic Sarnak's conjecture holds if and only if the logarithmic M\"obius orthogonality is satisfied for all dynamical systems whose ergodic measures yield nilsystems.

\end{abstract}

\tableofcontents

\section{Introduction}

All transformations in this paper are assumed to be invertible. A topological dynamical system is a pair $(X,T)$, where $T$ is a homeomorphism of a compact metric space $X$. A measure-theoretic dynamical system is a system of the form $(Z,\cb(Z),\kappa,R)$ where $R$ is an automorphism (invertible measure-preserving transformation) of a standard Borel probability space $(Z,\cb(Z),\kappa)$.

\paragraph{Sarnak's conjecture} Given a topological dynamical system $(X,T)$ and a bounded arithmetic function $\bfu\colon\N\to\C$, we consider the corresponding problem of orthogonality:
\beq\label{ort1}
\lim_{N\to\infty}\frac1N\sum_{n\leq N}f(T^nx)\bfu(n)=0~\mbox{ for all $f\in C(X)$ and $x\in X$.}
\eeq
 Once
\eqref{ort1} holds for $(X,T)$, we say that the system $(X,T)$ {\em satisfies the Sarnak property with respect to $\bfu$} and write $\bfu\perp(X,T)$.
When a class $\mathscr{C}$ of topological dynamical systems is given, and $\bfu\perp(X,T)$ for each $(X,T)\in\mathscr{C}$ then we write $\bfu\perp\mathscr{C}$.

The main motivation to study this orthogonality problem is Sarnak's conjecture \cite{Sa} on M\"obius orthogonality in which $\bfu$ is the M\"obius function $\mob$ (or, equivalently \cite{Fe-Ku-Le}, the Liouville function $\lio$) and $\mathscr{C}$  is the class $\mathscr{C}_{\rm ZE}$ of zero topological entropy dynamical systems.
Sequences of the form $(f(T^nx))$ with $(X,T)$ running over $\mathscr{C}_{\rm ZE}$, $f\in C(X)$ and $x\in X$
 are often called {\em deterministic} sequences. Focusing on a special $\bfu$ in \eqref{ort1}, say, being multiplicative, is important if we count on applications in number theory -- the main motivation of Sarnak himself for the M\"obius orthogonality conjecture was to ``attack'' the celebrated Chowla conjecture on auto-correlations of the M\"obius function dynamically (indeed the Chowla conjecture implies Sarnak's conjecture \cite{Sa}, see also \cite{Ta,Ab-Ku-Le-Ru1}).\footnote{On a potential equivalence of the Chowla and Sarnak's conjectures see \cite{Sa,Ta,Ta2,Ta3,Go-Kw-Le} and a resum\'e of that in the survey articles \cite{Fe-Ku-Le,Ku-Le}.} However, if we aim at providing an {\bf internal} characterization of $\bfu$ being orthogonal to $\mathscr{C}_{\rm ZE}$, then  dropping the assumption of multiplicativity of $\bfu$ seems to be  reasonable.
Let us give an argument for that.
First, note that the class of deterministic sequences  equipped with the coordinatewise multiplication and addition is a ring
(this is an easy consequence of the fact that the zero entropy class is closed under taking joinings and factors).
Moreover, the class of bounded sequences $\bfu$ orthogonal to $\mathscr{C}_{\rm ZE}$ is a module over this ring. In other words, even if our ``starting'' $\bfu$ displays some additional arithmetic property (like multiplicativity), the characterization which we aim at must still hold for $\bfu\cdot\bfv$, where $\bfv$ is an {\bf arbitrary} deterministic sequence.\footnote{In what follows, we replace $\mathscr{C}_{\rm ZE}$ by $\mathscr{C}_{\cf}$, where $\cf$ is a general {\bf characteristic} class. Note that the argument $\bfu\cdot\bfv\perp \mathscr{C}_{\cf}$ whenever $\bfu\perp \mathscr{C}_{\cf}$ and $\bfv$ is an arbitrary $\cf$-sequence (cf.\ Def.~\ref{def:charK}) persists.}
Of course, properties like the multiplicativity of $\bfu\cdot\bfv$ changes dramatically if $\bfv$ is arbitrary, while the characterization we are looking for has to be stable under such a change of~$\bfu$.

\paragraph{Visible measures and Furstenberg systems} Let us now briefly discuss the matter of the topological and measure-theoretic aspects of choosing a class $\mathscr{C}$. Recall that $M(X)$  stands for the set of probability (Borel) measures on $X$ and $M(X,T)$ is the (always non-empty) subset of $T$-invariant measures. Both spaces are compact in the weak-$\ast$-topology. There is a third natural  subspace $V(X,T)\subset M(X,T)$ which is the set of ($T$-invariant) {\em visible} measures, i.e.\ measures possessing a quasi-generic point. Formally, $\nu\in V(X,T)$ if, for some point $x\in X$ and some increasing sequence $(N_{\ell})$, we have
$$\lim_{\ell\to\infty}\frac1{N_{\ell}}\sum_{n\leq N_{\ell}}\delta_{T^nx}=\nu.\footnote{Not all invariant measures, even in a transitive subshift, have to be visible: let $X_y\subset\{0,1,2\}^{\N}$ be given by $y=0^{k_1}1^{k_1}2^{k_1}1^{k_1} 0^{k_2}1^{k_2}2^{k_2}1^{k_2}\dots$  with $k_1<k_2<\dots$ Let
 $\nu=\frac12\delta_{\underline{0}}+\frac12\delta_{\underline{2}}$ (two fixed points). If $A:=\{x\in X_y: x_0=0\text{ or } x_0=2\}$, then $\nu(A)=1$ but no window in $y$ has the property that the frequency of 0 jointly with 2 on it is close to the value~1.}
$$
We also write $\nu\in V_T(x)$ and say that $x$ {\it is generic for} $\nu$ {\it along} $(N_{\ell})$. Note that the set $V(X,T)$ of visible measures contains the set $M^e(X,T)$ of all ergodic measures,
but it may be strictly smaller than $M(X,T)$. To illustrate this, consider the automorphism $(x,y)\mapsto (x,x+y)$ of $\T^2$ in which each point is generic (i.e.\ generic along the whole sequence of natural numbers) and the corresponding measure-theoretic system is either a rational or an irrational rotation on the circle.
Hence, each visible measure gives rise to a system with discrete spectrum, while there are many other invariant measures which yield partly continuous spectrum.

Given a bounded $\bfu\colon\N\to\C$, $|\bfu|\leq L$, we first extend it to $\Z$ by setting $\bfu(-n)=\bfu(n)$ for $n\in\N$, $\bfu(0)\in\D_L:=\{z\in\C: |z|\leq L\}$, and then take the closure
$X_{\bfu}:=\overline{\{S^k\bfu:k\in\Z\}}$ of the orbit of $\bfu\in\D_L^{\Z}$ under the left shift $S\colon\D_L^{\Z}\to\D_L^{\Z}$, $S((v_k)_{k\in\Z})=(v_{k+1})_{k\in\Z}$ (recall that we only consider invertible systems).
Hence, $(X_{\bfu},S)$ is the {\em subshift} determined by $\bfu$. By the very definition, $\bfu\in X_{\bfu}$. Each measure $\kappa\in V_S(\bfu)$ yields a measure-theoretic system $(X_{\bfu},\mathcal{B}(X_u),\kappa,S)$ called a {\em Furstenberg system} of $\bfu$ (by some abuse of vocabulary, we may call $\kappa$ itself a Furstenberg system of $\bfu$). Fix $x\in X$ and consider the sequence $(\frac1N\sum_{n\leq N}\delta_{(T^nx,S^n\bfu)})$. By compactness of $M(X\times X_{\bfu})$, we can select an increasing sequence $(N_{\ell})$ so that
$$
\frac1{N_{\ell}}\sum_{n\leq N_{\ell}}\delta_{(T^nx,S^n\bfu)}\to\rho,
$$
where $\rho\in V_{T\times S}((x,\bfu)) \subset V(X\times X_{\bfu},T\times S)$. The projections of $\rho$ on $X$ and $X_{\bfu}$ are denoted by $\nu$ and $\kappa$, respectively,
so that $\rho$ is a joining of $\nu$ and $\kappa$. Since $\rho$ is a visible measure, so are $\nu$ and $\kappa$:
\beq\label{ort}
\nu\in V_T(x),
\eeq
and $\kappa$ yields a Furstenberg system of $\bfu$.
Now, setting
\begin{equation}
 \label{eq:defpi0}
 \pi_0: \begin{cases}
         &X_{\bfu}\to\C\\
         &z=(z_n)_{n\in\Z}\mapsto z_0,
        \end{cases}
\end{equation}
we have
\begin{align}
\begin{split}\label{three}
\lim_{\ell\to\infty}\frac1{N_{\ell}}\sum_{n\leq N_{\ell}}f(T^nx)\bfu(n)&=\lim_{\ell\to\infty}\int f\ot \pi_0\,d\left(\frac1{N_{\ell}}\sum_{n\leq N_{\ell}} \delta_{(T^nx,S^n\bfu)}\right)\\
&=\int f\ot \pi_0\,d\rho=\int\EE^\rho(f|X_{\bfu})\cdot\pi_0\,d\kappa.
\end{split}
\end{align}
So, the fact that the limit is~0 in \eqref{ort1}, i.e.\  $\bfu\perp(X,T)$, depends on the  joinings of measures from $V(X,T)$ with those from $V_S(\bfu)$ -- the invariant measures from $M(X,T)\setminus V(X,T)$ are irrelevant in this context. More precisely, what will matter is  the ``geometric position'' of the {\bf single} continuous function $\pi_0$ in the $L^2$-space of all Furstenberg systems $\kappa\in V_S(\bfu)$ for such joinings. Namely, inside the $L^2(\rho)$-space, we want $\pi_0$ to be orthogonal to $L^2(\nu)$ (in condition~\eqref{three} we use {\bf all} continuous functions $f$ on $X$ and we take into account all $\nu\in V(X,T)$).
Of course, whether this orthogonality can be established without referring to the (visible) joinings $\rho$ (and remaining at the level of $\kappa\in V_S(\bfu)$) is another question. A kind of surprise is that the answer to this question will turn out to be positive for $\mathscr{C}_{\rm ZE}$ and some other characteristic classes, see Theorem~\ref{tB}, below.

\paragraph{Characteristic classes and Veech's conjecture} With the above in mind, a knowledge  of some fundamental facts on joinings, especially on disjointness in the sense of Furstenberg in ergodic theory, strongly suggests to consider only the situation when the measures $\nu$ appearing above yield measure-theoretic systems $(X,\mathcal{B}(X),\nu,T)$ belonging  to one of so called \emph{characteristic classes of measure-theoretic dynamical systems}.
\begin{Def}\label{def:charK}
A class $\cf$ of measure-theoretic dynamical systems  is called {\em characteristic} if it is closed under taking isomorphisms, factors and (countable) joinings.
Given a characteristic class $\cf$, by an \emph{$\cf$-factor} of a measure-theoretic dynamical system $(Z,\mathcal{D},\kappa,R)$ we call any factor sub-$\sigma$-algebra of $\mathcal{D}$ on which the action of $R$ yields a system in the class $\cf$.
We denote by $\mathscr{C}_{\cf}$ the class of topological systems $(X,T)$ for which we have $(X,\mathcal{B}(X),\nu,T)\in\cf$ for each $\nu\in V(X,T)$. The sequences of the form $(f(T^nx))_{n\in\Z}$ with $f\in C(X)$, $x\in X$ and $(X,T)\in\mathscr{C}_{\cf}$ are called $\cf$-sequences.
\end{Def}

It follows from the above definition that:
\begin{enumerate}[(a)]
\item every measure-theoretic dynamical system $(Z,\mathcal{D},\kappa,R)$ has a \emph{largest $\cf$-factor} (in the sense of inclusion of sub-$\sigma$-algebras), which we denote by $\mathcal{D}_{\cf}$,
\item any joining of $(Z,\mathcal{D},\kappa,R)$  with a system from $\cf$ is uniquely determined by its restriction to the joining of the largest $\cf$-factor $\mathcal{D}_{\cf}$ of $R$ with the given system from $\cf$
\end{enumerate}
 (see Section~\ref{s:characteristic} for details). The class ZE of zero (Kolmogorov-Sinai) entropy is of course characteristic. The largest zero entropy factor $\mathcal{D}_{\rm ZE}$ of $(Z,\mathcal{D},\kappa,R)$ is called the {\em Pinsker factor} and
is denoted by $\Pi(R)$ or $\Pi(\kappa)$. Note that (via the variational principle) the family $\mathscr{C}_{\rm ZE}$ is precisely the family of all topological systems whose all invariant measures yield systems in ZE.
Returning to Sarnak's conjecture, in \cite{Ve}, Veech proves the following result:
\begin{Th}[\cite{Ve}] \label{t:veech1} If
\beq\label{w-ekveecha}
\pi_0\perp L^2(\Pi(\kappa))\text{ for each Furstenberg system }\kappa\in V_S(\mob)
\eeq
then Sarnak's conjecture holds, i.e.\ $\mob\perp \mathscr{C}_{\rm ZE}$.\footnote{An ergodic proof (which goes back to a suggestion of Sarnak in \cite{Sa}) of this result is implicit in \cite{Ab-Ku-Le-Ru1}, where the implication ``Chowla conjecture $\Rightarrow$ Sarnak's conjecture'' has been proved using joinings. Veech cites \cite{Ab-Ku-Le-Ru1} (which was on arXiv two years before Veech's lecture notes \cite{Ve} appeared) but instead he gives his own (slightly complicated) proof  using the concept of quasi-factors of Glasner and Weiss.}\end{Th}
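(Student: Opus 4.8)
The plan is to run the joining argument leading to \eqref{three} in the contrapositive, reducing Sarnak's conjecture to the entropy property of $\kappa$ expressed by \eqref{w-ekveecha}. Fix $(X,T)\in\mathscr{C}_{\rm ZE}$, $f\in C(X)$ and $x\in X$; we must show $\frac1N\sum_{n\le N}f(T^nx)\mob(n)\to0$. If this fails there is an increasing sequence $(N_\ell)$ along which these averages converge to some $c\neq0$, and, by compactness of $M(X\times X_{\mob})$, after passing to a further subsequence we may also assume $\frac1{N_\ell}\sum_{n\le N_\ell}\delta_{(T^nx,S^n\mob)}\to\rho$. Then $\rho$ is a $T\times S$-invariant joining of its marginals $\nu\in V_T(x)$ and $\kappa\in V_S(\mob)$; thus $\kappa$ is a Furstenberg system of $\mob$, while $\nu\in V(X,T)$, so that by the definition of $\mathscr{C}_{\rm ZE}$ the system $(X,\mathcal{B}(X),\nu,T)$ has zero (Kolmogorov-Sinai) entropy. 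By \eqref{three},
\[
c=\int f\ot\pi_0\,d\rho=\int\EE^\rho(f|X_{\mob})\cdot\pi_0\,d\kappa .
\]

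The crux — indeed the only substantial point — is the claim that $\EE^\rho(f|X_{\mob})$ is $\Pi(\kappa)$-measurable. Granting it, note that $\pi_0\perp L^2(\Pi(\kappa))$, i.e.\ \eqref{w-ekveecha}, means precisely $\EE^\kappa(\pi_0|\Pi(\kappa))=0$; hence, pulling the bounded $\Pi(\kappa)$-measurable factor $\EE^\rho(f|X_{\mob})$ out of a conditional expectation,
\[
\int\EE^\rho(f|X_{\mob})\cdot\pi_0\,d\kappa=\int\EE^\rho(f|X_{\mob})\cdot\EE^\kappa(\pi_0|\Pi(\kappa))\,d\kappa=0 ,
\]
which contradicts $c\neq0$ via the first display. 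Consequently every subsequence of $\big(\frac1N\sum_{n\le N}f(T^nx)\mob(n)\big)_N$ admits a further subsequence converging to $0$, so the full sequence converges to $0$; as $(X,T)\in\mathscr{C}_{\rm ZE}$, $f\in C(X)$ and $x\in X$ were arbitrary, this is exactly $\mob\perp\mathscr{C}_{\rm ZE}$.

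It remains to prove the claim, and this is where the zero-entropy hypothesis on $\nu$ enters; it is the one genuine obstacle. Disintegrate $\rho$ over its $X_{\mob}$-coordinate, $\rho=\int_{X_{\mob}}\rho_z\,d\kappa(z)$ with $\rho_z\in M(X)$; then $\EE^\rho(f|X_{\mob})$ is the function $z\mapsto\int_X f\,d\rho_z$, hence is measurable with respect to the sub-$\sigma$-algebra $\mathcal{D}'$ of $\mathcal{B}(X_{\mob})$ generated by $z\mapsto\rho_z\in M(X)$. The $T\times S$-invariance of $\rho$ forces this map to intertwine $S$ with the action $T_*$ induced by $T$ on $M(X)$, so that $\mathcal{D}'$ is $S$-invariant and the push-forward of $\kappa$ under $z\mapsto\rho_z$ is a $T_*$-invariant probability measure on $M(X)$ with barycenter $\nu$; in other words, the factor $(X_{\mob},\mathcal{D}',\kappa,S)$ is isomorphic to a quasi-factor, in the sense of Glasner, of $(X,\mathcal{B}(X),\nu,T)$. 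Since $(X,\mathcal{B}(X),\nu,T)$ has zero entropy, the theorem of Glasner and Weiss on the entropy of quasi-factors (which in particular ensures that a quasi-factor of a zero-entropy system has zero entropy) yields that $\mathcal{D}'$ carries zero entropy, whence $\mathcal{D}'\subseteq\Pi(\kappa)$ and $\EE^\rho(f|X_{\mob})$ is $\Pi(\kappa)$-measurable, as claimed. Equivalently, one may bypass quasi-factors and argue as is implicit in \cite{Ab-Ku-Le-Ru1}: any joining of a zero-entropy system with an arbitrary system $(Z,\mathcal{D},\kappa,R)$ is relatively independent, over the Pinsker factor $\Pi(\kappa)$, from that system, which gives the same conclusion. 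Either way, establishing this relative-independence/quasi-factor input is the heart of the matter; the rest is the soft weak-$\ast$-compactness and conditional-expectation bookkeeping above.
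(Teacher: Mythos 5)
Your proof is correct and its overall structure matches the paper's proof of the more general Theorem~\ref{tA} (of which the stated Veech theorem is the case $\cf={\rm ZE}$): pass to a subsequence where the empirical measures converge to a joining $\rho$ of a visible measure $\nu\in V(X,T)$ (which has zero entropy since $(X,T)\in\mathscr{C}_{\rm ZE}$) with a Furstenberg system $\kappa$; then $\int f\otimes\pi_0\,d\rho=0$ follows from the Veech condition once one knows $\EE^\rho(f\mid X_{\mob})$ is $\Pi(\kappa)$-measurable, i.e.\ that $\rho$ is relatively independent over $\Pi(\kappa)$. The one genuine divergence is in how that relative-independence fact is established. The paper invokes Proposition~\ref{p:sated}, which for an arbitrary characteristic class $\cf$ asserts that any joining of a system in $\cf$ with an arbitrary system is relatively independent over the latter's largest $\cf$-factor (its proof rests on a non-disjointness lemma from \cite{Le-Pa-Th}); this is precisely what makes the generalization to arbitrary $\cf$ in Theorem~\ref{tA} immediate. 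Your primary justification is instead the Glasner--Weiss theorem on entropy of quasi-factors, which is Veech's own original route (as the paper's footnote to this theorem points out) and is tied specifically to the zero-entropy case; you also correctly flag the relative-independence approach of \cite{Ab-Ku-Le-Ru1} as the alternative, which is essentially Proposition~\ref{p:sated} specialized. Both work; your contrapositive framing versus the paper's direct argument is only cosmetic.
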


Then he formulates the following conjecture (Conjecture 24.3   page 88 in~\cite{Ve}):
\begin{Conj}[Veech's conjecture]
Condition~\eqref{w-ekveecha} is equivalent to Sarnak's conjecture.
\end{Conj}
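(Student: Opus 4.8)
The first implication --- that \eqref{w-ekveecha} implies Sarnak's conjecture --- is exactly Theorem~\ref{t:veech1}, so the new content is the reverse one, which I would prove by contraposition. The starting point is to rewrite \eqref{three} dynamically. Recall that $\pi_0\perp L^2(\Pi(\kappa))$ means $\EE^\kappa(\pi_0\mid\Pi(\kappa))=0$; put $g_\kappa:=\EE^\kappa(\pi_0\mid\Pi(\kappa))$. If $(X,T)\in\mathscr{C}_{\rm ZE}$ then every $\nu\in V(X,T)$ yields a zero-entropy system, and by the standard fact that in any joining $\rho$ of $(X_{\mob},\kappa,S)$ with a zero-entropy system the $X_{\mob}$-coordinate and the other coordinate are conditionally independent given the Pinsker factor $\Pi(\kappa)$, the function $\EE^\rho(f\mid X_{\mob})$ is $\Pi(\kappa)$-measurable for every $f\in C(X)$; hence \eqref{three} becomes
\[
\lim_{\ell\to\infty}\frac1{N_\ell}\sum_{n\le N_\ell}f(T^nx)\mob(n)=\int\EE^\rho(f\mid X_{\mob})\,g_\kappa\,d\kappa .
\]
If $g_\kappa=0$ for all $\kappa\in V_S(\mob)$ this recovers Theorem~\ref{t:veech1}; for the converse I would assume $g_\kappa\ne0$ for some Furstenberg system $\kappa$ and must produce $(X,T)\in\mathscr{C}_{\rm ZE}$, a point $x$, an $f\in C(X)$ and a sequence $(N_\ell)$ for which the left-hand side is nonzero.

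The counterexample is built from the Pinsker factor itself. Realize the measure-preserving system $(X_{\mob},\Pi(\kappa),\kappa,S)$ on a topological model $(Y,R)$: when $\kappa$ is ergodic this is the Jewett--Krieger theorem and gives a strictly ergodic model whose unique invariant measure $\nu$ has zero entropy, so $h_{\rm top}(Y,R)=0$ by the variational principle and $(Y,R)\in\mathscr{C}_{\rm ZE}$; for a general Furstenberg system one first passes to a model of zero topological entropy (equivalently, all invariant measures of zero entropy), a preliminary technical point that can be handled through the ergodic decomposition of $\kappa$. Let $\theta\colon X_{\mob}\to Y$ be the measurable factor map and let $\rho$ be the graph joining, i.e.\ the image of $\kappa$ under $z\mapsto(z,\theta z)$; then $\EE^\rho(f\mid X_{\mob})=f\circ\theta$, and since $\{f\circ\theta:f\in C(Y)\}$ is dense in $L^2(X_{\mob},\Pi(\kappa),\kappa)$ one can choose $f\in C(Y)$ with $\int(f\circ\theta)\,g_\kappa\,d\kappa$ arbitrarily close to $\|g_\kappa\|_2^2>0$.

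The heart of the argument --- and where I expect the real difficulty to lie --- is that this graph joining $\rho$ must be \emph{realized along the trajectory of $\mob$}: one needs a point $y_0\in Y$ and an increasing sequence $(N_\ell)$ with $\frac1{N_\ell}\sum_{n\le N_\ell}\delta_{(S^n\mob,\,R^ny_0)}\to\rho$, i.e.\ one must lift the quasi-genericity of $\mob$ for $\kappa$ to joint quasi-genericity of $(\mob,y_0)$ for $\rho$. For an arbitrary topological model of a \emph{measurable} factor no such lift is available, so $(Y,R)$ has to be constructed adapted to $\mob$. Concretely, I would exhaust $\Pi(\kappa)$ by an increasing sequence of finite partitions, read their $\mob$-names along the orbit of $\mob$ to obtain, level by level, points in finite-alphabet subshifts, and run Krieger's generator / Rokhlin-tower construction so as to arrange simultaneously (i) strict ergodicity of the resulting subshift with the prescribed measure, (ii) zero symbolic entropy --- here the zero entropy of $\Pi(\kappa)$ is precisely what makes those names compressible --- and (iii) joint quasi-genericity of the limiting coding $y_0$ with $\mob$ along one common sequence $(N_\ell)$, which forces the Rokhlin towers to be chosen ``$\mob$-visible''. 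This is a relative, along-a-point version of the Jewett--Krieger theorem, and carrying it out rigorously is the bulk of the work. Granting it, \eqref{three} gives
\[
\lim_{\ell\to\infty}\frac1{N_\ell}\sum_{n\le N_\ell}f(R^ny_0)\,\mob(n)=\int(f\circ\theta)\,g_\kappa\,d\kappa\ne0
\]
with $(Y,R)\in\mathscr{C}_{\rm ZE}$, contradicting Sarnak's conjecture; hence $\mob\perp\mathscr{C}_{\rm ZE}$ forces \eqref{w-ekveecha}, which is Veech's conjecture.

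Finally, the whole scheme should go through verbatim with $\Pi(\kappa)=\mathcal{D}_{\rm ZE}$ replaced throughout by the largest $\cf$-factor $\mathcal{D}_{\cf}$ of an arbitrary characteristic class (using properties (a)--(b) above, together with a topological model of $(X_{\mob},\mathcal{D}_{\cf},\kappa,S)$ lying in $\mathscr{C}_{\cf}$), and every step is insensitive to whether the averages are Ces\`aro or logarithmic, as long as one works with logarithmically quasi-generic points from the outset; this is what would yield the general and the logarithmic statements announced in the introduction.
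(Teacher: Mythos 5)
Your overall strategy --- assume the Veech condition fails for some $\kappa$, take a zero-entropy topological model of $\Pi(\kappa)$, and show that $\mob$ correlates with a sequence coming from that model --- is exactly the right shape, and you have correctly located the crux: lifting the quasi-genericity of $\mob$ for $\kappa$ to a joint quasi-genericity for the graph joining. But the resolution you sketch is not the one the paper uses, and as written it is a genuine gap rather than a ``bulk of the work'' to be filled in.

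The paper deliberately \emph{avoids} what you propose. Lifting to a single orbit $(R^ny_0)_n$ so that $(\mob,y_0)$ is jointly quasi-generic along a common $(N_\ell)$ is known (Conze--Downarowicz--Serafin) only when the second space is a full shift; for an arbitrary topological model of $\Pi(\kappa)$ it is not available. Your proposed ``relative, along-a-point Jewett--Krieger'' would essentially have to reprove the Bergelson--Downarowicz--Vandehey lifting together with Downarowicz--Weiss's result on symbolic Hansel models; the paper says explicitly that this combination was their \emph{first} proof and that they abandoned it. The argument they actually give replaces the single-orbit lift by the weaker Proposition~\ref{p:momogen}: one only produces a sequence $(x_n)\subset X$ which is a concatenation of ever-longer orbit segments (the break set $\{n:x_{n+1}\neq Tx_n\}=\{b_1<b_2<\cdots\}$ has $b_{k+1}-b_k\to\infty$), jointly quasi-generic with $\mob$ for $\widehat{\nu}_J$. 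This costs nothing on the analytic side precisely because of the other key ingredient you do not use: Proposition~\ref{p:mo1}, which says $\bfu\perp\mathscr{C}_{\cf_{\rm ec}}$ is equivalent to the strong $\bfu$-MOMO property for all systems in $\mathscr{C}_{\cf_{\rm ec}}$. Strong MOMO is formulated exactly in terms of such concatenated orbit pieces, so a nonzero $\int f\otimes\pi_0\,d\widehat{\nu}_J$ directly contradicts strong $\mob$-MOMO for the (zero-entropy, Hansel-model) system $(X,T)$, hence contradicts $\mob\perp\mathscr{C}_{\rm ZE}$. Without this MOMO reduction, the weak lift is not enough and you really are stuck needing the single-orbit lift. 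For ZE this distinction is what makes the proof go through; note also that one uses \emph{Hansel} models (each point generic for one of the chosen ergodic components), not strict ergodicity, since a Furstenberg system of $\mob$ need not be ergodic. Finally, your closing claim that everything ``goes through verbatim'' for an arbitrary characteristic class $\cf$ overstates the paper's result: the equivalence is proved only for classes of the form $\cf_{\rm ec}$ (and the general case is left open), again because Proposition~\ref{p:mo1} is an ec-specific fact; this is harmless for Veech's conjecture since ${\rm ZE}={\rm ZE}_{\rm ec}$, but is not a formal generality you get for free.
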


One of motivations for the present work was to prove the above conjecture.  Let us first formulate~\eqref{w-ekveecha} in full generality.
\begin{Def}
Given a characteristic class $\cf$   we say that a (bounded) arithmetic function $\bfu\colon\N\to\C$ satisfies the {\em Veech condition} with respect to $\cf$ if
\beq\label{veech3}
\pi_0\perp L^2((\mathcal{B}(X_{\bfu}),\kappa)_{\cf})\text{ for each Furstenberg system }\kappa\in V_S(\bfu).
\eeq
\end{Def}

Given a characteristic class $\cf$, we denote by $\cf_{\rm ec}$ the class of those automorphisms $R$ of $\zdk$ such that a.e.\ ergodic component of $\kappa$ yields a system in $\cf$.
Then (see Section~\ref{s:onec}), $\cf_{\rm ec}$ is also a characteristic class,

\beq\label{ec1}
\mathscr{C}_{\cf}\subset \mathscr{C}_{\cf_{\rm ec}}\eeq
and
\beq\label{ec2}
\mathscr{C}_{\cf_{\rm ec}}=\{(X,T):\:(X,\mathcal{B}(X),\nu,T)\in \cf\text{ for each {\bf ergodic} } \nu\in M(X,T)\}.
\eeq

Our main result is the following:

\begin{Thx} \label{tB} Assume that $\cf$ is a characteristic class.
Let $\bfu\colon\N\to\C$ be a bounded arithmetic function. Then $\bfu\perp \mathscr{C}_{\cf_{\rm ec}}$ if and only if $\bfu$ satisfies the Veech condition~\eqref{veech3} (with respect to $\cf_{\rm ec}$).

\end{Thx}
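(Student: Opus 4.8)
The plan is to prove the two implications separately, using the decomposition of the correlation sum in \eqref{three} as the bridge between the ``dynamical'' side ($\bfu\perp\mathscr{C}_{\cf_{\rm ec}}$) and the ``internal'' side (the Veech condition).

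\textbf{The easy direction: Veech condition $\Rightarrow$ orthogonality.} Suppose $\bfu$ satisfies \eqref{veech3} with respect to $\cf_{\rm ec}$, and take $(X,T)\in\mathscr{C}_{\cf_{\rm ec}}$, $f\in C(X)$, $x\in X$. If the limit in \eqref{ort1} is not $0$, we may pass to a subsequence $(N_\ell)$ along which the empirical measures $\frac1{N_\ell}\sum_{n\le N_\ell}\delta_{(T^nx,S^n\bfu)}$ converge to some $\rho\in V_{T\times S}((x,\bfu))$, with marginals $\nu\in V_T(x)$ and $\kappa\in V_S(\bfu)$; by \eqref{three} the limit equals $\int\EE^\rho(f\mid X_{\bfu})\cdot\pi_0\,d\kappa$. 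Now $\rho$ is a joining of $(X,\mathcal B(X),\nu,T)$ — which lies in $\cf_{\rm ec}$ because $\nu$ is visible and $(X,T)\in\mathscr C_{\cf_{\rm ec}}$ — with the Furstenberg system $(X_{\bfu},\mathcal B(X_{\bfu}),\kappa,S)$. By property (b) of characteristic classes, this joining factors through the largest $\cf_{\rm ec}$-factor $(\mathcal B(X_{\bfu}),\kappa)_{\cf_{\rm ec}}$ of $\kappa$; in particular $\EE^\rho(f\mid X_{\bfu})$ is measurable with respect to that factor, so it is orthogonal in $L^2(\kappa)$ to $\pi_0$ by the Veech condition, and the integral vanishes. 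Hence $\bfu\perp(X,T)$.

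\textbf{The hard direction: orthogonality $\Rightarrow$ Veech condition.} Assume $\bfu\perp\mathscr{C}_{\cf_{\rm ec}}$ and suppose, for contradiction, that \eqref{veech3} fails: there is a Furstenberg system $\kappa\in V_S(\bfu)$ and a function $g\in L^2((\mathcal B(X_{\bfu}),\kappa)_{\cf_{\rm ec}})$ with $\int g\cdot\overline{\pi_0}\,d\kappa\ne 0$, equivalently the conditional expectation $\EE^\kappa(\pi_0\mid(\mathcal B(X_{\bfu}),\kappa)_{\cf_{\rm ec}})$ is not the zero function. The idea is to manufacture, from the largest $\cf_{\rm ec}$-factor of $(X_{\bfu},\kappa,S)$, a \emph{topological} model $(X,T)\in\mathscr C_{\cf_{\rm ec}}$ together with a point $x$ and a function $f\in C(X)$ so that the correlation $\frac1N\sum_{n\le N}f(T^nx)\bfu(n)$ does not tend to $0$ along the sequence $(N_\ell)$ witnessing $\kappa\in V_S(\bfu)$, contradicting $\bfu\perp(X,T)$. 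Concretely: let $(\mathcal B(X_{\bfu}),\kappa)_{\cf_{\rm ec}}$ be generated by a countable family of bounded functions; realize the corresponding factor system as a subshift $(X,T)$ over a suitable compact metric alphabet, via a factor map $\Phi\colon(X_{\bfu},\kappa,S)\to(X,\nu,T)$; set $x:=\Phi(\bfu)$, which along $(N_\ell)$ is generic for $\nu$. One must check $(X,T)\in\mathscr C_{\cf_{\rm ec}}$, i.e.\ that \emph{every} ergodic measure of this topological model lies in $\cf$ (using \eqref{ec2}); this is where some care is needed, since the topological closure of the orbit of $x$ may carry visible measures other than $\nu$. The right fix is to work with $\cf_{\rm ec}$ rather than $\cf$ from the start: a sub-$\sigma$-algebra on which $S$ yields a system whose ergodic components are in $\cf$ can be coded so that the ergodic decomposition passes to the symbolic model, and the passage $\cf\rightsquigarrow\cf_{\rm ec}$ together with \eqref{ec2} is exactly what guarantees all ergodic measures of the model are in $\cf$. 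Finally, choosing $f$ to be (a continuous approximation of) the coordinate reading the image of $\pi_0$ under $\Phi$, the joining $\rho:=(\Phi\times\mathrm{id})_*\Delta_*\kappa$ — i.e.\ the graph joining of $\kappa$ with itself pushed to $X\times X_{\bfu}$ — has $\EE^\rho(f\mid X_{\bfu})=\EE^\kappa(\pi_0\mid(\mathcal B(X_{\bfu}),\kappa)_{\cf_{\rm ec}})\ne 0$, so \eqref{three} gives a nonzero limit, the desired contradiction.

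\textbf{Main obstacle.} The delicate point is the construction in the hard direction: producing a genuine \emph{topological} system in $\mathscr C_{\cf_{\rm ec}}$ (not merely a measure-theoretic factor) together with a single \emph{continuous} function $f$ and a point $x$ realizing the nonzero correlation, while ensuring that \emph{all} ergodic — equivalently visible — measures of the constructed system yield systems in $\cf$. The separability of $L^2(\kappa)$ lets us code the largest $\cf_{\rm ec}$-factor symbolically, but one must verify that no ``parasitic'' visible measures with the wrong ergodic components appear on the orbit closure of $x$; the formulation in terms of $\cf_{\rm ec}$ and the identity \eqref{ec2} are precisely what make this verification go through, and handling the continuous (rather than merely measurable) approximation of $\pi_0$'s image requires a routine but careful $L^2$-density argument so that the perturbation to the correlation stays below the nonzero target value.
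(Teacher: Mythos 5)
Your easy direction is essentially the paper's, and your hard direction begins with the right picture (contrapositive, graph/relatively independent joining over the largest $\cf_{\rm ec}$-factor, read off $\int f\otimes\pi_0$). But three crucial technical ingredients are missing, and one step in your sketch is simply not well-defined.

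\textbf{The undefined step.} You propose to ``set $x:=\Phi(\bfu)$, which along $(N_\ell)$ is generic for $\nu$.'' The factor map $\Phi$ (equivalently $J$ in the paper) is only a measure-theoretic map, defined $\kappa$-almost everywhere; there is no reason the specific point $\bfu\in X_{\bfu}$ is in its domain, nor that an a.e.\ definable modification of $\Phi$ would send the orbit $\{S^n\bfu\}$ to an actual orbit in the topological model. Finding a single point $x$ making $((T^nx,S^n\bfu))_n$ generic along (a subsequence of) $(N_\ell)$ for the joining $\rho=\hat\nu_J$ is precisely the nontrivial lifting problem, and the paper's Proposition~\ref{p:momogen} does \emph{not} give you a single orbit: it produces a sequence $(x_n)\subset X$ consisting of longer and longer pieces of orbits, with $\{n: x_{n+1}\neq Tx_n\}=\{b_1<b_2<\cdots\}$, $b_{k+1}-b_k\to\infty$. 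This difference is not cosmetic.

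\textbf{Why the gap matters: strong $\bfu$-MOMO.} Because the lift is only along pieces of orbits, ordinary orthogonality $\bfu\perp(X,T)$ no longer suffices to conclude $\int f\otimes\pi_0\,d\hat\nu_J=0$. One needs the uniform, block version: the strong $\bfu$-MOMO property of $(X,T)$ (Definition~\ref{def:strongMOMO}), which via Proposition~\ref{p:mo1} is equivalent to $\bfu\perp\mathscr{C}_{\cf_{\rm ec}}$ \emph{provided the class is of the form $\cf_{\rm ec}$}. This is the real reason the theorem is stated for $\cf_{\rm ec}$ and not for an arbitrary characteristic class; you locate the role of $\cf_{\rm ec}$ in ensuring the model is in $\mathscr{C}_{\cf_{\rm ec}}$, but the deeper place it is used is Proposition~\ref{p:mo1}. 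Your argument does not invoke strong MOMO at all, yet with only pieces of orbits in hand you cannot avoid it.

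\textbf{Model construction.} You correctly flag the danger of ``parasitic'' visible measures on the orbit closure of $x$, but you do not actually neutralize it. Coding a separable factor symbolically does not, on its own, control which invariant measures appear on the subshift you get. The paper's resolution is to take a \emph{Hansel model} of $(X_{\bfu}/\ca(\kappa),\kappa|_{\ca(\kappa)},S)$, choosing in the ergodic decomposition only the $\cf$-components (which is a full-measure restriction since $\ca(\kappa)$ is an $\cf_{\rm ec}$-factor): in a Hansel model every point of $X$ is generic for one of the chosen ergodic components, so a fortiori every visible measure is among them, hence in $\cf$, giving $(X,T)\in\mathscr{C}_{\cf_{\rm ec}}$ by \eqref{ec2}. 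Without a Hansel-type genericity statement, your subshift model is not guaranteed to be in $\mathscr{C}_{\cf_{\rm ec}}$, and the verification you describe as ``going through'' is left unproven. To summarize: the skeleton of your hard direction matches the paper (same joining, same target identity), but the proof needs Hansel models, the lifting lemma in its ``pieces of orbits'' form, and the $\cf_{\rm ec}$-specific bridge of Proposition~\ref{p:mo1} to be complete; in your write-up all three are either missing or replaced by an undefined step.
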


\begin{Remark}
\label{r:ThmAsubsequence}
 A subsequence version of this result also holds: If $(N_\ell)$ is an increasing sequence of integers, then $\bfu$ is $(N_\ell)$-orthogonal to
$\mathscr{C}_{\mathscr{F}_{\rm ec}}$  (i.e.\ $\frac1{N_\ell}\sum_{n\leq N_\ell}f(T^nx)\bfu(n)\to 0$ for each $(X,T)\in \mathscr{C}_{\mathscr{F}_{\rm ec}}$ and all $f\in C(X)$, $x\in X$) if and only if the Veech condition~\eqref{veech3} holds for each measure $\kappa$ for which $\bfu$ is generic along some subsequence of $(N_\ell)$. The reason for the validity of this ``local'' version is that all tools used in the proof of Theorem~\ref{tB} work well on subsequences, see also Remarks~\ref{r:strongMOMOsubsequence} and~\ref{r:mo1subsequence} to cope with the strong $\bfu$-MOMO property along subsequences.

See \cite{Chi} for the validity of the alternative: {\em either there are no Siegel zeros or there exists a (universal) subsequence along which Sarnak's conjecture holds} and \cite{Go-Le-Ru} for a density version of Sarnak's conjecture.
\end{Remark}

Theorem~\ref{tB} for $\bfu=\mob$ and $\cf={\rm ZE}$ (note that ${\rm ZE}_{\rm ec}={\rm ZE}$) has the following consequence:
\begin{Cor}
Veech's conjecture holds. Moreover, if Sarnak's conjecture holds then {\bf all} Furstenberg systems $\kappa\in V_S(\mob)$ have positive entropy.
\end{Cor}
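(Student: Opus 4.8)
The plan is to deduce the Corollary directly from Theorem~\ref{tB} and the definition of the Veech condition, specializing to $\bfu=\mob$ and $\cf=\mathrm{ZE}$. The first point to make is that $\mathrm{ZE}_{\rm ec}=\mathrm{ZE}$: an automorphism has zero Kolmogorov--Sinai entropy if and only if a.e.\ ergodic component has zero entropy (this is the standard ergodic decomposition formula for entropy), so the class $\mathscr{C}_{\mathrm{ZE}_{\rm ec}}$ coincides with $\mathscr{C}_{\mathrm{ZE}}$ by~\eqref{ec2}, which by the variational principle is exactly the class of topological systems of zero topological entropy. Hence Theorem~\ref{tB} applied with $\bfu=\mob$, $\cf=\mathrm{ZE}$ reads: $\mob\perp\mathscr{C}_{\mathrm{ZE}}$ if and only if the Veech condition~\eqref{veech3} holds for $\mathrm{ZE}$, i.e.\ $\pi_0\perp L^2((\mathcal B(X_{\mob}),\kappa)_{\mathrm{ZE}})$ for every Furstenberg system $\kappa\in V_S(\mob)$. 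But the largest $\mathrm{ZE}$-factor of $(X_{\mob},\mathcal B(X_{\mob}),\kappa,S)$ is by definition the Pinsker factor $\Pi(\kappa)$, so this is precisely condition~\eqref{w-ekveecha}. Therefore~\eqref{w-ekveecha} is equivalent to Sarnak's conjecture $\mob\perp\mathscr{C}_{\mathrm{ZE}}$, which is exactly Veech's conjecture; combined with Theorem~\ref{t:veech1} (the already-known easy implication), nothing more is needed for the first assertion.

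For the second assertion, assume Sarnak's conjecture holds; I must show every Furstenberg system $\kappa\in V_S(\mob)$ has positive entropy. Suppose, for contradiction, that some $\kappa\in V_S(\mob)$ has zero entropy, i.e.\ $\Pi(\kappa)=\mathcal B(X_{\mob})$ (mod $\kappa$). Then $L^2(\Pi(\kappa))=L^2(\mathcal B(X_{\mob}),\kappa)$ contains $\pi_0$ itself, so certainly $\pi_0\not\perp L^2(\Pi(\kappa))$ unless $\pi_0=0$ in $L^2(\kappa)$, i.e.\ unless $\int|\pi_0|^2\,d\kappa=0$. So I need to rule out $\int|\pi_0|^2\,d\kappa=0$. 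Since $\pi_0(S^n\mob)=\mob(n)$ and $\kappa\in V_S(\mob)$ is the limit of $\frac1{N_\ell}\sum_{n\le N_\ell}\delta_{S^n\mob}$ along some $(N_\ell)$, we get $\int|\pi_0|^2\,d\kappa=\lim_\ell\frac1{N_\ell}\sum_{n\le N_\ell}|\mob(n)|^2$, which equals the density of squarefree integers $6/\pi^2>0$ (the sequence $|\mob(n)|^2=\mu^2(n)$ has a well-defined positive mean, so the limit is the same along every subsequence). Hence $\pi_0$ is a nonzero element of $L^2(\Pi(\kappa))$, contradicting the Veech condition~\eqref{w-ekveecha}, which holds by the equivalence just established under the assumption that Sarnak's conjecture is true. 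Therefore every Furstenberg system of $\mob$ has positive entropy.

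There is essentially no obstacle here: the Corollary is a formal specialization of Theorem~\ref{tB}, the only arithmetic input being the elementary fact that $\mu^2$ has positive mean $6/\pi^2$, which guarantees $\pi_0$ does not vanish in $L^2(\kappa)$ for any Furstenberg system $\kappa$. If one wanted to be fully careful one should note that $X_{\mob}$ sits in $\{-1,0,1\}^{\Z}$ (the Liouville/M\"obius values are bounded by $1$), so $\pi_0$ is a bona fide continuous function on $X_{\mob}$ and the argument about the largest $\mathrm{ZE}$-factor coinciding with the Pinsker factor applies verbatim from the general theory recalled after Definition~\ref{def:charK}. All the substantive work is in Theorem~\ref{tB}, which we are entitled to invoke.
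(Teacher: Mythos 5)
Your proof is correct and takes essentially the route the paper intends: the paper states this corollary as an immediate consequence of Theorem~\ref{tB} with $\bfu=\mob$, $\cf={\rm ZE}$ (using ${\rm ZE}_{\rm ec}={\rm ZE}$ and the identification of the largest ZE-factor with the Pinsker factor), and the positive-entropy claim follows exactly as you argue, by noting that a zero-entropy Furstenberg system would force $\pi_0=0$ in $L^2(\kappa)$ while $\int|\pi_0|^2\,d\kappa=\lim_\ell\frac1{N_\ell}\sum_{n\le N_\ell}\mu^2(n)=6/\pi^2>0$. Nothing is missing; you have simply filled in the details the paper leaves implicit.
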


We also generalize Veech's theorem (Theorem \ref{t:veech1}) to the setting of characteristic classes:

\begin{Thx}\label{tA}
Assume that $\cf$ is a characteristic class. Let $\bfu\colon\N\to\C$ be a bounded arithmetic function. If $\bfu$ satisfies the Veech condition~\eqref{veech3} with respect to $\cf$ then $\bfu\perp\mathscr{C}_{\cf}$.
\end{Thx}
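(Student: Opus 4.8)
The plan is to unwind the orthogonality condition~\eqref{ort1} exactly as in the computation~\eqref{three}: fix $(X,T)\in\mathscr{C}_{\cf}$, a point $x\in X$, a function $f\in C(X)$, and take an arbitrary increasing sequence $(N_k)$ along which $\frac1{N_k}\sum_{n\le N_k}\delta_{(T^nx,S^n\bfu)}$ converges to some $\rho\in V_{T\times S}((x,\bfu))$. Writing $\nu$ and $\kappa$ for the marginals of $\rho$ on $X$ and $X_{\bfu}$ respectively, we have $\nu\in V_T(x)$, so $(X,\mathcal B(X),\nu,T)\in\cf$ because $(X,T)\in\mathscr{C}_{\cf}$; and $\kappa\in V_S(\bfu)$ is a Furstenberg system of $\bfu$. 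By~\eqref{three} the limit of $\frac1{N_k}\sum_{n\le N_k}f(T^nx)\bfu(n)$ equals $\int \EE^\rho(f\mid X_{\bfu})\cdot\pi_0\,d\kappa$. Since $(N_k)$ was an arbitrary subsequence along which the empirical measures converge, it suffices to show that this integral vanishes for every such $\rho$; the original Cesàro limit in~\eqref{ort1} then exists and is zero.

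The heart of the matter is thus to show $\int \EE^\rho(f\mid X_{\bfu})\cdot\pi_0\,d\kappa=0$. First I would reduce the factor $\EE^\rho(f\mid X_{\bfu})$, which a priori is only an $L^2(\kappa)$-function, to the largest $\cf$-factor of $(X_{\bfu},\kappa,S)$. This is exactly point~(b) listed after Definition~\ref{def:charK}: because $\rho$ is a joining of $(X,\nu,T)\in\cf$ with $(X_{\bfu},\kappa,S)$, the joining $\rho$ is determined by its restriction to $\mathcal B(X)\ot (\mathcal B(X_{\bfu}),\kappa)_{\cf}$; equivalently, the conditional expectation operator $L^2(\nu)\ni g\mapsto \EE^\rho(g\mid X_{\bfu})$ takes values in $L^2((\mathcal B(X_{\bfu}),\kappa)_{\cf})$. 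Concretely, $\EE^\rho(f\mid X_{\bfu})$ is measurable with respect to the largest $\cf$-factor of $\kappa$. Then the Veech condition~\eqref{veech3} — namely $\pi_0\perp L^2((\mathcal B(X_{\bfu}),\kappa)_{\cf})$ — immediately gives $\int \EE^\rho(f\mid X_{\bfu})\cdot\overline{\pi_0}\,d\kappa=0$. (One keeps track of complex conjugates, but $\pi_0$ being orthogonal to the whole closed subspace $L^2((\mathcal B(X_{\bfu}),\kappa)_{\cf})$ means orthogonal to it as a complex subspace, so this is harmless.)

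The main obstacle is the fact invoked above, that conditional expectations onto $X_{\bfu}$ of functions coming from the $\cf$-system $X$ automatically land in the largest $\cf$-factor of $\kappa$ — this is the joining-theoretic input coded as~(b), and it in turn rests on the closure of $\cf$ under joinings and factors (and on the existence of the largest $\cf$-factor, point~(a)). I would either cite the discussion promised in Section~\ref{s:characteristic} for a clean statement of~(b), or give the short direct argument: if $g\in L^2(\nu)$ and $h$ is bounded and measurable with respect to the orthocomplement (inside $\mathcal B(X_{\bfu})$, mod $\kappa$) of the largest $\cf$-factor, then $\EE^\rho(g\otimes \bar h)=\EE^\rho\big(\EE^\rho(g\mid X_{\bfu})\,\bar h\big)$, and one shows this vanishes because the sub-$\sigma$-algebra generated by $\mathcal B(X)$ and $(\mathcal B(X_{\bfu}),\kappa)_{\cf}$ is relatively independent from the rest of $\mathcal B(X_{\bfu})$ over $(\mathcal B(X_{\bfu}),\kappa)_{\cf}$ — itself a consequence of $\cf$ being characteristic, since the largest $\cf$-factor absorbs any $\cf$-system joined on. Everything else — passing to convergent subsequences of empirical measures, the identity~\eqref{three}, and concluding that the full Cesàro average tends to $0$ since every subsequential limit is $0$ — is routine.
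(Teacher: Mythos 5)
Your proposal is correct and takes essentially the same route as the paper: pass to a subsequence along which the empirical measures over $X\times X_{\bfu}$ converge to a joining $\rho$ of a visible measure $\nu\in V_T(x)$ (hence $(X,\nu,T)\in\cf$) with a Furstenberg system $\kappa$, then invoke the relative-independence property of joinings with $\cf$-systems over the largest $\cf$-factor (the paper's Proposition~\ref{p:sated}, which is precisely the fact you cite as property~(b)) together with the Veech condition to kill $\int f\otimes\pi_0\,d\rho$. The only cosmetic difference is that the paper applies Proposition~\ref{p:sated} directly to $\pi_0$ (whose conditional expectation on $\cd_\cf$ vanishes by Veech), whereas you first observe that $\EE^\rho(f\mid X_{\bfu})\in L^2(\cd_\cf)$ and then appeal to orthogonality; these are two sides of the same coin.
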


\medskip

Let us now briefly discuss the methods involved in the proofs of the above theorems. The proof of Theorem~\ref{tA} is provided in Section~\ref{sec:prooftA}. It is a straightforward application of the above Property~(b) of the largest $\cf$-factor (see Proposition~\ref{p:sated} for more details). The proof of Theorem~\ref{tB} which we provide in Section~\ref{sec:prooftB} relies on:

\begin{itemize}
\item
so called Hansel models \cite{Ha} of possibly highly non-ergodic measure-preserving systems, 
\item
our version  of a lifting lemma  by Conze, Downarowicz, Serafin \cite{Co-Do-Se} on the existence of generic points for joinings, valid in a general context and using the concept of quasi-generic sequences (Proposition~\ref{p:momogen}),
\item general joining techniques.
\end{itemize}

Our first proof of Theorem~\ref{tB} was based on a different lifting lemma by Bergelson, Downarowicz and Vandehey (Theorem 5.16 in \cite{Be-Do-Va}) and a recent result by Downarowicz and Weiss showing the existence, for each zero-entropy measure-theoretic system, of a special Hansel model which is symbolic~\cite{Do-We}. The proof we finally chose to present here does not require the use of this symbolic model, as our lifting lemma works for all topological systems. Its additional advantage is that it is also valid in the context of logarithmic averages (see Section~\ref{a:logarithmic}). This makes all our results also true in the logarithmic set up: for example, the logarithmic Sarnak's conjecture (denoted as $\mob\perp_{\rm log}\mathscr{C}_{\rm ZE}$) is equivalent to the Veech condition for $\mob$ for all logarithmic Furstenberg systems $\kappa\in V_S^{\rm log}(\mob)$, see Corollary~\ref{c:SNC} for more.

\medskip

In Theorem~\ref{tB}, it is crucial that we deal with a characteristic class of the form $\cf_{\rm ec}$ since, by Proposition~\ref{p:mo1}, $\bfu\perp \mathscr{C}_{\cf_{\rm ec}}$ is equivalent to the so called \emph{strong $\bfu$-MOMO property} for systems in $\mathscr{C}_{\cf_{\rm ec}}$. This property, whose definition is recalled below, has been introduced in \cite{Ab-Ku-Le-Ru2,Go-Le-Ru}.  We leave as an open problem whether Theorem~\ref{tB} holds for an arbitrary characteristic class.

\begin{Def}
 \label{def:strongMOMO}
 A topological system $(X,T)$ satisfies the \emph{strong $\bfu$-MOMO property}, if
\beq\label{defmomo}\lim_{K\to\infty}\frac1{b_K}\sum_{k<K}\left\|\sum_{b_k\leq n<b_{k+1}}\bfu(n)f\circ T^n\right\|_{C(X)}=0\eeq
for each $f\in C(X)$ and each increasing sequence $(b_k)\subset\N$ such that $b_{k+1}-b_k\to\infty$.
\end{Def}

Clearly, the strong $\bfu$-MOMO property implies \eqref{ort1} uniformly in $x\in X$. The concept of strong $\bfu$-MOMO is formally stronger than the usual orthogonality. To see the difference between the usual orthogonality and strong MOMO, consider the system $(x,y)\mapsto(x,x+y)$ on $\T^2$ whose M\"obius orhogonality follows easily from the DDKBSZ criterion,\footnote{DDKBSZ stands for Daboussi-Delange-K\'atai-Bourgain-Sarnak-Ziegler \cite{Bo-Sa-Zi}, \cite{Ka}.} see e.g.~\cite{Fe-Ku-Le}) (in fact, the orthogonality holds even uniformly due to  the Davenport estimate \cite{Da}) , while the strong $\mob$-MOMO property (apply the definition to $f(x,y)=e^{2\pi iy}$) yields
\[
\frac1{b_K}\sum_{k<K}\sup_{x\in\T}\left|\sum_{b_k\leq n<b_{k+1}}\mob(n)e^{2\pi inx}\right|\to 0
\]
when $K\to\infty$ which is an open problem,\footnote{In fact, it is open whether a non-periodic, zero entropy, continuous, algebraic automorphism of $\T^2$ satisfies the strong $\mob$-MOMO property.} see Sections~\ref{s:FSMOMO} and~\ref{s:orthogonality} for more details.

\begin{Remark}
 \label{r:strongMOMOsubsequence}
 Note that in the definition of the strong $\bfu$-MOMO property, convergence~\eqref{defmomo} can be replaced by
 \begin{multline}
 \label{defmomobis}
 \lim_{N\to\infty}\frac1{N}\left(\sum_{k<K_{\!N}}\left\|\sum_{b_k\leq n<b_{k+1}}\bfu(n)f\circ T^n\right\|_{C(X)} \right. \\
\left. + \left\| \sum_{b_{K_{\!N}}\le n < N} \bfu(n)f\circ T^n \right\|_{C(X)}\right) =0,
 \end{multline}
 where $K_N:=\max\{k:b_k<N\}$. As a matter of fact, the definition is unchanged if we only restrict to sequences $(b_k)$ which further satisfy the condition
 \[
  \lim_{k\to\infty} \frac{b_{k+1}-b_k}{b_k} = 0.
 \]
 (If we have to consider a sequence $(b_k)$ which does not satisfy the above condition, we can always add more integers to the set $\{b_k:k\ge1\}$ so that this convergence holds for the new sequence. And the validity of~\eqref{defmomo} or~\eqref{defmomobis} for the new sequence is stronger than the same for the former sequence $(b_k)$.)

 Then it is easy to define also the strong $\bfu$-MOMO property along an increasing sequence $(N_\ell)$, by restricting convergence~\eqref{defmomobis} to the subsequence $(N_\ell)$.
\end{Remark}

\paragraph{Veech condition and combinatorics}  Given a characteristic class $\cf$, Theorems~\ref{tA} and~\ref{tB} determine a natural strategy to describe the arithmetic functions $\bfu$ orthogonal to all $\cf$-sequences. Namely, we need to describe the $\cf$-factors and understand the orthogonality to their $L^2$-space (i.e.\ the Veech condition), hoping that  this description can be expressed (for $\pi_0$ in $X_{\bfu}$ and a Furstenberg system $\kappa$ of $\bfu$) by some asymptotics of the integrals of continuous functions on $X_{\bfu}$. The final step would be to use the definition of a Furstenberg system to obtain a combinatorial condition on $\bfu$ itself. The first part of the strategy should be seen as an extension of the theory of characteristic factors $\mathcal{Z}_s(T)$ (given an automorphism $T$) and the Gowers-Host-Kra  (GHK in what follows) seminorms $\|\cdot\|_{u^s}$ for  $s\geq1$ \cite{Ho-Krbook}. In this perspective the Veech condition on $\pi_0$ is the counterpart of $\|\pi_0\|_{u^s}=0$ for each Furstenberg system $\kappa\in V_S(\bfu)$ (and has its combinatorial translation in terms of the GHK seminorm of $\bfu$). We will give more details on this shortly.

Let us discuss this strategy for the class ZE, see Section~\ref{s:wnioski} for details. Here, the characteristic factor of a measure-preserving system is the Pinsker factor.
The reader has certainly noticed that even though we study dynamical properties of Furstenberg systems, as a matter of fact, at the end we deal with a process $(\pi_0\circ S^n)_{n\in\Z}$, stationary with respect to $\kappa\in V_S(\bfu)$ (each such measure is invariant in the subshift $(X_{\bfu},S)$). Now, the Veech condition leads to the following concept.

\begin{Def} A centered stationary process $\underline{X}=(X_n)$ taking finitely many values is called a {\em Sarnak process} if $\EE(X_0\mid \sigma(X_{N},X_{N+1},\ldots))\to0$ in $L^2$ (or a.e.); equivalently $\EE(X_0\mid \Pi(\underline{X}))=0$, where $\Pi(\underline{X})$ stands for the tail $\sigma$-algebra.
\end{Def}
Understanding the structure of Sarnak processes seems to be a problem of an independent interest and it will be studied elsewhere.

Now, when $\bfu$ takes finitely many values, our  main result (Theorem~\ref{tB} applied to ZE)  can  be reformulated in the following manner:

\begin{Cor}\label{c:Sarnakpro}
Let $\bfu\colon\N\to\C$ be an arithmetic function taking finitely many values. Then  $\bfu\perp \mathscr{C}_{\rm ZE}$ if and only if all stationary processes $(\pi_0\circ S^n)_{n\in\Z}$ determined by $\kappa\in V_S(\bfu)$ are Sarnak.\end{Cor}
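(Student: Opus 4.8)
The plan is to derive Corollary~\ref{c:Sarnakpro} as a direct specialisation of Theorem~\ref{tB} applied with $\cf = {\rm ZE}$, noting (as remarked in the text) that ${\rm ZE}_{\rm ec} = {\rm ZE}$, so that $\mathscr{C}_{{\rm ZE}_{\rm ec}} = \mathscr{C}_{\rm ZE}$. By that theorem, $\bfu \perp \mathscr{C}_{\rm ZE}$ holds if and only if the Veech condition~\eqref{veech3} holds with respect to ${\rm ZE}$ for every Furstenberg system $\kappa \in V_S(\bfu)$; that is, if and only if $\pi_0 \perp L^2((\mathcal{B}(X_{\bfu}),\kappa)_{\rm ZE})$ for each such $\kappa$. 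So the entire content of the corollary is the translation of this $L^2$-orthogonality statement into the language of the stationary process $(\pi_0 \circ S^n)_{n\in\Z}$ being Sarnak.

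First I would record that when $\bfu$ takes finitely many values, the subshift $(X_{\bfu},S)$ sits inside a finite-alphabet full shift, $\pi_0$ is (a continuous function taking finitely many values and hence) an element of $L^2(X_{\bfu},\kappa)$ for any invariant $\kappa$, and the process $X_n := \pi_0 \circ S^n$ is stationary, takes finitely many values, and generates $\mathcal{B}(X_{\bfu})$ under $\kappa$. One should also observe that $\pi_0$ can be assumed centered: if $\bfu \perp \mathscr{C}_{\rm ZE}$ then in particular $\bfu$ is orthogonal to the one-point system, forcing $\frac1N\sum_{n\le N}\bfu(n)\to0$, hence $\int \pi_0\,d\kappa = 0$ for every $\kappa \in V_S(\bfu)$; conversely the Sarnak-process hypothesis is only imposed on centered processes, and if some $\kappa$ gives $\int\pi_0\,d\kappa\neq0$ then that $\kappa$ is not among the processes being quantified over, but then the mean of $\bfu$ along the corresponding subsequence is nonzero, so $\bfu\not\perp\mathscr{C}_{\rm ZE}$ and both sides fail — so the centering can be folded in cleanly. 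The key identification is then: the largest ${\rm ZE}$-factor $(\mathcal{B}(X_{\bfu}),\kappa)_{\rm ZE}$ is by definition the Pinsker factor $\Pi(\kappa)$ of $(X_{\bfu},\mathcal{B}(X_{\bfu}),\kappa,S)$, and for the process $\underline{X}=(X_n)$ one has $\Pi(\underline{X})$ (the tail $\sigma$-algebra) equal to the Pinsker $\sigma$-algebra — this is the classical Rokhlin–Sinai theorem (the Pinsker factor of a process with a finite generator coincides with the tail field $\bigcap_N \sigma(X_N, X_{N+1},\dots)$). Granting this, $\pi_0 \perp L^2(\Pi(\kappa))$ is literally the same as $X_0 \perp L^2(\Pi(\underline{X}))$, i.e.\ $\EE(X_0 \mid \Pi(\underline{X})) = 0$, which is exactly the definition of $\underline{X}$ being a Sarnak process (the martingale convergence theorem giving the equivalence with $\EE(X_0\mid\sigma(X_N,X_{N+1},\dots))\to0$ in $L^2$ stated in the definition).

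Putting the pieces together: $\bfu\perp\mathscr{C}_{\rm ZE}$ $\iff$ (Theorem~\ref{tB}) the Veech condition w.r.t.\ ${\rm ZE}$ holds for every $\kappa\in V_S(\bfu)$ $\iff$ $\pi_0\perp L^2(\Pi(\kappa))$ for every such $\kappa$ $\iff$ (Rokhlin–Sinai) $\EE^{\kappa}(\pi_0\mid \text{tail})=0$ for every such $\kappa$ $\iff$ every process $(\pi_0\circ S^n)_{n\in\Z}$ determined by $\kappa\in V_S(\bfu)$ is Sarnak. I do not anticipate a serious obstacle here — the result is essentially a dictionary entry — but the one point requiring genuine care is the identification of the Pinsker factor with the tail $\sigma$-algebra: this needs the finite (or at least countable) generator, which is precisely why the hypothesis ``$\bfu$ takes finitely many values'' appears, and one should either cite Rokhlin–Sinai explicitly or remark that $\pi_0$ generates $\mathcal{B}(X_{\bfu})$ so the two-sided generator hypothesis is met. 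A secondary bookkeeping point is to make sure the ``for each $\kappa\in V_S(\bfu)$'' quantifier matches the phrase ``all stationary processes $(\pi_0\circ S^n)$ determined by $\kappa\in V_S(\bfu)$'', which is immediate once one notes that every $\kappa\in V_S(\bfu)$ is $S$-invariant on $X_{\bfu}$ and conversely each such $\kappa$ does arise as a Furstenberg system.
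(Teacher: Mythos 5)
Your proposal is correct and follows essentially the paper's own (implicit) argument: the paper presents the corollary as a direct reformulation of Theorem~\ref{tB} applied to ${\rm ZE}$, silently using the Rokhlin--Sinai identification of the Pinsker factor with the tail field for a finite-alphabet process, which you make explicit. Your observations that the finitely-many-values hypothesis is exactly what licenses Rokhlin--Sinai, and that the centering of $\pi_0$ is forced on both sides of the equivalence, are correct and fill in the details the paper leaves unstated.
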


From the ergodic theory point of view we are close to the concept of the {\bf relative} Kolmogorov property (K-property) which is however perturbed by the fact that we need this property for a single function. But even though only $\pi_0$ is involved, the dynamical idea of the equivalence between the K-property and K-mixing (uniform mixing) works, and we can apply K-mixing of $\pi_0$ against the family of
functions depending on finitely many non-negative coordinates.
This  leads to studying the asymptotics of integrals of some continuous functions and finally gives the following combinatorial characterization of the orthogonality of $\bfu$ to all deterministic sequences.
In the following corollary, we use the fact that when $\bfu$ takes its values in a finite set, a subset $A\subset X_{\bfu}$ depends on finitely many non-negative coordinates if and only if there exists $\ell\ge1$ and a set $C$ of blocks of length $\ell$ appearing in $\bfu$ such that $\ind{A}(y)=\ind{(y(0),y(1),\ldots,y(\ell-1)) \in C}$.

\begin{Corx}\label{cA}
Let $\bfu\colon\N\to\C$ be an arithmetic function taking finitely many values.  Then $\bfu\perp \mathscr{C}_{\rm ZE}$ if and only if, for each subsequence $(N_k)$ defining a Furstenberg system of $\bfu$ and each subset $A\subset X_{\bfu}$ depending on finitely many non-negative coordinates, we have the cancellation phenomenon of the values of $\bfu$ uniformly along sufficiently large shifts of the set of visits of $\bfu$ in $A$: for each $\vep>0$, there exists $M\geq1$ such that for each $\ell\ge1$ and each set $C$ of blocks of length $\ell$, we have for each $m\geq M$
\begin{equation}\label{corCcond}
\lim_{k\to\infty}\left|\frac1{N_k}\sum_{n\leq N_k}\bfu(n)\underbrace{\raz_{(\bfu(m+n),\bfu(m+n+1),\ldots,\bfu(m+n+\ell-1))\in C}}_{\ind{A}(S^{m+n}\bfu)}\right|\leq \vep.
\end{equation}
\end{Corx}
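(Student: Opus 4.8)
The plan is to deduce Corollary~\ref{cA} from Corollary~\ref{c:Sarnakpro} by unwinding the definition of a Sarnak process into an explicit combinatorial statement about the values of $\bfu$, using the subsequence version of Theorem~\ref{tB} recorded in Remark~\ref{r:ThmAsubsequence}. First I would fix a subsequence $(N_k)$ defining a Furstenberg system $\kappa\in V_S(\bfu)$, so that $\bfu$ is generic along $(N_k)$ for $\kappa$, and translate the condition ``$(\pi_0\circ S^n)$ is a Sarnak process'' into the statement $\EE^\kappa(\pi_0\mid\Pi(\kappa))=0$, i.e.\ $\pi_0\perp L^2(\Pi(\kappa))$. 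The key elementary observation is that the tail $\sigma$-algebra $\Pi(\kappa)$ of the process is, up to $\kappa$-null sets, the decreasing intersection $\bigcap_{m\ge1}\sigma(\pi_0\circ S^m,\pi_0\circ S^{m+1},\dots)$; hence $\pi_0\perp L^2(\Pi(\kappa))$ is equivalent, by the martingale convergence theorem (backward martingale along the decreasing filtration $\cf_m:=\sigma(\pi_0\circ S^m,\pi_0\circ S^{m+1},\ldots)$), to
\[
\bigl\|\EE^\kappa(\pi_0\mid\cf_m)\bigr\|_{L^2(\kappa)}\xrightarrow[m\to\infty]{}0 .
\]
Since $\pi_0$ takes finitely many values, $\cf_m$ is generated by the cylinder sets on coordinates $\ge m$, so for each $m$ the conditional expectation $\EE^\kappa(\pi_0\mid\cf_m)$ is approximated in $L^2(\kappa)$, uniformly over $m$, by finite linear combinations of indicators $\ind{(y(m),\ldots,y(m+\ell-1))\in C}$ over finite block-sets $C$; this is the point where the remark preceding the corollary about sets depending on finitely many non-negative coordinates is invoked.

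Next I would make the quantitative passage. Orthogonality of $\pi_0$ to $L^2(\cf_m)$ ``in the limit $m\to\infty$'' is equivalent to: for every $\vep>0$ there is $M$ such that for all $m\ge M$ and every bounded $\cf_m$-measurable $g$ with $\|g\|_\infty\le1$ we have $|\int\pi_0\cdot g\,d\kappa|\le\vep$ (the sup over such $g$ realises $\|\EE^\kappa(\pi_0\mid\cf_m)\|_{L^1(\kappa)}$, and $L^1$-smallness is equivalent to $L^2$-smallness here since everything is bounded). Restricting to the dense family of $g=\ind{(y(0),\ldots,y(\ell-1))\in C}\circ S^m$ as $\ell$ and $C$ range over all choices, this becomes exactly: for every $\vep>0$ there is $M$ so that for all $m\ge M$, all $\ell\ge1$ and all block-sets $C$ of length $\ell$,
\[
\left|\int \pi_0(y)\,\ind{(y(m),y(m+1),\ldots,y(m+\ell-1))\in C}\,d\kappa(y)\right|\le\vep .
\]
Finally, genericity of $\bfu$ along $(N_k)$ for $\kappa$ together with the fact that the integrand is a continuous function on $X_{\bfu}$ (it depends on finitely many coordinates, and $\pi_0$ is continuous) gives
\[
\int \pi_0(y)\,\ind{(y(m),\ldots,y(m+\ell-1))\in C}\,d\kappa(y)
=\lim_{k\to\infty}\frac1{N_k}\sum_{n\le N_k}\bfu(n)\,\ind{(\bfu(m+n),\ldots,\bfu(m+n+\ell-1))\in C},
\]
which is precisely the left-hand side of~\eqref{corCcond}; note the cosmetic point that $\bfu$ was centered so that $\kappa\in V_S(\bfu)$ has $\int\pi_0\,d\kappa=0$, but this does not enter the estimate. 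Combining with Corollary~\ref{c:Sarnakpro} and the subsequence version of Theorem~\ref{tB} (so that it suffices to check the Veech/Sarnak-process condition for those $\kappa$ arising along subsequences of a prescribed $(N_\ell)$, and conversely $\bfu\perp\mathscr{C}_{\rm ZE}$ forces it for every defining subsequence) yields the stated equivalence.

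I expect the main obstacle to be the bookkeeping in the ``uniformity'' quantifiers: the definition of a Sarnak process only asserts $\EE^\kappa(\pi_0\mid\cf_m)\to0$ as $m\to\infty$ for the fixed measure $\kappa$, whereas~\eqref{corCcond} demands a single $M=M(\vep)$ that works simultaneously for all $\ell$ and all block-sets $C$ of every length. The resolution is that for a \emph{fixed} $m$, $\cf_m$ is a genuine $\sigma$-algebra and $\|\EE^\kappa(\pi_0\mid\cf_m)\|_{L^2(\kappa)}=\sup_{g}|\int\pi_0\,g\,d\kappa|$ over $\cf_m$-measurable $g$, $\|g\|_\infty\le1$ --- so the supremum over all $(\ell,C)$ is automatically controlled by this single number, and no further uniformity over block length is needed beyond choosing $M$ from the convergence $\|\EE^\kappa(\pi_0\mid\cf_m)\|_{L^2(\kappa)}\to0$. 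A secondary point requiring care is the interchange of the limit in $k$ with the approximation of $\EE^\kappa(\pi_0\mid\cf_m)$ by cylinder functions: this is handled cleanly by working with the continuous function $\pi_0\cdot\ind{\cdot\in C}$ directly and invoking genericity, rather than trying to approximate inside the averages, which is why the statement is phrased via~\eqref{corCcond} for each individual $(m,\ell,C)$ rather than via a truncated conditional expectation.
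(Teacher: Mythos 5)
Your proposal is correct and follows essentially the same route as the paper, which establishes Corollary~\ref{cA} via Proposition~\ref{prop:mich}: there the equivalence of $\pi_0\perp L^2(\Pi(\kappa))$ with the relative K-mixing bound $\int|\EE^\kappa(\pi_0\mid\sigma(\pi_n,\pi_{n+1},\dots))|\,d\kappa\to0$ (your backward-martingale step) and with the combinatorial condition against cylinder functions via genericity along $(N_k)$ is proved with the same decomposition you use. Your key observation — that for fixed $m$ the single quantity $\|\EE^\kappa(\pi_0\mid\cf_m)\|_{L^1(\kappa)}$ uniformly dominates $|\int\pi_0\ind{C}\,d\kappa|$ over all block lengths $\ell$ and all block-sets $C$, so the uniformity in~\eqref{corCcond} costs nothing — is precisely the paper's remark ``we have an upper bound which does not depend on $C$'' in the proof of (a)$\Rightarrow$(b).
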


Note that $M$ above depends on $(N_k)$ and $\vep$. This combinatorial condition looks more attractive if we assume that $\bfu$ is generic:

\setcounter{Corxx}{2}
\begin{Corxx}\label{cB}
Let $\bfu\colon\N\to\C$ be an arithmetic function taking finitely many values. If $\bfu$ is generic then $\bfu\perp\mathscr{C}_{\rm ZE}$ if and only if
$$
\lim_{m\to\infty}\lim_{N\to\infty}\left|\frac1{N}\sum_{n\leq N}\bfu(n)\raz_{(\bfu(m+n),\bfu(m+n+1),\ldots,\bfu(m+n+\ell-1))\in C}\right|=0$$
uniformly in $\ell\geq1$ and in $C$, a set of blocks of length $\ell$.
\end{Corxx}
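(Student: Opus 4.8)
The plan is to deduce this from Corollary~\ref{cA} by showing that, under the genericity assumption, the somewhat awkward quantifier structure of~\eqref{corCcond} collapses to the cleaner double limit stated here. First I would observe that when $\bfu$ is generic, there is a unique Furstenberg system $\kappa\in V_S(\bfu)$, and $\bfu$ is generic along the \emph{whole} sequence $(N)_{N\ge1}$; hence in~\eqref{corCcond} we may take $N_k=k$ and the inner ``$\lim_{k\to\infty}$'' is an honest limit in $N$. So Corollary~\ref{cA} already tells us that $\bfu\perp\mathscr{C}_{\rm ZE}$ is equivalent to: for every $\vep>0$ there is $M\ge1$ such that for all $m\ge M$, all $\ell\ge1$ and all sets $C$ of blocks of length $\ell$,
\[
\limsup_{N\to\infty}\left|\frac1N\sum_{n\le N}\bfu(n)\,\raz_{(\bfu(m+n),\ldots,\bfu(m+n+\ell-1))\in C}\right|\le\vep .
\]

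The next step is to package this as a statement about the single quantity
\[
g(m):=\sup_{\ell\ge1}\ \sup_{C}\ \limsup_{N\to\infty}\left|\frac1N\sum_{n\le N}\bfu(n)\,\raz_{(\bfu(m+n),\ldots,\bfu(m+n+\ell-1))\in C}\right|.
\]
With this notation, the condition from Corollary~\ref{cA} says precisely: for every $\vep>0$ there is $M$ with $g(m)\le\vep$ for all $m\ge M$, i.e.\ $\lim_{m\to\infty}g(m)=0$. On the other hand the asserted condition of Corollary~C$'$ is exactly $\lim_{m\to\infty}g(m)=0$ as well — ``uniformly in $\ell$ and $C$'' is encoded by the two suprema defining $g(m)$, and the genericity of $\bfu$ guarantees that for each fixed $m,\ell,C$ the limit $\lim_{N\to\infty}\frac1N\sum_{n\le N}\bfu(n)\raz_{(\ldots)\in C}$ actually exists (it is an integral against $\kappa$ of the continuous function $\pi_0\otimes(\raz_A\circ S^m)$ on $X_{\bfu}$, since $(x,\bfu)\mapsto\ldots$ is generic for the relevant joining; genericity of $\bfu$ forces this joining, hence the limit, to be well defined), so the $\limsup$ in $g(m)$ may be replaced by $\lim$. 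Matching the two reformulations finishes the equivalence.

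The only genuine point requiring care — and the step I would expect to be the main obstacle — is the passage from ``$\limsup$'' to ``$\lim$'' together with the interchange of the limit in $N$ with the suprema over $\ell$ and $C$. For each fixed $m$ and each fixed $\ell$, the supremum over $C$ is attained (there are finitely many blocks of length $\ell$ occurring in $\bfu$, hence finitely many relevant $C$), so for fixed $\ell$ the limit in $N$ exists and commutes with $\sup_C$; the subtlety is the supremum over $\ell\ge1$, which is an infinite supremum of limits and need not a priori commute with $\lim_N$. Here one uses that the left-hand side of Corollary~C$'$ only claims convergence to $0$ after first taking $\sup_\ell\sup_C$ and then letting $m\to\infty$: for the implication ``$\bfu\perp\mathscr{C}_{\rm ZE}\Rightarrow$ Corollary~C$'$ condition'' one simply reads off $g(m)\to0$ from Corollary~\ref{cA} as above, with no interchange needed; for the converse, given the Corollary~C$'$ hypothesis one fixes $\vep$, picks $M$ with $g(m)\le\vep/2$ for $m\ge M$, and then for each individual $\ell,C$ the quantity in~\eqref{corCcond} is $\le g(m)\le\vep/2<\vep$, which is exactly the hypothesis of Corollary~\ref{cA}. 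Thus no problematic interchange is ever actually performed; the apparent difficulty dissolves once one writes the two conditions in terms of the common majorant $g(m)$, and genericity is used only to legitimately speak of $\lim_{N\to\infty}$ in the statement.
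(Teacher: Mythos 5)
Your proposal is correct and matches the paper's route: the paper treats Corollary~\ref{cB} as a direct specialization of Corollary~\ref{cA} to the generic case (where the unique Furstenberg system is reached along the full sequence of integers). Your write-up merely makes explicit what the paper leaves as a one-line remark — that genericity guarantees the $N$-limits exist (via $\int\pi_0\cdot(\raz_A\circ S^m)\,d\kappa$) and that the uniform condition can be phrased as convergence of $g(m)=\sup_{\ell,C}\lim_{N}|\cdots|$ to $0$ — so no genuinely different idea is introduced.
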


\begin{Remark}\label{r:sarnakcomb}
Note however that the above two corollaries do not say much if the (clopen) sets $C$ are already of small measures (e.g.,\ in the most interesting case of blocks of large length). In fact, condition~\eqref{corCcond} in Corollary~\ref{cA} is equivalent to the following: for each subsequence $(N_k)$ defining a Furstenberg system of $\bfu$ we have the {\bf conditional} cancellation phenomenon of the values of $\bfu$ uniformly along sufficiently large shifts of the set of visits of $\bfu$ to ``typical'' blocks. More precisely, for each $\vep>0$ there exists $M\geq1$ such that
$$
\lim_{k\to\infty}\frac{\left|\sum_{n\leq N_k}\bfu(n)\raz_{(\bfu(m+n),\bfu(m+n+1),\ldots,\bfu(m+n+\ell-1))=C}\right|}{\sum_{n\leq N_k}\raz_{(\bfu(n),\ldots,\bfu(n+\ell-1))=C}}\leq \vep$$
for all $m\geq M$, all $\ell$ sufficiently large and blocks $C$ of length $\ell$ forming a family of measure $>1-\vep$.

If, additionally, $\bfu$ is generic, then the above condition reduces to
$$
\lim_{m\to\infty}\lim_{N\to\infty}\frac{\left|\sum_{n\leq N}\bfu(n)\raz_{(\bfu(m+n),\bfu(m+n+1),\ldots,\bfu(m+n+\ell-1))= C}\right|}{\sum_{n\leq N}\raz_{\bfu(m+n),\ldots,\bfu(m+n+\ell-1)=C}}=0$$
uniformly in $m$, for ``good'' blocks of length $\ell$ sufficiently large.\end{Remark}

\begin{Remark} Note the basic difference between the sums
$$
\sum_{n\leq N_k}\bfu(n)\raz_{(\bfu(n),\bfu(n+1),\ldots,\bfu(n+\ell-1))=C}
=C[0]\cdot\sum_{n\leq N_k}\raz_{(\bfu(n),\bfu(n+1),\ldots,\bfu(n+\ell-1))=C}
$$
and
$$
\sum_{n\leq N_k}\bfu(n)\raz_{(\bfu(m+n),\bfu(m+n+1),\ldots,\bfu(m+n+\ell-1))= C},$$
namely, the first one does not display {\bf any} cancellation (that is, along the return times to a fixed block, we have no cancellation) and in the second one cancellations are possible and the fact that along further and further shifts of the set of return times we observe more and  more cancellations, characterizes the Sarnak property.
\end{Remark}

\paragraph{Veech's and Sarnak's conjectures for other characteristic classes}
The family of all characteristic classes is enormous, see Section~\ref{s:characteristic} for natural examples. Here, let us just notice that the discrete spectrum automorphisms form a characteristic class and it contains uncountably many characteristic subclasses whose pairwise intersections are equal to the class of all identities (indeed, discrete spectrum automorphisms whose group of eigenvalues is contained in $\Z\alpha$, with $\alpha$ irrational, is a characteristic class). Moreover, there are the largest proper and the smallest non-trivial characteristic classes. Indeed,
although our study of the zero entropy class was originally motivated by Sarnak's conjecture, yet, ZE plays a special role since it is  the largest (proper) characteristic class. In fact, we have
\begin{equation}\label{zawierania}
\{\ast\}\subset {\rm ID}\subset \cf\subset {\rm ZE}\subset{\rm ALL}
\end{equation}
for each characteristic class $\cf$, where ${\rm ID}$ stands for the (characteristic) class of identities (of all standard Borel probability spaces) and {\rm ALL} stands for the (characteristic) class of all systems. Note that
$$
{\rm ID}=\{\ast\}_{\rm ec}\text{ and } {\rm ZE}={\rm ZE}_{\rm ec}.
$$

Clearly, the topological class $\mathscr{C}_{\rm ALL}$ consists of all topological systems and the only $\bfu$ orthogonal to all of them is $\bfu(n)=0$ on a subset of $n$ of full density  which is compatible with the Veech condition (which in this setting means that $\pi_0$ equals (a.e.) zero for each Furstenberg system).

The topological class $\mathscr{C}_{\{\ast\}}$ consists of topological systems whose all visible measures are given by fixed points. The reader can check that the Veech condition here is just $\int_{X_{\bfu}}\pi_0\,d\kappa=0$ for each $\kappa\in V_S(\bfu)$. The combinatorial condition~\eqref{corCcond} from Corollary~\ref{cA} (equivalent to the Veech condition) in this setting reduces to $\lim_{N\to\infty}\frac1N\sum_{n\leq N}\bfu(n)=0$. Clearly, in this setting, the latter is nothing but the Sarnak condition.

It is not hard to see that the topological class $\mathscr{C}_{\rm ID}$ consists of topological systems whose only ergodic measures are Dirac measures at fixed points. The Veech condition here is $\pi_0\perp L^2(\mathcal{I}_\kappa)$ for each $\kappa\in V_S(\bfu)$, where $\mathcal{I}_\kappa$ stands for the $\sigma$-algebra of invariant sets. Finally, the counterparts of Corollaries~\ref{cA} and~\ref{cB} are the following.

\begin{Cor}\label{c:ID}
Let $\bfu\colon\N\to\C$ be a bounded arithmetic function.
Then $\bfu\perp \mathscr{C}_{\rm ID}$ if and only if for each $\vep>0$ and each subsequence $(N_k)$ defining a Furstenberg system of $\bfu$, there exists $H_0\geq1$ such that for each $H\geq H_0$,
\beq\label{ortmsvf}
\lim_{k\to\infty}\frac1{N_k}\sum_{n\leq N_k}\left|\frac1H\sum_{h\leq H}\bfu(n+h)\right|^2\leq\vep.
\eeq
\end{Cor}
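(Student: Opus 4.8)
The plan is to obtain Corollary~\ref{c:ID} as a direct specialisation of Theorem~\ref{tB}, followed by an elementary rewriting of the resulting Veech condition. I take the characteristic class $\cf=\{\ast\}$ (the class whose only member, up to isomorphism, is the one-point system); it is characteristic, and, as recorded in the paper, $\cf_{\rm ec}={\rm ID}$, so $\mathscr{C}_{\cf_{\rm ec}}=\mathscr{C}_{\rm ID}$. Theorem~\ref{tB} then yields: $\bfu\perp\mathscr{C}_{\rm ID}$ if and only if $\bfu$ satisfies the Veech condition~\eqref{veech3} with respect to ${\rm ID}$, that is, $\pi_0\perp L^2\bigl((\mathcal{B}(X_{\bfu}),\kappa)_{\rm ID}\bigr)$ for every Furstenberg system $\kappa\in V_S(\bfu)$. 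Since a factor sub-$\sigma$-algebra of $(X_{\bfu},\mathcal{B}(X_{\bfu}),\kappa,S)$ on which $S$ acts as the identity is precisely a sub-$\sigma$-algebra consisting of $S$-invariant sets, the largest ${\rm ID}$-factor $(\mathcal{B}(X_{\bfu}),\kappa)_{\rm ID}$ is the invariant $\sigma$-algebra $\mathcal{I}_\kappa$ (as already noted just before the statement, using Property~(a) after Definition~\ref{def:charK}). Thus what remains to be proved is the equivalence between ``$\pi_0\perp L^2(\mathcal{I}_\kappa)$ for every $\kappa\in V_S(\bfu)$'' and condition~\eqref{ortmsvf}.

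Next I would carry out the translation, one Furstenberg system at a time. Fix $\kappa\in V_S(\bfu)$ and let $(N_k)$ be a subsequence along which $\bfu$ is quasi-generic for $\kappa$. The orthogonality $\pi_0\perp L^2(\mathcal{I}_\kappa)$ is the same as $\EE^\kappa(\pi_0\mid\mathcal{I}_\kappa)=0$; by von Neumann's mean ergodic theorem, $\frac1H\sum_{h\le H}\pi_0\circ S^h\to\EE^\kappa(\pi_0\mid\mathcal{I}_\kappa)$ in $L^2(\kappa)$ as $H\to\infty$ (passing from $\sum_{h=0}^{H-1}$ to $\sum_{h\le H}$ changes the average by $O(1/H)$ in $L^2$-norm). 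Hence $\pi_0\perp L^2(\mathcal{I}_\kappa)$ holds if and only if for every $\vep>0$ there is $H_0\ge1$ with $\int_{X_{\bfu}}\bigl|\frac1H\sum_{h\le H}\pi_0\circ S^h\bigr|^2\,d\kappa\le\vep$ for all $H\ge H_0$. The integrand $z\mapsto\bigl|\frac1H\sum_{h\le H}z_h\bigr|^2$ depends on finitely many coordinates, hence is continuous on $X_{\bfu}$, so quasi-genericity of $\bfu$ along $(N_k)$ together with the identity $\pi_0(S^hS^n\bfu)=\bfu(n+h)$ (valid for $n$ large, whatever the convention for the extension of $\bfu$ to $\Z$) gives
\[
\int_{X_{\bfu}}\Bigl|\tfrac1H\sum_{h\le H}\pi_0\circ S^h\Bigr|^2\,d\kappa=\lim_{k\to\infty}\frac1{N_k}\sum_{n\le N_k}\Bigl|\frac1H\sum_{h\le H}\bfu(n+h)\Bigr|^2 .
\]
Substituting this into the previous equivalence produces exactly~\eqref{ortmsvf} for the system $\kappa$, with $H_0$ depending on $\vep$ and on $\kappa$.

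Finally I would close the loop between Furstenberg systems and subsequences. Every $\kappa\in V_S(\bfu)$ is, by definition, quasi-generic for $\bfu$ along some subsequence $(N_k)$, and conversely every subsequence $(N_k)$ ``defining a Furstenberg system'' produces precisely one such $\kappa$; as the right-hand side of the displayed identity depends on $(N_k)$ only through the limit it computes, the family of per-$\kappa$ conditions and the per-$(N_k)$ condition in the statement are one and the same. Combining this with the reduction in the first paragraph gives Corollary~\ref{c:ID}.

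The argument involves no analytic difficulty beyond the mean ergodic theorem; the only care needed is organisational. One must be sure that the identification of $(\mathcal{B}(X_{\bfu}),\kappa)_{\rm ID}$ with $\mathcal{I}_\kappa$ is correct, that the index shift $n\mapsto n+h$ is harmless in view of the extension of $\bfu$ to $\Z$ (it is, for $n$ large and $h\le H$ fixed), and --- most importantly --- that the quantifier $\exists H_0$ is placed after both $\forall\vep$ and the choice of $(N_k)$, so that the condition obtained is literally the one in~\eqref{ortmsvf} and not a spuriously uniform strengthening of it. Should one prefer not to invoke Theorem~\ref{tB} as a black box for this very degenerate class, an alternative is to re-run its proof (Hansel models, the lifting lemma, joining arguments) directly for $\mathscr{C}_{\rm ID}$; but since Theorem~\ref{tB} is already available, the route above is the economical one.
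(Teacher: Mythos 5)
Your proposal is correct and follows essentially the same route as the paper: specialize Theorem~\ref{tB} to the class $\mathrm{ID}$, identify the largest $\mathrm{ID}$-factor with $\mathcal{I}_\kappa$, apply the von Neumann mean ergodic theorem, and translate the resulting $L^2$-statement into the combinatorial form via quasi-genericity and continuity of the finitely-determined integrand. The only cosmetic difference is that you enter Theorem~\ref{tB} with $\cf=\{\ast\}$ and use $\{\ast\}_{\rm ec}=\mathrm{ID}$, whereas the paper enters with $\cf=\mathrm{ID}$ and uses $\mathrm{ID}_{\rm ec}=\mathrm{ID}$; these are equivalent entry points.
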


Note that if, additionally, $\bfu$  is generic and it satisfies a non-quantitative version of the Matom\"aki-Radziwi\l\l \ \cite{Ma-Ra} convergence on a typical short interval:
$$\lim_{M, H\to\infty, H={\rm o}(M)}\frac1{M}\sum_{n\leq M}\left|\frac1H\sum_{h\leq H}\bfu(n+h)\right|^2=0$$
then   $\bfu\perp \mathscr{C}_{\rm ID}$.

\begin{Remark} It is also worth mentioning that the ID-sequences  are precisely  the {\em mean slowly varying functions} (see Proposition 5.1 in \cite{Go-Le-Ru1}), i.e.\ (bounded) arithmetic functions $\bfv\colon\N\to\C$ for which
$$
\lim_{N\to\infty}\frac1N\sum_{n<N}|\bfv(n+1)-\bfv(n)|=0.
$$
Therefore, sequences satisfying \eqref{ortmsvf} are precisely those orthogonal to all mean slowly varying functions.\end{Remark}

Notice that it follows from~\eqref{zawierania} that, for a non-trivial class $\cf$, the ``zero mean condition on a typical short interval'' \eqref{ortmsvf} is a necessary condition for $\bfu \perp \mathscr{C}_{\cf}$, whereas the condition given by Corollary~\ref{cA} is sufficient for $\bfu \perp \mathscr{C}_{\cf}$.

In Section~\ref{s:orthogonality}, we discuss the case $\cf={\rm DISP}_{\rm ec}$, where DISP stands for the (characteristic) class of discrete spectrum automorphisms. In view of~\eqref{ec2}, $\mathscr{C}_{{\rm DISP}_{\rm ec}}$ consists of homeomorphisms whose all {\bf ergodic} measures yield systems with discrete spectrum.  Let $\bfu\colon\N\to \C$ be bounded. For  an increasing sequence of integers $(N_k)$, we set
\begin{equation}\label{eq:1us}
\|\bfu\|^2_{u^1((N_k))}:=\lim_{H\to \infty} \frac{1}{H}\sum_{h\leq H} \Big(\lim_{k\to \infty}\frac1{N_k}\sum_{n\leq N_k}\bfu(n)\overline{\bfu(n+h)}\Big)
\end{equation}
and, for $s>1$,
\begin{equation}\label{eq:2us}
\|\bfu\|_{u^s((N_k))}^{2^{s+1}}:=\lim_{H\to \infty} \frac{1}{H}\sum_{h\leq H}\|\bfu(\cdot+h)\cdot \bfu(\cdot)\|_{u^s((N_k))}^{2^s},
\end{equation}
whenever all the above limits exist. If $N_k=k$, we set $\|\bfu\|_{u^s}:=\|\bfu\|_{u^s((N_k))}$.

\begin{Cor}\label{c:averagedCh'} Let $\bfu\colon\N\to\C$ be bounded.
Then
$\bfu\perp \mathscr{C}_{{\rm DISP}_{\rm ec}}$ if and only if $\|\bfu\|_{u^2((N_k))}=0$ for each subsequence $(N_k)$ along which $\bfu$ is generic. In particular, if $\bfu$ is generic then $\|\bfu\|_{u^2}=0$.
\end{Cor}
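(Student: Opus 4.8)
The plan is to read Corollary~\ref{c:averagedCh'} off Theorem~\ref{tB} applied to the characteristic class $\cf={\rm DISP}$ of discrete spectrum automorphisms, so that $\cf_{\rm ec}={\rm DISP}_{\rm ec}$. By Theorem~\ref{tB}, $\bfu\perp\mathscr{C}_{{\rm DISP}_{\rm ec}}$ is equivalent to the Veech condition~\eqref{veech3} with respect to ${\rm DISP}_{\rm ec}$, i.e.\ to $\pi_0\perp L^2\bigl((\mathcal B(X_{\bfu}),\kappa)_{{\rm DISP}_{\rm ec}}\bigr)$ for every Furstenberg system $\kappa\in V_S(\bfu)$; recall that $\kappa\in V_S(\bfu)$ precisely when $\bfu$ is generic, along some increasing sequence $(N_k)$, for $\kappa$. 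So everything reduces to the following two facts, for a fixed pair $(\kappa,(N_k))$ with $\bfu$ generic for $\kappa$ along $(N_k)$: (A) $\pi_0\perp L^2\bigl((\mathcal B(X_{\bfu}),\kappa)_{{\rm DISP}_{\rm ec}}\bigr)$ if and only if the Gowers--Host--Kra seminorm $\|\pi_0\|_{u^2(\kappa)}$ of $\pi_0$ in the (possibly non-ergodic) system $(X_{\bfu},\mathcal B(X_{\bfu}),\kappa,S)$ vanishes; and (B) $\|\bfu\|_{u^2((N_k))}$ is well defined and vanishes exactly when $\|\pi_0\|_{u^2(\kappa)}$ does.

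For (A), I would first identify the largest ${\rm DISP}_{\rm ec}$-factor $\mathcal D_{{\rm DISP}_{\rm ec}}$ of a system $(Z,\mathcal D,\kappa,R)$ — which exists by property~(a) recorded after Definition~\ref{def:charK} — as the \emph{fiberwise Kronecker factor}: over the ergodic decomposition $\kappa=\int\kappa_\omega\,d\PP(\omega)$ it is the (globally defined, $R$-invariant) sub-$\sigma$-algebra whose restriction to $\kappa_\omega$ is the maximal discrete-spectrum (Kronecker) factor $\mathcal Z_1(\kappa_\omega)$ for $\PP$-a.e.\ $\omega$; this is the analogue of the ID case, where $\mathcal D_{\rm ID}=\mathcal I_\kappa$, and its verification belongs to the general theory of characteristic classes of Section~\ref{s:characteristic}. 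Next I would use that the $u^2$-seminorm integrates over ergodic components: from $\EE(g\mid\mathcal I_\kappa)(z)=\int g\,d\kappa_{\omega(z)}$ and $\|g\|_{u^1(\kappa)}^2=\|\EE(g\mid\mathcal I_\kappa)\|_{L^2(\kappa)}^2$, swapping the average over $h$ with the integral over $\omega$ (bounded convergence) gives $\|f\|_{u^2(\kappa)}^4=\int\|f\|_{u^2(\kappa_\omega)}^4\,d\PP(\omega)$. On an ergodic component, Wiener's lemma yields $\|f\|_{u^2(\kappa_\omega)}^4=\sum_t\sigma_{f,\omega}(\{t\})^2$ (sum of squares of the atoms of the spectral measure of $f$), which vanishes iff $\sigma_{f,\omega}$ is continuous iff $f\perp\mathcal Z_1(\kappa_\omega)$ — the classical spectral characterization of orthogonality to the Kronecker factor, see \cite{Ho-Krbook}. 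Hence $\|f\|_{u^2(\kappa)}=0$ iff $\EE(f\mid\mathcal Z_1(\kappa_\omega))=0$ for a.e.\ $\omega$ iff $f\perp L^2(\mathcal D_{{\rm DISP}_{\rm ec}})$; applying this with $f=\pi_0$ gives (A).

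For (B), fix $\kappa$ and $(N_k)$ with $\bfu$ generic for $\kappa$ along $(N_k)$. For each $h$ the sequence $n\mapsto\bfu(n)\overline{\bfu(n+h)}$ equals $n\mapsto(\pi_0\cdot\overline{\pi_0\circ S^h})(S^n\bfu)$ with a continuous function on $X_{\bfu}$, so by genericity $\frac1{N_k}\sum_{n\le N_k}\bfu(n)\overline{\bfu(n+h)}\to\langle\pi_0,\pi_0\circ S^h\rangle_{L^2(\kappa)}$; the von Neumann ergodic theorem then makes the average over $h$ in~\eqref{eq:1us} converge, giving $\|\bfu\|_{u^1((N_k))}^2=\|\EE(\pi_0\mid\mathcal I_\kappa)\|_{L^2(\kappa)}^2=\|\pi_0\|_{u^1(\kappa)}^2$. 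Iterating once: each $n\mapsto\bfu(n+h)\bfu(n)$ is $n\mapsto G_h(S^n\bfu)$ with $G_h:=(\pi_0\circ S^h)\cdot\pi_0$ continuous, hence $\frac1{N_k}\sum_{n\le N_k}G_h(S^n\bfu)\overline{G_h(S^{n+j}\bfu)}\to\langle G_h,G_h\circ S^j\rangle$ and $\|\bfu(\cdot+h)\bfu(\cdot)\|_{u^1((N_k))}^2=\|\EE(G_h\mid\mathcal I_\kappa)\|_{L^2(\kappa)}^2$; averaging over $h$ and using~\eqref{eq:2us} shows the limits defining $\|\bfu\|_{u^2((N_k))}$ exist and that $\|\bfu\|_{u^2((N_k))}$ is a fixed positive power of $\|\pi_0\|_{u^2(\kappa)}$ (the exact conjugation convention in~\eqref{eq:2us} being immaterial for the vanishing). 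Combining (A) and (B) with the reduction of the first paragraph proves the equivalence. For the last assertion, if $\bfu$ is generic then it has a single Furstenberg system $\kappa$ and is generic along every subsequence for this same $\kappa$, so $\|\bfu\|_{u^2((N_k))}=\|\pi_0\|_{u^2(\kappa)}=\|\bfu\|_{u^2}$ for all $(N_k)$, whence $\bfu\perp\mathscr{C}_{{\rm DISP}_{\rm ec}}$ is equivalent to $\|\bfu\|_{u^2}=0$.

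The main obstacle is step (A): proving cleanly that $\mathcal D_{{\rm DISP}_{\rm ec}}$ is the fiberwise Kronecker factor — which needs a measurable version of the Kronecker factor across the ergodic decomposition and the fact that this is the largest factor lying in ${\rm DISP}_{\rm ec}$ — together with the fibered identity $\|f\|_{u^2(\kappa)}^4=\int\|f\|_{u^2(\kappa_\omega)}^4\,d\PP(\omega)$. The ergodic building block (Wiener's lemma identifying $\|f\|_{u^2}=0$ with orthogonality to the Kronecker factor) is classical; the care lies in the non-ergodic bookkeeping. Step (B) is routine, the only genuine point being that genericity along $(N_k)$ forces all the iterated limits in~\eqref{eq:1us}--\eqref{eq:2us} to converge.
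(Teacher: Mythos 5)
Your proposal is correct and follows essentially the same route as the paper: reduce via Theorem~\ref{tB} (with $\cf={\rm DISP}$) to the Veech condition for ${\rm DISP}_{\rm ec}$, identify the largest ${\rm DISP}_{\rm ec}$-factor with the fiberwise Kronecker factor (this is precisely the content of Corollary~\ref{pods} and the identity $\mathcal{D}_{{\rm DISP}_{\rm ec}}=\mathcal{K}(\mathcal{I}_\kappa)=\mathcal{Z}_1(R)$ from Section~\ref{largDCSec}), characterize orthogonality to it by $\|\pi_0\|_{u^2(\kappa)}=0$ using Wiener's lemma on ergodic components, and then use genericity along $(N_k)$ plus the von Neumann theorem to convert $\|\pi_0\|_{u^2(\kappa)}$ into $\|\bfu\|_{u^2((N_k))}$. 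The only cosmetic difference is that the paper cites Corollary~\ref{pods} and its condition~(v) (relative weak mixing over $\mathcal{I}_\kappa$) directly, whereas you re-derive the equivalent spectral characterization (condition~(iv)) via the fibered identity $\|f\|_{u^2(\kappa)}^4=\int\|f\|_{u^2(\kappa_\omega)}^4\,d\PP(\omega)$; you correctly flag the ``non-ergodic bookkeeping'' (a measurable choice of fiberwise Kronecker factor, and that it is indeed the largest ${\rm DISP}_{\rm ec}$-factor) as the technical point, which the paper has already dispatched in establishing~\eqref{opo} and~\eqref{rownosc}.
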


The main assertion of Corollary~\ref{c:averagedCh'} is equivalent to $\|\pi_0\|_{u^2(\kappa)}=0$ for each Furstenberg system $\kappa\in V_S(\bfu)$. The reason for the validity of this result is that given an automorphism $(Z,\cd,\rho,R)$, we have the equality
$$
\cd_{{\rm DISP}_{\rm ec}}=\mathcal{Z}_1(R,\rho)$$
which is a consequence of ${\rm DISP}_{\rm ec}={\rm NIL}_1$, where ${\rm NIL}_s$ stands for the class of automorphisms whose a.a.\ ergodic components are inverse limits of $s$-step nil-automorphisms (see Section~\ref{s:characteristic} for more details).

When we turn to the classes ${\rm NIL}_s(=({\rm NIL}_s)_{\rm ec})$ and return to the original Sarnak's conjecture (for $\bfu$) then clearly  $\pi_0\perp L^2(\Pi(\kappa))$ implies $\pi_0\perp L^2(\mathcal{Z}_s(\kappa))$ for each $s\geq1$. We hence obtain  one more necessary condition for $\bfu$ to be orthogonal to all deterministic sequences:

\begin{Cor}\label{deter} Let $\bfu:\N\to \C$ be bounded. If $\bfu\perp \mathscr{C}_{\rm ZE}$ then  $\|\bfu\|_{u^s((N_k))}=0$ for each $s\in\N$ (for each subsequence $(N_k)$  along which $\bfu$ is generic). 
\end{Cor}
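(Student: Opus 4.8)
The plan is to deduce the corollary from Theorem~\ref{tB} together with the structure of the characteristic classes ${\rm NIL}_s$ and their associated characteristic factors. First I would observe that since ${\rm ZE}={\rm ZE}_{\rm ec}$, we have $\mathscr{C}_{\rm ZE}=\mathscr{C}_{{\rm ZE}_{\rm ec}}$, so Theorem~\ref{tB} applied with $\cf={\rm ZE}$ turns the hypothesis $\bfu\perp\mathscr{C}_{\rm ZE}$ into the Veech condition~\eqref{veech3} with respect to ${\rm ZE}$: for every Furstenberg system $\kappa\in V_S(\bfu)$ one has $\pi_0\perp L^2(\Pi(\kappa))$, where $\Pi(\kappa)$ is the Pinsker factor of $(X_{\bfu},\mathcal{B}(X_{\bfu}),\kappa,S)$.

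Now fix $s\in\N$. As recalled in Section~\ref{s:characteristic}, ${\rm NIL}_s$ is a characteristic class with $({\rm NIL}_s)_{\rm ec}={\rm NIL}_s$, and for every automorphism $(Z,\cd,\rho,R)$ its largest ${\rm NIL}_s$-factor coincides with the Host--Kra factor, $\cd_{{\rm NIL}_s}=\mathcal{Z}_s(R,\rho)$. Since inverse limits of nil-automorphisms have zero entropy, and entropy is the integral over ergodic components, $\mathcal{Z}_s(\kappa)$ is a zero-entropy factor, hence $\mathcal{Z}_s(\kappa)\subseteq\Pi(\kappa)$. Therefore $\pi_0\perp L^2(\Pi(\kappa))$ forces $\pi_0\perp L^2(\mathcal{Z}_s(\kappa))$ for every $\kappa\in V_S(\bfu)$, i.e.\ $\bfu$ satisfies the Veech condition~\eqref{veech3} with respect to ${\rm NIL}_s$. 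It then remains to translate this into the vanishing of the combinatorial seminorm, exactly as in the proof of Corollary~\ref{c:averagedCh'} (which is the case $s=1$): the equality $\cd_{{\rm NIL}_s}=\mathcal{Z}_s$ gives that $\pi_0\perp L^2(\mathcal{Z}_s(\kappa))$ is equivalent to $\|\pi_0\|_{u^{s+1}(\kappa)}=0$, the Gowers--Host--Kra seminorm of order $s+1$ computed in the system $\kappa$. Finally, fix an increasing sequence $(N_k)$ along which $\bfu$ is generic and let $\kappa\in V_S(\bfu)$ be the corresponding Furstenberg system. Genericity guarantees that all the Cesàro limits occurring in~\eqref{eq:1us} and~\eqref{eq:2us} exist, each being an integral over $(X_{\bfu},\kappa)$ of a continuous function, and, unwinding the definition of the process $(\pi_0\circ S^n)_{n\in\Z}$, one gets $\|\bfu\|_{u^{s+1}((N_k))}=\|\pi_0\|_{u^{s+1}(\kappa)}=0$. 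By the monotonicity of the GHK seminorms, $\|\bfu\|_{u^s((N_k))}\le\|\bfu\|_{u^{s+1}((N_k))}=0$, so $\|\bfu\|_{u^s((N_k))}=0$; since $s\in\N$ was arbitrary, this proves the corollary.

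The part requiring care, and the only genuinely substantial input, is the identification $\cd_{{\rm NIL}_s}=\mathcal{Z}_s$ together with the equivalence of orthogonality to $L^2(\mathcal{Z}_s(\kappa))$ with the vanishing of the order-$(s+1)$ GHK seminorm of $\pi_0$. This is where the Host--Kra structure theory enters, in the (possibly non-ergodic) ergodic-components form already developed in Section~\ref{s:characteristic} and used for Corollary~\ref{c:averagedCh'}. Once that machinery is in place, Corollary~\ref{deter} is an immediate consequence of Theorem~\ref{tB} and the inclusion $\mathcal{Z}_s(\kappa)\subseteq\Pi(\kappa)$; no further dynamical work is needed.
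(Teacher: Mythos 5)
Your proof is correct and follows the same route the paper takes (implicitly, in the two sentences preceding the corollary): apply Theorem~\ref{tB} with $\cf={\rm ZE}$ to get $\pi_0\perp L^2(\Pi(\kappa))$, use the inclusion $\mathcal{Z}_s(\kappa)\subseteq\Pi(\kappa)$ to deduce $\pi_0\perp L^2(\mathcal{Z}_s(\kappa))$, and then invoke~\eqref{prop13fubook} together with the combinatorial translation of Corollary~\ref{c:averagedCh'} to convert this to $\|\bfu\|_{u^{s+1}((N_k))}=0$. The only cosmetic difference is that you invoke monotonicity of the GHK seminorms to handle $s=1$, whereas one can equally well read $\|\bfu\|_{u^1((N_k))}=0$ off directly from $\mathcal{I}_\kappa\subseteq\Pi(\kappa)$ (cf.\ Corollary~\ref{c:ID}); both are fine.
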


In Section~\ref{s:distalandother}, we prove that the Sarnak property of $\bfu$ for the fundamental (in ergodic theory) class of distal automorphisms
is equivalent to the Veech property of $\bfu$. We leave as an open problem whether the Veech property can be expressed combinatorially in the distal case.

\paragraph{The logarithmic Sarnak's conjecture} As we have already noticed, our results also hold in the logarithmic case. In the corollary below we put together conditions which are equivalent to the logarithmic Sarnak's conjecture.\footnote{Our thanks go to N.\ Frantizkinakis who pointed out to us one of crucial equivalences: (iv) $\Leftrightarrow$ (v).}
\begin{Cor}\label{c:SNC} Let $\bfu=\mob$ or $\lio$. The following conditions are equivalent:\\
(i)  $\bfu\perp_{\rm log} \mathscr{C}_{\rm ZE}$ (i.e.\ zero entropy systems satisfy the logarithmic Sarnak property with respect to $\bfu$),\\
(ii) $\bfu \perp_{\rm log} \mathscr{C}_{\rm NIL_s}$ for each $s\geq1$,\\
(iii)  $\pi_0\perp L^2(\Pi(\kappa))$ for each $\kappa\in V_S^{\rm log}(\bfu)$ (i.e.\ $\bfu$ satisfies the Veech condition for each logarithmic Furstenberg system of $\bfu$),\\
(iv) $\pi_0\perp L^2(\mathcal{Z}_s(\kappa))$ for each $\kappa\in V_S^{\rm log}(\bfu)$ and $s\geq1$,\\
(v) $\bfu$ satisfies the logarithmic Chowla conjecture.
\end{Cor}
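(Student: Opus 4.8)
\emph{Proof proposal.} The plan is to arrange the five conditions into a cycle and to obtain all the implications from the logarithmic version of Theorem~\ref{tB} (Section~\ref{a:logarithmic}) together with two routine factor identifications, the only heavy external input being the single equivalence (iv)~$\Leftrightarrow$~(v).

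First I would record the factor identifications that make the machine run: for an automorphism $(Z,\mathcal{D},\kappa,R)$ one has $\mathcal{D}_{\rm ZE}=\Pi(\kappa)$ (the Pinsker factor) and $\mathcal{D}_{{\rm NIL}_s}=\mathcal{Z}_s(\kappa)$ (the $s$-th Host--Kra factor), the latter being the natural extension of the identity $\mathcal{D}_{{\rm DISP}_{\rm ec}}=\mathcal{Z}_1(R,\kappa)$ recalled in Section~\ref{s:characteristic}, and it uses ${\rm NIL}_s=({\rm NIL}_s)_{\rm ec}$. Granting these, applying the logarithmic Theorem~\ref{tB} to $\cf={\rm ZE}$ (using ${\rm ZE}={\rm ZE}_{\rm ec}$) shows that (i) is equivalent to the Veech condition~\eqref{veech3} for ${\rm ZE}$ along every $\kappa\in V_S^{\rm log}(\bfu)$, that is, to $\pi_0\perp L^2(\Pi(\kappa))$, which is exactly~(iii); applying it to each $\cf={\rm NIL}_s$ and intersecting over $s\ge1$ gives (ii)~$\Leftrightarrow$~(iv) in the same way.

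Next I would dispose of the two easy arrows of the cycle. For (iii)~$\Rightarrow$~(iv): inverse limits of $s$-step nilsystems have zero entropy, and a system almost every ergodic component of which has zero entropy has itself zero entropy, so ${\rm NIL}_s\subset{\rm ZE}$, hence $\mathcal{Z}_s(\kappa)\subset\Pi(\kappa)$ and $\pi_0\perp L^2(\Pi(\kappa))$ forces $\pi_0\perp L^2(\mathcal{Z}_s(\kappa))$. For (v)~$\Rightarrow$~(iii): under the logarithmic Chowla conjecture $\bfu$ has a single logarithmic Furstenberg system $\kappa$, the ``Chowla system''. When $\bfu=\lio$ this is the Bernoulli shift on $\{-1,1\}^{\Z}$, whose Pinsker factor is trivial, so $\pi_0\perp L^2(\Pi(\kappa))$ reduces to $\int\pi_0\,d\kappa=0$ (the one-point logarithmic Chowla, equivalently the logarithmic prime number theorem). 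When $\bfu=\mob$ it is the M\"obius--Chowla measure, a relatively Bernoulli extension of the zero-entropy square-free (Mirsky) factor; its Pinsker factor equals that square-free factor, on which $\pi_0$ is conditionally a centred $\pm1$ variable, so $\pi_0\perp L^2(\Pi(\kappa))$ again. Together with the previous step this yields (i)~$\Leftrightarrow$~(iii)~$\Rightarrow$~(iv), (ii)~$\Leftrightarrow$~(iv), and (v)~$\Rightarrow$~(iii).

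The main obstacle is to close the cycle through (iv)~$\Rightarrow$~(v). Here I would first reformulate~(iv) via the Host--Kra inverse theorem -- $\mathbb{E}[\pi_0\mid\mathcal{Z}_s(\kappa)]=0$ is equivalent to $\|\pi_0\|_{u^{s+1}(\kappa)}=0$ -- so that, arguing as in the logarithmic analogues of Corollaries~\ref{c:averagedCh'} and~\ref{deter}, condition~(iv) becomes the statement that $\|\bfu\|_{u^s((N_k))}=0$ for every $s\ge1$ and every $(N_k)$ along which $\bfu$ is logarithmically generic. That this vanishing of all Gowers--Host--Kra seminorms of $\bfu$ is equivalent to the logarithmic Chowla conjecture is the equivalence we owe to N.~Frantzikinakis; one direction is immediate (on the Chowla system $\pi_0$ is, on its positive-entropy part, a Bernoulli coordinate, so all of its higher-order seminorms vanish), while the converse is the genuinely hard point, since in a general system vanishing of the uniformity seminorms of a function does \emph{not} imply vanishing of its correlations. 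This is where one must invoke the deep results on the logarithmically averaged Chowla and Elliott conjectures -- Tao's theorem on the two-point logarithmic Chowla conjecture, the Tao--Ter\"av\"ainen treatment of the odd-order cases, and the Frantzikinakis--Host structural analysis of logarithmic Furstenberg systems of $\mob$ -- which together allow one to reduce the vanishing of each logarithmic correlation of $\bfu$ to the vanishing of finitely many Gowers--Host--Kra seminorms in the relevant Furstenberg systems. With (iv)~$\Leftrightarrow$~(v) in hand, the chain (i)~$\Leftrightarrow$~(iii)~$\Rightarrow$~(iv)~$\Leftrightarrow$~(v)~$\Rightarrow$~(iii) together with (ii)~$\Leftrightarrow$~(iv) establishes all five equivalences.
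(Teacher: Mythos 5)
Your proposal is correct and takes essentially the same approach as the paper: both derive the substantive equivalences from the logarithmic form of Theorem~\ref{tB} (using ${\rm ZE}={\rm ZE}_{\rm ec}$, ${\rm NIL}_s=({\rm NIL}_s)_{\rm ec}$, and the identifications $\mathcal{D}_{\rm ZE}=\Pi(\kappa)$, $\mathcal{D}_{{\rm NIL}_s}=\mathcal{Z}_s(\kappa)$), and both defer (iv)~$\Leftrightarrow$~(v) to Frantzikinakis' paper building on Tao, exactly as the authors do. You simply spell out (iii)~$\Rightarrow$~(iv), via $\mathcal{Z}_s(\kappa)\subset\Pi(\kappa)$, and (v)~$\Rightarrow$~(iii), via the structure of the conjectural Chowla system, which the paper dismisses as ``standard.''
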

The equivalence of (ii) and (iv) is due to Theorem~\ref{tB} (in its logarithmic form), the equivalence of (iv) and (v) (based on the facts proved by Tao \cite{Ta2} for the equivalence of the logarithmic Sarnak's and Chowla's conjectures for the Liouville function) is formally proved  in \cite{Fr} (implicit in Section 2.7 therein). Other (needed) implications are standard.  Recall also that, by \cite{Go-Le-Ru}, (i) is equivalent to the logarithmic strong $\bfu$-MOMO property of all zero entropy systems.

Note that the equivalence of (i) and (ii) in Corollary~\ref{c:SNC}  yields immediately the following.
\begin{Cor}\label{c:logS+N} The logarithmic Sarnak's conjecture
holds if and only if for each $s\geq1$, $\mob$ is orthogonal to all systems whose all ergodic measures yield ${\rm NIL}_s$-systems.\end{Cor}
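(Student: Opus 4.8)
The plan is to deduce Corollary~\ref{c:logS+N} directly from the equivalence (i) $\Leftrightarrow$ (ii) of Corollary~\ref{c:SNC}, the only thing to check being the translation of the symbol $\mathscr{C}_{{\rm NIL}_s}$ into plain words. First I would recall that ${\rm NIL}_s$ is, by definition, a class described through ergodic components, so that ${\rm NIL}_s = ({\rm NIL}_s)_{\rm ec}$ and formula~\eqref{ec2} applies:
\[
\mathscr{C}_{{\rm NIL}_s} = \mathscr{C}_{({\rm NIL}_s)_{\rm ec}} = \{(X,T) : (X,\mathcal{B}(X),\nu,T)\in{\rm NIL}_s \text{ for every ergodic } \nu\in M(X,T)\},
\]
i.e.\ $\mathscr{C}_{{\rm NIL}_s}$ is precisely the class of topological systems all of whose ergodic measures yield ${\rm NIL}_s$-systems. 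With this identification in hand, condition~(ii) of Corollary~\ref{c:SNC}, namely $\mob\perp_{\rm log}\mathscr{C}_{{\rm NIL}_s}$ for every $s\geq1$, reads verbatim as ``for each $s\geq1$, $\mob$ is logarithmically orthogonal to all systems whose all ergodic measures yield ${\rm NIL}_s$-systems'', while condition~(i) is by definition the logarithmic Sarnak's conjecture. The asserted equivalence is therefore exactly (i) $\Leftrightarrow$ (ii).

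For completeness I would indicate which of the two directions carries the content. The implication ``logarithmic Sarnak $\Rightarrow$ orthogonality to all ${\rm NIL}_s$-systems'' is the soft one: every ergodic nilsystem, and any inverse limit of such, has zero entropy, hence ${\rm NIL}_s\subseteq{\rm ZE}$ as classes of measure-theoretic systems, so $\mathscr{C}_{{\rm NIL}_s}\subseteq\mathscr{C}_{\rm ZE}$ and $\mob\perp_{\rm log}\mathscr{C}_{\rm ZE}$ trivially gives $\mob\perp_{\rm log}\mathscr{C}_{{\rm NIL}_s}$. The converse is the substantial half, and it is here that Corollary~\ref{c:SNC} is genuinely used: via Theorem~\ref{tB} in its logarithmic form one converts $\mob\perp_{\rm log}\mathscr{C}_{{\rm NIL}_s}$ into the Veech-type condition $\pi_0\perp L^2(\mathcal{Z}_s(\kappa))$ for all logarithmic Furstenberg systems $\kappa\in V_S^{\rm log}(\mob)$ and all $s\geq1$, and then the equivalence (iv) $\Leftrightarrow$ (v) (logarithmic Chowla, following Tao \cite{Ta2} and made explicit in \cite{Fr}) together with the standard implication (v) $\Rightarrow$ (i) returns to the logarithmic Sarnak's conjecture.

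No genuine obstacle remains once Corollary~\ref{c:SNC} is granted; the only point deserving a line of care is the bookkeeping around $({\rm NIL}_s)_{\rm ec}$ --- one must make sure that ${\rm NIL}_s$ really equals $({\rm NIL}_s)_{\rm ec}$ and is a characteristic class, so that~\eqref{ec2} is legitimately applicable --- both facts being recorded in Section~\ref{s:characteristic}. Everything else is a purely notational translation between $\mathscr{C}_\cf$ and its description in words.
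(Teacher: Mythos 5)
Your proof is correct and follows essentially the same route as the paper, which simply cites the equivalence (i) $\Leftrightarrow$ (ii) of Corollary~\ref{c:SNC}; your extra bookkeeping (the identity ${\rm NIL}_s = ({\rm NIL}_s)_{\rm ec}$ and the unwinding of $\mathscr{C}_{{\rm NIL}_s}$ via \eqref{ec2}) is exactly the translation the paper leaves implicit. Note that you have correctly read the word ``orthogonal'' in the corollary statement as \emph{logarithmic} orthogonality, which is the only reading under which the ``iff'' follows immediately from (i) $\Leftrightarrow$ (ii).
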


\begin{Remark} Yet, in \cite{Hu-Xu-Ye} another characterization of logarithmic Sarnak's conjecture is given, namely, the conjecture holds if and only if $\mob$ is orthogonal to all topological systems with polynomial mean complexity. It follows from \cite{Ka-Vi-We} that the topological (and hence measure theoretic) mean complexity of any zero entropy homogeneous system is polynomial. However, the polynomial mean complexity in \cite{Hu-Xu-Ye} is considered in the topological setting (for systems that need not be homogeneous), so it seems there is no direct relation between Corollary \ref{c:logS+N} and the main result in \cite{Hu-Xu-Ye}.
\end{Remark}

\begin{Remark} As Theorem~\ref{tB} is true in a larger context, also Corollary~\ref{c:SNC} can be formulated for more general {\bf multiplicative} functions bounded by~1, cf.\ Theorem~1.8 in \cite{Fr}.\end{Remark}

\paragraph{Averaged Chowla property}  The ``iff'' assertion of Theorem~\ref{tB}  cannot be applied to the class $\mathscr{C}_{\rm DISP}$. However, in Section~\ref{s:avCh}, we will show that the Sarnak and Veech conditions are equivalent in this setting for each bounded $\bfu\colon\N\to\C$ such that
\beq\label{avch}
\mbox{all circle rotations satisfy the strong $\bfu$-MOMO property}
\eeq
(strong $\bfu$-MOMO property has been defined in Definition~\ref{def:strongMOMO}).

\begin{Cor}\label{c:averagedCh}
Assume that $\bfu\colon\N\to\C$ is bounded by~1 and satisfies~\eqref{avch}. Then $\bfu$ satisfies the Veech condition for the $\cf=$DISP. In particular, the Sarnak and Veech conditions are equivalent for $\cf=$DISP.  Moreover,   for every sequence $(N_k)$ along which $\bfu$ is generic, $\bfu$ satisfies the averaged 2-Chowla property:
\beq\label{avch1}
\lim_{H\to\infty}\frac1H\sum_{h\leq H}\lim_{k\to\infty}\frac{1}{N_k}\left|\sum_{n\leq N_k}\bfu(n)\overline{\bfu(n+h)}\right|=0\eeq
and, for all sequences $\bfv_1,\ldots,\bfv_{k}$ (bounded by~1), we have
\beq\label{avch2}
\lim_{H\to\infty}\frac1{H^k}\sum_{h_1,\ldots,h_k\leq H}\lim_{k\to\infty}\frac{1}{N_k}\left|\sum_{n\leq N_k}\bfu(n)\prod_{i=1}^k\bfv_i(n+h_i)\right|=0.\eeq
\end{Cor}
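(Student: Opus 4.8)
Everything is controlled by the spectral measure $\sigma_{\pi_0}=\sigma_{\pi_0,\kappa}$ of the single coordinate function $\pi_0$ in $L^2(X_\bfu,\kappa)$ for the Koopman operator of $S$, over all Furstenberg systems $\kappa\in V_S(\bfu)$. The first observation is that, for a (not necessarily ergodic) automorphism, the largest DISP-factor is the factor whose $L^2$ is the closed linear span of the eigenfunctions with constant eigenvalues (the Kronecker factor); hence, for $\kappa\in V_S(\bfu)$, the Veech condition $\pi_0\perp L^2((\mathcal B(X_\bfu),\kappa)_{\mathrm{DISP}})$ holds precisely when $\sigma_{\pi_0,\kappa}$ has no atoms. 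So the whole corollary reduces to showing that~\eqref{avch} forces every $\sigma_{\pi_0,\kappa}$ to be continuous; the Sarnak property will then follow from Theorem~\ref{tA}, and the two averaged Chowla statements by Hilbert-space (Wiener/van der Corput) arguments.

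\textbf{Step 1: the Veech condition.} Fix $\kappa\in V_S(\bfu)$ with $\bfu$ generic along $(N_k)$. Genericity gives $\widehat\sigma_{\pi_0}(j)=\int z^{j}\,d\sigma_{\pi_0}(z)=\langle\pi_0\circ S^{j},\pi_0\rangle=\lim_k\tfrac1{N_k}\sum_{n\le N_k}\bfu(n+j)\overline{\bfu(n)}$, and by the von Neumann/Wiener formula $\sigma_{\pi_0}(\{\lambda\})=\lim_{H}\tfrac1H\sum_{j=0}^{H-1}\bar\lambda^{\,j}\widehat\sigma_{\pi_0}(j)$. Pulling $\lim_k$ outside the finite sum over $j$ and using $|\bfu|\le1$,
\[
\Big|\tfrac1H\sum_{j=0}^{H-1}\bar\lambda^{\,j}\widehat\sigma_{\pi_0}(j)\Big|=\Big|\lim_k\tfrac1{N_k}\sum_{n\le N_k}\overline{\bfu(n)}\,\tfrac1H\sum_{j=0}^{H-1}\bar\lambda^{\,j}\bfu(n+j)\Big|\le\limsup_{N}\tfrac1N\sum_{n\le N}\Big|\tfrac1H\sum_{m=n}^{n+H-1}\bar\lambda^{\,m}\bfu(m)\Big|=:D(H),
\]
a sliding-window average of normalized twisted sums of $\bfu$. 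I would then deduce $D(H)\to0$ from~\eqref{avch}: testing the strong $\bfu$-MOMO property of the rotation $(\T,x\mapsto x+\alpha)$, $\lambda=e^{2\pi i\alpha}$, on the character $x\mapsto e^{-2\pi ix}$ yields $\tfrac1{b_K}\sum_{k<K}\big|\sum_{b_k\le m<b_{k+1}}\bfu(m)\bar\lambda^{\,m}\big|\to0$ for every $(b_k)$ with $b_{k+1}-b_k\to\infty$; a diagonalisation over stages with growing block-lengths and varying phases (cf.\ Remark~\ref{r:strongMOMOsubsequence}) upgrades this to $\limsup_N\tfrac1N\sum_{j}\big|\sum_{r+jH\le m<r+(j+1)H}\bfu(m)\bar\lambda^{\,m}\big|\to0$ as $H\to\infty$, uniformly in the phase $r\in\{0,\dots,H-1\}$, and averaging over $r$ gives $D(H)\to0$. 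Hence $\sigma_{\pi_0,\kappa}(\{\lambda\})=0$ for every $\lambda$ and every $\kappa\in V_S(\bfu)$, i.e.\ $\bfu$ satisfies the Veech condition for DISP; Theorem~\ref{tA} then gives $\bfu\perp\mathscr C_{\mathrm{DISP}}$, so the Sarnak and Veech conditions both hold, and in particular coincide, for $\cf=\mathrm{DISP}$.

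\textbf{Step 2: averaged Chowla.} For~\eqref{avch1}: genericity gives $\lim_k\tfrac1{N_k}\sum_{n\le N_k}\bfu(n)\overline{\bfu(n+h)}=\overline{\widehat\sigma_{\pi_0}(h)}$, and Wiener's lemma yields $\tfrac1H\sum_{h\le H}|\widehat\sigma_{\pi_0}(h)|^{2}\to\sum_\lambda\sigma_{\pi_0}(\{\lambda\})^{2}=0$ by Step~1, whence $\tfrac1H\sum_{h\le H}|\widehat\sigma_{\pi_0}(h)|\to0$ by Cauchy--Schwarz; this is~\eqref{avch1} (and also the case $r=1,\ \bfv_1=\overline{\bfu}$ of~\eqref{avch2}). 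For~\eqref{avch2}: passing to a subsequence of $(N_k)$ along which $(\bfu,\bfv_1,\dots,\bfv_r)$ is jointly generic (harmless for $\bfu$-genericity), one gets a joining $\rho$ of $\kappa$ with visible measures $\kappa_i$ of the $\bfv_i$ with $\lim_k\tfrac1{N_k}\big|\sum_{n\le N_k}\bfu(n)\prod_i\bfv_i(n+h_i)\big|=\big|\int\pi_0\otimes\prod_i(\pi_0\circ S^{h_i})\,d\rho\big|$; writing $Q(H)$ for the average of this over $\vec h\in\{1,\dots,H\}^{r}$, Cauchy--Schwarz over $\vec h$, expansion of $|{\cdot}|^{2}$ against $\rho\otimes\rho$, and Birkhoff's theorem for $(X_{\bfv_i}^{2},\kappa_i\otimes\kappa_i,S\times S)$ give
\[
\limsup_{H}Q(H)^{2}\le\iint\pi_0(x)\overline{\pi_0(x')}\prod_{i=1}^{r}\Psi_i(y_i,y_i')\,d\rho\,d\rho,\qquad\Psi_i:=\lim_{H}\tfrac1H\sum_{h<H}(\pi_0\otimes\overline{\pi_0})\circ(S\times S)^{h}.
\]
Since $\Psi_i=\EE^{\kappa_i\otimes\kappa_i}[\pi_0\otimes\overline{\pi_0}\mid\mathcal I(S\times S)]$, and after disintegrating $\kappa_i$ over its invariant factor each fibrewise eigenfunction, extended by $0$, is an honest eigenfunction, we may write $\Psi_i=\sum_\lambda\widetilde G^{(i)}_\lambda\otimes\overline{\widetilde G^{(i)}_\lambda}$ with $\widetilde G^{(i)}_\lambda$ eigenfunctions of $(X_{\bfv_i},\kappa_i,S)$ of constant eigenvalue; the right-hand side then equals $\sum_{\vec\lambda}\big|\int\pi_0(x)\prod_i\widetilde G^{(i)}_{\lambda_i}(y_i)\,d\rho\big|^{2}$. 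For fixed $\vec\lambda$, the map $(x,\vec y)\mapsto\prod_i\widetilde G^{(i)}_{\lambda_i}(y_i)$ is an eigenfunction of $(\rho,S\times\cdots\times S)$; conditioning onto the $S$-invariant $X_\bfu$-coordinate $\sigma$-algebra turns it into an eigenfunction of $(\kappa,S)$, which is orthogonal to $\pi_0$ by Step~1 — so every term vanishes and $Q(H)\to0$.

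\textbf{Main obstacle.} The crux is the passage $D(H)\to0$ inside Step~1: converting the strong $\bfu$-MOMO property of circle rotations — a statement about sums of $\bfu$ over \emph{arbitrary} blocks with growing lengths, twisted by characters — into smallness of the fixed-length \emph{sliding-window} twisted averages that actually compute the atoms of $\sigma_{\pi_0,\kappa}$; this is exactly where the full strength of~\eqref{avch} is needed rather than mere orthogonality to rotations (for which $D(H)$ need not tend to $0$), and where the non-ergodicity of Furstenberg systems has to be accommodated. The only other delicate point, in Step~2, is representing $\Psi_i$ as a convergent sum of tensors of honest (constant-eigenvalue) eigenfunctions when $\kappa_i$ is not ergodic; this is routine via the ergodic decomposition, since the coefficient functions are invariant and hence keep the eigenvalue constant.
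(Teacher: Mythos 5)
Your approach diverges substantially from the paper's. To obtain the Veech condition the paper invokes the universal model fact~\eqref{modeldsc} (every discrete spectrum automorphism is a factor of some $R_\alpha\times{\rm Id}_{[0,1]}$), upgrades~\eqref{avch} to strong $\bfu$-MOMO of $R_\alpha\times{\rm Id}_{[0,1]}$ via Lemma~\ref{hasmomo}, and then applies the lifting lemma (Proposition~\ref{p:momogen}) to manufacture a quasi-generic orbit-piece sequence along which strong MOMO kills the relevant joining integral. You instead work purely spectrally: atoms of $\sigma_{\pi_0,\kappa}$ are computed by Wiener's lemma and dominated by a sliding-window twisted average $D(H)$, which you try to kill directly from~\eqref{avch}. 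For~\eqref{avch2} the paper gives a self-contained combinatorial deduction from~\eqref{avch1} in Appendix~\ref{a:drugi}, whereas you produce a joining, apply Cauchy--Schwarz and the mean ergodic theorem to reduce to the projection $\Psi_i=\EE^{\kappa_i\otimes\kappa_i}[\pi_0\otimes\overline{\pi_0}\mid\mathcal I(S\times S)]$, expand it in constant-eigenvalue eigenfunctions, and kill each term by the Veech condition. Your route avoids Proposition~\ref{p:momogen} entirely, which is structurally appealing; the price is that you must re-prove by hand the passage from strong MOMO to a short-interval statement, which the lifting lemma does once and for all.

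That passage is exactly where the gap sits, and it is a real one: you cannot apply the strong $\bfu$-MOMO property of $R_\alpha$ to the fixed-length block partition $b_k=r+kH$, since there $b_{k+1}-b_k=H$ does not tend to infinity, and the claimed ``diagonalisation over stages with growing block-lengths and varying phases'' is not yet an argument. What does work is a contrapositive/concatenation construction: if $D(H_j)\ge\varepsilon_0$ along $H_j\to\infty$, one chooses successive windows $[L_{j-1},L_j]$ with $L_j\gg L_{j-1}$, exploits the $\limsup_N$ to push the mass of the defect into $[L_{j-1},L_j]$, pigeonholes over the $H_j$ residues to extract a single bad phase $r_j$, and glues the resulting phase-$r_j$, length-$H_j$ block partitions (matching boundaries up to one negligible block per stage) into a single $(b_k)$ with $b_{k+1}-b_k\to\infty$ that violates strong MOMO for $R_\alpha$. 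This is elementary, but it is a several-step construction with bookkeeping of positions and phases, not a diagonalization, and it has to be written out; as it stands, the decisive implication in Step~1 is asserted rather than proved. Secondarily, in Step~2 your dismissal of the non-ergodic issue for $\Psi_i$ is too quick: the phrase ``the coefficient functions are invariant and hence keep the eigenvalue constant'' does not address the actual problems, namely that for a given $\lambda$ the fibrewise eigenfunction must be set to $0$ on ergodic components where $\lambda$ is not an eigenvalue, and that one needs a measurable selection of the fibrewise eigenfunctions (only defined up to a fibrewise phase). These are fixable and classical, but since the $\kappa_i$ are Furstenberg systems of completely arbitrary bounded $\bfv_i$, they deserve more than a footnote.
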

Property~\eqref{avch2}, called the {\em averaged Chowla property} (cf.\ \cite{Ma-Ra-Ta}), follows from \eqref{avch1} -- this will be shown in Appendix~\ref{a:drugi}.  For an alternative approach to obtain the assertions of the above corollary, see the method in~\cite{FrPol}: for \eqref{avch1} cf.\ Thm.~4.1 and Prop.\ 4.3 \cite{FrPol}, and for \eqref{avch2} cf.\ Thm.~2.1 and Prop.\ 5.1 therein.

\begin{Cor}\label{c:averagedCh2}
Let $\bfu\colon\N\to\C$ be a multiplicative function bounded by~1. If, for each Dirichlet character $\chi$, $\bfu\cdot\chi$ satisfies the short interval behaviour~\eqref{ortmsvf}~\footnote{This is equivalent to saying that along arithmetic sequences the averages on a typical short interval vanish.} then $\bfu$ satisfies the averaged Chowla property~\eqref{avch2} (along each sequence $(N_k)$ for which $\bfu$ is generic).
\end{Cor}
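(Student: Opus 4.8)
The plan is to deduce~\eqref{avch2} from Corollary~\ref{c:averagedCh}. Since $\bfu$ is bounded by~$1$, it is enough to verify the hypothesis~\eqref{avch} of that corollary, namely that \emph{every circle rotation satisfies the strong $\bfu$-MOMO property}: the averaged $2$-Chowla property~\eqref{avch1} then follows at once, and~\eqref{avch2} is deduced from~\eqref{avch1} in Appendix~\ref{a:drugi}. To check~\eqref{avch} I would first unwind it by harmonic analysis on $\T$. Fix $f\in C(\T)$ and an increasing sequence $(b_k)$ with $b_{k+1}-b_k\to\infty$; by Remark~\ref{r:strongMOMOsubsequence} one may also assume $(b_{k+1}-b_k)/b_k\to0$, and approximating $f$ uniformly by a trigonometric polynomial $P$ (the error contributing at most $\|f-P\|_{C(\T)}$ to the expression in Definition~\ref{def:strongMOMO}) reduces matters to the case $f(x)=e^{2\pi i x}$. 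Since $f\circ T^n(x)=e^{2\pi i x}e^{2\pi i n\alpha}$ for the rotation $T\colon x\mapsto x+\alpha$, and $\alpha$ is arbitrary, the goal becomes
\[
\lim_{K\to\infty}\frac1{b_K}\sum_{k<K}\sup_{\beta\in\T}\Bigl|\sum_{b_k\le n<b_{k+1}}\bfu(n)e^{2\pi i n\beta}\Bigr|=0
\]
for every such $(b_k)$ (the $\sup$ over $\beta$ is a convenient strengthening of what is actually needed).

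Multiplicativity of $\bfu$ enters through a non-pretentiousness dichotomy. By Corollary~\ref{c:ID}, the assumption that $\bfu\cdot\chi$ satisfies~\eqref{ortmsvf} for every Dirichlet character $\chi$ says precisely that the averages of $\bfu\cdot\chi$ over typical short intervals vanish. A pure Archimedean character $n\mapsto n^{it}$ has short-interval averages of modulus bounded away from $0$, so $\bfu\cdot\chi$ cannot be pretentious to $n^{it}$ for any $t\in\R$, i.e.\ $\mathbb{D}(\bfu\chi,n^{it})=\infty$; since $\mathbb{D}(\bfu\chi,n^{it})=\mathbb{D}(\bfu,\overline{\chi}(n)n^{it})$ and $\chi$ ranges over all Dirichlet characters, $\bfu$ is non-pretentious to $\psi(n)n^{it}$ for \emph{every} Dirichlet character $\psi$ and \emph{every} $t\in\R$. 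This is exactly the hypothesis under which the Matom\"aki--Radziwi\l\l--Tao analysis of bounded multiplicative functions in short intervals twisted by a linear phase (\cite{Ma-Ra,Ma-Ra-Ta}) provides the cancellation
\[
\sup_{\beta\in\T}\ \frac1N\sum_{n\le N}\Bigl|\frac1H\sum_{h\le H}\bfu(n+h)e^{2\pi i(n+h)\beta}\Bigr|\ \longrightarrow\ 0\qquad(H\to\infty,\ H=\mathrm{o}(N)).
\]

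It remains to promote this short-interval cancellation to the MOMO estimate displayed above, which is a routine block decomposition: partition each window $[b_k,b_{k+1})$ into consecutive sub-windows of a length $H_k\to\infty$ growing slowly relative to $b_{k+1}-b_k$, apply the uniform cancellation on all but a negligible proportion of the sub-windows, and sum over $k$, using $b_{k+1}-b_k=\mathrm{o}(b_k)$ for the remaining boundary terms. This establishes~\eqref{avch}, and Corollary~\ref{c:averagedCh} then completes the proof (re-proving, along the way, the Veech condition for $\cf=\mathrm{DISP}$).

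The principal obstacle is the second step: passing from the individual non-pretentiousness statements to the cancellation \emph{uniform in the frequency $\beta$}. The major arcs $\beta\approx a/q$ are absorbed by the hypothesis applied to $\bfu\cdot\chi$ for $\chi$ of modulus~$q$, but the minor-arc range genuinely requires the circle-method input of~\cite{Ma-Ra-Ta}. A secondary, bookkeeping, point is to reconcile the quantifier structure of~\eqref{ortmsvf} (stated along subsequences that define Furstenberg systems) with the ``all~$N$'' formulation used above; this is handled by a standard subsequence/compactness argument of the kind underlying Proposition~\ref{p:mo1}.
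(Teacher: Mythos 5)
Your reduction to the hypothesis~\eqref{avch} of Corollary~\ref{c:averagedCh} is the same first step as the paper, but from there you diverge, and the divergence runs into a known open problem. You propose to establish
\[
\lim_{K\to\infty}\frac1{b_K}\sum_{k<K}\sup_{\beta\in\T}\Bigl|\sum_{b_k\le n<b_{k+1}}\bfu(n)e^{2\pi i n\beta}\Bigr|=0,
\]
describing the $\sup$ inside the average over windows as ``a convenient strengthening of what is actually needed.'' But that displayed condition is exactly~\eqref{momo28}, the strong $\bfu$-MOMO property of the unipotent system $(x,y)\mapsto(x,x+y)$, which Section~\ref{s:FSMOMO} and Remark~\ref{r:remark56} of the paper single out as an \emph{open problem} (even for $\bfu=\lio$). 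The Matom\"aki--Radziwi\l\l--Tao input you invoke gives the version with $\sup_\beta$ \emph{outside} the average over $n$, and the paper explicitly comments on this: ``so the sup has changed the place!'' Your step~4, the ``routine block decomposition,'' is precisely an attempt to move the $\sup$ from outside to inside; it is not routine and is not known to be possible. So the proposal as written does not go through.

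There is a viable salvage hiding here: what Corollary~\ref{c:averagedCh} actually requires is strong $\bfu$-MOMO for each \emph{individual} rotation $R_\alpha$, i.e.\ the fixed-frequency statement with no $\sup$ at all; for that, your step~4 would stand a chance. Even so, one would then need a pointwise-in-$\beta$ short-interval cancellation for $\bfu\cdot e(\beta\cdot)$ derived from the qualitative hypothesis~\eqref{ortmsvf} applied to $\bfu\chi$, plus care with the subsequence structure, and this is no longer a light application of~\cite{Ma-Ra-Ta}. The paper sidesteps all of this: it proves the MOMO property for irrational rotations ergodically, using the AOP property of totally ergodic rotations from~\cite{Ab-Le-Ru} together with the implication ``AOP (for multiplicative $\bfu$ with the short-interval behaviour) $\Rightarrow$ strong $\bfu$-MOMO'' from~\cite{Ab-Ku-Le-Ru2}; this is an application of the DDKBSZ/K\'atai criterion rather than of the circle method, and it works directly from the qualitative hypothesis. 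The paper flags this as the point of the corollary: it gives an \emph{ergodic} proof of the averaged Chowla property, deliberately avoiding the quantitative Fourier-uniformity machinery you are reaching for.
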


The above result with  $(N_k)$ being all positive integers has been proved by Matom\"aki, Radziwi\l\l \ and Tao in~\cite{Ma-Ra-Ta}. Note that to obtain Corollary \ref{c:averagedCh2} it is enough to show that irrational rotations satisfy the strong $\bfu$-MOMO property. Via the DDKBSZ criterion, this follows from the ergodic property AOP introduced and proved to hold for totally ergodic rotations in \cite{Ab-Le-Ru}.  Thus, we obtain an ergodic proof of the averaged Chowla property for each sequence $(N_k)$ as above for all multiplicative $\bfu$ enjoying the special short interval behavior. In particular, assuming that $\bfu$ is generic, we get a non-quantitative version of \cite{Ma-Ra-Ta}.

In Section~\ref{s:momo}, we prove (see Theorem~\ref{t:thmC}) that for each $\bfu:\N\to\C$ taking finitely many values, if it satisfies the Sarnak property for the class $\mathscr{C}_{\rm ZE}$, then no positive entropy system satisfies the strong $\bfu$-MOMO property (this was previously known for the Liouville function assuming the Chowla conjecture \cite{Ab-Ku-Le-Ru2}).

\section{Characteristic classes}\label{s:characteristic}

\subsection{Definition, examples, basic properties}
Recall that a class $\cf$ of measure-theoretic dynamical systems is characteristic if it is closed under taking isomorphisms, factors and (countable) joinings. Recall also the following classical result on such classes (see e.g. \cite{Ru}).

\begin{Prop}\label{p:najwf}  Given a characteristic class $\cf$,
each automorphism $R$ on $(Z,\mathcal{D},\kappa)$ has a largest $\cf$-factor, denoted by $\mathcal{D}_{\cf}$.
\end{Prop}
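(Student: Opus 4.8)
The plan is to realize the largest $\cf$-factor as the join of a suitably chosen countable family of $\cf$-factors, using that $L^2(Z,\mathcal{D},\kappa)$ is separable (since $(Z,\mathcal{D},\kappa)$ is standard) together with the stability of $\cf$ under countable joinings. The family of $\cf$-factors of $R$ is non-empty (it contains the trivial $\sigma$-algebra, $\cf$ being closed under factors and containing the one-point system), but may be uncountable; the point is that separability lets us detect its supremum by a single real-valued functional.

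First I would fix a sequence $(f_k)_{k\ge1}$ dense in the unit ball of $L^2(Z,\mathcal{D},\kappa)$ and, for every $R$-invariant sub-$\sigma$-algebra $\mathcal{A}\subset\mathcal{D}$, set
\[
d(\mathcal{A}):=\sum_{k\ge1}2^{-k}\,\big\|\EE(f_k\mid\mathcal{A})\big\|_{L^2(\kappa)}^2 .
\]
By the contractivity and tower property of conditional expectation, $d$ is monotone: $\mathcal{A}\subset\mathcal{B}$ implies $d(\mathcal{A})\le d(\mathcal{B})$; and if moreover $d(\mathcal{A})=d(\mathcal{B})$ then $\EE(f_k\mid\mathcal{A})=\EE(f_k\mid\mathcal{B})$ for every $k$, whence, by density of $(f_k)$, $\mathcal{A}=\mathcal{B}$ mod $\kappa$. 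Let $s:=\sup\{d(\mathcal{A}):\mathcal{A}\text{ is an }\cf\text{-factor of }R\}$ and choose $\cf$-factors $\mathcal{A}_n$ with $d(\mathcal{A}_n)\to s$.

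Next I would put $\mathcal{D}_\cf:=\bigvee_{n\ge1}\mathcal{A}_n$ and argue that this is again an $\cf$-factor. The system $(Z,\mathcal{D}_\cf,\kappa,R)$ is canonically isomorphic to the joining of the countable family of quotient systems $(Z,\mathcal{A}_n,\kappa,R)$ inside $(Z,\mathcal{D},\kappa,R)$: the diagonal map $z\mapsto(z,z,\dots)$ pushes $\kappa$ forward to a joining of the $(Z,\mathcal{A}_n,\kappa,R)$, and the $\sigma$-algebra it generates on $Z$ is exactly $\bigvee_n\mathcal{A}_n$. Since each $(Z,\mathcal{A}_n,\kappa,R)$ lies in $\cf$ and $\cf$ is closed under countable joinings, $(Z,\mathcal{D}_\cf,\kappa,R)\in\cf$, so $\mathcal{D}_\cf$ is an $\cf$-factor; by monotonicity $d(\mathcal{D}_\cf)\ge d(\mathcal{A}_n)$ for all $n$, hence $d(\mathcal{D}_\cf)=s$. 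Finally, for maximality, let $\mathcal{A}$ be any $\cf$-factor of $R$. Then $\mathcal{D}_\cf\vee\mathcal{A}$ is again an $\cf$-factor (a joining of the two $\cf$-systems $(Z,\mathcal{D}_\cf,\kappa,R)$ and $(Z,\mathcal{A},\kappa,R)$, realized as above and in $\cf$ by closure under joinings), so $s=d(\mathcal{D}_\cf)\le d(\mathcal{D}_\cf\vee\mathcal{A})\le s$; equality and the injectivity property of $d$ give $\mathcal{D}_\cf\vee\mathcal{A}=\mathcal{D}_\cf$ mod $\kappa$, i.e.\ $\mathcal{A}\subset\mathcal{D}_\cf$. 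Thus $\mathcal{D}_\cf$ is an $\cf$-factor containing every $\cf$-factor.

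The step I expect to demand the most care is the identification — modulo $\kappa$-null sets — of the join $\bigvee_n\mathcal{A}_n$ of factor $\sigma$-algebras with the corresponding joining of the quotient systems in the sense of Definition~\ref{def:charK}, so that "closed under countable joinings'' is literally applicable. This is precisely where standardness of $(Z,\mathcal{D},\kappa)$ enters (existence of regular conditional measures, the correspondence between complete $R$-invariant sub-$\sigma$-algebras and factor maps onto standard probability spaces): one must verify that the diagonal construction yields a bona fide joining, with the correct marginal on each coordinate, and that the $\sigma$-algebra it generates is all of $\bigvee_n\mathcal{A}_n$ rather than a proper subalgebra. Once this bookkeeping is in place, everything else is the soft maximization argument above.
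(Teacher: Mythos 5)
Your argument is correct and is the standard one: the paper itself gives no proof, simply citing \cite{Ru}, where precisely this kind of $L^2$-separability maximization (pick a strictly monotone functional on factor $\sigma$-algebras, take a maximizing sequence, realize its join as a countable joining of $\cf$-systems) is how the existence of a largest $\cf$-factor is established. One small terminological nit: what you call the ``injectivity'' of $d$ is really strict monotonicity along inclusions ($\mathcal{A}\subsetneq\mathcal{B}$ mod $\kappa$ implies $d(\mathcal{A})<d(\mathcal{B})$), which is exactly what you prove and all you use.
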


The following result whose proof is based on a fundamental non-disjointness lemma from \cite{Le-Pa-Th} will be crucial for us:

\begin{Prop}[\cite{Ru}]
\label{p:sated}
Let $(X,\cb,\nu,T)$ be a measure-theoretic dynamical system in the characteristic class $\cf$, and let $(Z,\cd,\kappa,R)$ be any measure-theoretic dynamical system. Then any joining of $R$ and $T$ is relatively independent over the largest $\cf$-factor $\cd_{\cf}$ of $R$.
That is: if $g\in L^2(Z,\kappa)$ is such that $\EE^{\kappa}[g|\cd_{\cf}]=0$, and if $\rho$ is a joining of $T$ and $R$, then for any $f\in L^2(X,\nu)$ we have
\beq\label{relind1}
\EE^\rho(f\otimes g)=0.\eeq
\end{Prop}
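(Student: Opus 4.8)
The plan is to reduce the statement to a standard fact about joinings and characteristic classes, invoking the non-disjointness lemma of Lema\'nczyk--Parreau--Thouvenot \cite{Le-Pa-Th}. First I would recall the setup: we have a joining $\rho$ of $(X,\cb,\nu,T)\in\cf$ and $(Z,\cd,\kappa,R)$, and a function $g\in L^2(Z,\kappa)$ with $\EE^{\kappa}[g|\cd_{\cf}]=0$; we must show $\int_{X\times Z}(f\ot g)\,d\rho=0$ for every $f\in L^2(X,\nu)$. Equivalently, writing $\tilde g := \EE^{\rho}[g\mid X]\in L^2(X,\nu)$ (abusing notation to view $X$ as a factor of $X\times Z$ under $\rho$), it suffices to show $\tilde g=0$ in $L^2(X,\nu)$, since $\EE^{\rho}(f\ot g)=\int_X f\cdot\tilde g\,d\nu$.

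Next I would set up the dual argument via the Hilbert space of the joining. Consider the Markov operator $\Phi\colon L^2(X,\nu)\to L^2(Z,\kappa)$ associated with $\rho$, i.e.\ $\langle \Phi f, g\rangle_{L^2(\kappa)} = \EE^{\rho}(f\ot \bar g)$ (or its adjoint, depending on convention). Because $\rho$ is $T\times R$-invariant, $\Phi$ intertwines the Koopman operators $U_T$ and $U_R$. The image $\Phi(L^2(X,\nu))$ is then a closed $U_R$-invariant subspace of $L^2(Z,\kappa)$ whose ``source'' is a system in the characteristic class $\cf$. The heart of the matter is the claim that $\Phi(L^2(X,\nu))\subset L^2(Z,\cd_{\cf},\kappa)$: every function in $L^2(Z,\kappa)$ that is, in a Markov-operator sense, in the range of a $\cf$-system must be measurable with respect to the largest $\cf$-factor. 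Granting this, $g\perp L^2(\cd_{\cf})$ forces $\langle \Phi f, g\rangle =0$, i.e.\ $\EE^{\rho}(f\ot g)=0$, which is exactly \eqref{relind1}.

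For the key claim I would argue as follows. Let $\cd' := \cd_{\cf}$ and decompose $g = g_{\cf} + g^{\perp}$ with $g_{\cf}=\EE^{\kappa}[g\mid\cd']$; by hypothesis $g=g^{\perp}$, so it is enough to show that $L^2(X,\nu)$ is $\Phi$-orthogonal to $(L^2(\cd'))^{\perp}$, i.e.\ that $\Phi f$ has zero component orthogonal to $\cd'$ for all $f$. Suppose not: then there is $f\in L^2(X,\nu)$ and a nonzero $h\in (L^2(\cd'))^{\perp}$ (WLOG $h$ in the closed $U_R$-invariant subspace generated by the orthogonal-to-$\cd'$ part of $\Phi(L^2(\nu))$) with $\langle \Phi f, h\rangle\neq0$. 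One then manufactures a joining between $(X,\nu,T)$ and the sub-system of $(Z,\kappa,R)$ generated by $h$ that is not relatively independent over $\cd'$ — equivalently, the factor of $(Z,\kappa,R)$ generated by $\{U_R^n h : n\in\Z\}$ together with $\cd'$ is not relatively independent from $\cd'$ yet is non-disjoint from a $\cf$-system. The non-disjointness lemma of \cite{Le-Pa-Th} says that if a system is disjoint from all systems in a characteristic class $\cf$ relatively over its largest $\cf$-factor — more precisely, that the orthocomplement of the largest $\cf$-factor carries no nontrivial joining-induced correlation with any $\cf$-system — we reach a contradiction. Concretely, the orthocomplement $L^2(\kappa)\ominus L^2(\cd')$ supports a canonical "$\cf$-free" structure, and any Markov intertwiner from a $\cf$-system into $L^2(\kappa)$ must land in $L^2(\cd')$; otherwise composing appropriately produces a $\cf$-system appearing as a factor strictly larger than $\cd'$ inside $\cd$, contradicting maximality of $\cd_{\cf}$ (here one uses that $\cf$ is closed under joinings and factors to see that the factor generated by $\cd'$ and the image of the intertwiner is again a $\cf$-factor).

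The main obstacle is making the last paragraph rigorous: one must be careful that the "sub-system generated by $h$" together with $\cd_{\cf}$ really forms a $\cf$-factor, which requires the Markov-operator image of a $\cf$-system to itself be (generated by) a $\cf$-system — this is where closure of $\cf$ under joinings is used, since the relevant system is a joining-type construction over $\cd_{\cf}$ — and then the maximality of $\cd_{\cf}$ delivers the contradiction. Equivalently, and perhaps more cleanly, one invokes \cite{Le-Pa-Th} in its packaged form: a measure-theoretic system disjoint from all systems in $\cf$ (which the orthocomplement system is, relatively over $\cd_{\cf}$) cannot be non-trivially joined to, hence cannot correlate with, any $\cf$-system; this is precisely the content needed and is, as the paper notes, the fundamental non-disjointness input. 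The remaining steps — the intertwining property of $\Phi$, the identification $\EE^{\rho}(f\ot g)=\langle\Phi f,\bar g\rangle$, and the ergodic-decomposition bookkeeping — are routine.
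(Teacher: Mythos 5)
The paper does not actually prove Proposition~\ref{p:sated}: it states it with a citation to \cite{Ru}, adding only that the proof rests on a non-disjointness lemma of \cite{Le-Pa-Th}. So there is no in-paper argument to compare against, and your proposal must be judged on its own.

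Your high-level strategy is the right one: introduce the Markov operator $\Phi$ attached to $\rho$, reduce \eqref{relind1} to the claim that $\Phi\bigl(L^2(X,\nu)\bigr)\subset L^2(Z,\cd_{\cf},\kappa)$, and conclude by maximality of $\cd_{\cf}$ together with the orthogonality of $g$. You also correctly identify \cite{Le-Pa-Th} as the key ingredient, which matches the paper's own attribution. However, the crux of the matter — why the sub-$\sigma$-algebra $\ca\subset\cd$ generated by $\{\EE^{\rho}[f\otimes 1\mid Z]:f\in L^\infty(X)\}$ yields a system in $\cf$ — is precisely the step you flag as ``the main obstacle'' and then do not carry out. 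Asserting that $\ca\vee\cd_\cf$ ``is again a $\cf$-factor'' because ``the relevant system is a joining-type construction over $\cd_{\cf}$'' is not an argument: the image of a Markov operator is a subspace, not a $\sigma$-algebra, and nothing a priori realizes $\ca$ as a factor of a joining of $\cf$-systems. And the ``cleaner'' alternative you offer — that the orthocomplement of $\cd_{\cf}$ is relatively disjoint over $\cd_\cf$ from all $\cf$-systems, so it cannot correlate with $T$ — simply restates the proposition being proved; invoking \cite{Le-Pa-Th} in that packaged form would make the proof circular.

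The missing idea is the infinite relatively-independent self-joining over $Z$. Take countably many copies of $X$ and glue them to $Z$ so that they are conditionally i.i.d.\ given $Z$, obtaining a measure $\tilde\rho$ on $X^{\N}\times Z$; the $X$-marginal on $X^{\N}$ is a countable self-joining of $T$, hence a $\cf$-system. By the law of large numbers (or a reverse martingale argument), for each $f\in L^\infty(X)$ the function $\EE^\rho[f\otimes 1\mid Z]$ agrees $\tilde\rho$-a.s.\ with $\lim_N \tfrac1N\sum_{i\le N}f(x_i)$, which is measurable with respect to $\bigvee_i\cb_i$. Thus $\ca$ sits (mod null sets) inside a factor of a countable self-joining of $T$, so $(Z,\ca,\kappa|_{\ca},R)\in\cf$, whence $\ca\subset\cd_{\cf}$ by maximality. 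Then $\EE^\rho(f\otimes g)=\int_Z \EE^\rho[f\otimes 1\mid Z]\cdot g\,d\kappa=0$ since the first factor is $\cd_{\cf}$-measurable and $\EE^\kappa[g\mid\cd_{\cf}]=0$. Without this (or some equivalent) mechanism, your proposal leaves the load-bearing step unjustified.
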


{\bf \noindent Examples of characteristic classes} (some acronyms are used for those which will be used in the sequel):
\begin{itemize}
\item ALL: all automorphisms of  standard Borel probability spaces;
\item $\{\ast\}$: the identity on the one-point space;
\item ID: identity automorphisms (of all standard Borel probability spaces);
\item  DISP: discrete spectrum automorphisms;
\item RDISP: rational discrete spectrum automorphisms;
\item DISP($G$): discrete spectrum automorphisms whose group of eigenvalues is contained in fixed countable subgroup $G$ of the circle;

\item ${\rm NIL}_s$: automorphisms whose a.a.\ ergodic components are inverse limits of $s$-step nilautomorphisms. The fact that ergodic joinings of nilsystems remain  nil, see Proposition 15, page 186 in the book \cite{Ho-Krbook}, and the same holds for inverse limits (this is actually Lemma A.4 in \cite{Fr-Ho1}). Regarding factors of ergodic nilsystems, see Theorem 11 in page 230 \cite{Ho-Krbook}. Here $s\in\N$.

\item DIST: distal automorphisms are those which are given as a transfinite (indexed by ordinals smaller than a fixed countable ordinal) sequence of consecutive extensions each of which  either has relative discrete spectrum or (in case of a limit ordinal) is the corresponding inverse limit. The structural theorem Theorem~6.17 together with the concluding remark (for $\Z$-actions) on page 139 \cite{Fu0} tell us that each system has a largest distal factor, hence DIST is closed under countable joinings. In Lemma~\ref{l:re3}, we note that an automorphism is distal if and only if a.a.\ its ergodic components are distal. To see that this class is closed under taking factors, let us first recall that this fact holds for ergodic automorphisms (see Theorem 10.18 \cite{Gl}). If now $\ca\subset\cd$ is a factor of a distal automorphism $(Z,\cd,\kappa,R)$ then $\ca$ (as an $R$-invariant $\sigma$-algebra) is also a factor of a.e.\ of its ergodic components. So a double use of Lemma~\ref{l:re3} together with Theorem 10.18 from \cite{Gl} gives that $\ca$ is also distal.
\item ZE: zero entropy automorphisms;
\item RIG$_{(q_n)}$: automorphisms with a fixed sequence $(q_n)$  of rigidity;
\item multipliers $\mathscr{M}(\mathscr{D}^{\perp})$ of a class $\mathscr{D}^\perp$  ($\mathscr{D}$ is {\bf any} class of automorphisms and by $\mathscr{D}^{\perp}$ we mean the set of automorphisms disjoint from all systems from  $\mathscr{D}$, and by $\mathscr{M}(\mathscr{D}^{\perp})$ we mean the set of systems whose all joinings with any element of $\mathscr{D}^{\perp}$ remain in $\mathscr{D}^{\perp}$); interesting classes of multipliers arise e.g.\ for $\mathscr{D}$=all weakly mixing (cf.\ Proposition 5.1 in \cite{Le-Pa}) or all mixing automorphisms; see \cite{Gl0}, \cite{Le-Pa}.
 \item the class of factors of all infinite self-joinings of a fixed automorphism $R$ (the smallest characteristic class containing $R$); especially in case of MSJ and simple automorphisms (cf.\ \cite{Gl}, Chapter~12). Characteristic classes of such type were used in~\cite{Le-Ri-Ru}.
\end{itemize}

Note also that the intersection of any family of characteristic classes yields again a characteristic class.  In Section~\ref{s:onec}, we will show that each characteristic class $\cf$ determines another characteristic class $\cf_{\rm ec}$ consisting of those automorphisms whose ergodic components are in $\cf$.

\subsection{The smallest nontrivial and the largest proper characteristic class}
An obvious observation has been made already in the introduction that the family ALL of all automorphisms is the largest characteristic class, while the one-element $\{\ast\}$ family (which is  the one-point space automorphism) is the smallest characteristic class. It is more interesting however that the smallest non-trivial and the largest proper characteristic classes exist.

\begin{Prop}\label{p:smallest}
ID is the smallest non-trivial characteristic class.
\end{Prop}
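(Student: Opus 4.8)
The plan is to show that any nontrivial characteristic class $\cf$ (i.e.\ one containing a system with more than one point) must contain all identity automorphisms. First I would recall that a characteristic class is closed under factors, so it suffices to extract identity systems as factors of the one ``large'' system $(Z,\cd,\kappa,R)\in\cf$ that we assume is not reduced to a point; and it is closed under countable joinings, so once we exhibit a single nontrivial identity as a factor, we can bootstrap to larger and larger (eventually arbitrary standard Borel) identity systems by taking joinings of countably many copies. Since $\cf$ is closed under isomorphism, it is enough to produce, as a factor of some self-joining of $R$, the identity on a two-point space with a nontrivial partition; a countable power of that identity realizes the identity on $\{0,1\}^{\N}$, which is (isomorphic to) the identity on any standard Borel probability space, because all nontrivial identity systems on standard probability spaces that are non-atomic are measure-theoretically isomorphic, and atomic ones are factors of these.

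The heart of the matter is thus to produce a nontrivial \emph{invariant} set (equivalently a nontrivial factor on which $R$ acts as the identity) inside some self-joining of $R$. Here I would invoke the fundamental non-disjointness idea already cited in the paper (the Lemma of Lema\'nczyk--Parreau--Thouvenot behind Proposition~\ref{p:sated}, or more elementarily the ergodic decomposition). The cleanest route: look at the ergodic decomposition $\kappa=\int \kappa_\omega\,d\bp(\omega)$ of $(Z,\cd,\kappa,R)$. If the space of ergodic components is nontrivial (i.e.\ $R$ is not ergodic), then the $\sigma$-algebra $\cj$ of $R$-invariant sets is a nontrivial factor on which $R$ acts as the identity, and we are done. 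If instead $R$ is ergodic and nontrivial, consider the $2$-fold Cartesian self-joining $(Z\times Z,\kappa\otimes\kappa,R\times R)$, which belongs to $\cf$ (countable joinings), and inside it the diagonal-type sub-$\sigma$-algebra, or better: since $R$ is ergodic but $\kappa$ is not a point mass, $R$ has a nontrivial spectrum; take an eigenvalue. If $R$ has a nontrivial eigenvalue $\lambda$ with eigenfunction $g$, then in $Z\times Z$ the function $g(z_1)\overline{g(z_2)}$ is $R\times R$-invariant and nonconstant, giving a nontrivial identity factor of a system in $\cf$. If $R$ has no nontrivial eigenvalues it is weakly mixing, hence $R\times R$ is ergodic and we would instead pass to a countable self-joining realizing a weakly mixing system with a nontrivial eigenvalue — but in fact the standard fact is that any ergodic nontrivial system has a nontrivial \emph{Kronecker} (= maximal discrete spectrum) factor \emph{or} is weakly mixing; for weakly mixing $R$ one uses that the off-diagonal self-joinings or the infinite product still allows one to locate an identity factor via the ergodic decomposition of a \emph{relatively independent} joining over a common factor.

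A more uniform and robust argument, which I would prefer to present, avoids the eigenvalue case-split: take the infinite self-joining given by the relatively independent product of countably many copies of $R$ over its (trivial, if $R$ ergodic) invariant factor — this is just $R^{\otimes\N}$ — no, better: use that $\cf$ contains $R$ and hence, by Proposition~\ref{p:sated} applied with $R$ itself playing both roles, analyze an off-diagonal joining $\Delta_{R^n}$ of $R$ with $R$ supported on the graph of $R^n$; the family $\{\Delta_{R^n}: n\in\Z\}$ has weak-$*$ limit points which are joinings of $R$ with $R$ lying in $\cf$, and if $R$ is rigid along some sequence $(n_k)$ then $\Delta_{R^{n_k}}\to\Delta$ (the diagonal), while in general a limit point $\rho_\infty$ is $R\times R$-invariant and the ergodic decomposition of $\rho_\infty$ over its invariant $\sigma$-algebra produces, unless $\rho_\infty$ is itself ergodic and equal to $\kappa\otimes\kappa$, a nontrivial identity factor. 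The only genuinely delicate point — and the one I expect to be the main obstacle — is the weakly mixing nontrivial case, where all finite Cartesian powers are ergodic, and one must argue that some \emph{countable} joining nonetheless has a nontrivial invariant factor: here I would use that a weakly mixing system is \emph{not} disjoint from itself (its self-joinings form an uncountable simplex containing more than $\kappa\otimes\kappa$ when $R$ is nontrivial, e.g.\ via Gaussian or via the existence of nontrivial spectral measures), and more concretely that if $(Z,\cd,\kappa,R)$ is nontrivial there is a nontrivial factor $\mathcal A\subset\cd$ that is \emph{not} weakly mixing-generating relative to something — ultimately the clean fact is: \emph{every nontrivial characteristic class is closed under joinings and contains a nontrivial system, and the Kronecker factor / compact factor extraction plus countable products forces $\mathrm{ID}$ inside}, using that the orthocomplement construction in Proposition~\ref{p:sated} with $\cf$ itself shows $\cd_\cf=\cd$, so $R\in\cf$ constrains its own joinings. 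I would spell this out by reducing, via ergodic decomposition (Lemma~\ref{l:re3}-style reasoning), to the ergodic case, then handle ergodic weakly mixing $R$ by exhibiting a two-point identity factor inside a suitable limit of off-diagonal self-joinings, and finally conclude $\mathrm{ID}\subseteq\cf$ by taking countable joinings of that two-point identity to build the identity on $\{0,1\}^{\N}$ and hence on an arbitrary standard Borel probability space.
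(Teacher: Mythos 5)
Your overall strategy is right---extract a nontrivial identity factor, then bootstrap via countable joinings to the identity on $[0,1]$---and your opening observation about reducing to the non-ergodic case matches the paper. But there is a genuine gap in the ergodic case, and in particular in the weakly mixing subcase, which your proposal openly fails to close.

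You handle ergodic $R$ by splitting on the spectrum: if $R$ has a nontrivial eigenvalue, $g\otimes\overline{g}$ gives an $R\times R$-invariant nonconstant function, which works. But when $R$ is weakly mixing, all finite Cartesian powers are ergodic, and your attempts to repair this (weak-$*$ limits of off-diagonal joinings $\Delta_{R^n}$, Gaussian self-joinings, rigidity, relative products) never converge to an actual argument; you acknowledge this yourself. In fact the weak-$*$ limit route is genuinely dangerous: for a mixing $R$, $\Delta_{R^n}\to\kappa\otimes\kappa$, which is again ergodic, so you would get nothing. The paper sidesteps the entire case analysis with a one-line convexity trick that your proposal circles around but never states: the two graph joinings $\Delta_{\rm Id}$ and $\Delta_T$ of $T$ with itself are \emph{both} ergodic (each is isomorphic to $T$) and, because $T$ is nontrivial, they are \emph{different} measures on $Y\times Y$. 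Hence any nontrivial convex combination $\lambda\Delta_{\rm Id}+(1-\lambda)\Delta_T$ is a 2-fold self-joining of $T$ (so it lies in $\cf$) that is \emph{not} ergodic, since a proper convex combination of distinct ergodic measures is never an extreme point of the simplex of invariant measures. This works uniformly---no eigenvalue or weak-mixing dichotomy is needed---and immediately hands you a non-ergodic member of $\cf$, whose invariant $\sigma$-algebra is the desired nontrivial identity factor. The rest of your bootstrap (infinite self-joining of that identity yields $([0,1],{\rm Leb},{\rm Id})$, which has every identity as a factor) is correct and essentially what the paper does.
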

\begin{proof} Let us first notice that the system $([0,1],{\rm Leb}, {\rm Id})$ has {\bf all} other identities as factors. Indeed, any standard Borel probability space is determined by a sequence $(t_i)_{i\geq 0}$ of non-negative numbers such that $\sum_{i\geq0}t_i=1$ and $t_0$ corresponds to the mass of the continuous part and $t_1,t_2,\ldots$ correspond to the masses of atoms. Then, take the corresponding partition of $[0,1]$ into intervals $I_i$ of length $t_i$ and, for each $i\geq1$, the factor map will glue points in $I_i$.

Now, notice that any non-trivial characteristic class $\cf$ contains a non-ergodic automorphism. Indeed, suppose that $T$ is ergodic, acting on a non-trivial space $(Y,\nu)$. Since $Y$ is non-trivial and so is $T$, the graph joinings $\Delta_{{\rm Id}}$ and $\Delta_T$ are ergodic and different, so any non-trivial convex combination of them yields a non-ergodic member of $\cf$.  It follows that by taking the factor $\mathcal{I}_{\nu}$ of (a.e.) $T$-invariant sets (which belongs to $\cf$), we obtain the identity on a non-trivial standard Borel probability space $(\ov{Y},\ov{\nu})$. But then the infinite Cartesian product $(\ov{Y}^{\times\infty},\ov{\nu}^{\otimes\infty})$ is also in $\cf$ and this infinite product is isomorphic to $([0,1],{\rm Leb})$, which completes the proof.
\end{proof}

In order to prove the existence of the largest characteristic (proper) class, we need to recall some results.

\begin{Th} [non-ergodic Sinai's factor theorem \cite{Ki-Ra, Ta-Ve}]\label{t:Sinai}
Assume that $R$ is an automorphism of $\zdr$ and let $\rho=\int_{X/\mathcal{I}_{\rho}}\rho_{\ov{x}}\,dm(\ov{x})$ stand for the ergodic decomposition of $\rho$. Assume that
$$
m-{\rm essinf}_{\ov{x}\in X/\mathcal{I}_\rho}h(\rho_{\ov{x}},R)\geq\alpha>0.$$
Then a Bernoulli automorphism of entropy $\alpha$ is a factor of $R$.\end{Th}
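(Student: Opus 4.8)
The plan is to reduce the statement to the classical (ergodic) Sinai factor theorem applied componentwise, the only delicate point being that the construction must depend measurably on the ergodic component. First I would write $\pi\colon Z\to Y:=Z/\mathcal{I}_{\rho}$ for the factor map associated with the $\sigma$-algebra $\mathcal{I}_{\rho}$ of $R$-invariant sets; then $R$ acts as the identity on $Y$, $\pi\circ R=\pi$, and $\rho=\int_Y\rho_{\ov{x}}\,dm(\ov{x})$ with each $\rho_{\ov{x}}$ ergodic and concentrated on $\pi^{-1}(\ov{x})$. Fix once and for all a Bernoulli automorphism $\mathbf{B}=(W,\mathcal{W},\beta,\sigma)$ of entropy exactly $\alpha$ (with a finite alphabet if $\alpha<\infty$, a countable one otherwise).

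Next I would record a gluing principle. Suppose that, for $m$-a.e.\ $\ov{x}$, we are given a factor map $\phi_{\ov{x}}\colon(Z,\mathcal{D},\rho_{\ov{x}},R)\to\mathbf{B}$ and that $(\ov{x},z)\mapsto\phi_{\ov{x}}(z)$ is jointly measurable. Then $\Phi(z):=\phi_{\pi(z)}(z)$ is a factor map from $(Z,\mathcal{D},\rho,R)$ onto $\mathbf{B}$: it is $R$-equivariant because $\pi\circ R=\pi$ gives $\Phi(Rz)=\phi_{\pi(z)}(Rz)=\sigma\big(\phi_{\pi(z)}(z)\big)=\sigma(\Phi(z))$, and for every Borel $C\subseteq W$,
\[
\Phi_*\rho(C)=\int_Y\rho_{\ov{x}}(\Phi^{-1}C)\,dm(\ov{x})=\int_Y(\phi_{\ov{x}})_*\rho_{\ov{x}}(C)\,dm(\ov{x})=\int_Y\beta(C)\,dm(\ov{x})=\beta(C),
\]
since $\rho_{\ov{x}}$ lives on $\pi^{-1}(\ov{x})$, where $\Phi$ and $\phi_{\ov{x}}$ agree. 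So it suffices to construct the family $(\phi_{\ov{x}})_{\ov{x}}$ with measurable dependence on the parameter.

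For a fixed $\ov{x}$, the map $\phi_{\ov{x}}$ is exactly what the classical Sinai factor theorem provides: $(Z,\mathcal{D},\rho_{\ov{x}},R)$ is ergodic and, by hypothesis, $h(\rho_{\ov{x}},R)\ge\alpha=h(\mathbf{B})$ for $m$-a.e.\ $\ov{x}$. This is precisely where the essential infimum assumption enters, guaranteeing that one single target entropy $\alpha$ works for almost every component (the weaker $\int h(\rho_{\ov{x}},R)\,dm\ge\alpha$ would not do, since a Bernoulli factor independent of $\mathcal{I}_{\rho}$ forces full entropy $\alpha$ on a.e.\ component). The remaining task — and the step I expect to be the main obstacle — is to make this choice measurable in $\ov{x}$, i.e.\ to prove a parametrised version of Sinai's theorem. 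One route is to revisit a proof of Sinai's theorem (Ornstein's finitely-determined/copying machinery, a Burton--Rothstein Baire-category argument, or exhibiting the set of $\mathbf{B}$-codings of $\rho_{\ov{x}}$ as a Borel set with non-empty sections in a Polish space of partitions and applying a measurable selection theorem) while keeping $\ov{x}$ under measurable control. Alternatively, and more economically, one quotes the non-ergodic/relative form of Sinai's theorem from \cite{Ki-Ra,Ta-Ve}: for an extension $(Z,R)\to(Y,S)$ there is a factor isomorphic to $(Y,S)$ times a Bernoulli shift of entropy $\beta$, with the Bernoulli part independent of the base, provided the relative entropy over $m$-a.e.\ ergodic component of $(Y,S)$ is at least $\beta$; here $S$ is the identity, its ergodic components are the points $\ov{x}$, and those relative entropies are precisely the $h(\rho_{\ov{x}},R)$, so taking $\beta=\alpha$ yields the theorem (with the bonus that the Bernoulli factor is independent of $\mathcal{I}_{\rho}$).
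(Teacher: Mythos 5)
The paper does not actually prove Theorem~\ref{t:Sinai}: it is imported from the literature, with the attribution discussion ``In \cite{Ta-Ve} (see Theorem 4.3 therein) the above result is attributed to Kieffer and Rahe \cite{Ki-Ra}, see also \cite{Se} p.2 (The non-ergodic factor theorem).'' So there is no internal proof to compare your proposal against; your second, ``more economical'' route (quote the non-ergodic form from \cite{Ki-Ra,Ta-Ve}) is exactly what the paper does and is, of course, circular as a proof of the statement itself.

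Your first route, on the other hand, is a genuine and essentially correct account of how one would prove the theorem from the ergodic Sinai factor theorem. The gluing step is sound: the map $\Phi(z)=\phi_{\pi(z)}(z)$ is $R$-equivariant since $\pi\circ R=\pi$, and $\Phi_*\rho=\beta$ by disintegration. You are also right that the Bernoulli factor you build is automatically independent of $\mathcal{I}_\rho$, and in fact this independence is forced for \emph{any} Bernoulli factor: if $\Phi_*\rho=\beta$ and $\rho=\int\rho_{\ov{x}}\,dm$, then $\beta=\int\Phi_*\rho_{\ov{x}}\,dm$ is a decomposition of the ergodic measure $\beta$ into $\sigma$-invariant measures, hence $\Phi_*\rho_{\ov{x}}=\beta$ for $m$-a.e.\ $\ov{x}$; since $h(\Phi_*\rho_{\ov{x}})\le h(\rho_{\ov{x}})$, this already shows the essential-infimum hypothesis cannot be weakened to an average. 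What you have not supplied is the hard part, namely the parametrized/measurable-selection version of Ornstein theory that makes $\ov{x}\mapsto\phi_{\ov{x}}$ jointly measurable. You name reasonable routes (Burton--Rothstein Baire-category arguments, exhibiting the set of codings as a Borel set with non-empty sections and applying Jankov--von~Neumann or Kuratowski--Ryll-Nardzewski), and this is indeed where the content of \cite{Ki-Ra} lies (``selecting universal partitions''), but as written the step is deferred, not carried out. Given that the paper itself treats the theorem as a black box, this is not a defect relative to the paper, but one should be explicit that the sketch stops short of a proof precisely at the measurable-selection step.
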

In \cite{Ta-Ve} (see Theorem 4.3 therein) the above result is attributed to Kieffer and Rahe \cite{Ki-Ra}, see also \cite{Se} p.2 (The non-ergodic factor theorem).

We also need the following well-known result (we include its proof for completeness).

\begin{Prop}\label{p:ST?}
Each automorphism $R$ is a factor of a self-joining of the (infinite entropy) Bernoulli automorphism $([0,1]^{\Z},{\rm Leb}^{\ot\Z},S)$.\end{Prop}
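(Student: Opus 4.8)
The plan is to realize an arbitrary automorphism $R$ of $\zdr$ as a factor of a self-joining of the infinite-entropy Bernoulli shift $B=([0,1]^{\Z},{\rm Leb}^{\ot\Z},S)$. The key structural observation is that $B$ has independent generating partitions into continuous pieces of arbitrary shape, so it carries, as a factor, \emph{any} i.i.d.\ process with values in a standard Borel space; in particular any stationary process can be coded inside a suitable self-joining of $B$.

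First I would fix a Borel isomorphism of $(Z,\cd,\kappa)$ with a sub-probability structure and choose a countable generating sequence of finite (or countable) partitions $\xi_1\le\xi_2\le\cdots$ for $R$, so that $\bigvee_k\xi_k=\cd$ mod $\kappa$. Let $\beta_0$ be the time-zero partition of $B$ into the (continuous) ``coordinate'' sets; then $(\beta_0,S)$ generates $B$ and the partitions $S^n\beta_0$, $n\in\Z$, are mutually independent. The point is that a single copy of $B$ already contains, as a time-zero factor, a uniform random variable on $[0,1]$ independent of its own past and future — hence, by composing with a Borel map $[0,1]\to Z$ pushing Lebesgue measure to $\kappa$, one gets the time-zero data needed to reconstruct $\kappa$, but \emph{not} in an $S$-equivariant way. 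To get equivariance one passes to a self-joining.

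The main step is the construction of the joining. Consider the product system $B^{\Z}=\prod_{j\in\Z}B$ with the diagonal shift $\widehat S$ acting simultaneously on the index $j$ and within each coordinate; this is again (isomorphic to) the infinite-entropy Bernoulli shift, and the natural ``diagonal'' measure on $B^{\Z}$ that makes $\widehat S$ act is a self-joining of $B$ (identify $B^{\Z}$ with $([0,1]^{\Z})^{\Z}=[0,1]^{\Z\times\Z}$ and use the product Lebesgue measure; the diagonal shift is an off-diagonal/graph-type self-joining of countably many copies of $B$). Inside this system, the family $\{\,$coordinate $(0,j):j\in\Z\,\}$ is an i.i.d.\ $[0,1]^{\Z}$-valued sequence indexed by $j$, shifted by $\widehat S$. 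Now build the factor map: send a point to the $Z$-valued sequence whose $j$-th entry is obtained by applying a fixed Borel map $\Phi:[0,1]^{\Z}\to Z$ to the $j$-th block of coordinates, where $\Phi$ is chosen (using that $B$ has $\kappa$ as a factor via the generating partitions $\xi_k$, coded through the independent coordinates $S^n\beta_0$) so that $\Phi$ pushes ${\rm Leb}^{\ot\Z}$ to $\kappa$ and intertwines $S$ on $B$ with $R$ on $Z$. Then the induced map on $B^{\Z}$ intertwines $\widehat S$ with $R$ and pushes the self-joining to $\kappa$; equivariance is automatic because the $j$-indexing carries the $R$-dynamics while $\Phi$ carries the identification within each fiber.

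The step I expect to be the main obstacle is the correct packaging of ``$R$ is a Borel-measurable factor of the i.i.d.\ process,'' i.e.\ producing the map $\Phi$. Concretely, one needs: (i) a measurable selection giving, for $\kappa$-a.e.\ $z$, a sequence in $[0,1]^{\Z}$ whose ``future'' determines $R^n z$ for all $n$ — this is where the generating partitions $\xi_k$ and the independence of the $S^n\beta_0$ enter, and one must be careful that the coding is genuinely shift-equivariant rather than merely measure-theoretically matching at time zero; and (ii) checking that the resulting diagonal measure on $[0,1]^{\Z\times\Z}$ is indeed a self-joining of $B$ in the precise sense (each one-dimensional marginal along a fixed $j$ is $B$, and $\widehat S$ restricted to it is conjugate to $S$). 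Both are standard in the ergodic-theory folklore — this is exactly the sort of fact used when showing Bernoulli shifts are universal for joinings — so once the bookkeeping of indices is set up, the verification is routine; I would present it by first treating the case $R=$ a finite-entropy Bernoulli shift (where $\Phi$ is transparent) and then the general case by the generating-partition argument, or alternatively by invoking that any $R$ is a factor of $R\times B$ and that $R$ itself embeds measurably into a power of $B$.
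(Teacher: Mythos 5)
Your plan hinges on constructing a map $\Phi\colon[0,1]^{\Z}\to Z$ that pushes ${\rm Leb}^{\ot\Z}$ to $\kappa$ \emph{and} intertwines $S$ with $R$. That is precisely the statement that $R$ is a factor of the Bernoulli shift $B$, and this is false in general: factors of Bernoulli automorphisms have the $K$-property (indeed are Bernoulli, by Ornstein theory), whereas $R$ can be any automorphism, e.g.\ a non-trivial zero-entropy one. So the map $\Phi$ you need for the ``main step'' does not exist, and no amount of bookkeeping will fix that. Passing to the diagonal shift $\widehat S$ on $[0,1]^{\Z\times\Z}$ does not escape the obstruction either: the $\Z$-action $(n,j)\mapsto(n+1,j+1)$ on $\Z\times\Z$ has orbits given by the diagonals, so $([0,1]^{\Z\times\Z},{\rm Leb}^{\ot\Z\times\Z},\widehat S)$ is itself (an infinite-entropy) Bernoulli system, so it still cannot have $R$ as a factor; moreover, the projections $\omega\mapsto\omega_j$ satisfy $(\widehat S\omega)_j=S\omega_{j+1}$, not $S\omega_j$, so this is not a self-joining of $B$ in the sense required (the action on the $j$-th coordinate is not $S$ on that coordinate). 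Finally, your factor map $\omega\mapsto(\Phi(\omega_j))_j$ lands in $Z^{\Z}$; for the shift to reduce to $R$ there, the image would have to be supported on $R$-orbit-compatible sequences $z_{j+1}=Rz_j$, and since the $\omega_j$ are independent this forces $\Phi$ to be essentially constant, i.e.\ $\kappa$ a point mass.

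The paper sidesteps all of this by \emph{not} trying to present $R$ as a factor of $B$. Instead it first realizes $R$ as the shift on $([0,1]^{\Z},\kappa,S)$ for a suitable shift-invariant measure $\kappa$ (which is \emph{not} ${\rm Leb}^{\ot\Z}$; this is Remark~\ref{shift realization}). Then it takes the product system $([0,1]^{\Z}\times[0,1]^{\Z},\,{\rm Leb}^{\ot\Z}\ot\kappa,\,S\times S)$, which obviously has $R$ as its second-coordinate factor, and shows that this product \emph{is} a $2$-fold self-joining of $B$. The crucial trick is the additive group structure of $\T^{\Z}\cong[0,1]^{\Z}$: the map $\psi(x,y)=x+y$ (coordinatewise mod $1$) is $S\times S$-equivariant, pushes ${\rm Leb}^{\ot\Z}\ot\kappa$ onto ${\rm Leb}^{\ot\Z}$ (an independent uniform absorbs anything), and the pair $(\pi_1,\psi)$ is a bimeasurable bijection onto $[0,1]^{\Z}\times[0,1]^{\Z}$ since $y$ can be recovered as $\psi(x,y)-x$. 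Hence ${\rm Leb}^{\ot\Z}\ot\kappa$ is isomorphic, via $(\pi_1,\psi)$, to a joining of two copies of $B$. This is a genuinely different and much more economical idea than the coding scheme you propose; your alternative remark ``$R$ is a factor of $R\times B$'' points in the right direction, but the non-trivial content — that $R\times B$ can be recast as a self-joining of $B$ alone — is exactly what the additive trick supplies, and you would still need to prove it.
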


\begin{Remark}\label{shift realization}
Before we prove the above result, let us notice that any automorphism $T$ of $\xbm$ has an isomorphic copy in the space $([0,1]^{\Z},\kappa,S)$.\footnote{The same arguments apply to $\D=\{z\in\C : |z|\leq 1\}$ instead of $[0,1]$.}
Consider first the aperiodic part of $T$ which is realized on a standard Borel space. This space is isomorphic to $[0,1]$, via a Borel isomorphism $I$. It follows that the distribution $\mu'$ of the process $(I\circ S^k)_{k\in\Z}$ yields a realization of the aperiodic part. Now, $\mu'$ takes measure zero on the set of periodic points for the shift. Moreover, the set of periodic points of $S$ can be identified with a subset of $[0,1]$, points of period~2 with a subset of $[0,1]^2$ etc., and we can easily settle an isomorphism of the fixed point subspace for $T$ with a subset of $[0,1]$, period~2 points with a subset of $[0,1]^2$, etc. Thus, it suffices to take $\kappa$ equal to the sum of $\mu'$ and the relevant atomic measures corresponding to the periodic points.
\end{Remark}
\begin{proof}[Proof of Proposition~\ref{p:ST?}] Fix any automorphism $R$ of $\zdr$ and take its isomorphic copy in the space $([0,1]^{\Z},\kappa,S)$.  Take the product space $([0,1]^{\Z}\times [0,1]^{\Z},{\rm Leb}^{\otimes\Z}\ot \kappa)$ and consider the map $\psi\colon[0,1]^{\Z}\times [0,1]^{\Z}\to[0,1]^{\Z}$ given by
$$
(x_n,y_n)\mapsto (x_n+y_n).$$
Then $\psi_\ast({\rm Leb}^{\otimes\Z}\ot \kappa)={\rm Leb}^{\otimes\Z}$ and clearly the join of the $\sigma$-algebra of the first coordinate and of $\psi^{-1}(\mathcal{B}([0,1]^{\otimes\Z}))$ is the product $\sigma$-algebra in $[0,1]^{\otimes\Z}\times [0,1]^{\otimes\Z}$. The result follows.
\end{proof}

We now prove the following.

\begin{Lemma}\label{l:dode}
Assume that $\cf$ is a characteristic class such that $\cf\setminus {\rm ZE}\neq\emptyset$. Then $\cf={\rm ALL}$.
\end{Lemma}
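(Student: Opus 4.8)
The plan is to show that if a characteristic class $\cf$ contains a single automorphism of positive entropy, then it already contains the infinite-entropy Bernoulli shift, and then to invoke Proposition~\ref{p:ST?} to conclude $\cf = {\rm ALL}$. So suppose $R_0 \in \cf$ with $h(R_0) > 0$, acting on $\zdr$. First I would pass to the ergodic decomposition $\rho = \int \rho_{\ov x}\,dm(\ov x)$. Since $h(R_0,\rho) = \int h(\rho_{\ov x},R_0)\,dm(\ov x) > 0$, there is some $\beta > 0$ with $m\{\ov x : h(\rho_{\ov x},R_0) \geq \beta\} =: c > 0$. The subtlety is that Theorem~\ref{t:Sinai} (the non-ergodic Sinai factor theorem) requires an \emph{essential infimum} bound $h(\rho_{\ov x},R_0) \geq \alpha > 0$ for $m$-a.e.\ $\ov x$, which need not hold globally. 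I would get around this by restricting to the invariant set $Y := \{\ov x : h(\rho_{\ov x},R_0) \geq \beta\}$: the system $R_0$ restricted to (the preimage of) $Y$, with the normalized measure $\rho(\cdot\mid Y)$, is a factor of $R_0$ — indeed, conditioning on an invariant set is an operation producing a factor in the sense relevant here — wait, more carefully, it is a ``piece'' of $R_0$ rather than a factor; one should instead note that $R_0$ itself has, by Theorem~\ref{t:Sinai} applied after a suitable reduction, a Bernoulli factor. Let me restructure: the cleanest route is to use that $\cf$ is closed under factors and that the restriction of $R_0$ to an invariant subset sits inside $\cf$ via a factor-type argument, or alternatively to directly apply the non-ergodic Sinai theorem to the system $R_0$ conditioned on $Y$.

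Concretely, here is the argument I would carry out. Let $Y = \{\ov x : h(\rho_{\ov x},R_0) \geq \beta\}$, an $R_0$-invariant set of positive measure $c$. Consider the sub-$\sigma$-algebra generated by $\mathcal{I}_\rho$ restricted below $Y$ together with the dynamics; in fact the system $(Z, \cd, \rho(\cdot\mid Y), R_0)$ — call it $R_0'$ — is a factor of $R_0$ because the map sending a point to itself is equivariant and we can realize $R_0'$ as the factor of $R_0$ corresponding to the invariant $\sigma$-algebra $\cd \vee \sigma(\{Y\})$... hmm, conditioning on an invariant set does not obviously give a factor. The genuinely safe move: $\cf$ is closed under countable joinings, hence under countable products, and also closed under factors, and in particular an invariant-set restriction of a member of $\cf$ is again in $\cf$ — this is because $(Z,\cd,\rho,R_0)$ is isomorphic to the direct integral over $\mathcal{I}_\rho$, and the restriction to $Y$ is the direct integral over $Y$, which is a factor of the whole via the quotient that collapses the complement of $Y$ to a point together with... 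Actually the slickest justification is the one already used in the proof of Proposition~\ref{p:smallest}: characteristic classes are closed under taking the invariant-sets factor, and then under Cartesian products, so I would mirror that style. Having placed $R_0'$ (with $m$-essinf of ergodic-component entropy $\geq \beta$) inside $\cf$, Theorem~\ref{t:Sinai} gives that a Bernoulli automorphism $B_\beta$ of entropy $\beta$ is a factor of $R_0'$, hence $B_\beta \in \cf$.

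Next, since $\cf$ is closed under countable joinings (in particular under countable Cartesian products), the product $B_\beta^{\times \infty}$ lies in $\cf$. But a countable product of i.i.d.\ copies of a Bernoulli shift of entropy $\beta$ is a Bernoulli shift of entropy $\infty$ — concretely it is isomorphic to $([0,1]^{\Z}, {\rm Leb}^{\ot\Z}, S)$ by Ornstein theory (any two Bernoulli shifts of the same, possibly infinite, entropy are isomorphic). Hence the infinite-entropy Bernoulli automorphism $([0,1]^{\Z},{\rm Leb}^{\ot\Z},S)$ belongs to $\cf$. Now apply Proposition~\ref{p:ST?}: every automorphism $R$ is a factor of a self-joining of $([0,1]^{\Z},{\rm Leb}^{\ot\Z},S)$. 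Since $\cf$ is closed under (countable) self-joinings and under factors, we get $R \in \cf$ for every automorphism $R$, i.e.\ $\cf = {\rm ALL}$, as desired. $\qed$

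I expect the main obstacle to be the bookkeeping in the first step: converting the positive-entropy hypothesis (which only gives a \emph{positive-measure} set of ergodic components with entropy bounded below) into the uniform essential-infimum hypothesis that Theorem~\ref{t:Sinai} demands, and doing so while staying inside $\cf$. The resolution — restricting to the invariant set $Y$ and arguing, as in the proof of Proposition~\ref{p:smallest}, that this restriction is again obtainable inside the characteristic class via a factor/invariant-sets construction — is conceptually routine but needs to be spelled out carefully so that one genuinely produces a \emph{member of $\cf$} and not merely an abstract system with the right entropy profile. The remaining steps (Sinai's theorem, products of Bernoullis via Ornstein, then Proposition~\ref{p:ST?}) are standard and quick.
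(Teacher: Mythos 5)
Your proposal has the same overall architecture as the paper's proof — reduce to showing the infinite-entropy Bernoulli lies in $\cf$ via Proposition~\ref{p:ST?}, manufacture a Bernoulli inside $\cf$ via the non-ergodic Sinai factor theorem, then take infinite products — but there is a genuine gap in how you try to put the first Bernoulli inside $\cf$. You want to pass from $R_0$ to $R_0'$, the system \emph{conditioned} on the invariant set $Y$ of ergodic components with entropy $\geq\beta$, and you assert (after some visible hesitation) that $R_0'\in\cf$ because ``an invariant-set restriction of a member of $\cf$ is again in $\cf$.'' That is false in general: conditioning on a proper invariant subset of positive measure and renormalizing is \emph{not} a factor operation. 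Concretely, if $T = B_\alpha \oplus \{\text{pt}\}$ is a Bernoulli shift sitting next to a fixed point of positive mass, then $B_\alpha$ is \emph{not} a factor of $T$, since any candidate factor map would have to send the fixed point to a single point of $B_\alpha$, creating an atom in a nonatomic measure. Your proposed justifications do not repair this: collapsing the complement of $Y$ to a point gives the system (restriction to $Y$)\,$\oplus$\,(fixed point), not the conditioned system; and the mechanism in the proof of Proposition~\ref{p:smallest} (invariant-sets factor plus products) never produces a restriction to an invariant set either.

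The paper circumvents this issue precisely by \emph{not} trying to strip off the fixed point. It takes the factor $T'$ of $T$ that collapses the low-entropy part to one point — this is an honest factor, given by the invariant sub-$\sigma$-algebra which is full above the high-entropy set and trivial below it — so $T'\in\cf$. Then it applies Sinai's theorem only to the high-entropy piece (where the essinf hypothesis holds), obtains a factor map onto a Bernoulli $B_\alpha$ there, and glues a fixed point on to get a factor $R = B_\alpha \oplus \{\text{pt}\}$ of $T'$, hence $R\in\cf$. The fixed point is then harmless: in $R^{\times\infty}$, almost every ergodic component involves infinitely many copies of $B_\alpha$ (Borel--Cantelli), so is an infinite-entropy Bernoulli, and one more application of Sinai (or Ornstein, as you suggest, via a factor argument) puts the infinite-entropy Bernoulli in $\cf$. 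From there Proposition~\ref{p:ST?} finishes as you describe. So the idea of tolerating the parasitic fixed point instead of trying to condition it away is the missing ingredient; your final two steps (infinite product of Bernoullis and Proposition~\ref{p:ST?}) are correct and coincide with the paper.
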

\begin{proof}
Fix $T\in \cf\setminus {\rm ZE}$. Because of Proposition~\ref{p:ST?}, we only need to prove that the infinite entropy Bernoulli automorphism is in $\cf$. The first step is to consider the factor of $T$ that arises by gluing together the periodic part and the ergodic components from the aperiodic part whose entropy is smaller than $\alpha=h(T)$. Clearly, this factor remains in $\cf\setminus {\rm ZE}$. Moreover, in its ergodic decomposition we have a single point and the remaining part (which may still be non-ergodic) consists of ergodic components of entropy at least $\alpha$. By Theorem~\ref{t:Sinai}, it follows that as a further factor $R\in \cf\setminus {\rm ZE}$ we can obtain a non-ergodic automorphism with two ergodic components: one of them is a Bernoulli of entropy $\alpha$ and the other one is a fixed point. Finally, we take the infinite Cartesian product $R^{\times\infty}$. It is not hard to see that a.e.\ ergodic component of this automorphism is a Bernoulli with infinite entropy. Using once more Sinai's theorem (Theorem~\ref{t:Sinai}), we obtain that a Bernoulli with infinite entropy belongs to $\cf$ which completes the proof.
\end{proof}

Now, using the lemma we obtain the following.
\begin{Prop}\label{p:largest}
ZE is the largest proper characteristic class.
\bez
\end{Prop}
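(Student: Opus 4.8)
The plan is to show two things: first, that ZE is itself a characteristic class, and second, that any characteristic class $\cf$ which is not contained in ZE must be all of ALL, hence is not proper. Together these say precisely that ZE is the maximal element among the proper characteristic classes.

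For the first point, I would recall that entropy is an isomorphism invariant, that the Kolmogorov–Sinai entropy of a factor does not exceed that of the ambient system (so zero entropy passes to factors), and that the entropy of a countable joining is controlled by the entropies of its coordinates — more precisely, in a joining $(Z,\mathcal D,\kappa,R)=\bigvee_i (Z_i,\mathcal D_i)$ each coordinate system sits inside as a factor and the join of zero-entropy factor $\sigma$-algebras has zero entropy (using that $h(R,\mathcal A\vee\mathcal B)\le h(R,\mathcal A)+h(R,\mathcal B)$ and a limiting argument for countably many factors, the Pinsker $\sigma$-algebra being a complete sublattice). Hence ZE is closed under isomorphisms, factors and countable joinings, i.e.\ it is characteristic, and it is proper since any Bernoulli shift has positive entropy. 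This part is routine.

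The substantive content is the second point, and it is exactly Lemma~\ref{l:dode}, which has already been established in the excerpt: if $\cf$ is a characteristic class with $\cf\setminus\mathrm{ZE}\neq\emptyset$, then $\cf=\mathrm{ALL}$. So the proof of the proposition is short: let $\cf$ be any proper characteristic class. If $\cf$ contained an automorphism of positive entropy, then $\cf\setminus\mathrm{ZE}\neq\emptyset$, and Lemma~\ref{l:dode} would force $\cf=\mathrm{ALL}$, contradicting properness. Therefore every automorphism in $\cf$ has zero entropy, i.e.\ $\cf\subseteq\mathrm{ZE}$. Combined with the fact that ZE is itself a proper characteristic class, this shows ZE is the largest proper characteristic class, which is the assertion of~\eqref{zawierania} at its top end.

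I do not expect a genuine obstacle here, since all the heavy lifting — the non-ergodic Sinai factor theorem (Theorem~\ref{t:Sinai}), the shift-realization and self-joining representation of arbitrary automorphisms (Proposition~\ref{p:ST?}), and their combination into Lemma~\ref{l:dode} — is already in place. The only point requiring a line of care is the closure of ZE under countable joinings, where one must note that the Pinsker factor of a joining contains the Pinsker factors of all the coordinate subsystems, so that if each coordinate has trivial Pinsker factor the whole joining does too; this is where the definition of characteristic class (closure under \emph{countable} joinings) is used in full strength. With that remark, the proof is immediate from Lemma~\ref{l:dode}.
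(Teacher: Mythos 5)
Your proof is correct and follows the paper's own route: after noting (as the paper does in its list of examples) that ZE is itself a proper characteristic class, the proposition is an immediate consequence of Lemma~\ref{l:dode}. The extra paragraph verifying closure of ZE under countable joinings is a useful sanity check but is not new content relative to what the paper already takes for granted.
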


\subsection{Characteristic classes given by ergodic components}\label{s:onec}
Assume that $\cf$ is a characteristic class. By $\cf_{\rm ec}$ we denote the class of those automorphisms $R$ such that (a.e.) ergodic components of $R$ are in $\cf$ (or more precisely, in $\cf\cap$\,Erg, where Erg stands for the family of all ergodic automorphisms). Note that we have
\beq\label{mo1}
\cf \cap {\rm Erg}=\cf_{\rm ec}\cap {\rm Erg}.\eeq

\begin{Lemma}
$\cf_{\rm ec}$ is a characteristic class.
\end{Lemma}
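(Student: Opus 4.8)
The plan is to verify directly that $\cf_{\rm ec}$ is closed under isomorphisms, factors, and countable joinings, using the corresponding closure properties of $\cf$ together with the behaviour of the ergodic decomposition under these operations. Closure under isomorphisms is immediate: an isomorphism $R\to R'$ transports the ergodic decomposition of $R$ to that of $R'$, carrying a.e.\ ergodic component to an isomorphic one, so if all ergodic components of $R$ lie in $\cf$ (which is isomorphism-closed), the same holds for $R'$.

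For closure under factors, suppose $R$ is an automorphism of $\zdr$ with $R\in\cf_{\rm ec}$, and let $\ca\subset\cd$ be an $R$-invariant sub-$\sigma$-algebra, giving a factor $\bar R$ on $(Z,\ca,\kappa|_\ca)$. The key point is the compatibility of the ergodic decomposition with factors: writing $\rho=\int \rho_{\ov z}\,dm(\ov z)$ for the ergodic decomposition of $\rho$ over the $\sigma$-algebra $\mathcal{I}_\rho$ of invariant sets, one has $\mathcal{I}_{\kappa|_\ca}=\mathcal{I}_\kappa\cap\ca$ (mod $0$), and the disintegration of $\kappa|_\ca$ over $\mathcal{I}_{\kappa|_\ca}$ is obtained by projecting a.e.\ $\rho_{\ov z}$ to $\ca$. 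Hence a.e.\ ergodic component of $\bar R$ is a factor of an ergodic component of $R$, which lies in $\cf$; since $\cf$ is factor-closed, each such component is in $\cf$, so $\bar R\in\cf_{\rm ec}$. One has to be slightly careful here to phrase the ``a.e.\ component of a factor is a factor of an a.e.\ component'' statement correctly, citing or proving the standard measure-disintegration fact; this is really the only technical wrinkle.

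For closure under (countable) joinings, let $(R_i)_{i\in I}$ be a countable family of automorphisms, each in $\cf_{\rm ec}$, on spaces $(Z_i,\cd_i,\kappa_i)$, and let $\lambda$ be a joining on $\prod_i Z_i$ with marginals $\kappa_i$. Decompose $\lambda=\int \lambda_{\ov z}\,dm(\ov z)$ into its ergodic components. Each ergodic component $\lambda_{\ov z}$ is an ergodic invariant measure on $\prod_i Z_i$, hence is a joining of its own marginals $(\lambda_{\ov z})_i$ on $Z_i$; and these marginals $(\lambda_{\ov z})_i$, for $m$-a.e.\ $\ov z$, are themselves ergodic (being factors, via the coordinate projection, of the ergodic measure $\lambda_{\ov z}$). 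Moreover, disintegrating $\kappa_i=\int (\lambda_{\ov z})_i\,dm(\ov z)$ shows that for a.e.\ $\ov z$ each $(\lambda_{\ov z})_i$ is (equivalent to) an ergodic component of $\kappa_i$, hence lies in $\cf$ by hypothesis on $R_i$. Since $\cf$ is joining-closed, the ergodic joining $\lambda_{\ov z}$ of the systems $((Z_i,(\lambda_{\ov z})_i),R_i)\in\cf$ is itself in $\cf$. As this holds for $m$-a.e.\ $\ov z$, every ergodic component of $(\prod_i Z_i,\lambda,\prod_i R_i)$ is in $\cf$, i.e.\ $\lambda\in\cf_{\rm ec}$.

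The main obstacle is purely a bookkeeping one: making rigorous, in each of the three cases, the identification of the ergodic components of the constructed system (factor, or joining) with ergodic components of the original system(s) built from it. This rests on the standard uniqueness of the ergodic decomposition and on the fact that pushing forward or restricting an ergodic-decomposition disintegration under a factor map again yields an ergodic-decomposition disintegration; once this is cleanly stated (and it is classical, see e.g.\ \cite{Gl}), each closure property follows immediately from the corresponding property of $\cf$. Equation~\eqref{mo1} is then an obvious consequence, since an ergodic automorphism is its own unique ergodic component.
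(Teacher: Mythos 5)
Your argument is correct and follows essentially the same route as the paper's proof: for factors, restricting the ergodic decomposition of $\kappa$ to the factor $\sigma$-algebra and invoking factor-closedness of $\cf$; for joinings, observing that the ergodic components of a joining project to ergodic measures which by uniqueness of ergodic decomposition must be ergodic components of the marginals, then invoking joining-closedness of $\cf$. The only (cosmetic) difference is that you treat the countable family directly, whereas the paper states the two-system case and remarks that it extends.
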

\begin{proof}
The proof has two parts: we need to show that $\cf_{\rm ec}$ is closed under taking factors and joinings.
~\paragraph{Factors}
Let $R$ acting on $\zdk$ belong to $\cf_{\rm ec}$ and fix a factor $\ca\subset \cd$ of $R$. Let $\kappa=\int \kappa_{\ov{x}}\,dP(\ov{x})$ denote the ergodic decomposition of $\kappa$. Since the ergodic components $\kappa_{\ov{x}}$ are $R$-invariant measures, $\ca$ (being an $R$-invariant sub-$\sigma$-algebra) is also a factor of the automorphism $(Z,\kappa_{\ov{x}},R)$ and $\kappa|_{\ca}=\int \kappa_{\ov{x}}|_{\ca}\,dP(\ov{x})$ is the ergodic decomposition of $\kappa|_{\ca}$. It follows that the ergodic components of the factor are factors of ergodic components of $R$, and since $R\in \cf_{\rm ec}$, $(\kappa_{\ov{x}},R)\in \cf$, so also $(\kappa_{\ov{x}}|_{\ca},R|_{\ca})\in \cf$ for $P$-a.e.\ $\ov{x}$.

\paragraph{Joinings}  Take $(X,\mu,T)$ and $(Y,\nu,S)$ from $\cf_{\rm ec}$ and let $\rho\in J(T,S)$ be their joining.  Let
$$\rho=\int_0^1 \rho_t\,dP(t),\;\mu=\int_0^1 \mu_t\,dQ(t),\;\nu=\int_0^1\nu_t\,dR(t)$$
be the relevant ergodic decompositions. Then
   $$\int_0^1\mu_t\,dQ(t)=\mu=\rho|_X=\int_0^1\rho_t|_X\,dP(t),$$
   so since $\rho_t|X$ are also ergodic,  these two decompositions are the same. So for a $P$-``typical'' $t\in[0,1]$, the projection of $\rho_t$ on $X$ is an ergodic component of $T$. The same argument applies on the coordinate $Y$ and we see that the ergodic components of $\rho$ are joinings of ergodic components of $\mu$ and $\nu$. It follows that $(X\times Y,\rho,T\times S)\in \cf_{\rm ec}$. The argument extends to countable joinings.
\end{proof}

\subsubsection{Largest characteristic factor} \label{largDCSec}
\paragraph{ID, ZE, DISP and RIG$_{(q_n)}$}
Given a characteristic class $\cf$, according to Proposition~\ref{p:najwf}, each automorphism $R$ acting on $\zdk$ has a largest $\cf$-factor $\cd_{\cf}\subset\cd$. Often, its description is classical:
\begin{itemize}
\item
the $\sigma$-algebra of invariant sets for $\cf={\rm ID}$,
\item
the Pinsker factor for $\cf={\rm ZE}$,
\item
the Kronecker factor for $\cf={\rm DISP}$,
\item
the largest factor for which $(q_n)$ is a rigidity time for $\cf={\rm RIG}_{(q_n)}$.
\end{itemize}
\paragraph{DISP$_{\rm ec}$}
We will comment now on $\cd_{\cf_{\rm ec}}$ when $\cf={\rm DISP}$, cf.\ Proposition~\ref{p:examples}~(ii) and its connections with the theory of nil-factors. Most of the material presented below is known to aficionados but not necessarily the material is explicitly present in the literature. Our discussion is based on \cite{Fr-Ho3}, \cite{Fu}, \cite{Ho-Kr} and \cite{Ho-Krbook}.
We provide some details to explain clearly why the problem of whether $\mob\perp {\rm DISP}_{\rm ec}$ is open, cf.\ Corollary~\ref{c:averagedCh'}, Corollary~\ref{pods} and Remark~\ref{r:remark56}.


Recall that
according to the Furstenberg-Zimmer theory \cite{Fu}, given $R$ on $\zdk$ and a factor $\mathcal{C}\subset\mathcal{D}$, there exists a certain intermediate factor
\[
\mathcal{C}\subset \K=\K(\mathcal{C})\subset \mathcal{D},
\]
called the {\em relative Kronecker factor} (with respect to $\mathcal{C}$).
It is the largest intermediate factor with the following property (see condition C$_2$ in \cite{Fu}, p.~131):
\begin{equation}\label{C41}
\begin{split}
\parbox[t]{0.8\linewidth}{there exists a dense set of functions $F\in L^2(\K,\kappa|_{\K})$ such that
for each $\delta>0$ there is a finite set $g_1,\ldots,g_k\in L^2(\K,\kappa|_{\K})$ such that for each $h\in\Z$,
\[
\min_{1\leq j\leq k}\|F\circ R^h-g_j\|_{L^2(\kappa_y)}<\delta
\]}
\end{split}
\end{equation}
for a.e.\ $y\in Z/\mathcal{C}$, where
\beq\label{disint}
\kappa|_{\K}=\int_{Z/\mathcal{C}}\kappa_y\,d\kappa(y).
\eeq
Whenever condition~\eqref{C41} holds, we speak of {\em relative compactness} or of {\em relatively discrete spectrum} of the intermediate factor over $\mathcal{C}$.

A particular situation arises when $\cc=\mathcal{I}_\kappa$, i.e.\ it is the $\sigma$-algebra of invariant sets. Then~\eqref{disint} is nothing but the ergodic decomposition of $\kappa$ and the conditional measures $\kappa_y$ are also $R$-invariant. In this case condition~\eqref{C41} yields in a.e.\ fiber $\pi^{-1}(y)$ (where $\pi\colon Z/\K\to Z/\mathcal{I}_\kappa$ stands for the factor map) a dense set of functions $F|_{\pi^{-1}(y)}$ {in $L^2(\K,\kappa_y)$ whose orbits under the unitary action of $R$ are relatively compact. It follows that the (ergodic) automorphism $(R,\kappa_y)$ has discrete spectrum for a.e.\ $y\in Z/\mathcal{I}_\kappa$. In other words,
\[
\K(\mathcal{I}_\kappa)\subset  \mathcal{D}_{{\rm DISP_{ec}}}
\]
In fact, the opposite inclusion is also true, i.e.
\begin{equation}\label{opo}
\mathcal{D}_{{\rm DISP_{ec}}} = \K(\mathcal{I}_\kappa),
\end{equation}
that is,  $\mathcal{A}:=\mathcal{D}_{{\rm DISP_{ec}}}$ has relatively discrete spectrum over $\mathcal{I}_\kappa$.
Indeed, by the definition of $\mathcal{A}$, a.e.\ ergodic component of $R|_{\mathcal{A}}$ has discrete spectrum.
Fix $F\in L^\infty(\ca,\kappa|_{\ca})$. Fix also $\vep,\delta>0$ and $k\geq1$. Consider the set $W_k\subset Z/\mathcal{I}_\kappa$ of those $y$ for which
$$
\min_{-k\leq j\leq k}\left\|F\circ R^n-F\circ R^j\right\|_{L^2(\kappa_y)}<\vep$$
for each $n\in\Z$. Since on each fiber  $R$ is an ergodic automorphism with discrete spectrum, the measure of $W_k$ goes to~1, when $k\to\infty$, so it will be greater than $1-\delta$ for $k$ large enough.  It follows that the function $F$ is compact as it has been defined in the proof of Theorem 6.15 \cite{Fu}. Therefore, $F\in L^2(\K(\mathcal{I}_\kappa))$, which (by \cite{Fu}) concludes the proof of~\eqref{opo}.

\begin{Remark} \label{r:Tim+Nil}
As a matter of fact, in \cite{Au}, the Furstenberg-Zimmer theory is developed without assuming ergodicity (cf.\ e.g.\ Proposition 5.7 therein to obtain the equality $\mathcal{D}_{{\rm DISP}_{\rm ec}}=\mathcal{K}(\mathcal{I}_\kappa)$).
\end{Remark}
We will now see that $\mathcal{D}_{\rm DISP_{ec}}$ appears naturally in the classical theory of characteristic nil-factors \cite{Ho-Kr,Ho-Krbook}.\footnote{We would like to thank Bryna Kra and Nikos Frantzikinakis for fruitful discussions and useful references on this subject.}
Recall that if $R$ acting on $\zdk$ is ergodic then for a function $f\in L^\infty\zdk$ its $u^s$ norms (in fact, seminorms) are defined in the following way:
\beq\label{hkg1}
\|f\|_{u^1}:=\Big|\int f\,d\kappa\Big|,
\eeq
\beq\label{hkg2}
\|f\|_{u^{s+1}}^{2^{s+1}}:=\lim_{H\to\infty}\frac1H\sum_{h\leq H}\|f\circ R^h\cdot\ov{f}\|_{u^s}^{2^s}.
\eeq
If $R$ is non-ergodic then  instead of \eqref{hkg1}, we put
\[
\|f\|^2_{u^1}:=\lim_{H\to\infty}\frac1H\sum_{h\leq H}\int f\circ R^h\cdot \ov{f}\,d\kappa
\]
and \eqref{hkg2} remains unchanged.
Then, by \cite{Ho-Kr, Ho-Krbook}, for each $s\geq1$ there is a special factor $\mathcal{Z}_s=\mathcal{Z}_s(R)\subset\cb$, namely, the largest factor whose
\beq\label{hkchar}
\mbox{a.e.\ ergodic component is an inverse limit of $s$-step nil-systems.}
\eeq
In other words,  $\mathcal{Z}_s(R)$ is the largest (characteristic) ${\rm NIL}_s$-factor of $R$.
Moreover (see Proposition 7 (page 138) and Proposition 13 (page 141) in~\cite{Ho-Krbook}),
\begin{equation}\label{prop13fubook}
\|f\|_{u^{s+1}}=0 \iff f\perp L^2(\mathcal{Z}_s) \iff f\perp L^2(\mathcal{Z}_s(R,\kappa_y)) \text{ for $\kappa$-a.e. }y.
\end{equation}

A special case arises when our measure-preserving systems are Furstenberg systems of a bounded $\bfu\colon\N\to\C$.
As in (for example) \cite{Fr}, see Sections~2.4 and 2.5 therein, one can introduce the uniformity norms (along subsequences of intervals) for $\bfu$. The definitions are given in \eqref{eq:1us} and \eqref{eq:2us}. They are  very similar to those (in the non-ergodic case) to the definitions for functions.

We will now show that
\begin{equation}\label{rownosc}
\mathcal{Z}_1(R)=\mathcal{K}(\mathcal{I}_\kappa).
\end{equation}
 If $R$ is ergodic then the above means just that
\begin{equation}\label{rownerg}
\mathcal{Z}_1 \text{ is the Kronecker factor of $R$}.
\end{equation}

To see that~\eqref{rownerg} indeed holds, notice that~\eqref{prop13fubook} for $s=1$ yields
\[
\|f\|_{\bfu^2}=0 \iff f\perp L^2(\mathcal{Z}_1)
\]
and it remains to notice that (using the Wiener lemma)
\[
\|f\|^4_{\bfu^2}=\lim_{H\to \infty}\frac{1}{H}\sum_{h\leq H}\|f\circ R^h \cdot \ov{f} \|^2_{\bfu^1}=\]\[\lim_{H\to\infty}\frac{1}{H}\sum_{h\leq H}\left| \int f\circ R^h \cdot \ov{f}\right|^2\,d\kappa\to\sum_{z\in\mathbb{T}}\sigma_f(\{z\})^2,
\]
where $\sigma_f$ stands for the spectral measure of $f$.

Let us return to a possibly non-ergodic $R$. The inclusion $\mathcal{Z}_1(R)\subset \mathcal{K}:=\mathcal{K}(\mathcal{I}_\kappa)$ follows directly by Theorem 5.2 in~\cite{Fr-Ho3}. To obtain the opposite inclusion, one can argue in the following way. Suppose that $f\perp L^2(\mathcal{Z}_1(R))$ and $|f|\leq 1$. Take $g\in L^2(\mathcal{K})$, cf.\ \eqref{C41} with $F=g$. We want to show that $\int fg\, d\kappa=0$. Notice that
\[
\int fg\, d\kappa=\int \left(\frac{1}{N}\sum_{n\leq N}\int f\circ T^n \cdot g\circ T^n\, d\kappa_y\right)\, d\kappa(y).
\]
Let $g_j$, $1\leq j\leq k$, be as in~\eqref{C41}. Then
\begin{multline*}
\left|\frac{1}{N}\sum_{n\leq N}\int f\circ T^n \cdot g\circ T^n\, d\kappa_y\right|\\
\leq \left|\sum_{1\leq j\leq k}\frac{1}{N}\sum_{n\leq N}\int f\circ T^n \cdot g_j\, d\kappa_y \right| + \frac{1}{N}\sum_{n\leq N}\min_{1\leq i\leq k}\int \left|f\circ T^n (g\circ T^n-g_i) \right|\, d\kappa_y.
\end{multline*}
Each term  in the average in the second summand is bounded by $\delta$. Moreover,
\begin{equation}\label{gra}
\frac{1}{N}\sum_{n\leq N}\left|\int f\circ T^n \cdot g_j\, d\kappa_y \right|^2 \to \sum_{z\in\mathbb{T}}\sigma_{f,g_j,\kappa_y}(\{z\})^2,
\end{equation}
where $\sigma_{f,g_j,\kappa_y}$ stands for the spectral measure of the pair $f,g_j$ (on the ergodic component $(\pi^{-1}(y),\kappa_y)$ given by $y$). But by~\eqref{prop13fubook} and~\eqref{rownerg}, we have
\begin{align*}
f\perp L^2(\mathcal{Z}_1) &\iff f\perp L^2(\mathcal{Z}_1(R,\kappa_y))\text{ for a.e. }y\\
& \iff \sigma_{f,\kappa_y} \text{ is continuous for a.e. }y
\end{align*}
($\sigma_{f,\kappa_y}$ stands for the spectral measure of $f$ on the ergodic component $(\pi^{-1}(y),\kappa_y)$ given by $y$).
Since $f\perp L^2(\mathcal{Z}_1)$ and $\sigma_{f,g_i,\kappa_y}\ll \sigma_{f,\kappa_y}$, it remains to use the classical equivalence
\[
\frac{1}{N}\sum_{n\leq N}a_n \to 0 \iff \frac{1}{N}\sum_{n\leq N}a_n^2\to 0
\]
for any bounded sequence $(a_n)\subset [0,\infty)$, to conclude that the limit in~\eqref{gra} is equal to zero. Thus $f\perp L^2(\mathcal{Z}_1) \implies f\perp L^2(\mathcal{K})$.

Finally, let us compare the above with the notion of relative weak mixing. Recall that relative weak mixing over $\mathcal{I}_\kappa$ for $f$ means that
\[
\frac1H\sum_{h\leq H}\int \left|\E(f\circ R^h\cdot \overline{f}|\mathcal{I}_\kappa)\right|^2\, d \kappa \to0.
\]
Moreover,
$$
\frac1H\sum_{h\leq H}\int \left|\E(f\circ R^h\cdot \overline{f}|\mathcal{I}_\kappa)\right|^2\, d \kappa=\int\left(
\frac1H\sum_{h\leq H}\left|\int f\circ R^h\cdot \overline{f}\,d\kappa_y\right|^2\right)\, d \kappa,
$$
and, once more by the Wiener lemma,
$$\frac1H\sum_{h\leq H}\left| \int f\circ R^h\cdot \overline{f}\,d\kappa_y\right|^2\to\sum_{z\in\T}
\sigma_{f,\kappa_y}(\{z\})^2.
$$
It follows immediately that $\sigma_{f,\kappa_y}$ is continuous for a.e.\ $y$ if and only if $f$ is relatively weakly mixing over $\mathcal{I}_\kappa$.

The above discussion can be summarized in the following statement.
\begin{Cor}\label{pods}
Let $(Z,\cd,\kappa,R)$ be a measure-theoretic dynamical system and let $f\in L^2\zdk$. The following conditions are equivalent:
\begin{enumerate}
\item[(i)] $f\perp L^2(\mathcal{Z}_1)$,
\item[(ii)] $f\perp L^2(\mathcal{D}_{\rm DISP_{ec}})$,
\item[(iii)] $f\perp L^2(\mathcal{K}(\mathcal{I}_\kappa))$,
\item[(iv)] $\sigma_{f,\kappa_y}$ is continuous for $\kappa$-a.e. $y$,
\item[(v)] $f$ is relatively weakly mixing over $\mathcal{I}_\kappa$.
\end{enumerate}
\end{Cor}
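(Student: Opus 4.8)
Looking at Corollary~\ref{pods}, I need to prove the equivalence of five conditions on $f \in L^2(Z,\cd,\kappa)$: orthogonality to $L^2(\mathcal{Z}_1)$, to $L^2(\mathcal{D}_{\rm DISP_{ec}})$, to $L^2(\mathcal{K}(\mathcal{I}_\kappa))$, continuity of the fiberwise spectral measures $\sigma_{f,\kappa_y}$ for a.e.\ $y$, and relative weak mixing of $f$ over $\mathcal{I}_\kappa$.

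My plan is to show that all the pieces are already assembled in the discussion preceding the statement, and simply stitch them together into a clean cycle (or a star of equivalences through one central condition). The cleanest organization routes everything through condition~(iv), the continuity of $\sigma_{f,\kappa_y}$ for a.e.\ $y$.

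\begin{proof}
The equivalence (ii)$\iff$(iii), i.e.\ $\mathcal{D}_{\rm DISP_{ec}} = \mathcal{K}(\mathcal{I}_\kappa)$, is exactly~\eqref{opo}, established above by the two inclusions $\mathcal{K}(\mathcal{I}_\kappa)\subset \mathcal{D}_{\rm DISP_{ec}}$ (via the fiberwise discrete spectrum reading of condition~\eqref{C41} with $\cc=\mathcal{I}_\kappa$) and $\mathcal{D}_{\rm DISP_{ec}}\subset \mathcal{K}(\mathcal{I}_\kappa)$ (via the compactness-of-$F$ argument from Theorem~6.15 in~\cite{Fu}). The equivalence (i)$\iff$(iii) is~\eqref{rownosc}, $\mathcal{Z}_1(R) = \mathcal{K}(\mathcal{I}_\kappa)$: one inclusion follows from Theorem~5.2 in~\cite{Fr-Ho3}, and the other from the spectral computation given above using~\eqref{prop13fubook}, \eqref{rownerg}, the disintegration $\kappa|_{\mathcal{K}} = \int \kappa_y\, d\kappa(y)$, the domination $\sigma_{f,g_i,\kappa_y}\ll\sigma_{f,\kappa_y}$, and the elementary fact that $\frac1N\sum_{n\le N}a_n\to0\iff\frac1N\sum_{n\le N}a_n^2\to0$ for bounded nonnegative $(a_n)$. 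So (i), (ii), (iii) are mutually equivalent.

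It remains to fold in (iv) and (v). For (i)$\iff$(iv): by~\eqref{prop13fubook} applied with $s=1$, $f\perp L^2(\mathcal{Z}_1)$ is equivalent to $f\perp L^2(\mathcal{Z}_1(R,\kappa_y))$ for $\kappa$-a.e.\ $y$; and by~\eqref{rownerg}, $\mathcal{Z}_1(R,\kappa_y)$ is the Kronecker factor of the ergodic system $(\pi^{-1}(y),\kappa_y,R)$, so $f\perp L^2(\mathcal{Z}_1(R,\kappa_y))$ holds iff $\sigma_{f,\kappa_y}$ is continuous. This is precisely the chain of equivalences already displayed just before the statement of the corollary.

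Finally (iv)$\iff$(v): relative weak mixing of $f$ over $\mathcal{I}_\kappa$ means
\[
\frac1H\sum_{h\le H}\int\bigl|\E(f\circ R^h\cdot\overline f\mid\mathcal{I}_\kappa)\bigr|^2\,d\kappa\longrightarrow 0.
\]
Disintegrating over $\mathcal{I}_\kappa$, the integrand equals $\frac1H\sum_{h\le H}\bigl|\int f\circ R^h\cdot\overline f\,d\kappa_y\bigr|^2$, and by the Wiener lemma applied on each ergodic component this converges to $\sum_{z\in\T}\sigma_{f,\kappa_y}(\{z\})^2$. Hence, by dominated convergence, the displayed average tends to $\int\sum_{z\in\T}\sigma_{f,\kappa_y}(\{z\})^2\,d\kappa(y)$, which is $0$ iff $\sigma_{f,\kappa_y}$ is continuous for $\kappa$-a.e.\ $y$. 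This establishes (iv)$\iff$(v) and completes the proof.
\end{proof}

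The main point to be careful about is that none of these steps requires new work: everything is justified in the text preceding the corollary, so the proof is genuinely a matter of assembling the equivalences (i)$\iff$(iii) from~\eqref{rownosc}, (ii)$\iff$(iii) from~\eqref{opo}, (i)$\iff$(iv) from~\eqref{prop13fubook} together with~\eqref{rownerg}, and (iv)$\iff$(v) from the Wiener-lemma computation. The only place where one must exercise a little care is the measurability and integrability needed to pass the disintegration through the limits (continuity of $y\mapsto\kappa_y$ in the relevant sense, boundedness of the integrands by $\|f\|_\infty^2$), but for $f\in L^\infty$ this is routine, and the general $L^2$ case follows by approximation since all five conditions are closed under $L^2$-limits of $f$.
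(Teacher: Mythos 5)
Your proof is correct and follows the same route as the paper: Corollary~\ref{pods} is introduced there with ``The above discussion can be summarized in the following statement,'' and your argument is exactly a clean assembly of the identities~\eqref{opo}, \eqref{rownosc}, \eqref{prop13fubook}--\eqref{rownerg}, and the Wiener-lemma computation just preceding the statement. The additional remark about $L^\infty$-to-$L^2$ approximation is sound but not needed beyond what the paper already implicitly assumes.
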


\subsubsection{A class vs. its ec-class}\label{s:distalandother}
Let us continue our observations on the relations between characteristic classes and the corresponding ec-classes. Note that in general there are no relations between $\cf$ and $\cf_{\rm ec}$:

\begin{Prop}\label{p:examples}
We have:\\
(i) ${\rm ZE}={\rm ZE}_{\rm ec}$,  ${\rm ALL}={\rm ALL}_{\rm ec}$, ${\rm ID}={\rm ID}_{\rm ec}$, ${\rm NIL}_s=({\rm NIL}_s)_{\rm ec}$, $\{\ast\}\subsetneq\{\ast\}_{\rm ec}$;\\
(ii) ${\rm DISP}\subsetneq {\rm DISP}_{\rm ec}$;\\
(iii) ${\rm RDISP}={\rm RDISP}_{\rm ec}$;\\
(iv) ${\rm DIST}={\rm DIST}_{\rm ec}$;\\
(v) $\Big({\rm RIG}_{(q_n)}\Big)_{\rm ec}\subsetneq {\rm RIG}_{(q_n)}$.
\end{Prop}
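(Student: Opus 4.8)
The plan is to prove Proposition~\ref{p:examples} item by item, since each assertion concerns a specific characteristic class and its ec-version. The common thread is the ergodic decomposition: $\cf_{\rm ec}$ always \emph{contains} $\cf\cap{\rm Erg}=\cf_{\rm ec}\cap{\rm Erg}$ by \eqref{mo1}, so the only question in each case is whether passing to non-ergodic systems whose ergodic components lie in $\cf$ keeps us inside $\cf$ (giving equality) or lets us escape (giving strict inclusion), together with the reverse direction. I will organize the proof into the five numbered parts.

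For part (i): the equalities ${\rm ZE}={\rm ZE}_{\rm ec}$ and ${\rm ID}={\rm ID}_{\rm ec}$ follow because entropy is an average of the entropies of the ergodic components (so zero entropy is equivalent to a.e.\ ergodic component having zero entropy), and because the identity property passes componentwise; ${\rm ALL}={\rm ALL}_{\rm ec}$ is trivial. For ${\rm NIL}_s=({\rm NIL}_s)_{\rm ec}$ one uses that ${\rm NIL}_s$ is by its very definition \eqref{hkchar} the class of automorphisms whose a.a.\ ergodic components are inverse limits of $s$-step nilsystems, so ${\rm NIL}_s$ is \emph{already} of the form $\cf_{\rm ec}$ for $\cf$ the ergodic $s$-step nilsystem-inverse-limits, hence $({\rm NIL}_s)_{\rm ec}={\rm NIL}_s$. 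The strict inclusion $\{\ast\}\subsetneq\{\ast\}_{\rm ec}$ holds because $\{\ast\}_{\rm ec}={\rm ID}$ (as already recorded in the introduction, ${\rm ID}=\{\ast\}_{\rm ec}$), which strictly contains the one-point class. Part (iii): ${\rm RDISP}={\rm RDISP}_{\rm ec}$ because a system whose a.e.\ ergodic component is a rational rotation has rational discrete spectrum itself — the eigenvalue group of an ergodic decomposition with rational-spectrum fibers is still a subgroup of $\Q/\Z$; more carefully, such a system has all its $L^2$ spanned by generalized eigenfunctions (the Koopman operator restricted to each fiber is a direct sum of roots of unity), and a direct integral of finite rotations of bounded order decomposes as a countable direct sum of finite rotations. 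Part (iv): ${\rm DIST}={\rm DIST}_{\rm ec}$ is exactly Lemma~\ref{l:re3} cited in the list of examples (an automorphism is distal iff a.a.\ its ergodic components are distal), so I will just invoke it.

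The genuinely interesting cases are (ii) and (v), where strict inclusion is claimed. For (ii), ${\rm DISP}\subseteq{\rm DISP}_{\rm ec}$ is clear; for strictness I will exhibit a non-ergodic automorphism whose ergodic components all have discrete spectrum but which itself does not — the simplest example is a skew-product-type construction or simply a direct integral of rotations by a non-constant, non-piecewise-constant angle, e.g.\ the map $(x,y)\mapsto(x,x+y)$ on $\T^2$ already discussed in the introduction, whose ergodic components are the circle rotations $R_x$ but whose global Koopman operator has continuous spectral components (functions like $e^{2\pi i y}$ have spectral measure with a continuous part coming from averaging over $x$). So $\mathcal D_{{\rm DISP}}\subsetneq\mathcal D_{{\rm DISP}_{\rm ec}}$ for this system, giving $({\T^2},\cdot)\in{\rm DISP}_{\rm ec}\setminus{\rm DISP}$. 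For (v), ${\rm RIG}_{(q_n)}$ is the class of automorphisms for which $(q_n)$ is a rigidity sequence; an ergodic component being $(q_n)$-rigid does \emph{not} force the whole system to be $(q_n)$-rigid, since the speed of convergence $R^{q_n}\to{\rm Id}$ in the various fibers need not be uniform — one builds a direct integral over $x\in[0,1]$ of $(q_n)$-rigid systems whose ``rigidity witnesses'' degrade as $x\to0$, so that no single subsequence works globally, yet every fiber is rigid along $(q_n)$; hence $\big({\rm RIG}_{(q_n)}\big)_{\rm ec}\supsetneq{\rm RIG}_{(q_n)}$ — wait, I must get the direction right: the claim is $\big({\rm RIG}_{(q_n)}\big)_{\rm ec}\subsetneq{\rm RIG}_{(q_n)}$, meaning being globally $(q_n)$-rigid is \emph{stronger} than having all ergodic components $(q_n)$-rigid is \emph{false}; rather, global rigidity along $(q_n)$ does imply a.e.-component rigidity along $(q_n)$ (restricting the $L^2$-convergence to a component is automatic after disintegration), so ${\rm RIG}_{(q_n)}\subseteq\big({\rm RIG}_{(q_n)}\big)_{\rm ec}$ always, and strictness of the \emph{reverse} containment ${\rm RIG}_{(q_n)}\supsetneq\big({\rm RIG}_{(q_n)}\big)_{\rm ec}$ as stated would be wrong — so I should re-read: the proposition says $\big({\rm RIG}_{(q_n)}\big)_{\rm ec}\subsetneq {\rm RIG}_{(q_n)}$, i.e.\ the ec-class is \emph{smaller}. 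This forces the reading that ${\rm RIG}_{(q_n)}$ was defined non-ergodically to already allow all systems with a rigid \emph{factor}, or that a direct integral of $(q_n)$-rigid systems is automatically $(q_n)$-rigid (by dominated convergence, $\|f\circ R^{q_n}-f\|_{L^2(\kappa)}^2=\int\|f\circ R^{q_n}-f\|_{L^2(\kappa_y)}^2\,d\kappa(y)\to0$), giving $\big({\rm RIG}_{(q_n)}\big)_{\rm ec}\subseteq{\rm RIG}_{(q_n)}$, with strictness because ${\rm RIG}_{(q_n)}$ contains non-ergodic-decomposable examples or because a $(q_n)$-rigid system can have an ergodic component that is $(q_n)$-rigid only along a sub-subsequence — no: if the global system is rigid along $(q_n)$ then the dominated-convergence identity shows every component is too. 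The correct resolution, which I will present, is: $\big({\rm RIG}_{(q_n)}\big)_{\rm ec}\subseteq{\rm RIG}_{(q_n)}$ by the dominated-convergence argument above, and the inclusion is strict because there exist weakly mixing $(q_n)$-rigid automorphisms that are \emph{not} of the form ``a.e.\ ergodic component $(q_n)$-rigid'' — impossible since they are ergodic — so instead strictness comes from: take a $(q_n)$-rigid system $T$ and form $T\times {\rm Id}_{[0,1]}$; hmm. I expect this last sign-and-direction issue in (v) to be the main obstacle, and I will resolve it by carefully using the definition of ${\rm RIG}_{(q_n)}$ from the paper (the largest factor on which $(q_n)$ is a rigidity time is the relevant object), concluding that global $(q_n)$-rigidity of the ambient system is what $\cf_{\rm ec}$ delivers via dominated convergence, while the ec-class is strictly smaller because there is a $(q_n)$-rigid automorphism none of whose ergodic components — after an appropriate isomorphic adjustment producing non-rigid components — remains rigid; the clean example is $R$ ergodic $(q_n)$-rigid versus a skew product over $R$ with fibers that are $(p_n)$-rigid for a sparser $(p_n)$, engineered so the product is still $(q_n)$-rigid while a typical ergodic component is not. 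I will flag that verifying this example is the delicate point and give the construction in detail there, leaving the other four parts as the short arguments sketched above.
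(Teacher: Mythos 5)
Parts (i)--(iv) of your proposal are fine and essentially follow the paper's arguments: convexity/averaging of entropy for ZE, the componentwise observation for ID and ALL, the definitional remark for $\mathrm{NIL}_s$, the unipotent example $(x,y)\mapsto(x,x+y)$ on $(\T^2,\mathrm{Leb})$ for the strictness in (ii), measurable choice of eigenfunctions for a countable set of rational eigenvalues for (iii), and Lemma~\ref{l:re3} for (iv). For (iii) you invoke ``bounded order,'' which is not part of the hypothesis, but the measurable-selection idea you gesture at is the paper's argument. These parts are acceptable.

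Part (v) contains a genuine error and a genuine gap. The error: you repeatedly assert that ``restricting the $L^2$-convergence to a component is automatic after disintegration,'' i.e.\ that global $(q_n)$-rigidity forces a.e.\ ergodic component to be $(q_n)$-rigid. This is false. From $\int\|f\circ R^{q_n}-f\|_{L^2(\kappa_y)}^2\,d\kappa(y)\to0$ you get only convergence in measure (in $y$), and extracting an a.e.-convergent subsequence gives a different subsequence for each $f$. This false belief is exactly what sends you back and forth chasing the direction of the inclusion, and if true it would give $\mathrm{RIG}_{(q_n)}\subseteq(\mathrm{RIG}_{(q_n)})_{\rm ec}$, contradicting the statement. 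The correct inclusion $(\mathrm{RIG}_{(q_n)})_{\rm ec}\subseteq\mathrm{RIG}_{(q_n)}$ does follow from the dominated-convergence computation you write down. The gap: you never produce the witness for strictness; your ``clean example'' is a vague sketch of a skew product with two rigidity sequences that you flag as needing verification, and you stop there. The paper's construction is concrete and different: build a \emph{continuous} probability measure $\sigma$ on the circle for which $e^{2\pi iq_n \cdot}\to 1$ in $\sigma$-measure but not $\sigma$-a.e.\ (a Cantor-type measure, assuming $(q_n)$ sufficiently sparse), then take $T(x,y)=(x,y+x)$ on $\T^2$ with the measure $\sigma\otimes\mathrm{Leb}$. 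Convergence in $\sigma$-measure makes $T$ globally $(q_n)$-rigid, while the ergodic component over $x$, namely the rotation $R_x$, is $(q_n)$-rigid iff $q_nx\to0\pmod 1$, which fails for $\sigma$-a.e.\ $x$. Without this example, or an equivalent one, (v) is unproved, and the incorrect intermediate claim needs to be removed.
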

\begin{proof}[Proof of (i)-(iii)]~\\ ~
\noindent
{\bf (i)} The first claim follows from the fact that the entropy function is convex, the other claims are obvious.

\noindent
{\bf (ii)} If an automorphism has discrete spectrum then its $L^2$-space is generated by eigenfunctions. The restrictions (if non-zero) of these (global) eigenfunctions yield orthonormal bases in $L^2$-spaces of ergodic components. The inclusion is strict since
$(x,y)\mapsto (x,x+y)$ (on $\T^2$, considered with Lebesgue measure) does not have discrete spectrum while the ergodic components do.

\noindent
{\bf (iii)} We want to show that if each ergodic component has  rational discrete spectrum then the whole automorphism has. Given $p/q\in\mathbb{Q}
$ and an ergodic component $c$, we choose $f_c$ a modulus~1 eigenfunction corresponding to the eigenvalue $e^{2\pi ip/q}$. Since $f_c$ is unique up to a constant of modulus~1, this choice can be done measurably. In this way, we will create global eigefunctions.\footnote{The same argument works if we consider the characteristic class of automorphisms having discrete spectrum contained in a {\bf fixed} countable subgroup of the circle.}
\end{proof}
Before we give the proof of (iv), we need to recall some more notions and facts from the relative ergodic theory, e.g.\ \cite{Fu, Gl}.
Given an automorphism $T$ of $\xbm$ and its factor $S$ on $\ycn$ with the factor map $\pi\colon X\to Y$, we say that this extension is {\em relatively ergodic} (rel.\ erg.) if each $f\in L^1\xbm$ satisfying $f\circ T=f$ ($\mu$-a.e.) is $\pi^{-1}(\mathcal{C})$-measurable. It follows immediately from the definition that:
\begin{itemize}
\item
any composition of relatively ergodic extensions remains relatively ergodic;
\item
an inverse limit of relatively ergodic extensions remains relatively ergodic (as the conditional expectation, with respect to a factor, of an invariant function remains invariant);
\item
$\ov{\pi}\colon Y\to\ov{Y}:=Y/\mathcal{I}_\nu$, where $(\ov{Y},\ov{\nu})$ stands for the space of ergodic components (on which acts the identity map), is relatively ergodic.
\end{itemize}

Let $\mu=\int_Y \mu_y\,d\nu(y)$ stand for the disintegration of $\mu$ over $\nu$ and let
$$
\nu=\int_{\ov{Y}}\ov{\nu}_{\ov{y}}\,d\ov{\nu}
$$
denote the ergodic decomposition of $\nu$ (which is precisely the disintegration of $\nu$ over $\ov{\nu}$). Then the ergodic components of $S$ acting on $Y$ are of the form
$$
(\ov{\pi}^{-1}(\ov{y}),\ov{\nu}_{\ov{y}}, S)
$$
(the measures $\ov{\nu}_{\ov{y}}$ are $S$-invariant). Therefore, we have the following lemma.

\begin{Lemma}\label{l:re1} If $T$ is relatively ergodic over $S$ then the ergodic components of $T$ are of the form
$$
\left(\pi^{-1}(\ov{\pi}^{-1}(\ov{y})),\int_{\ov{\pi}^{-1}(\ov{y})}\mu_y\, d \ov{\nu}_{\ov{y}}(y)\right).$$
\end{Lemma}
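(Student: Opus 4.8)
The statement to prove is Lemma~\ref{l:re1}: if $T$ on $\xbm$ is relatively ergodic over its factor $S$ on $\ycn$ (factor map $\pi\colon X\to Y$), then the ergodic components of $T$ are exactly the measures $\int_{\ov\pi^{-1}(\ov y)}\mu_y\,d\ov\nu_{\ov y}(y)$, where $\mu=\int\mu_y\,d\nu(y)$ is the disintegration of $\mu$ over $\nu$ and $\nu=\int\ov\nu_{\ov y}\,d\ov\nu$ is the ergodic decomposition of $\nu$.

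The plan is to compose the two disintegrations and identify the resulting decomposition of $\mu$ as its ergodic decomposition. First I would observe that since $\nu=\int_{\ov Y}\ov\nu_{\ov y}\,d\ov\nu(\ov y)$ is the disintegration of $\nu$ over $\ov\nu=\nu|_{\mathcal I_\nu}$, we can substitute this into $\mu=\int_Y\mu_y\,d\nu(y)$ and obtain, by the chain rule / tower property for disintegrations, $\mu=\int_{\ov Y}\Big(\int_{\ov\pi^{-1}(\ov y)}\mu_y\,d\ov\nu_{\ov y}(y)\Big)\,d\ov\nu(\ov y)$. Denote $m_{\ov y}:=\int_{\ov\pi^{-1}(\ov y)}\mu_y\,d\ov\nu_{\ov y}(y)$; this is a probability measure on $X$ concentrated on $\pi^{-1}(\ov\pi^{-1}(\ov y))$. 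So it remains to check (a) each $m_{\ov y}$ is $T$-invariant, (b) each $m_{\ov y}$ is $T$-ergodic, and (c) the map $\ov y\mapsto m_{\ov y}$ is $\ov\nu$-essentially injective; together with the integral representation these say precisely that $\mu=\int m_{\ov y}\,d\ov\nu(\ov y)$ is the ergodic decomposition.

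For (a): $T$-invariance of $m_{\ov y}$ follows because $S$-invariance of $\ov\nu_{\ov y}$ plus essential uniqueness of disintegrations forces $T_*\mu_y=\mu_{S y}$ for $\nu$-a.e.\ $y$, hence $T_*m_{\ov y}=\int \mu_{Sy}\,d\ov\nu_{\ov y}(y)=\int\mu_y\,d(S_*\ov\nu_{\ov y})(y)=\int\mu_y\,d\ov\nu_{\ov y}(y)=m_{\ov y}$ for $\ov\nu$-a.e.\ $\ov y$ (here I use that $\ov\pi^{-1}(\ov y)$ is $S$-invariant). For (c): if $m_{\ov y}=m_{\ov{y'}}$ on a positive-measure set of pairs, then projecting to $Y$ gives $\ov\nu_{\ov y}=\ov\nu_{\ov{y'}}$, contradicting essential injectivity of the ergodic decomposition of $\nu$; so $\ov y\mapsto m_{\ov y}$ is injective a.e. The main obstacle — and the only place relative ergodicity is used — is (b), ergodicity of $m_{\ov y}$. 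Here is how I would argue it: suppose $A\subset X$ is $T$-invariant; then $\ind A$ is a $T$-invariant $L^1(\mu)$ function, so by relative ergodicity of $T$ over $S$ it is $\pi^{-1}(\mathcal C)$-measurable, i.e.\ $\ind A=\ind{\pi^{-1}(B)}$ $\mu$-a.e.\ for some $B\subset Y$, and $B$ is then $S$-invariant mod $\nu$. By ergodicity of $S$ on $(\ov\pi^{-1}(\ov y),\ov\nu_{\ov y})$ for $\ov\nu$-a.e.\ $\ov y$, the set $B$ has $\ov\nu_{\ov y}$-measure $0$ or $1$, hence $A=\pi^{-1}(B)$ has $m_{\ov y}$-measure $0$ or $1$; some care with null sets (the exceptional $\ov y$ may depend on $A$, so one restricts to a countable generating algebra of $T$-invariant sets, or rather uses that the $\sigma$-algebra of $T$-invariant sets equals $\pi^{-1}$ of the $S$-invariant sets and passes to the ergodic decomposition of $\nu$) gives ergodicity of $m_{\ov y}$ for $\ov\nu$-a.e.\ $\ov y$. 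That measure-zero bookkeeping is the one genuinely delicate point; everything else is the routine tower property for disintegrations together with uniqueness of the ergodic decomposition.
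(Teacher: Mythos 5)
The paper gives no proof of Lemma~\ref{l:re1}: after setting up the two disintegrations $\mu=\int_Y\mu_y\,d\nu(y)$ and $\nu=\int_{\ov Y}\ov\nu_{\ov y}\,d\ov\nu$ and noting that the ergodic components of $S$ are $(\ov\pi^{-1}(\ov y),\ov\nu_{\ov y},S)$, it simply says ``Therefore, we have the following lemma.'' So the comparison is really against a one-word argument, and your task is to supply the observation the authors regarded as immediate.

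Your decomposition into (a) invariance, (b) ergodicity, (c) essential injectivity is a reasonable plan, and (a) and (c) are fine (indeed (c) is optional once (a) and (b) hold, by uniqueness of ergodic decomposition, which already forces the pushforward of $\ov\nu$ under $\ov y\mapsto m_{\ov y}$ to be the decomposing measure). The delicate point, as you say, is (b), and your first version of (b) has a real gap beyond null-set bookkeeping: you show that each set $A$ that is $T$-invariant \emph{mod $\mu$} gets $m_{\ov y}$-measure $0$ or $1$, but ergodicity of $m_{\ov y}$ is the statement about sets invariant \emph{mod $m_{\ov y}$}, and a priori there could be $m_{\ov y}$-invariant sets that are not $\mu$-invariant. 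Restricting to a countable algebra does not repair this. What does repair it is exactly the fix you mention last: relative ergodicity of $T$ over $S$ plus the trivial inclusion $\pi^{-1}(\mathcal I_\nu)\subset\mathcal I_\mu$ give $\mathcal I_\mu=\pi^{-1}(\mathcal I_\nu)$ mod $\mu$, and then one invokes the theorem that the ergodic decomposition of $\mu$ \emph{is} the disintegration of $\mu$ over $\mathcal I_\mu$. By the tower property, disintegrating $\mu$ over $\pi^{-1}(\mathcal I_\nu)\subset\pi^{-1}(\mathcal B(Y))$ in two stages produces exactly $m_{\ov y}=\int_{\ov\pi^{-1}(\ov y)}\mu_y\,d\ov\nu_{\ov y}(y)$. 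This route is not merely cleaner bookkeeping --- it is what actually proves ergodicity, so I would promote it from an afterthought to the main argument and drop the first attempt at (b). This also recovers (a) and (c) for free and matches, I believe, what the authors had in mind when they called the lemma immediate.
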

Note that it follows that the ergodic components of $T$ have as their factors (via the relevant restriction of $\pi$) ergodic components of $S$, and that the spaces of ergodic components of $T$ and $S$ are the same (i.e.\ $\ov{X}=\ov{Y}$).

\begin{Lemma} \label{l:re2}
Let $T$ be relatively ergodic over $S$. Then the following are equivalent:
\begin{enumerate}[(a)]
\item
$T$ over $S$ has relatively discrete spectrum.
\item
The ergodic components of $T$ have relatively discrete spectrum over the ergodic components of $S$ being their relevant factors.
\end{enumerate}
\end{Lemma}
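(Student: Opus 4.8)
The plan is to prove Lemma~\ref{l:re2} by exploiting the defining property~\eqref{C41} of relative discrete spectrum and the structural description of ergodic components provided by Lemma~\ref{l:re1}. Throughout, keep the notation of Lemma~\ref{l:re1}: $\mu=\int_Y\mu_y\,d\nu(y)$, $\nu=\int_{\ov Y}\ov\nu_{\ov y}\,d\ov\nu(\ov y)$, the factor maps $\pi\colon X\to Y$ and $\ov\pi\colon Y\to\ov Y$, and recall that since $T$ is relatively ergodic over $S$ the two spaces of ergodic components coincide, $\ov X=\ov Y$, and the ergodic components of $T$ are $\big(\pi^{-1}(\ov\pi^{-1}(\ov y)),\int_{\ov\pi^{-1}(\ov y)}\mu_y\,d\ov\nu_{\ov y}(y)\big)$ with the relevant restriction of $\pi$ realizing the ergodic component $(\ov\pi^{-1}(\ov y),\ov\nu_{\ov y})$ of $S$ as a factor.

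For the implication (a)$\Rightarrow$(b), suppose $T$ has relatively discrete spectrum over $S$, witnessed by a dense family $\cf\subset L^2(X,\mu)$ as in~\eqref{C41} (with $\cc=\pi^{-1}(\mathcal C)$): for each $F\in\cf$ and each $\delta>0$ there are $g_1,\dots,g_k\in L^2(X,\mu)$ with $\min_j\|F\circ T^h-g_j\|_{L^2(\mu_y)}<\delta$ for $\nu$-a.e.\ $y$ and all $h\in\Z$. The disintegration of $\mu$ over the measure $\int_{\ov\pi^{-1}(\ov y)}\mu_y\,d\ov\nu_{\ov y}(y)$ on the ergodic component of $T$ is again $(\mu_y)$, now indexed by $y\in\ov\pi^{-1}(\ov y)$. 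Hence for $\ov\nu$-a.e.\ $\ov y$, the restriction to the ergodic component of the same family $\cf$ (after discarding the $\ov\nu$-null set of bad $\ov y$, and using that $\cf$ restricts to a dense family in $L^2$ of a.e.\ ergodic component, which one checks by a standard approximation/separability argument) witnesses~\eqref{C41} for the ergodic component of $T$ over the ergodic component of $S$: the condition is literally the same, only the range of $y$ over which ``a.e.'' is asserted has been localized to a fiber $\ov\pi^{-1}(\ov y)$, and Fubini applied to $\nu=\int\ov\nu_{\ov y}\,d\ov\nu$ converts ``$\nu$-a.e.\ $y$'' into ``$\ov\nu$-a.e.\ $\ov y$, and $\ov\nu_{\ov y}$-a.e.\ $y$''. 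This gives (b).

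For the converse (b)$\Rightarrow$(a), run the argument in reverse: for $\ov\nu$-a.e.\ $\ov y$ pick the witnessing family $\cf_{\ov y}$ on the ergodic component; the main task is to assemble these fiberwise families into a single dense family $\cf\subset L^2(X,\mu)$ valid for~\eqref{C41} over $S$. Here I would use separability of $L^2(X,\mu)$ to reduce to a countable dense set $\{F_i\}$, and for each $F_i$ and each rational $\delta>0$ use a measurable selection of the finite approximating sets $g_1^{\ov y},\dots,g_{k(\ov y)}^{\ov y}$ across the fibers (the number $k(\ov y)$ can be made measurable and, after passing to a subset of $\ov Y$ of measure $>1-\delta$, bounded; alternatively invoke Proposition~5.7 of~\cite{Au} to get the non-ergodic Furstenberg--Zimmer structure directly). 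Gluing these selections produces finitely many functions on $X$ that work for $\ov\nu$-a.e.\ $\ov y$ simultaneously, hence for $\nu$-a.e.\ $y$ by Fubini again, which is exactly~\eqref{C41}. The main obstacle is this measurable-selection / gluing step in (b)$\Rightarrow$(a): one must be careful that ``finitely many $g_j$'' in the fiberwise statement can be chosen measurably in $\ov y$ with uniformly bounded cardinality on a large set, and that the resulting global functions still lie in $L^2(X,\mu)$ and approximate $F\circ T^h$ uniformly in $h$ on a.e.\ fiber; this is where citing the non-ergodic version of the Furstenberg--Zimmer theory (Remark~\ref{r:Tim+Nil}, \cite{Au}) is the cleanest route, since it already packages the relative Kronecker factor without an ergodicity hypothesis, after which (a)$\Leftrightarrow$(b) becomes the tautology that the relative Kronecker factor of $T$ over $S$ disintegrates over $\ov Y=\ov X$ into the relative Kronecker factors of the ergodic components.
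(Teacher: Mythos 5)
Your proof follows the same overall skeleton as the paper's: invoke Lemma~\ref{l:re1} to see that the disintegration of each ergodic component over its relevant factor uses the same conditional measures $(\mu_y)$ as the total disintegration of $\mu$ over $\nu$, and then unwind condition~\eqref{C41}, with (a)$\Rightarrow$(b) handled as you describe. Where you diverge is in (b)$\Rightarrow$(a). The paper refers to the proof of~\eqref{opo}: there, instead of choosing an arbitrary finite approximating set $\{g_1,\ldots,g_k\}$ fiber by fiber, one takes the canonical orbit $\{F\circ T^j:\ |j|\le k\}$ and observes that the set $W_k$ of fibers on which this orbit is an $\varepsilon$-net is \emph{measurable} with measure tending to $1$ as $k\to\infty$, which is exactly Furstenberg's criterion for $F$ being compact; this sidesteps the measurable-selection/gluing problem you identify as the main obstacle. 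You instead attack that problem directly (measurably selecting the $g_j^{\ov y}$ with uniformly bounded cardinality on a set of measure $>1-\delta$) or defer to Austin's Proposition~5.7 for the disintegration of the relative Kronecker factor. Both routes are legitimate: the paper's orbit-approximation argument is lighter because its data ($F\circ T^j$) are already globally defined, whereas your route is more explicit about where the technical work lies. Note that adapting the~\eqref{opo} argument to the general relative setting still requires one to check that, under (b), the orbit of a given $F$ restricted to a.e.\ fiber is precompact --- automatic when the fibers have genuine discrete spectrum as in the ${\rm DISP}_{\rm ec}$ case, but needing a small fiberwise approximation step in the general relatively-discrete-spectrum case --- so the caution you raise is not misplaced.
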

\begin{proof} By Lemma~\ref{l:re1}, we see that the disintegration of an ergodic component $\pi^{-1}(\ov{\pi}^{-1}(\ov{y}))$   over $\ov{\pi}^{-1}(\ov{y})$ (which is its factor) consists of the same conditional measures $\mu_y$ as the total disintegration of $\mu$ over $\nu$. We proceed now as in the proof of the equality $\mathcal{K}(I_\kappa)=\mathcal{D}_{\rm DISP_{ec}}$ (page~\pageref{opo}), showing compactness.
\end{proof}

Recall that an automorphism $T$ is {\em distal} if it is a limit of a transfinite (indexed by countable ordinals) sequence of consecutive maximal Kronecker extensions (if an ordinal is not isolated, we pass to the corresponding inverse limit). Note that, by the very definition, the $\sigma$-algebra Inv is contained in {\bf the} Kronecker factor of $T$, so in  this transfinite chain of consecutive extensions, all but (perhaps) the first one are relatively ergodic. By applying Lemma~\ref{l:re2} and transfinite induction, we obtain the following.

\begin{Lemma}\label{l:re3} $T$ is distal if and only if all its ergodic components are distal.\end{Lemma}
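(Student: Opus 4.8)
The plan is to prove Lemma~\ref{l:re3} by transfinite induction on the distal rank, using Lemma~\ref{l:re2} as the induction step and handling limit ordinals by the stability of relative ergodicity under inverse limits. Recall first the bookkeeping: $T$ is distal means there is a countable ordinal $\theta$ and an increasing transfinite chain of factors $(\mathcal{B}_\xi)_{\xi\le\theta}$ with $\mathcal{B}_0$ the trivial $\sigma$-algebra, $\mathcal{B}_\theta=\mathcal{B}$, each $\mathcal{B}_{\xi+1}$ obtained from $\mathcal{B}_\xi$ by a maximal Kronecker (relatively discrete spectrum) extension, and $\mathcal{B}_\lambda=\bigvee_{\xi<\lambda}\mathcal{B}_\xi$ at limit ordinals. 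As observed just before the statement, since $\mathrm{Inv}$ sits inside the Kronecker factor, all extensions $\mathcal{B}_\xi\subset\mathcal{B}_{\xi+1}$ for $\xi\ge1$ are relatively ergodic, and moreover the whole tower above $\mathcal{B}_1$ is relatively ergodic over $\mathcal{B}_1$; the factor $\mathcal{B}_1$ itself is a Kronecker (hence discrete spectrum, hence distal) system whose ergodic components are rotations, so the ``base'' case is fine on both sides of the equivalence.

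For the forward direction, suppose $T$ is distal with chain $(\mathcal{B}_\xi)_{\xi\le\theta}$. Disintegrate $\mu$ over $\mathcal{I}_\mu$ to get the ergodic components $\mu_{\bar x}$. The chain $(\mathcal{B}_\xi)$ restricts, for $\ov\nu$-a.e. $\bar x$, to a transfinite chain of factors of the ergodic component $(X,\mathcal{B},\mu_{\bar x},T)$: indeed each $\mathcal{B}_\xi$ is $T$-invariant hence a factor of each ergodic component, and the join at limit stages is preserved under restriction. By Lemma~\ref{l:re2} applied at each successor step $\xi\mapsto\xi+1$ (legitimate because the extension is relatively ergodic once $\xi\ge1$), the extension of ergodic components $\mathcal{B}_\xi\!\restriction_{\mu_{\bar x}}\subset\mathcal{B}_{\xi+1}\!\restriction_{\mu_{\bar x}}$ has relatively discrete spectrum for a.e. $\bar x$; a countable-union argument lets us choose a single full-measure set of $\bar x$ working for all countably many successor ordinals $<\theta$. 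At limit ordinals the restricted chain is the corresponding inverse limit (join). Hence, for a.e. $\bar x$, the ergodic component $\mu_{\bar x}$ carries a transfinite chain of relatively-discrete-spectrum (and at limits, inverse-limit) extensions starting from a rotation, i.e. it is distal. For the converse, assume a.e. ergodic component is distal. One runs the transfinite construction of the maximal distal factor of $T$ (which exists as a characteristic-type construction, cf. the DIST entry in the list of examples): build the chain $(\mathcal{B}_\xi)$ inside $\mathcal{B}$ by taking successive maximal relatively-discrete-spectrum extensions over $\mathrm{Inv}$. Each such step is relatively ergodic, so by Lemma~\ref{l:re2} the step $\mathcal{B}_\xi\subset\mathcal{B}_{\xi+1}$, read fiberwise over the ergodic components, is again of relatively-discrete-spectrum type; the process must stabilize at some countable ordinal $\theta$ with $\mathcal{B}_\theta=\mathcal{B}$, because if $\mathcal{B}_\theta\subsetneq\mathcal{B}$ then on a positive-measure set of ergodic components the distal tower of the component would still have a proper relatively-discrete-spectrum extension available inside $\mathcal{B}$, contradicting maximality combined with the assumed distality of (a.e.) component. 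This gives a distal tower for $T$, so $T$ is distal.

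The main obstacle, and the step deserving care, is the interchange of "restrict the global distal tower to an ergodic component" with "the tower read fiberwise is the distal tower of that component" — in particular verifying that a maximal Kronecker extension $\mathcal{B}_\xi\subset\mathcal{B}_{\xi+1}$ restricts, for a.e. ergodic component, to a \emph{maximal} relative-discrete-spectrum extension, not merely to \emph{some} such extension. This is exactly the content needed to make the converse direction's stabilization argument go through, and it is where Lemma~\ref{l:re2} (together with the remark following Lemma~\ref{l:re1} that ergodic components of $T$ project onto ergodic components of $S$, with the same space of ergodic components) does the real work: the relative Kronecker factor $\mathcal{K}(\mathcal{B}_\xi)$, disintegrated over $\mathrm{Inv}$, must coincide fiberwise with the relative Kronecker factor over the component base, by the very characterization~\eqref{C41} — both are governed by the \emph{same} conditional measures $\mu_y$, as in the proof of~\eqref{opo}. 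A secondary but routine point is the limit-ordinal case: one needs that an inverse limit (join) of factors restricts to the inverse limit of the restrictions $\ov\nu$-a.e., and that relative ergodicity passes to inverse limits (already noted in the bulleted list before Lemma~\ref{l:re1}); both are standard measure-disintegration arguments. Finally, all the "a.e." choices are over countably many ordinals $<\theta$, so a single exceptional null set suffices and no measurability pathology arises.
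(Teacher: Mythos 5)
Your proof takes essentially the same route as the paper's, which consists of the single sentence preceding the lemma ("By applying Lemma~\ref{l:re2} and transfinite induction, we obtain the following"); you unpack the transfinite induction in both directions and correctly identify the one subtle point — that the maximal relative Kronecker factor at each stage must be read fiberwise as the fiberwise maximal one — attributing it to Lemma~\ref{l:re2} together with the argument establishing~\eqref{opo}. The exposition is sound and consistent with the paper's intent.
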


\begin{proof}[Proof of (iv)-(v)]~\\ ~
{\bf (iv)}
This follows directly from Lemma~\ref{l:re3}.\\
\noindent
{\bf (v)}
It is clear that if $(q_n)$ is a rigidity time for an a.e.\ ergodic component, it is also a rigidity time for the whole automorphisms. Not vice versa however (for $(q_n)$ sufficiently sparsed). We will provide a relevant construction below.
\end{proof}

\paragraph{${\rm RIG}_{\rm ec}$ is a proper subclass of ${\rm RIG}$}
Let us first notice that we only need to construct a continuous measure $\sigma$ on the circle such that
\beq\label{rigid10}
e^{2\pi i q_n\cdot}\to 1\text{ in measure }\sigma\text{ but not } \sigma-\text{a.e.}\eeq
Indeed, suppose \eqref{rigid10} holds, and consider on $\T^2$ the automorphism
$$
T(x,y)=(x,y+x)\text{ with measure }\sigma\ot\,{\rm Leb}.$$
If $F(x,y)=f(x)e^{2\pi i\ell y}$ then by \eqref{rigid10},
$$
\int|F(T^{q_n}(x,y))-F(x,y)|\,d\sigma(x)dy=\int|f(x)||e^{2\pi iq_n\ell x}-1|\,d\sigma(x)\to 0$$
when $n\to\infty$. On the other hand, the rotation by $x$ on an ergodic component $\{x\}\times\T$ has $(q_n)$ as its rigidity time if and only if $q_nx\to0$ mod~1. This is not true for $\sigma$-a.e.\ $x\in\T$ in view of \eqref{rigid10}.

We now sketch how to construct such a measure assuming that $(q_n)$ is sufficiently sparsed. Fix $0<p_n<1$ so that $p_n$ is decreasing to zero and $\sum_{n\geq1}p_n=\infty$. Set $f_n(x)=\{q_nx\}$. We intend to construct a Cantor set (together with a Cantor measure $\sigma$ on it). Let
$$
A_n:=f_n^{-1}([1/4,3/4]), \;B_n=f_n^{-1}([0,p_n]).$$
Our postulates are:
$$\sigma(B_n)=1-p_n,\; \sigma(A_n)=p_n.$$
In fact, we need to be more precise in description of the measure at stage $n$ to be able to continue its definition. So at stage $n$ the circle is divided into intervals of the form $[\frac j{q_n},\frac{j+1}{q_n})$ (many of such intervals are of measure $\sigma$ equal to zero). We now require that the conditional measures satisfy:
\beq\label{kf}
\sigma\left(B_n|[\frac j{q_n},\frac{j+1}{q_n})\right)=1-p_n,\,
\sigma\left(A_n|[\frac j{q_n},\frac{j+1}{q_n})\right)=p_n\eeq
for each $j=0,\ldots,q_n-1)$.
Passing to step $n+1$, we require that all the intervals $[\frac j{q_n},\frac{j+1}{q_n}))$   contain at least two intervals of the form $[\frac k{q_{n+1}},\frac{k+1}{q_{n+1}})$, we choose two of such (of course only in those $[\frac j{q_n},\frac{j+1}{q_n})$ which are of positive measure $\sigma$) and apply the rule~\eqref{kf} to $A_{n+1}$, $B_{n+1}$ with $p_n$ replaced with $p_{n+1}$.

Note that $\int e^{2\pi i q_nx}\,d\sigma(x)=1+O(p_n(1+p_n)+1\cdot p_n)$, so $e^{2\pi i q_n\cdot}\to 1$ in measure~$\sigma$. On the other hand $\sigma(A_n)=p_n$  and the sets $A_n$ are almost independent. Since $\sum_{n\geq1}p_n=\infty$, for $\sigma$-a.e.\ $x$, we have $x\in A_n$ for infinitely many $n$ (by the Borel-Cantelli lemma), so \eqref{rigid10} holds.

\subsubsection{Strong $\bfu$-MOMO property of systems whose visible measures yield systems in an ec-class}
While we have seen rather unclear relations between $\cf$ and $\cf_{\rm ec}$ (cf.\ Proposition~\ref{p:examples}), on the topological level we always have the following.

\begin{Prop}\label{p:tilde}
Let $\cf$ be a characteristic class. Then $\mathscr{C}_{\cf}\subset\mathscr{C}_{\cf_{\rm ec}}$.
\end{Prop}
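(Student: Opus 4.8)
The plan is to show directly that every topological system $(X,T)\in\mathscr{C}_{\cf}$ also lies in $\mathscr{C}_{\cf_{\rm ec}}$. By definition, $(X,T)\in\mathscr{C}_{\cf}$ means that $(X,\mathcal{B}(X),\nu,T)\in\cf$ for every $\nu\in V(X,T)$, and we must verify that $(X,\mathcal{B}(X),\nu,T)\in\cf_{\rm ec}$ for every $\nu\in V(X,T)$. So fix a visible measure $\nu\in V(X,T)$; I would like to conclude that a.e.\ ergodic component of $\nu$ is in $\cf$, using that $\nu$ itself is in $\cf$.

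First I would recall that $\cf$ is closed under taking factors (Definition~\ref{def:charK}). The $\sigma$-algebra $\mathcal{I}_\nu$ of $T$-invariant sets is a factor sub-$\sigma$-algebra of $\mathcal{B}(X)$ on which $T$ acts as the identity, and its disintegration $\nu=\int\nu_{\bar x}\,d\nu(\bar x)$ over $\mathcal{I}_\nu$ is precisely the ergodic decomposition of $\nu$. The key point is that each ergodic component $(X,\mathcal{B}(X),\nu_{\bar x},T)$ is a \emph{factor} of $(X,\mathcal{B}(X),\nu,T)$ in a suitable sense — more precisely, this is the standard fact (used repeatedly in the excerpt, e.g.\ in the proof that $\cf_{\rm ec}$ is closed under factors) that the ergodic components of a system in a characteristic class $\cf$ again belong to $\cf$ for a.e.\ component. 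Indeed, one way to see this: if $g\in L^2(X,\nu)$ satisfies $\EE^\nu[g\mid\mathcal{D}_\cf]=0$ where $\mathcal{D}_\cf=\mathcal{B}(X)_\cf$ is the largest $\cf$-factor, then since $\nu\in\cf$ forces $\mathcal{D}_\cf=\mathcal{B}(X)$ (mod $\nu$), we get $g=0$; but one actually needs the fibered statement. The clean route is to invoke that a characteristic class, being closed under countable joinings and factors, satisfies: $\nu\in\cf$ implies $\nu_{\bar x}\in\cf$ for $\nu$-a.e.\ $\bar x$. This is exactly the ``only if'' content hiding in the identity $\cf\cap\mathrm{Erg}=\cf_{\rm ec}\cap\mathrm{Erg}$ combined with the factor-closure argument spelled out in the proof that $\cf_{\rm ec}$ is closed under factors (apply that argument with the factor $\ca=\mathcal{I}_\nu$, noting $(X,\nu,T)\in\cf\subset\cf_{\rm ec}$ trivially once we know ergodic components of an $\cf$-system are in $\cf$ — so one should phrase it without circularity, directly: ergodic components are factors, factors of $\cf$-systems are in $\cf$).

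Once we know $\nu_{\bar x}\in\cf$ for $\nu$-a.e.\ $\bar x$, then by the very definition of $\cf_{\rm ec}$ we have $(X,\mathcal{B}(X),\nu,T)\in\cf_{\rm ec}$. Since $\nu\in V(X,T)$ was arbitrary, this gives $(X,T)\in\mathscr{C}_{\cf_{\rm ec}}$, i.e.\ $\mathscr{C}_{\cf}\subset\mathscr{C}_{\cf_{\rm ec}}$. Note this also re-proves inclusion~\eqref{ec1}. The only delicate point — and the one I would be most careful about — is justifying that a.e.\ ergodic component of a system in $\cf$ belongs to $\cf$; this is standard (it is the reason one defines characteristic classes via factors and joinings), but it should be stated as a lemma or cross-referenced rather than waved through, since the whole content of the proposition rests on it.
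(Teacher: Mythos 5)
Your argument hinges on the lemma that a.e.\ ergodic component of a measure-theoretic system in a characteristic class $\cf$ again lies in $\cf$; equivalently, that $\cf\subset\cf_{\rm ec}$. This is \emph{false} in general, and the paper itself provides a counterexample: Proposition~\ref{p:examples}~(v) shows $\bigl({\rm RIG}_{(q_n)}\bigr)_{\rm ec}\subsetneq {\rm RIG}_{(q_n)}$, via an explicit non-ergodic automorphism on $\T^2$ that is rigid along $(q_n)$ even though $\sigma$-a.e.\ ergodic component fails to be. Note also that the fact you cite as justification --- the proof that $\cf_{\rm ec}$ is closed under factors --- does not actually use your claim; it uses only the \emph{definition} of $\cf_{\rm ec}$ (ergodic components of a system already known to be in $\cf_{\rm ec}$ lie in $\cf$), which is weaker and not what you need here. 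Ergodic components are also not factors in the dynamical sense (a factor is an invariant sub-$\sigma$-algebra with the same ambient measure, not a different measure on the same $\sigma$-algebra), so factor-closure gives you nothing directly.

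The proposition is nonetheless true, and the fix is to obtain the membership $\nu_{\bar x}\in\cf$ from the \emph{topological} hypothesis rather than from $\nu\in\cf$. Each ergodic component $\nu_{\bar x}$ is itself an ergodic $T$-invariant measure on $X$, and ergodic measures are always visible (a.e.\ point is generic by the Birkhoff theorem; the paper records $M^e(X,T)\subset V(X,T)$). Hence $(X,T)\in\mathscr{C}_{\cf}$ directly yields $(X,\mathcal{B}(X),\nu_{\bar x},T)\in\cf$ for every ergodic $\nu_{\bar x}\in M(X,T)$, and then~\eqref{ec2} gives $(X,T)\in\mathscr{C}_{\cf_{\rm ec}}$. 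This is exactly the paper's one-line proof. The structural lemma you reached for is both unnecessary and unprovable.
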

\begin{proof} This follows immediately from the fact that a homeomorphism $T$ (acting on a compact metric space $X$) belongs to $\mathscr{C}_{\cf_{\rm ec}}$ if and only if for each $\kappa\in M^e(X,T)$, $(X,\mathcal{B}(X),\kappa,T)\in \cf$.\end{proof}

Note that in view of Proposition~\ref{p:tilde} and Proposition~\ref{p:examples},
\beq\label{tildeRIG}
\mathscr{C}_{{\rm RIG}_{(q_n)}}=\mathscr{C}_{({\rm RIG}_{(q_n)})_{\rm ec}}.\eeq

The special role of ec-classes stands in the next proposition.
\begin{Prop}\label{p:mo1}
Let $\cf$ be a characteristic class. Then $\bfu \perp \mathscr{C}_{\cf_{\rm ec}}$ if and only if each element in $\mathscr{C}_{\cf_{\rm ec}}$ satisfies the strong $\bfu$-MOMO property.
\end{Prop}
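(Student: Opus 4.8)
The nontrivial direction is that $\bfu\perp\mathscr{C}_{\cf_{\rm ec}}$ implies that every $(X,T)\in\mathscr{C}_{\cf_{\rm ec}}$ has the strong $\bfu$-MOMO property (the converse is immediate, since strong $\bfu$-MOMO for $(X,T)$ trivially gives $\bfu\perp(X,T)$ by taking the trivial partition $b_k=k$, or rather by noting that~\eqref{defmomo} forces~\eqref{ort1} uniformly in $x$). So assume $\bfu\perp\mathscr{C}_{\cf_{\rm ec}}$ and suppose, for contradiction, that some $(X,T)\in\mathscr{C}_{\cf_{\rm ec}}$ fails the strong $\bfu$-MOMO property: there exist $f\in C(X)$, an increasing sequence $(b_k)$ with $b_{k+1}-b_k\to\infty$ (which, by Remark~\ref{r:strongMOMOsubsequence}, we may additionally take to satisfy $(b_{k+1}-b_k)/b_k\to 0$), points $x_k\in X$ with $b_k\le n<b_{k+1}$-sums $\bigl|\sum_{b_k\le n<b_{k+1}}\bfu(n)f(T^nx_k)\bigr|$ bounded below along a subsequence in the Ces\`aro-type average~\eqref{defmomobis}, i.e.\ $\frac1{b_K}\sum_{k<K}\bigl|\sum_{b_k\le n<b_{k+1}}\bfu(n)f(T^nx_k)\bigr|\ge\varepsilon$ for infinitely many $K$.

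\textbf{Building a bad point and a bad system.} The standard device is to concatenate the blocks: form a single orbit in a (possibly larger) topological system that "sees" all the pieces $f(T^nx_k)$ glued along the intervals $[b_k,b_{k+1})$. Concretely, consider the product system $(X\times X_{\bfu},T\times S)$ and, along the intervals $[b_k,b_{k+1})$, run the orbit of $(x_k,S^{b_k}\bfu)$; passing to the closure of the resulting orbit of a suitably constructed point $z^\ast$ in a shift space over $X$ (or over $X\times X_{\bfu}$) one obtains a new topological system $(Y,R)$ together with a factor map $p\colon Y\to X$. The key point is that $(Y,R)$ must still lie in $\mathscr{C}_{\cf_{\rm ec}}$: by~\eqref{ec2} this is a condition only on \emph{ergodic} measures, and every ergodic measure of $(Y,R)$ projects (under $p$) to an \emph{ergodic} measure of $(X,T)$, hence yields a system in $\cf$ after factoring; one must check that the extension $Y\to X$ does not create ergodic components outside $\cf$. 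The cleanest way to arrange this is to take $(Y,R)$ so that $p$ is an almost one-to-one extension, or more robustly to build $Y$ as an orbit closure inside $X\times X_{\bfu}$ itself, so that ergodic measures on $Y$ are ergodic joinings of ergodic measures on $X$ (which give $\cf$-systems) with Furstenberg systems of $\bfu$; since $\cf$ is characteristic (closed under joinings and factors), the resulting ergodic components of the relevant coordinate stay in $\cf$, placing $(Y,R)\in\mathscr{C}_{\cf_{\rm ec}}$. Then $\bfu\perp(Y,R)$ by hypothesis.

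\textbf{Deriving the contradiction.} Apply $\bfu\perp(Y,R)$ to the function $\tilde f:=f\circ p\in C(Y)$ and the point $z^\ast$. By construction, the Birkhoff sum $\frac1N\sum_{n\le N}\tilde f(R^nz^\ast)\bfu(n)$ decomposes, up to a negligible boundary term, as $\frac1{b_K}\sum_{k<K}\sum_{b_k\le n<b_{k+1}}\bfu(n)f(T^nx_k)$ with $N=b_K$; the triangle inequality applied to the definition of $\bfu\perp(Y,R)$ is not quite enough (it only bounds the sum, not the sum of absolute values), so one must insert, for each block $k$, a unimodular constant $c_k$ with $|c_k|=1$ chosen so that $c_k\sum_{b_k\le n<b_{k+1}}\bfu(n)f(T^nx_k)=\bigl|\sum_{b_k\le n<b_{k+1}}\bfu(n)f(T^nx_k)\bigr|$; by further enlarging the system (taking a product with a rotation or, more simply, by a standard averaging/choice-of-subsequence argument over the finitely many relevant phases, or by appealing to the version of the lifting lemma Proposition~\ref{p:momogen} that handles this) one arranges these phases inside the dynamics as well, so that $\bfu\perp(Y,R)$ forces $\frac1{b_K}\sum_{k<K}\bigl|\sum_{b_k\le n<b_{k+1}}\bfu(n)f(T^nx_k)\bigr|\to 0$, contradicting the lower bound $\varepsilon$. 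This yields the strong $\bfu$-MOMO property for every $(X,T)\in\mathscr{C}_{\cf_{\rm ec}}$.

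\textbf{Main obstacle.} The delicate point is the two intertwined issues of (i) verifying that the concatenated system $(Y,R)$ genuinely stays in $\mathscr{C}_{\cf_{\rm ec}}$ — this is exactly where the hypothesis $\cf_{\rm ec}$ (rather than an arbitrary characteristic class) is used, via~\eqref{ec2}, since the condition on ergodic measures only is stable under the almost-one-to-one / orbit-closure construction — and (ii) converting the single Birkhoff-sum cancellation $\bfu\perp(Y,R)$ into the \emph{sum of moduli} appearing in the strong $\bfu$-MOMO property, which requires absorbing the block phases $c_k$ into the dynamics (this is the technical role played by the lifting lemma on generic points for joinings, Proposition~\ref{p:momogen}). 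Everything else is bookkeeping: controlling boundary terms uses $(b_{k+1}-b_k)/b_k\to0$, and the passage between~\eqref{defmomo} and~\eqref{defmomobis} is Remark~\ref{r:strongMOMOsubsequence}.
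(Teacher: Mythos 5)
Your broad strategy --- concatenate the block data into a new point/system, check the new system stays in the class, apply the hypothesis, and absorb phases so that the single-sum cancellation becomes a cancellation of moduli --- is the right one, and your identification of where the hypothesis $\cf_{\rm ec}$ (as opposed to a general $\cf$) enters is correct. However, there is a genuine gap in the construction of the auxiliary system.

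You propose to take the orbit closure inside $X\times X_{\bfu}$, and justify membership in $\mathscr{C}_{\cf_{\rm ec}}$ by saying that ergodic measures on the orbit closure are joinings of ergodic measures on $X$ (in $\cf$) with Furstenberg systems of $\bfu$, and that ``since $\cf$ is characteristic, the resulting ergodic components of the relevant coordinate stay in $\cf$.'' This does not work: the characteristic class $\cf$ is closed under joinings of systems \emph{each of which lies in $\cf$}, but Furstenberg systems of $\bfu$ are not assumed (and typically are not expected) to be in $\cf$. A nontrivial ergodic joining of a $\cf$-system with a non-$\cf$-system is generally not in $\cf$, so the orbit closure in $X\times X_{\bfu}$ would fall outside $\mathscr{C}_{\cf_{\rm ec}}$ and the hypothesis $\bfu\perp\mathscr{C}_{\cf_{\rm ec}}$ could not be applied. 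Moreover, there is no need to bring $X_{\bfu}$ into the construction at all --- its role is entirely confined to the Furstenberg-system side, not to the encoding of the points $(y_k)$.

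The paper instead works with a shift space $X:=(Y\times\mathbb{A})^{\Z}$ built from the original system $(Y,S)\in\mathscr{C}_{\cf_{\rm ec}}$ together with a \emph{finite} phase alphabet $\mathbb{A}=\{1,e^{2\pi i/3},e^{4\pi i/3}\}$ chosen via Lemma~\ref{l:cone} (the ``cone trick''), which lets one reduce to cancellation of sums rather than sums of moduli using only three phases rather than arbitrary unimodular constants. The orbit closure $X_x$ of the concatenated point $x$ lies in $\mathscr{C}_{\cf_{\rm ec}}$ for the following reason, which is the crux you need and which your proposal does not supply: because $b_{k+1}-b_k\to\infty$, every visible measure on $X_x$ must (by the Portmanteau theorem) give full mass to the closed ``diagonal'' set $B=\{(v_j,a_j)_j: (v_1,a_1)=(Sv_0,a_0)\}$, hence to $\bigcap_n T^nB$, so the system collapses to a copy of $(Y\times\mathbb{A},\ S\times{\rm Id}_{\mathbb{A}})$ with some invariant measure. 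The ergodic components of the latter are just ergodic measures of $(Y,S)$ (sitting in one of the three $\mathbb{A}$-fibers), which are in $\cf$ precisely because $(Y,S)\in\mathscr{C}_{\cf_{\rm ec}}$ and by~\eqref{ec2} that membership is tested only on ergodic measures. Finally, your invocation of the lifting lemma (Proposition~\ref{p:momogen}) for the phase absorption is misplaced: that lemma is used in the proof of Theorem~\ref{tB}, not here; the phase bookkeeping in this proposition is handled entirely by Lemma~\ref{l:cone} and the finite alphabet $\mathbb{A}$.
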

The below proof of Proposition~\ref{p:mo1} is an adaptation of the proof of Corollary~9 in \cite{Ab-Ku-Le-Ru2}. It uses the following elementary result (see Lemma~18 in \cite{Ab-Ku-Le-Ru2}).
\begin{Lemma}\label{l:cone}
	Assume that $(c_n)\subset \C$ and $(m_n)\subset \N$. Then if the sequence $(c_n)$ is contained in a closed convex cone which is not a half-plane then
	\[
	\frac{1}{m_N}\sum_{n\leq N}c_n \to 0 \iff \frac{1}{m_N}\sum_{n\leq N}|c_n|\to 0 \text{ as }N\to\infty.
	\]
\end{Lemma}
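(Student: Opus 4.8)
The plan is to reduce the statement to the triangle inequality plus one elementary observation about convex cones in the plane. Identify $\C$ with $\R^2$. The content of the hypothesis ``$(c_n)$ lies in a closed convex cone $K$ that is not a half-plane'' is the geometric fact that such a $K$ (in all the situations where the lemma is applied $K$ is a proper pointed cone) is contained in an \emph{open} half-plane bounded by a line through the origin; equivalently, there exist $w\in\C$ with $|w|=1$ and $\delta\in(0,1]$ such that
\[
\operatorname{Re}(\overline{w}\,c)\ \ge\ \delta\,|c|\qquad\text{for all }c\in K .
\]
I would establish this by inspecting the compact set $K\cap S^1$, where $S^1$ is the unit circle: it is empty, a single point, or a closed arc, and since $K$ is not a half-plane this arc has length $<\pi$; choosing $w$ along its bisector makes $c\mapsto\operatorname{Re}(\overline w\,c)$ continuous and \emph{strictly} positive on $K\cap S^1$, hence bounded below there by some $\delta>0$, and homogeneity (writing $c=|c|\cdot(c/|c|)$ for $c\neq0$) promotes this to the displayed bound on all of $K$.

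Granting this, the equivalence is immediate, and this is where the two directions split. The direction ``$\Leftarrow$'' is pure triangle inequality, $\bigl|\tfrac1{m_N}\sum_{n\le N}c_n\bigr|\le\tfrac1{m_N}\sum_{n\le N}|c_n|$, and uses nothing about the cone. For ``$\Rightarrow$'', assume $\tfrac1{m_N}\sum_{n\le N}c_n\to0$; multiplying this average by $\overline w$ and taking real parts gives $\tfrac1{m_N}\sum_{n\le N}\operatorname{Re}(\overline w\,c_n)\to0$. Every term $\operatorname{Re}(\overline w\,c_n)$ is nonnegative — this is exactly where the cone hypothesis enters — so this is an average of nonnegative reals, and the pointwise bound yields
\[
0\ \le\ \frac{\delta}{m_N}\sum_{n\le N}|c_n|\ \le\ \frac1{m_N}\sum_{n\le N}\operatorname{Re}(\overline w\,c_n)\ \longrightarrow\ 0,
\]
hence $\tfrac1{m_N}\sum_{n\le N}|c_n|\to0$, as required.

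The only genuine obstacle is the geometric claim, and even there the real point is reading the hypothesis correctly: ``closed convex cone, not a half-plane'' must be understood to mean that $K\setminus\{0\}$ lies inside an open half-plane (equivalently, $K$ carries a real-linear functional strictly positive off the origin). This is what rules out the degenerate case of $K$ a line through the origin, for which the lemma genuinely fails (take $c_n=(-1)^n$, $m_N=N$: the left-hand average vanishes while the right-hand one equals $1$). With that reading fixed, the argument is entirely as above and is valid for an arbitrary sequence $(m_N)\subset\N$.
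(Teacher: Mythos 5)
Your proof is correct and is exactly the standard argument for this lemma (which the paper does not reprove but imports as Lemma~18 of \cite{Ab-Ku-Le-Ru2}): pick a unit vector $w$ along the bisector of the cone so that $\operatorname{Re}(\overline w c)\ge\delta|c|$ on the cone, then compare the average of $|c_n|$ with the real part of the average of $\overline w c_n$, the converse direction being the triangle inequality. Your side remark that the hypothesis must be read as excluding the degenerate cases of a line or all of $\C$ is accurate and harmless, since in the paper's application the cone is the sector $\{0\}\cup\{z\in\C^*:\arg(z)\in[-\pi/3,\pi/3]\}$, for which your bound holds with $\delta=\cos(\pi/3)$.
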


\begin{proof}[Proof of Proposition~\ref{p:mo1}]
Only one implication needs to be proved.
Suppose that $\bfu \perp \mathscr{C}_{\cf_{\rm ec}}$, and let $(Y,S)\in\mathscr{C}_{\cf_{\rm ec}}$. We fix $f\in C(Y)$, an increasing sequence $(b_k)$ in $\N$, with $b_1=1$ and $b_{k+1}-b_k\to\infty$, and a sequence $(y_k)$ of points in $Y$.
We introduce the finite set $\mathbb{A}:=\{1,e^{2\pi i/3},e^{4\pi i /3}\}$, and for each $k\ge1$, we define $e_k\in\mathbb{A}$ such that the complex number
\[
e_k\Big(\sum_{b_k\leq n<b_{k+1}}f(S^{n-b_k}y_k)\bfu(n) \Big)
\]
is in the closed convex cone $\{0\}\cup\{z\in\C^* : \text{arg}(z)\in[-\pi/3,\pi/3]\}$.
Then, by Lemma~\ref{l:cone}, the convergence that we need to prove, i.e.
\[
\frac{1}{b_K}\sum_{k< K}\Big|\sum_{b_k \leq n <b_{k+1}}f(S^{n-b_k}y_k)\bfu(n) \Big|\tend{K}{\infty} 0
\]
is equivalent to the convergence
\begin{equation}
 \label{eq:toShow}
 \frac{1}{b_K}\sum_{k< K}\sum_{b_k\leq n<b_{k+1}}e_kf(S^{n-b_k}y_k)\bfu(n)\tend{K}{\infty} 0.
\end{equation}
Consider the dynamical system $(X,T)$, where $X:=(Y\times \mathbb{A})^\Z$, and $T$ is the left shift.
Let $x\in X$ be such that
\[
x_n:=(S^{n-b_k}y_k,e_k),\text{ whenever }b_k\leq n<b_{k+1},
\]
and $x_n=x_0$ is a fixed arbitrary point  of $Y\times \mathbb{A}$ for $n\le 0$.

By setting $F:=f\otimes{\rm Id}$ on $Y\times \mathbb{A}$, it easily follows that~\eqref{eq:toShow} amounts to
\[
\frac{1}{b_K}\sum_{k< K}\sum_{b_k\leq n<b_{k+1}}F\circ \pi_0(T^nx)\bfu(n)\tend{K}{\infty} 0.
\]
To prove the convergence above, we define the subspace $X_x$ as the closure of $\{T^nx : n\in\Z\}$. By assumption on $\bfu$, we only
have to check that the system $(X_x,T)$ is in $\mathscr{C}_{\cf_{\rm ec}}$. So let $\mu$ be a visible measure in $(X_x,T)$, and we first consider the case where $x$ itself is generic for $\mu$, along a sequence $(N_\ell)$. Set
\[
 B:=\bigl\{(v_j,a_j)_{j\in\Z}\in X : (v_1,a_1) = (Sv_0,a_0) \bigr\}
\]
Since $b_{k+1}-b_k\to\infty$, we have
\[
\frac{1}{N_\ell}\sum_{n<N_\ell}\delta_{T^nx}(B)\tend{\ell}{\infty} 1,
\]
and since the set $B$ is closed, by the Portmanteau theorem, it must be of full measure $\mu$ in $(X,T)$. Moreover, such a measure $\mu$ must be $T$-invariant, hence,
\begin{equation}\label{where}
	1=\mu\left(\bigcap_{n\in\Z}T^nB\right)=\mu\Big( \bigl\{(v_j,a_j)_{j\in\Z}\in X : \forall j,\ (v_j,a_j) = (S^jv_0,a_0) \bigr\}   \Big).
\end{equation}
Denote by $\mu^{(0)}$ the restriction of $\mu$ to the zero-coordinate (that is, with the above notation, the distribution of $(v_0,a_0)$ under $\mu$). Since $\mu$ is $T$-invariant, it follows that $\mu^{(0)}$ is $(S\times {\rm Id}_{\mathbb{A}})$-invariant. Moreover, $Y\times \mathbb{A}$ consists of three copies of $Y$, each of them is invariant under $S\times {\rm Id}_{\mathbb{A}}$. Thus,
\[
\mu^{(0)}=\alpha_0 \mu^{(0)}_1 + \alpha_1 \mu^{(0)}_2 + \alpha_2 \mu^{(0)}_3,
\]
where $\alpha_0+\alpha_1+\alpha_2=1$, $\alpha_j\geq 0$ and $\mu^{(0)}_j(Y\times \{e^{2\pi  ij /3}\})=1$ for $j=0,1,2$. It follows that the ergodic components of $(Y\times \mathbb{A},\mu^{(0)},S\times {\rm Id}_{\mathbb{A}})$ yield measure-theoretic systems isomorphic to ergodic measures on $(Y,S)$, hence in $\cf$ since $(Y,S)\in \mathscr{C}_{\cf_{\rm ec}}$ (this is the moment in our proof where we use the fact that we deal with a characteristic ec-class and not a general characteristic class $\cf$).
Thus $(Y\times \mathbb{A},\mu^{(0)},S\times {\rm Id}_{\mathbb{A}})\in\cf_{\rm ec}$.
Now, using~\eqref{where}, we see that $(X_x,\mu,T)$ is isomorphic to  $(Y\times \mathbb{A},\mu^{(0)},S\times {\rm Id}_{\mathbb{A}})$, thus is also in $\cf_{\rm ec}$.

Now, suppose that $\mu\in V_T(x')$ for some point $x'$ in the orbit closure of $x$, say $x'=\lim_{r\to\infty}T^{n_r}x$.

If $x'=T^nx$ for some $n\in\Z$, then $\mu\in V_T(x)$ and we already know that $(X_x,\mu,T)\in\{\ast\}\subset\cf_{\rm ec}$ in this case.

If $n_r\to-\infty$, then
$x'=(\ldots,x_0,x_0,x_0,\ldots)$
is a fixed point, and $\mu=\delta_{x'}$. In this case, $(X_x,\mu,T)\in\{\ast\}\subset\cf_{\rm ec}$.

If $n_r\to +\infty$, and if we write $x'=(v_j,a_j)_{j\in\Z}=\lim_{r\to\infty}T^{n_r}x$, then as $b_{k+1}-b_k\to\infty$, there exists at most one $j\in\Z$ such that $(v_{j+1},a_j)\neq (Sv_j,a_j)$.
We can then use the same arguments as for $x$ to show that a measure $\mu$ for which $x$ is quasi-generic satisfies $(X_x,\mu,T) \in \cf_{\rm ec}$.

We conclude that $(X_x,T)$ is in $\mathscr{C}_{\cf_{\rm ec}}$.
\end{proof}

\begin{Remark}
In general, when instead of $\cf_{\rm ec}$ we consider $\cf$, $\bfu\perp\mathscr{C}_{\cf}$ implies the strong $\bfu$-MOMO property for each for $(Y,S)$ in which all invariant measures yield systems in $\cf$ (in particular, if $(Y,S)\in\mathscr{C}_{\cf}$ and each invariant measure is visible).
\end{Remark}

\begin{Question}
Is Proposition~\ref{p:mo1} true for {\bf each} characteristic class?
\end{Question}

\begin{Remark}
 \label{r:mo1subsequence}
 A straightforward adaptation of the proof shows that the subsequence version of Proposition~\ref{p:mo1} also holds:
 for each characteristic class $\cf$ and each increasing sequence of integers $(N_\ell)$, $\bfu$ is $(N_\ell)$-orthogonal to $\mathscr{C}_{\cf_{\rm ec}}$  if and only if each element in $\mathscr{C}_{\cf_{\rm ec}}$ satisfies the strong $\bfu$-MOMO property along $(N_\ell)$. See Remarks~\ref{r:ThmAsubsequence} and~\ref{r:strongMOMOsubsequence}.
\end{Remark}

\section{Lifting lemma}

The purpose of this section is to prove Proposition~\ref{p:momogen}, which is an alternative version of Conze-Downarowicz-Serafin lifting lemma from \cite{Co-Do-Se} and seems to be of independent interest. It may seem weaker than the original where the genericity was lifted to a single orbit, but the main advantage here is that we do not need assumptions on the nature of the second topological space:  it does not have to be a full shift. The second advantage is that the result has its extension to the logarithmic case, see Appendix~\ref{a:logarithmic}, while the lifting lemma of Conze-Downarowicz-Serafin and other results of that type so far have been proved for Ces\`aro averages.

\begin{Prop}
\label{p:momogen}
Let $(Y,S)$ and $(X,T)$ be two topological systems and $u\in Y$ generic along an increasing sequence $(N_m)$ for some $S$-invariant measure $\kappa$ on $Y$.  Let $\rho$ be a joining of $\kappa$ with a $T$-invariant measure $\nu$ on $X$. Then there exist a sequence $(x_n)\subset X$ and a subsequence $(N_{m_\ell})$ such that:
\begin{itemize}
\item the sequence $(S^nu,x_n)$ is generic for $\rho$ along $(N_{m_\ell})$:
\[ \frac1{N_{m_\ell}}\sum_{0\le n< N_{m_\ell}}\delta_{(S^nu,x_n)}\tend{\ell}{\infty}\rho, \]
\item The sequence $(x_n)$ is constituted of longer and longer pieces of orbits. More precisely, $\{n\ge0:\ x_{n+1}\neq Tx_n\}$ is of the form $\{b_1<b_2<\cdots<b_k<b_{k+1}<\cdots\}$, where $b_{k+1}-b_k\to\infty$.
\end{itemize}
\end{Prop}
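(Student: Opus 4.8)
The plan is to build the sequence $(x_n)$ by a direct construction: approximate the joining $\rho$ more and more accurately on longer and longer blocks, where on each block we paste in a finite piece of a $T$-orbit of a well-chosen point of $X$. The ambient idea is that the marginal behaviour of $u$ along $(N_m)$ is already fixed (it is $\kappa$-generic), so we only need to adjust the second coordinate so that the empirical distribution of the pairs $(S^nu,x_n)$ converges to $\rho$ rather than to some arbitrary joining.

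First I would fix a countable family $\{f_i\otimes g_j\}$ of products of continuous functions, dense (in the sup norm, or at least separating measures) in $C(Y\times X)$, since convergence of empirical measures to $\rho$ is tested on such a family. Because $u$ is generic along $(N_m)$ for $\kappa$, for each $m$ the measure $\kappa_m:=\frac1{N_m}\sum_{n<N_m}\delta_{S^nu}$ is close to $\kappa$ for large $m$. Now partition $\{0,1,\dots,N_m-1\}$ into consecutive blocks $[b_k,b_{k+1})$ whose lengths $L_k=b_{k+1}-b_k$ grow slowly (say $L_k\sim$ a slowly increasing function of $k$, chosen at the end), so that on each block the empirical distribution $\frac1{L_k}\sum_{b_k\le n<b_{k+1}}\delta_{S^nu}$ is itself a decent approximation of $\kappa$ for the blocks sitting inside the ``good'' range. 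On each such block I would like the pairs $\frac1{L_k}\sum_{b_k\le n<b_{k+1}}\delta_{(S^nu,x_n)}$ to approximate $\rho$; averaging over $k$ then gives the global convergence. To realize this, disintegrate $\rho$ over its $Y$-marginal $\kappa$, $\rho=\int_Y \delta_y\otimes \rho_y\,d\kappa(y)$ (or, more robustly, use a finite measurable partition $\{P_s\}$ of $Y$ and the conditional measures $\rho(\cdot\mid P_s)$ on $X$): on the portion of a block where $S^nu\in P_s$, one wants the $x_n$'s to be distributed roughly like the $X$-marginal of $\rho$ restricted to $P_s$. Since that $X$-marginal is a $T$-invariant measure (it is absolutely continuous with respect to $\nu$ up to the partitioning — more precisely its average over $s$ weighted by $\kappa(P_s)$ is $\nu$), and every invariant measure on a compact system admits generic points (or at least quasi-generic points giving the right empirical averages on long intervals), I can choose a point of $X$ whose $T$-orbit visits the relevant sets with the correct frequencies, and lay a long piece of that orbit along the positions of the block with $S^nu\in P_s$. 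One must interleave the pieces corresponding to different $s$ within the same block; each switch between pieces creates one ``jump'' $x_{n+1}\ne Tx_n$, but the number of such jumps per block is bounded by (number of partition classes)$\times$(number of alternations), and by refining the partition slowly and letting $L_k\to\infty$ fast enough relative to the refinement, the jumps become sparser and sparser, so $b_{k+1}-b_k\to\infty$ as required. A diagonal argument over the partition refinements, the accuracy parameters, and the dense family $\{f_i\otimes g_j\}$ then produces a single sequence $(x_n)$ and a subsequence $(N_{m_\ell})$ along which $\frac1{N_{m_\ell}}\sum_{n<N_{m_\ell}}\delta_{(S^nu,x_n)}\to\rho$.

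The main obstacle is the bookkeeping that forces the jump set to be exactly of the form $b_1<b_2<\cdots$ with $b_{k+1}-b_k\to\infty$ while \emph{simultaneously} getting the conditional distributions right: one has to choose the block lengths, the fineness of the partition of $Y$, and the lengths of the orbit pieces of $X$ glued in, in a mutually compatible way, and verify that the empirical measure of the pairs really does converge (and not just along a subsequence of blocks). A subtle point is that the second system $(X,T)$ need not be a full shift, so one cannot freely ``write'' arbitrary symbols — this is exactly where the construction must use genuine $T$-orbits and their quasi-generic behaviour, and it is why the statement is phrased with a sequence $(x_n)$ of concatenated orbit segments rather than a single orbit. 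I expect the cleanest way to control everything is to first prove a one-block lemma: given $\varepsilon>0$ and a finite test family, for all sufficiently large $L$ and any window $[b,b+L)$ on which the $Y$-empirical measure is $\varepsilon$-close to $\kappa$, there is a length-$L$ sequence in $X$, made of at most $C(\varepsilon)$ orbit pieces, such that the joint empirical measure on that window is $\varepsilon$-close to $\rho$; then the proposition follows by concatenating such blocks with $\varepsilon\to0$ and $L\to\infty$, and a routine estimate shows the averaged empirical measure converges along a suitable $(N_{m_\ell})$. The logarithmic variant (needed for Appendix~\ref{a:logarithmic}) should go through with the same scheme, replacing Cesàro block averages by their logarithmically weighted analogues.
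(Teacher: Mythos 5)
Your high-level plan — build $(x_n)$ block by block, make the joint empirical measure close to $\rho$ on each block, diagonalize over accuracies and partitions — is the same as the paper's. But the mechanism you propose for filling in the $x_n$'s within a block has a fatal flaw, and it is precisely the flaw that the paper's key idea (a Rokhlin tower in $Y$) is designed to circumvent.

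The problem is the interleaving step. You propose to partition $Y$ into cells $\{P_s\}$, and, within a block $[b_k,b_{k+1})$, to lay a separate orbit piece on the set of positions $n$ where $S^n u\in P_s$. But those positions are \emph{scattered}, not contiguous: as $n$ increments, $S^n u$ hops between cells essentially at every step. Consequently every $n$ with $S^n u$ and $S^{n+1}u$ in different cells of the partition produces a jump $x_{n+1}\neq Tx_n$. For a nontrivial finite partition of $Y$ and $\kappa$-generic $u$, the density of such $n$'s in a long block is a strictly positive constant, namely (approximately) $\kappa\bigl(\bigcup_s (P_s\setminus S^{-1}P_s)\bigr)$, which does not decrease as the block length grows. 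Your claim that ``by refining the partition slowly and letting $L_k\to\infty$ fast enough the jumps become sparser'' therefore fails: refining only increases this density, and increasing $L_k$ does nothing to it. The jump set produced this way has positive lower density, whereas the proposition requires gaps tending to infinity. There is also a secondary gap: you invoke the existence of (quasi-)generic points for the $X$-marginals $\rho(\cdot\mid P_s)$, but these are non-ergodic $T$-invariant measures in general and need not be visible; the existence of quasi-generic points for an arbitrary invariant measure is false.

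The paper's proof resolves both difficulties at once. After first reducing to the case that $(Y,\kappa,S)$ is aperiodic (by taking a product with a uniquely ergodic aperiodic system), it applies the Rokhlin lemma in $Y$: for a height $h>1/\eps$ it finds a base $B$, measurable with respect to $\bigvee_{j<h}S^j\Q_\ell$ for $\ell$ large, with $B,SB,\ldots,S^{h-1}B$ disjoint covering all but $\eps$ of $Y$. The jump times $(b_k)$ are then declared to be the visits of $S^n u$ to $B$, so they are automatically $h$-separated — no bookkeeping with label changes is needed. Between two jumps the $Y$-trajectory is one ``pass through the tower'', i.e.\ a full $h$-step cylinder $\bq$ of $\Q_\lz$, and this is where the refined joint information lives. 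The segment $(x_n)_{b_k\le n<b_{k+1}}$ is then taken to be a single $T$-orbit segment $T^{n-b_k}\xi_k$, and $\xi_k$ is drawn \emph{at random}, independently over $k$, from the conditional distribution $\rho^Y_{B\cap\bq}$, where $\bq$ is the $h$-step $\Q_\lz$-cylinder containing $S^{b_k}u$. The law of large numbers then gives, almost surely, the correct empirical frequencies on atoms of $\Q_\lz\times\P_\lz$ (up to $\eps$ for the part outside the tower), and one fixes a realization. This probabilistic LLN step is what replaces your appeal to (possibly nonexistent) quasi-generic points, and the Rokhlin tower is what makes the jump set genuinely sparse. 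Those two ideas are missing from your proposal and without them the construction cannot be completed.
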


\subsection{Good sequences of partitions}
We need a convenient tool to estimate the weak*-convergence of a sequence of probability measures to a given measure.

\begin{Def}
 Let $(E,d)$ be a compact metric space, and let $\nu$ be a Borel probability measure on $E$, i.e. $\nu\in M(E)$. We consider a sequence $(\P_\ell)$ of finite partitions of $E$ into Borel subsets. The sequence $(\P_\ell)$ is said to be \emph{good for $(E,\nu)$} if the following conditions hold:
\begin{itemize}
 \item for each $\ell$, $\P_{\ell+1}$ refines $\P_\ell$,
 \item $\diam(\P_\ell):=\max_{P\text{ atom of }\P_\ell} \diam(P) \tend{\ell}{\infty}0$,
 \item for each $\ell$ and each atom $P$ of $\P_\ell$, $\nu(\partial P)=0$.
\end{itemize}
\end{Def}

The motivation for introducing this definition comes from the following result.
\begin{Lemma}
 \label{lemma:good_convergence}
 If $(\P_\ell)$ is a good sequence of partitions for $(E,\nu)$, then a sequence $(\nu_n)\subset M(E)$ converges to $\nu$ in the weak*-topology if and only if, for each $\ell$ and each atom $P$ of $\P_\ell$, we have
 \[
  \nu_n(P) \tend{n}{\infty} \nu(P).
 \]
\end{Lemma}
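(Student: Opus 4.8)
The statement is a standard Portmanteau-type characterization of weak*-convergence, specialized to a sequence of partitions whose atoms are $\nu$-continuity sets and whose diameters shrink to zero. First I would dispose of the easy direction: if $\nu_n\to\nu$ weak*, then for every atom $P$ of every $\P_\ell$ we have $\nu(\partial P)=0$ by goodness, so $P$ is a $\nu$-continuity set and the usual Portmanteau theorem gives $\nu_n(P)\to\nu(P)$. The content is the converse, so the bulk of the argument is to show that the convergence $\nu_n(P)\to\nu(P)$ on all atoms of all $\P_\ell$ forces $\int g\,d\nu_n\to\int g\,d\nu$ for every $g\in C(E)$.

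For the converse, fix $g\in C(E)$ and $\varepsilon>0$. Since $E$ is compact, $g$ is uniformly continuous, so there is $\delta>0$ with $|g(x)-g(y)|<\varepsilon$ whenever $d(x,y)<\delta$. Because $\diam(\P_\ell)\to0$, choose $\ell$ with $\diam(\P_\ell)<\delta$; then on each atom $P$ of $\P_\ell$, $g$ varies by at most $\varepsilon$. Pick a point $x_P\in P$ for each atom and form the simple function $g_\ell:=\sum_{P\in\P_\ell} g(x_P)\ind{P}$, which satisfies $\|g-g_\ell\|_\infty\le\varepsilon$. Then for any probability measure $\mu$ on $E$,
\[
\left|\int g\,d\mu-\sum_{P\in\P_\ell} g(x_P)\,\mu(P)\right|\le\varepsilon.
\]
Applying this with $\mu=\nu_n$ and with $\mu=\nu$, and using that $\P_\ell$ has finitely many atoms together with the hypothesis $\nu_n(P)\to\nu(P)$ for each such atom, we get
\[
\limsup_{n\to\infty}\left|\int g\,d\nu_n-\int g\,d\nu\right|\le 2\varepsilon.
\]
Since $\varepsilon>0$ was arbitrary, $\int g\,d\nu_n\to\int g\,d\nu$, which is exactly weak*-convergence $\nu_n\to\nu$.

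\textbf{Main obstacle.} There is no deep obstacle here; the only point requiring a little care is making sure the finite-partition approximation is uniform in the measure, which is why I introduce the single auxiliary simple function $g_\ell$ and bound $\|g-g_\ell\|_\infty$ rather than estimating measure-by-measure. The refinement hypothesis $\P_{\ell+1}\succeq\P_\ell$ is not strictly needed for this particular lemma (it will matter elsewhere for diagonal arguments), and the hypothesis $\nu(\partial P)=0$ is only used in the forward direction; I would mention this so the reader sees precisely where each assumption enters.
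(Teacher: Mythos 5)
Your proof is correct, and the converse direction takes a genuinely different route from the paper. The paper argues by contradiction via compactness: it takes any weak*-subsequential limit $\mu$ of $(\nu_n)$, applies Portmanteau again to deduce $\mu(P)=\nu(P)$ on every atom, then invokes the fact that $(\P_\ell)$ separates points (hence generates the Borel $\sigma$-algebra) to conclude $\mu=\nu$, and finally uses compactness of $M(E)$ to upgrade this to convergence of the whole sequence. You instead argue directly: uniform continuity of a test function $g$, plus $\diam(\P_\ell)\to0$, yields a $\P_\ell$-measurable simple function $g_\ell$ with $\|g-g_\ell\|_\infty\le\varepsilon$, and convergence on atoms then gives $\int g\,d\nu_n\to\int g\,d\nu$ up to $2\varepsilon$. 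Your version is more elementary and more self-contained — it avoids compactness of $M(E)$ entirely — and, as you correctly observe, it does not use the refinement hypothesis. In fact the paper's argument uses refinement at least implicitly: passing from ``$\mu$ and $\nu$ agree on all atoms of all $\P_\ell$'' to ``$\mu=\nu$'' requires the atoms to form a $\pi$-system (so Dynkin's theorem applies), and it is exactly the nesting that guarantees this. So your remark about which hypotheses enter where is accurate and slightly sharper than what a reader would extract from the paper's proof.
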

\begin{proof}
 If $\nu_n\tend[\text{w}*]{n}{\infty} \nu$, then by the Portmanteau theorem, for each $P\subset E$ such that $\nu(\partial P)=0$, we have $\nu_n(P)\to\nu(P)$.

 Conversely, assume that for each $\ell$ and each atom $P$ of $\P_\ell$ we have $\nu_n(P)\to\nu(P)$. Then any weak*-limit $\mu$ of a subsequence of $(\nu_n)$ satisfies (again by the Portmanteau theorem) $\mu(P)=\nu(P)$ for each atom $P$ of $\P_\ell$. But since $\diam(\P_\ell)\to 0$, the sequence $(\P_\ell)$ separates points in $E$, hence it generates the Borel $\sigma$-algebra of $E$. Thus we have $\mu=\nu$, and using the compactness  of $M(E)$ for the weak* topology, we get that $\nu_n\tend[\text{w}*]{n}{\infty} \nu$.
\end{proof}

\begin{Lemma}
 \label{lemma:good_exist}
 For each $\nu\in M(E)$ of a compact metric space $(E,d)$, there exists a good sequence of partitions for $(E,\nu)$.
\end{Lemma}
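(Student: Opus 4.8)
The plan is to obtain $(\P_\ell)$ as common refinements of finite partitions generated by balls chosen to have $\nu$-negligible boundaries. The single fact doing the real work is that, for a fixed center, almost every radius produces a ball with null boundary: writing $B(x,r):=\{y\in E:d(x,y)<r\}$, one has $\partial B(x,r)\subseteq S(x,r):=\{y\in E:d(x,y)=r\}$; the spheres $S(x,r)$, $r>0$, are pairwise disjoint; and a probability measure can assign positive mass to at most countably many pairwise disjoint sets. Hence for every $x\in E$ and every nondegenerate interval $J\subseteq(0,\infty)$, the set $\{r\in J:\nu(\partial B(x,r))>0\}$ is countable, so uncountably many (in particular, at least one) $r\in J$ satisfy $\nu(\partial B(x,r))=0$.

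Granting this, I would fix $\ell\ge1$ and, for each $x\in E$, pick $r_x\in(0,1/\ell)$ with $\nu(\partial B(x,r_x))=0$. The open cover $\{B(x,r_x):x\in E\}$ of the compact space $E$ admits a finite subcover $B(x_1,r_{x_1}),\dots,B(x_{k_\ell},r_{x_{k_\ell}})$; let $\Q_\ell$ be the finite Borel partition of $E$ generated by these $k_\ell$ balls, i.e.\ the one whose atoms are the nonempty sets obtained by choosing, for each $j\le k_\ell$, either $B(x_j,r_{x_j})$ or its complement and intersecting. Using $\partial(A\cap B)\subseteq\partial A\cup\partial B$ and $\partial(A^c)=\partial A$, the boundary of every atom of $\Q_\ell$ is contained in $\bigcup_{j\le k_\ell}\partial B(x_j,r_{x_j})$, hence is $\nu$-null. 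Moreover every nonempty atom of $\Q_\ell$ lies inside one of the balls $B(x_j,r_{x_j})$: if it were contained in $B(x_j,r_{x_j})^c$ for all $j$ it would lie in $\bigl(\bigcup_j B(x_j,r_{x_j})\bigr)^c=\emptyset$, since the balls cover $E$. Therefore $\diam(\Q_\ell)\le 2/\ell$.

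Finally I would set $\P_\ell:=\bigvee_{j=1}^\ell\Q_j$, the common refinement of $\Q_1,\dots,\Q_\ell$. This is a finite Borel partition; $\P_{\ell+1}$ refines $\P_\ell$ by construction; $\diam(\P_\ell)\le\diam(\Q_\ell)\le 2/\ell\to0$; and the boundary of each atom of $\P_\ell$ is contained in a finite union of boundaries of atoms of the $\Q_j$, so it remains $\nu$-null. Thus $(\P_\ell)$ is good for $(E,\nu)$. I do not expect any serious obstacle: the whole content is the elementary observation that $\nu$-almost every sphere is negligible, together with compactness to pass to finite covers; everything else is bookkeeping with Boolean combinations of balls and with common refinements.
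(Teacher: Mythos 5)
Your proof is correct and follows essentially the same route as the paper: use the countability of bad radii to select balls with $\nu$-null boundaries, pass to a finite subcover by compactness, generate a finite partition from these balls, and take common refinements. The only cosmetic difference is that you choose a separate radius $r_x<1/\ell$ for each center before extracting the subcover, whereas the paper first extracts a $\frac{1}{3\ell}$-subcover and then selects one common radius $r\in(\frac{1}{3\ell},\frac{1}{2\ell})$ that works simultaneously for all finitely many chosen centers; both yield the same conclusion.
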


\begin{proof}
 We first show that, for each $\ell\ge1$, there exists a finite partition $\Q_\ell$ in which each atom $Q$ satisfies
 \begin{itemize}
  \item $\diam(Q)<1/\ell$,
  \item $\nu(\partial Q)=0$.
 \end{itemize}
Indeed, by compactness, there exists a finite set $\{x_1,\ldots,x_k\}\subset E$ such that
\[ E\subset \bigcup_{1\le i\le k} B\bigl(x_i,\frac{1}{3\ell}\bigr). \]
Then, for each $1\le i\le k$, there exist at most countably many $r>0$ such that
\[
 \nu \left(\partial B(x_i,r)\right) > 0.
\]
Therefore, we can find $r\in \left(\frac{1}{3\ell},\frac{1}{2\ell}\right)$ such that
\[
 \forall 1\le i\le k,\quad \nu \left(\partial B(x_i,r)\right) = 0.
\]
Then the partition $\Q_\ell$ generated by the open balls $B(x_i,r)$, $1\le i\le k$, satisfies the required conditions.

Once we have $\Q_\ell$ for each $\ell\ge1$, we set
\[
 \P_\ell:=\Q_1\vee\cdots\vee \Q_\ell,
\]
and we get a good sequence $(\P_\ell)$ for $(E,\nu)$.
\end{proof}

\begin{Lemma}
\label{lemma:good_product}
 Let $(\P_\ell)$ be a good sequence of partitions for $(E_1,\nu_1)$, and let $(\Q_\ell)$ be a good sequence of partitions for $(E_2,\nu_2)$.
 Then for each coupling $\rho$ of $\nu_1$ and $\nu_2$, $(\P_\ell\times\Q_\ell)$ is a good sequence of partitions for $(E_1\times E_2,\rho)$.
\end{Lemma}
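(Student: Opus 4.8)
The plan is to verify directly the three conditions defining a good sequence of partitions, now for the pair $(E_1\times E_2,\rho)$. First recall that the atoms of $\P_\ell\times\Q_\ell$ are exactly the nonempty sets $P\times Q$ with $P$ an atom of $\P_\ell$ and $Q$ an atom of $\Q_\ell$. With this description the refinement property is immediate: if $P'\times Q'$ is an atom of $\P_{\ell+1}\times\Q_{\ell+1}$, let $P\supseteq P'$ and $Q\supseteq Q'$ be the atoms of $\P_\ell$, resp. $\Q_\ell$, containing them (these exist because $\P_{\ell+1}$ refines $\P_\ell$ and $\Q_{\ell+1}$ refines $\Q_\ell$); then $P'\times Q'\subseteq P\times Q$.

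For the diameter condition I would equip $E_1\times E_2$ with the natural product metric; for the usual choices one has $\diam(P\times Q)\le \diam(P)+\diam(Q)$, whence
\[
\diam(\P_\ell\times\Q_\ell)\ \le\ \diam(\P_\ell)+\diam(\Q_\ell)\ \tend{\ell}{\infty}\ 0,
\]
since both $(\P_\ell)$ and $(\Q_\ell)$ are good.

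The only step requiring a genuine argument is the boundary condition, and this is precisely where the hypothesis that $\rho$ is a coupling of $\nu_1$ and $\nu_2$ (and not just an arbitrary measure on the product) is used. For atoms $P$ of $\P_\ell$ and $Q$ of $\Q_\ell$ one has, in the product topology, $\overline{P\times Q}=\overline P\times\overline Q$ and $\mathrm{int}(P\times Q)=\mathrm{int}(P)\times\mathrm{int}(Q)$, so a point of $\partial(P\times Q)$ must fail to be interior in at least one coordinate; hence
\[
\partial(P\times Q)\ \subseteq\ (\partial P\times E_2)\ \cup\ (E_1\times\partial Q).
\]
Since $\rho$ has marginals $\nu_1$ on $E_1$ and $\nu_2$ on $E_2$, this gives
\[
\rho\bigl(\partial(P\times Q)\bigr)\ \le\ \rho(\partial P\times E_2)+\rho(E_1\times\partial Q)\ =\ \nu_1(\partial P)+\nu_2(\partial Q)\ =\ 0,
\]
using that $(\P_\ell)$ is good for $(E_1,\nu_1)$ and $(\Q_\ell)$ is good for $(E_2,\nu_2)$.

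Putting the three verifications together shows that $(\P_\ell\times\Q_\ell)$ is good for $(E_1\times E_2,\rho)$. The argument is elementary throughout; the only mildly delicate point — and thus the ``main obstacle'' — is the set-theoretic inclusion for $\partial(P\times Q)$ combined with the remark that $\rho$ projects onto the prescribed marginals, which is what kills the boundary mass.
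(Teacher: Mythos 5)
Your proof is correct and follows essentially the same approach as the paper: the paper's own argument simply records the inclusion $\partial(P\times Q)\subseteq(\partial P\times E_2)\cup(E_1\times\partial Q)$ and observes that the marginals of $\rho$ are $\nu_1$ and $\nu_2$, treating the refinement and diameter conditions as obvious. You have just written out these omitted verifications in full.
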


\begin{proof}
 This is obvious, since for each atom $P$ of $\P_\ell$ and each atom $Q$ of $\Q_\ell$,
 \[ \partial(P\times Q) \subset (\partial P\times E_2) \cup (E_1\times \partial Q), \]
 and the marginals of $\rho$ are $\nu_1$ and $\nu_2$.
\end{proof}

\subsection{Proof of Proposition~\ref{p:momogen}}

Without loss of generality, we can (and we do) assume that the measure-theoretic dynamical system $(Y,\kappa,S)$ is aperiodic. Indeed, if this is not the case, we consider any uniquely ergodic topological system $(Y',S')$ whose unique invariant measure $\kappa'$ is such that $(Y',\kappa',S')$ is aperiodic. Then we take any point $u'\in Y'$, and we replace $Y$ by $Y\times Y'$, $S$ by $S\times S'$, and $u$ by $(u,u')$. We also replace $(N_m)$ by a subsequence of $(N_m)$ along which $(u,u')$ is generic, for some measure $\tilde \kappa$ whose marginals have to be $\kappa$ and $\kappa'$. But then the system $(Y\times Y',\tilde\kappa,S\times S')$ is aperiodic, because it is an extension of the aperiodic system $(Y',\kappa',S')$.

\medskip

We fix a good sequence of partitions $(\Q_\ell)$ for $(Y,\kappa)$ and a good sequence of partitions $(\P_\ell)$ for $(X,\nu)$. Then by Lemma~\ref{lemma:good_product}, $(\Q_\ell\times \P_\ell)$ is a good sequence of partitions for $(Y\times X,\rho)$.

\medskip

\newcommand{\lz}{{\ell_0}}

\begin{Def}
 \label{def:separated}
 Let $M>0$. A subset $E$ of $\N$ is said to be \emph{$M$-separated} if for each integers $n\neq m$, $n,m\in E\Longrightarrow|n-m|\ge M$.
\end{Def}

The main argument to prove Proposition~\ref{p:momogen} stands in the following proposition.

\begin{Prop}
 \label{prop:partial}
 Under the assumptions of Proposition~\ref{p:momogen}, and assuming also that $(Y,\kappa,S)$ is aperiodic (see above), given $\lz\ge1$ and $\eps\in(0,\frac{1}{2})$, there exists a sequence $(x_n)$ of points in $X$ such that:
 \begin{itemize}
  \item $\{n\ge0: x_{n+1}\neq Tx_n\}$ is $\frac{1}{\eps}$-separated,
  \item  for each atom $A$ of $\Q_\lz\times\P_\lz$, we have
 \begin{equation}
  \label{eq:liminf}
  \rho(A)-\eps < \liminf_{m\to\infty} \frac{1}{N_m} \sum_{0\le n<N_m} \ind{A }(S^nu,x_n),
 \end{equation}
 and
 \begin{equation}
  \label{eq:limsup}
  \limsup_{m\to\infty} \frac{1}{N_m} \sum_{0\le n<N_m} \ind{A }(S^nu,x_n) < \rho(A)+\eps.
 \end{equation}
 \end{itemize}

\end{Prop}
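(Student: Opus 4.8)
The plan is to build $(x_n)$ by a Rokhlin tower copying argument, the aperiodicity hypothesis being used precisely so that the Rokhlin lemma is available. First I would fix the height $h:=\lceil 1/\eps\rceil$ and apply the Rokhlin lemma to $(Y,\kappa,S)$ to obtain a base $B\subset Y$ with $B,SB,\dots,S^{h-1}B$ pairwise disjoint and $\kappa(Y\setminus\mathcal T)<\eps/2$, where $\mathcal T:=\bigsqcup_{0\le i<h}S^iB$. I want $B$ to be a $\kappa$-continuity set; this is arranged by a standard approximation: replace $B$ by a union $B'$ of atoms of a fine enough member of the good sequence $(\Q_\ell)$, then remove from $B'$ the finitely many continuity-set collisions $B'\cap S^{-i}B'$, $1\le i<h$, recovering a genuine Rokhlin base $B$ which is still a $\kappa$-continuity set. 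I then split the base along $\Q_\lz$-names over the tower: for a word $w=(w_0,\dots,w_{h-1})$ over the (finite) alphabet of atoms of $\Q_\lz$, set $B_w:=B\cap\bigcap_{0\le i<h}S^{-i}w_i$. Each $B_w$ is a finite intersection of $\kappa$-continuity sets, hence a $\kappa$-continuity set, so the genericity of $u$ along $(N_m)$ gives $\frac1{N_m}\#\{n<N_m:S^nu\in B_w\}\to\kappa(B_w)$, and likewise $\frac1{N_m}\#\{n<N_m:S^nu\notin\mathcal T\}\to\kappa(Y\setminus\mathcal T)$.

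Next I would prescribe what to copy over each column. For each $w$ with $\kappa(B_w)>0$, let $\mu_w$ be the distribution, under $\rho$ conditioned on $\{y\in B_w\}$, of the $\P_\lz$-name $(\P_\lz(x),\P_\lz(Tx),\dots,\P_\lz(T^{h-1}x))$ of the $X$-coordinate; it is supported on the finite set of length-$h$ words over the atoms of $\P_\lz$, and, as $\nu$ is the $X$-marginal of $\rho$, every word $v$ in its support labels a subset of $X$ of positive $\nu$-measure, from which I pick a point $z_{w,v}$. I then build $(x_n)$ running through $n=0,1,2,\dots$: each time $S^nu$ enters $B$, say $S^nu\in B_w$, I start a new block by taking the next word $v$ from a fixed sequence valued in $\mathrm{supp}(\mu_w)$ with asymptotic frequencies $\mu_w$, and set $x_{n+i}:=T^iz_{w,v}$ for $i\ge0$ until the next entrance of $(S^nu)_n$ into $B$ (so on the ``overshoot'' between the end of the length-$h$ block and that entrance I simply keep iterating $T$); before the first entrance I run $x_n$ along the orbit of an arbitrary point. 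Since $S^iB\cap B=\emptyset$ for $1\le i<h$, consecutive entrance times of $(S^nu)_n$ into $B$ differ by at least $h\ge 1/\eps$, and $\{n:x_{n+1}\neq Tx_n\}$ is contained in the set of entrance times, so it is $\frac1\eps$-separated; this is the first bullet.

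It remains to check the frequency estimates. Fix an atom $A=Q\times P$ of $\Q_\lz\times\P_\lz$ and group the indices $n\in[0,N_m)$ by blocks. A block of type $(w,v)$ contributes exactly $\varphi_A(w,v):=\#\{0\le i<h:w_i=Q,\ v_i=P\}$, while the overshoot indices contribute a nonnegative amount bounded by $\#\{n<N_m:S^nu\notin\mathcal T\}$ (indeed an overshoot index $n$ satisfies $S^nu\notin\mathcal T$), and the initial segment contributes $O(1)$. Counting the blocks of each type via genericity (type $(w,v)$ occurs with frequency $\kappa(B_w)\mu_w(v)+o(1)$) yields
\[
\frac1{N_m}\sum_{0\le n<N_m}\ind{A}(S^nu,x_n)=\sum_{w,v}\varphi_A(w,v)\,\kappa(B_w)\,\mu_w(v)+\theta_m ,
\]
with $\theta_m\ge 0$ and $\limsup_m\theta_m\le\kappa(Y\setminus\mathcal T)<\eps/2$. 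Finally, expanding $\varphi_A$, using $\kappa(B_w)\,\mu_w(\{v:v_i=P\})=\rho(B_w\times T^{-i}P)$, then $\bigsqcup_{w:w_i=Q}B_w=B\cap S^{-i}Q$ together with the $(S\times T)$-invariance of $\rho$ and the disjointness of $S^0B,\dots,S^{h-1}B$, the main term telescopes to $\rho((\mathcal T\cap Q)\times P)$, which lies in $(\rho(A)-\eps/2,\rho(A)]$. Hence $\liminf_m\ge\rho(A)-\eps/2>\rho(A)-\eps$ and $\limsup_m\le\rho(A)+\eps/2<\rho(A)+\eps$, as required.

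The construction is classical; the real work --- and where I would expect to spend most effort --- is twofold: making the Rokhlin data compatible with the hypothesis, i.e.\ ensuring that the counting sets $B_w$ and $\mathcal T$ are $\kappa$-continuity sets, since the genericity of the single point $u$ only controls frequencies of visits to continuity sets; and shepherding the several $o(1)$- and $\eps$-sized errors through the combinatorial bookkeeping down to the exact identity $\sum_{w,v}\varphi_A(w,v)\kappa(B_w)\mu_w(v)=\rho((\mathcal T\cap Q)\times P)$. The remaining points --- nonemptiness of the set labelled by a word $v$ with $\mu_w(v)>0$, and existence of a sequence with prescribed frequencies over a finite alphabet --- are routine.
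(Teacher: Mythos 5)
Your proof is correct and follows essentially the same route as the paper's: a Rokhlin tower for $(Y,\kappa,S)$ with a base that is a $\kappa$-continuity set built from the good partitions, a splitting of the base into columns by $\Q_\lz$-names over the tower height $h$, and orbit segments of $T$ over each column whose $\P_\lz$-names track the conditional law of the $X$-coordinate under $\rho$. The one technical variation is in how the $T$-orbit segments are chosen: the paper introduces independent random variables $\xi_k$ distributed according to the conditional laws $\rho^Y_{B\cap\bar Q}$ and invokes the strong law of large numbers before fixing a realization (the step culminating in \eqref{eq:densitybqbp}), whereas you take a deterministic sequence of $\P_\lz$-words with prescribed Ces\`aro frequencies $\mu_w$ and combine it with the genericity of $u$ for the visit frequencies to $B_w$. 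This derandomized version is slightly more elementary and arrives at the same identity $\sum_{w,v}\varphi_A(w,v)\,\kappa(B_w)\,\mu_w(v)=\rho\bigl((\bigcup_{0\le i<h}S^iB\cap Q)\times P\bigr)$, which the paper carries out as the chain of equalities \eqref{eq:densityAR}--\eqref{eq:rhoAR}. A small imprecision: your error term $\theta_m$ is not literally nonnegative for every $m$, since it also absorbs the $o(1)$ discrepancies from the frequency convergences and from the truncated last block; what actually holds (and is all that the two bounds \eqref{eq:liminf}, \eqref{eq:limsup} require) is $\liminf_m\theta_m\ge 0$ and $\limsup_m\theta_m\le\kappa\bigl(Y\setminus\bigcup_{0\le i<h}S^iB\bigr)<\eps/2$.
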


\begin{proof}Let $h$ be a natural number such that $\frac{1}{h}<\eps$. We claim that for $\ell$ large enough, we can find a set $B\subset Y$ which is measurable with respect to $\bigvee_{0\le j\le h-1}S^j\Q_\ell$, and such that
\begin{itemize}
 \item $B,SB,\ldots,S^{h-1}B$ are pairwise disjoint, 
 \item $\kappa\left(\bigcup_{0\le j\le h-1}S^jB\right)>1-\eps$.
\end{itemize}
Indeed, since $(Y,\kappa,S)$ is assumed to be aperiodic, we can use the Rokhlin lemma to find a Borel subset $\tilde B\subset Y$ such that $\tilde B, S\tilde B,\ldots, S^{h-1}\tilde B$ are pairwise disjoint, and such that
\[ \kappa\left(\bigcup_{0\le j\le h-1}S^j\tilde B\right)>1-\frac{\eps}{2}. \]
Then we use the fact that the good sequence of partitions $(\Q_\ell)$ generates the Borel $\sigma$-algebra: it follows that for $\ell$ large enough, we can find a $\Q_\ell$-measurable set $B'$ such that
\[ \kappa(B'\bigtriangleup\tilde B)< \frac{\eps}{8h^2}. \]
For each $1\le j\le h-1$, we have
\[ B'\cap S^jB' \subset (B'\setminus\tilde B)\cup(S^jB'\setminus S^j\tilde B), \]
hence
\begin{equation}
 \label{eq:piece}\kappa(B'\cap S^jB')\le \frac{\eps}{4h^2}.
\end{equation}
It remains to define $B$ by
\[ B:= B'\setminus\left(\bigcup_{1\le j\le h-1} S^jB'\right). \]
Then, by construction, $B$ is disjoint from $S^jB$ for each $1\le j\le h-1$, thus $B,SB,\ldots,S^{h-1}B$ are pairwise disjoint. Moreover, from~\eqref{eq:piece}, we have
\[
 \kappa(B) \ge \kappa(B')-\frac{\eps}{4h}\ge\kappa(\tilde B)-\frac{\eps}{2h},
\]
which implies
\[ \kappa\left(\bigcup_{0\le j\le h-1}S^j B\right) = h\kappa(B) \ge h\kappa(\tilde B)-\frac{\eps}{2}>1-\eps, \]
and our first claim is proved.

Since $u$ is generic for $\kappa$ along $(N_m)$, and since the set $\bigcup_{0\le j\le h-1}S^jB$ is measurable with respect to $\bigvee_{0\le j\le 2h}S^j\Q_\ell$ (in particular, the $\kappa$-measure of its boundary vanishes), we have
\begin{equation}
 \label{eq:density_in_tower}
 \frac{1}{N_m} \sum_{0\le n< N_m} \ind{\bigcup_{0\le j\le h-1}S^jB} (S^n u) \tend{m}{\infty} \kappa\left(\bigcup_{0\le j\le h-1}S^jB\right) > 1-\eps.
\end{equation}
This implies in particular that the set $P_B(u):=\{n\ge0:\ S^nu\in B\}$ is infinite.
We number in order the elements of this set:
\[ P_B(u) = \{b_1<b_2<\cdots<b_k<\cdots\}\; \]
The integers $(b_k)$ will correspond to the times when we will be allowed to change the orbit of the desired sequence.
As $B$ is disjoint from $S^jB$ for each $1\le j\le h-1$, the set $P_B(u)$ is $h$-separated, hence $\frac{1}{\eps}$-separated.

We consider the partition $\bigvee_{0\le j\le h-1}(S\times T)^{-j}(\Q_\lz\times\P_\lz)$ of $Y\times X$.
Any atom of this partition is of the form $\bq\times\bp$, where $\bq$ (respectively $\bp$) is an atom of
$\bigvee_{0\le j\le h-1}S^{-j}\Q_\lz$ (respectively of $\bigvee_{0\le j\le h-1}T^{-j}\P_\lz$). For such atoms
$\bq$ and $\bp$, we can write
\begin{equation}
 \label{eq:atomq}
 \bq=Q_0\cap S^{-1}Q_1\cap\cdots\cap S^{-(h-1)}Q_{h-1},
\end{equation}
each $Q_j$ being an atom of $\Q_\lz$, and
\begin{equation}
 \label{eq:atomp}
 \bp=P_0\cap S^{-1}P_1\cap\cdots\cap S^{-(h-1)}P_{h-1},
\end{equation}
each $P_j$ being an atom of $\P_\lz$.
Since the $\kappa$-measure of the boundary of each involved set is always 0, we again have for each atom $\bq$ of
$\bigvee_{0\le j\le h-1}S^{-j}\Q_\lz$
\begin{equation}
 \label{eq:density_bbq}
 \frac{1}{N_m} \sum_{b_k< N_m} \ind{\bq} (S^{b_k} u) =
 \frac{1}{N_m} \sum_{0\le n< N_m} \ind{B\cap\bq} (S^n u)
 \tend{m}{\infty} \kappa(B\cap \bq).
\end{equation}

If $C$ is a measurable subset of $Y$ with $\kappa(C)>0$, we denote by $\rho^Y_C$ the marginal on $X$ of the conditional probability measure
$\rho(\,\cdot\,|C\times X)$. Then, for each measurable $A\subset X$, we have
\begin{align}
\label{eq:conditionnement}
 \begin{split}
 \rho(C\times A) &=\rho\bigl( (C\times X) \cap (Y\times A) \bigr) \\
    &=\rho(C\times X) \, \rho\bigl(Y\times A|C\times X\bigr)\\
    &=\kappa(C) \, \rho^Y_C(A).
\end{split}
\end{align}
 On an appropriate probability space, we construct a sequence $(\xi_k)$ of independent random variables,
 taking values in $X$, such that for each $k$, $\xi_k$ is distributed according to $\rho^Y_{B\cap\bq}$, where $\bq$ is the atom of $\bigvee_{0\le j\le h-1}S^{-j}\Q_\lz$ containing $S^{b_k}u$.

 For each atom $\bq$ of $\bigvee_{0\le j\le h-1}S^{-j}\Q_\lz$ and each atom $\bp$ of $\bigvee_{0\le j\le h-1}T^{-j}\P_\lz$, by~\eqref{eq:density_bbq}, the law of large numbers and~\eqref{eq:conditionnement}, with probability~1, we have
 \begin{equation}
  \label{eq:densitybqbp}
  \frac{1}{N_m} \sum_{b_k< N_m} \ind{\bq} (S^{b_k} u) \ind{\bp} (\xi_k) \tend{m}{\infty} \kappa(B\cap \bq) \rho^Y_{B\cap\bq}(\bp) = \rho\bigl((B\cap\bq)\times\bp\bigr).
 \end{equation}
Let us fix a realization of $(\xi_k)$ which satisfies~\eqref{eq:densitybqbp} for each atom $\bq\times\bp$ of $\bigvee_{0\le j\le h-1}(S\times T)^{-j}(\Q_\lz\times\P_\lz)$. Then, for each $n\ge0$, we define the point $x_n\in X$ as follows:
\[
 x_n := \begin{cases}
         T^{n-b_1}\xi_1 &\text{ if }n<b_1,\\
         T^{n-b_k}\xi_k &\text{ if }b_k\le n<b_{k+1}\text{ for some }k\ge1.
        \end{cases}
\]
The set of integers $n$ such that $x_{n+1}\neq Tx_n$ is contained in $P_B(u)$, therefore, it is $\frac{1}{\eps}$-separated.

Now, let $A=Q\times P$ be a fixed atom of $\Q_\lz\times\P_\lz$. We set
\[
 R:=\bigcup_{0\le j\le h-1}S^jB\times X,
\]
and we observe that
\begin{equation}
\label{eq:rhoR}
 \rho(R) = \kappa\left(\bigcup_{0\le j\le h-1}S^jB\right) >1-\eps.
\end{equation}
We also note that for each $n\ge b_1$, $(S^nu,x_n)\in R$ if and only if there exists $k$ and $0\le j\le h-1$ such that $n=b_k+j$.
In this case, $(S^nu,x_n)\in A\cap R$ if and only if the atom $\bq\times\bp$ of $\bigvee_{0\le j\le h-1}(S\times T)^{-j}(\Q_\lz\times\P_\lz)$ containing $(S^{b_k}u,\xi_k)$ satisfies $Q_j=Q$ and $P_j=P$ (using the notations given in~\eqref{eq:atomq} and~\eqref{eq:atomp}, and remembering that $A=Q\times P$).

We can then use~\eqref{eq:densitybqbp} to get
\begin{align}
\label{eq:densityAR}
 \begin{split}
  &\frac{1}{N_m} \sum_{b_1\le n<N_m} \ind{A\cap R}(S^nu,x_n) \\
  &= \frac{1}{N_m} \sum_{b_k<N_m} \sum_{0\le j\le h-1} \sum_{(\bq,\bp):\ Q_j=Q\text{ and }P_j=P} \ind{\bq\times\bp}(S^{b_k}u,\xi_k)\\
  &\tend{m}{\infty} \sum_{0\le j\le h-1} \sum_{(\bq,\bp):\ Q_j=Q\text{ and }P_j=P} \rho\bigl((B\cap\bq)\times\bp\bigr).
 \end{split}
\end{align}

But, on the other hand, we can write
\begin{align}
 \label{eq:rhoAR}
 \begin{split}
 \rho(A\cap R) &= \sum_{0\le j\le h-1} \rho\bigl(A\cap(S^jB\times X)\bigr)\\
 &= \sum_{0\le j\le h-1} \rho\bigl((S^{-j}Q\times T^{-j}P)\cap(B\times X)\bigr)\\
 &= \sum_{0\le j\le h-1} \sum_{(\bq,\bp):\ Q_j=Q\text{ and }P_j=P} \rho\bigl((B\cap\bq)\times\bp\bigr).
 \end{split}
\end{align}
From~\eqref{eq:densityAR} and~\eqref{eq:rhoAR}, it follows that
\begin{equation}
 \label{eq:densityAR2}
 \frac{1}{N_m} \sum_{0\le n<N_m} \ind{A\cap R}(S^nu,x_n) \tend{m}{\infty} \rho(A\cap R).
\end{equation}

From~\eqref{eq:density_in_tower}, we get that
\[
 \lim_{m\to\infty}\frac{1}{N_m} \sum_{0\le n<N_m} \ind{(Y\times X)\setminus R}(S^nu,x_n)<\eps,
\]
and since $\ind{A}\le\ind{A\cap R}+\ind{Y\times X\setminus R}$, this yields by~\eqref{eq:densityAR2},
\begin{align*}
 \limsup_{m\to\infty} \frac{1}{N_m} \sum_{0\le n<N_m} \ind{A }(S^nu,x_n)
 &< \lim_{m\to\infty} \frac{1}{N_m} \sum_{0\le n<N_m} \ind{A\cap R}(S^nu,x_n) + \eps \\
 &=\rho(A\cap R)+ \eps\\
 &\le\rho(A)+\eps,
\end{align*}
and we have~\eqref{eq:limsup}.
On the other hand, using $\ind{A}\ge\ind{A\cap R}$, we get by~\eqref{eq:rhoR}
\begin{align*}
 \liminf_{m\to\infty} \frac{1}{N_m} \sum_{0\le n<N_m} \ind{A }(S^nu,x_n)
 &\ge \lim_{m\to\infty} \frac{1}{N_m} \sum_{0\le n<N_m} \ind{A\cap R}(S^nu,x_n) \\
 &=\rho(A\cap R) \\
 &>\rho(A)-\eps.
\end{align*}
and we have~\eqref{eq:liminf}.
\end{proof}

We can now give the proof of Proposition~\ref{p:momogen}, in which we use the following obvious fact: if we modify the sequence $(x_n)$ given by Proposition~\ref{prop:partial} on a finite number of terms, we still get~\eqref{eq:liminf} and \eqref{eq:limsup}.

\begin{proof}[Proof of Proposition~\eqref{p:momogen}]
 We fix a sequence $(\eps_\ell)_{\ell\ge1}$ of numbers in $(0,\frac{1}{2})$, decreasing to 0, and we construct inductively the desired sequence $(x_n)$ and the subsequence $(N_{m_\ell})$ by a repeated use of Proposition~\ref{prop:partial}.

 We start by applying Proposition~\ref{prop:partial} with $\eps:=\eps_1$ and $\lz:=1$. It provides us with an integer $m_1$, and a finite sequence $(x_n)_{0\le n<N_{m_1}}$ of points in $X$ such that
\begin{itemize}
 \item the set of integers $n\in\{0,\ldots,N_{m_1}-2\}$ such that $x_{n+1}\neq Tx_n$ is $\frac{1}{\eps_1}$-separated,
 \item for each atom $A$ of $\Q_1\times\P_1$, we have
 \[
  \rho(A)-\eps_1<\frac{1}{N_{m_1}} \sum_{0\le n<N_{m_1}} \ind{A}(S^nu,x_n) < \rho(A)+\eps_1.
 \]
\end{itemize}

Now, assume that for some $\ell\ge1$ we have already constructed $m_1<\cdots<m_\ell$ and the sequence $(x_n)_{0\le n<N_{m_\ell}}$ of points in $X$ such that
\begin{itemize}
 \item for each $1\le j<\ell$, the set of integers $n\in\{N_{m_{j-1}},\ldots,N_{m_j}-2\}$ such that $x_{n+1}\neq Tx_n$ is $\frac{1}{\eps_j}$-separated (with the convention that $N_{m_0}=0$),
 \item for each atom $A$ of $\Q_\ell\times\P_\ell$, we have
 \[
  \rho(A)-\eps_\ell<\frac{1}{N_{m_\ell}} \sum_{0\le n<N_{m_\ell}} \ind{A}(S^nu,x_n) < \rho(A)+\eps_\ell.
 \]
\end{itemize}
Then we apply again Proposition~\ref{prop:partial}, with $\eps:=\eps_{\ell+1}$ and $\lz:=\ell+1$. It provides us with an integer $m_{\ell+1}$ and a finite sequence of points $(x_n)_{N_{m_\ell}\le n<N_{m_{\ell+1}}}$ in $X$ which satisfy:
\begin{itemize}
 \item the set of integers $n\in\{N_{m_\ell},\ldots,N_{m_{\ell+1}}-2\}$ such that $x_{n+1}\neq Tx_n$ is $\frac{1}{\eps_{\ell+1}}$-separated,
 \item for each atom $A$ of $\Q_{\ell+1}\times\P_{\ell+1}$, we have
 \begin{equation}
  \label{eq:induction}
  \rho(A)-\eps_{\ell+1}<\frac{1}{N_{m_{\ell+1}}} \sum_{0\le n<N_{m_{\ell+1}}} \ind{A}(S^nu,x_n) < \rho(A)+\eps_{\ell+1}.
 \end{equation}
\end{itemize}
(We keep the points $(x_n)_{0\le n<N_{m_\ell}}$ already provided by the induction hypothesis, refering to the obvious fact stated before the proof.)

Moreover, we can assume that the sequence $(\eps_\ell)$ decreases sufficiently fast so that the validity of~\eqref{eq:induction} for each atom $A$ of $\Q_{\ell+1}\times\P_{\ell+1}$ ensures the validity of the analog inequalities for each $A$ which is a finite union of atoms of $\Q_{\ell+1}\times\P_{\ell+1}$ (in particular, for each $A$ which is an atom of the previous partitions), but with $\eps_\ell$ instead of $\eps_{\ell+1}$.

The sequence $(x_n)_{n\ge0}$ of points in $X$ and the subsequence $(N_{m_\ell})$ we construct with the above inductive procedure then satisfy the conditions announced in Proposition~\ref{p:momogen}.
\end{proof}

\subsection{Logarithmic case}\label{a:logarithmic}
We would like to study the logarithmic version of Proposition~\ref{p:momogen}, in which we replace each arithmetic average of the form
\[
 \frac{1}{N_m}\sum_{0\le n< N_m} f(n)
\]
by the logarithmic average
\[
 \frac{1}{L(N_m)}\sum_{1\le n\le N_m} \frac{1}{n}f(n).
\]
(Here we use the notation $L(N):=1+\frac{1}{2}+\cdots+\frac{1}{N}$.) In fact, this logarithmic version, whose statement is written below, is also valid, and the arguments to prove it are exactly the same as in the arithmetic average case. We just point out below the few technical changes that need to be made in the proof for the logarithmic case.

\begin{Prop}
\label{prop:log_case}
Let $(Y,S)$ and $(X,T)$ be two topological systems and $u\in Y$, logarithmically generic along an increasing sequence $(N_m)$ for some $S$-invariant measure $\kappa$ on $Y$.  Let $\rho$ be a joining of $\kappa$ with a $T$-invariant measure $\nu$ on $X$. Then there exist a sequence $(x_n)\subset X$ and a subsequence $(N_{m_\ell})$ such that:
\begin{itemize}
\item the sequence $(S^nu,x_n)$ is logarithmically generic for $\rho$ along $(N_{m_\ell})$:
\[ \frac1{L(N_{m_\ell})}\sum_{1\le n\le N_{m_\ell}}\frac{1}{n}\delta_{(S^nu,x_n)}\tend{\ell}{\infty}\rho, \]
\item the set $\{n\ge0:\ x_{n+1}\neq Tx_n\}$ is of the form $\{b_1<b_2<\cdots<b_k<b_{k+1}<\cdots\}$, where $b_{k+1}-b_k\to\infty$.
\end{itemize}
\end{Prop}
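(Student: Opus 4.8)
The plan is to transcribe the proof of Proposition~\ref{p:momogen} almost word for word, replacing throughout every arithmetic Ces\`aro average $\frac1{N_m}\sum_{0\le n<N_m}(\cdot)$ by the logarithmic average $\frac1{L(N_m)}\sum_{1\le n\le N_m}\frac1n(\cdot)$, and pointing out the few places where the weights $\tfrac1n$ force a real change. First I would carry out the reduction to the aperiodic case exactly as before: if $(Y,\kappa,S)$ is not aperiodic, adjoin a uniquely ergodic aperiodic system $(Y',\kappa',S')$ together with a point $u'\in Y'$, use weak-$*$ compactness of the logarithmic empirical measures of $(S^nu,S'^nu')$ on $Y\times Y'$ to pass to a subsequence of $(N_m)$ along which $(u,u')$ is logarithmically generic for some $\tilde\kappa$ with marginals $\kappa$ and $\kappa'$, replace $\rho$ by a joining $\tilde\rho$ of $\tilde\kappa$ and $\nu$ whose projection to $Y\times X$ is $\rho$, and observe that $(Y\times Y',\tilde\kappa,S\times S')$ is aperiodic as an extension of $(Y',\kappa',S')$; a solution for $\tilde\rho$ restricts to a solution for $\rho$.

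Next I would fix good sequences of partitions $(\Q_\ell)$ for $(Y,\kappa)$ and $(\P_\ell)$ for $(X,\nu)$, so that by Lemma~\ref{lemma:good_product} $(\Q_\ell\times\P_\ell)$ is good for $(Y\times X,\rho)$, and by the (obvious) logarithmic version of Lemma~\ref{lemma:good_convergence} it is enough to control $\frac1{L(N_m)}\sum_{n\le N_m}\frac1n\ind{A}(S^nu,x_n)$ for the atoms $A$ of $\Q_\lz\times\P_\lz$. The heart of the matter is then the logarithmic analogue of Proposition~\ref{prop:partial}. Its proof begins unchanged: the Rokhlin-tower construction of a $\Q_\ell$-measurable set $B\subset Y$ with $B,SB,\dots,S^{h-1}B$ pairwise disjoint and $\kappa\bigl(\bigcup_{j}S^jB\bigr)>1-\eps$ is purely measure-theoretic. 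Numbering $P_B(u)=\{b_1<b_2<\cdots\}$, this set is $h$-separated, so $b_k\ge hk-O(1)$; hence $b_k\to\infty$ and, crucially, $\sum_k b_k^{-2}<\infty$. Logarithmic genericity of $u$ gives, for each atom $\bq$ of $\bigvee_{0\le j\le h-1}S^{-j}\Q_\lz$,
\[
 \frac1{L(N_m)}\sum_{b_k\le N_m}\frac1{b_k}\ind{\bq}(S^{b_k}u)=\frac1{L(N_m)}\sum_{n\le N_m}\frac1n\ind{B\cap\bq}(S^nu)\tend{m}{\infty}\kappa(B\cap\bq).
\]

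The one genuinely new ingredient replaces the classical strong law of large numbers by a weighted one. Constructing the independent $X$-valued variables $\xi_k\sim\rho^Y_{B\cap\bq}$ (with $\bq\ni S^{b_k}u$) as before, the variables $Z_k:=\ind{\bq}(S^{b_k}u)\bigl(\ind{\bp}(\xi_k)-\rho^Y_{B\cap\bq}(\bp)\bigr)$ are independent, centred and bounded by $1$, so $\sum_k b_k^{-2}\Var(Z_k)\le\sum_k b_k^{-2}<\infty$; Kolmogorov's one-series theorem then makes $\sum_k b_k^{-1}Z_k$ converge almost surely, and since $L(N_m)\to\infty$ we get $\frac1{L(N_m)}\sum_{b_k\le N_m}b_k^{-1}Z_k\to0$ a.s. Combined with the previous display this yields, almost surely and for each pair of atoms $(\bq,\bp)$,
\[
 \frac1{L(N_m)}\sum_{b_k\le N_m}\frac1{b_k}\ind{\bq}(S^{b_k}u)\ind{\bp}(\xi_k)\tend{m}{\infty}\kappa(B\cap\bq)\,\rho^Y_{B\cap\bq}(\bp)=\rho\bigl((B\cap\bq)\times\bp\bigr),
\]
the logarithmic counterpart of~\eqref{eq:densitybqbp}. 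Because $h$ is fixed and $\tfrac1{b_k+j}=\tfrac1{b_k}\bigl(1+O(j/b_k)\bigr)$ with $b_k\to\infty$, the same limit holds with $b_k$ replaced by $b_k+j$ in the weights, so the block-by-block bookkeeping leading to~\eqref{eq:densityAR}--\eqref{eq:densityAR2} goes through verbatim: defining $x_n:=T^{n-b_k}\xi_k$ on $[b_k,b_{k+1})$, using that the set $R:=\bigcup_{0\le j\le h-1}S^jB\times X$ has logarithmic density $>1-\eps$ along $(N_m)$ and that $\ind{A}\le\ind{A\cap R}+\ind{(Y\times X)\setminus R}$, one obtains the logarithmic versions of~\eqref{eq:liminf} and~\eqref{eq:limsup}, with $\{n:x_{n+1}\neq Tx_n\}$ contained in $P_B(u)$, hence $\tfrac1\eps$-separated.

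Finally I would run the same inductive assembly, applying the logarithmic partial statement with $\eps=\eps_\ell\downarrow0$ and $\lz=\ell$ and keeping the initial segment at each step; here the elementary remark used is that altering $(x_n)$ on $\{1,\dots,n_0\}$ perturbs a logarithmic average by at most $L(n_0)/L(N_m)\to0$, playing the role of the corresponding remark in the arithmetic proof. The upshot is that, apart from the weighted strong law of large numbers above — whose applicability rests entirely on $\sum_k b_k^{-2}<\infty$, itself guaranteed by the $h$-separation of $P_B(u)$ — every step is a literal transcription of the arithmetic case; I expect the only place requiring genuine care to be this substitution of convergence-of-random-series for the ordinary law of large numbers, together with the minor bookkeeping needed to pass from the weights $\tfrac1{b_k+j}$ to $\tfrac1{b_k}$.
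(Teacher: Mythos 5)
Your proposal is correct, and it follows the same overall strategy as the paper — transcribe the arithmetic-average proof of Proposition~\ref{p:momogen} to logarithmic averages, isolating the one step that genuinely changes (the analogue of~\eqref{eq:densitybqbp}) and the small bookkeeping corrections ($\tfrac1{b_k+j}\leadsto\tfrac1{b_k}$, finite modifications costing $O(L(n_0)/L(N_m))$). The interesting divergence is in how you handle the key step. The paper isolates a purely deterministic Abel-summation statement (Lemma~\ref{lemma:log_case}): given that $\frac1{L(N_m)}\sum_{d_k\le N_m}\frac1{d_k}\to\kappa$ and that the ordinary Ces\`aro averages $\frac1K\sum_{k\le K}\rho_k\to\rho$, the weighted average converges to $\kappa\rho$. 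The probabilistic input is then exactly the same SLLN already used in the arithmetic case, and the lemma transfers it to logarithmic weights. You instead build the weighted law of large numbers from scratch: centre the variables to get $Z_k$, note that $h$-separation gives $\sum_k b_k^{-2}<\infty$, invoke Kolmogorov's one-series theorem so that $\sum_k b_k^{-1}Z_k$ converges a.s., and divide by $L(N_m)\to\infty$. Both routes are sound and of comparable length; the paper's version has the mild advantage of keeping the probabilistic content literally unchanged between the two averaging regimes and concentrating the logarithmic novelty in one self-contained deterministic lemma, while yours is more streamlined in that it never separately records the Ces\`aro convergence of $(\rho_k)$. One tiny point to watch in your version: $P_B(u)$ is defined with $n\ge 0$, so if $b_1=0$ the weight $b_1^{-1}$ is undefined; this is harmless since dropping the term $k=1$ changes nothing, but it is worth the remark.
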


The changes that need to be made to the proof are almost all quite obvious, they consist in formally replacing the arithmetic average by the logarithmic average. One point maybe needs some explanations, namely when we arrive at the proof of the logarithmic analog of~\eqref{eq:densitybqbp}.
We put these explanations in the form of a lemma, which we will apply in the following context: $(d_k)$ is the ordered sequence of positive integers $n$ such that
$S^nu\in B\cap {\bar Q}$, and the sequence $(\rho_k)$ is defined by $\rho_k:=\ind{{\bar P}}(\xi_k)$.

\begin{Lemma}
 \label{lemma:log_case}
 Let $(d_k)$ be an increasing sequence of positive integers such that
 \[
  \frac1{L(N_{m})}\sum_{d_k\le N_{m}}\frac{1}{d_k} \tend{m}{\infty} \kappa\in[0,1],
 \]
and let $(\rho_k)$ be a sequence of real numbers in $[0,1]$ such that
\[
 \frac{1}{K} \sum_{1\le k\le K} \rho_k \tend{K}{\infty} \rho\in[0,1].
\]
Then we have
 \[
  \frac1{L(N_{m})}\sum_{d_k\le N_{m}}\frac{\rho_k}{d_k} \tend{m}{\infty} \kappa\rho.
 \]
\end{Lemma}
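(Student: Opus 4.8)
The plan is to deduce this from Abel summation, reducing it to the elementary fact that a Cesàro limit is preserved under averaging against a non‑increasing weight sequence. Write $T_m:=\sum_{d_k\le N_m}\tfrac1{d_k}$ and let $K_m$ denote the number of indices $k$ with $d_k\le N_m$; we may assume $N_m\to\infty$ (otherwise $(N_m)$ is eventually constant and there is nothing to prove), so that $L(N_m)\to\infty$. If $\kappa=0$ the claim is immediate, since $0\le\rho_k\le1$ forces $\bigl|\frac1{L(N_m)}\sum_{d_k\le N_m}\tfrac{\rho_k}{d_k}\bigr|\le\frac{T_m}{L(N_m)}\to0=\kappa\rho$. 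So assume from now on that $\kappa>0$; then $T_m\to\infty$, and hence $K_m\to\infty$.

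Next I would center the sequence: put $\delta_k:=\rho_k-\rho$, so $|\delta_k|\le1$, and $D_K:=\sum_{k=1}^K\delta_k=o(K)$ by hypothesis on $(\rho_k)$. Since $\frac1{L(N_m)}\sum_{d_k\le N_m}\frac{\rho}{d_k}=\rho\,\frac{T_m}{L(N_m)}\to\rho\kappa$, it suffices to show that $\frac1{L(N_m)}\sum_{k=1}^{K_m}\frac{\delta_k}{d_k}\to0$. Apply summation by parts with the non‑increasing weights $1/d_k$:
\[
\sum_{k=1}^{K_m}\frac{\delta_k}{d_k}=\frac{D_{K_m}}{d_{K_m}}+\sum_{k=1}^{K_m-1}D_k\Bigl(\frac1{d_k}-\frac1{d_{k+1}}\Bigr).
\]
Because $(d_k)$ is strictly increasing in $\N$ we have $d_k\ge k$, so the boundary term satisfies $|D_{K_m}|/d_{K_m}\le K_m/d_{K_m}\le1$, which is negligible after dividing by $L(N_m)\to\infty$.

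For the remaining sum, fix $\eps>0$ and choose $K_0$ with $|D_k|\le\eps k$ for all $k\ge K_0$. Splitting the sum at $K_0$: for $k<K_0$ I use $|D_k|\le K_0$ together with the telescoping bound $\sum_{k=1}^{K_m-1}\bigl(\tfrac1{d_k}-\tfrac1{d_{k+1}}\bigr)\le\tfrac1{d_1}\le1$, so this part is at most $K_0$; for $k\ge K_0$ I would use the identity
\[
\sum_{k=1}^{K_m-1}k\Bigl(\frac1{d_k}-\frac1{d_{k+1}}\Bigr)=\sum_{k=1}^{K_m-1}\frac1{d_k}-\frac{K_m-1}{d_{K_m}}\le T_m,
\]
together with nonnegativity of each summand, to bound this part by $\eps T_m$. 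Altogether $\frac1{L(N_m)}\bigl|\sum_{k=1}^{K_m}\frac{\delta_k}{d_k}\bigr|\le\frac{1+K_0}{L(N_m)}+\eps\frac{T_m}{L(N_m)}$, whose $\limsup$ as $m\to\infty$ is at most $\eps\kappa$. Letting $\eps\to0$ concludes the argument.

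I do not expect a genuine obstacle here; the only delicate point is the bookkeeping in the two summations by parts — in particular ensuring that the contribution of the small indices $k<K_0$ is $O(1)$ uniformly in $m$ (hence disappears after normalization by $L(N_m)\to\infty$), while the contribution of the large indices is controlled by $T_m\sim\kappa L(N_m)$ rather than by $L(N_m)$ itself. In the intended application one takes $(d_k)$ to be the increasing enumeration of $\{n\ge1:\ S^nu\in B\cap\bq\}$ and $\rho_k:=\ind{\bp}(\xi_k)$, whose Cesàro averages converge by the law of large numbers, exactly paralleling the arithmetic‑average case.
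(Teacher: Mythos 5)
Your proof is correct and follows essentially the same route as the paper's: both use Abel summation against the decreasing weights $1/d_k$, split the sum at a threshold $K_0$ determined by the Cesàro convergence of $(\rho_k)$, and control the tail via the telescoping bound $\sum_k k(1/d_k - 1/d_{k+1}) \le T_m$. The only cosmetic difference is that you center the sequence ($\delta_k := \rho_k - \rho$) before summing by parts, whereas the paper sums by parts and then subtracts $\rho$ from the running Cesàro average.
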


\begin{proof}
For each $m$, let us denote by $k_m$ the largest $k$ such that $d_k\le N_m$.
 We use the classical identity
 \[
  \sum_{d_k\le N_{m}} \frac{\rho_k}{d_k} = \sum_{1\le k < k_m}  \left( \frac{1}{d_k}-\frac{1}{d_{k+1}} \right) (\rho_1+\cdots+\rho_k) + \frac{1}{d_{k_m}}(\rho_1+\cdots+\rho_{k_m}).
 \]
 Given $\eps>0$, let $K_\eps$ be such that
 \[ K\ge K_\eps \Longrightarrow \left| \frac{1}{K} \sum_{1\le k\le K} \rho_k - \rho \right| < \eps. \]
 We can then write
 \begin{align*}
  &\left| \frac1{L(N_{m})} \sum_{d_k\le N_{m}} \frac{\rho_k}{d_k} -  \frac1{L(N_{m})} \sum_{d_k\le N_{m}} \frac{\rho}{d_k}\right|\\
  &= \left| \frac1{L(N_{m})} \sum_{K_\eps\le k < k_m} k \left( \frac{1}{d_k}-\frac{1}{d_{k+1}} \right)  \left( \frac{1}{k} (\rho_1+\cdots+\rho_k)-\rho \right) \right| + O\left(\frac1{L(N_{m})}\right)\\
  &< \eps + O\left(\frac1{L(N_{m})}\right).
 \end{align*}
But by assumption, we have
\[
  \frac1{L(N_{m})} \sum_{d_k\le N_{m}} \frac{\rho}{d_k}\tend{m}{\infty} \kappa\rho,
\]
hence we get
\[
  \frac1{L(N_{m})} \sum_{d_k\le N_{m}} \frac{\rho_k}{d_k}\tend{m}{\infty} \kappa\rho.
\]
\end{proof}

The last place in the proof where a (very easy) correction should be made in the logarithmic case is to get the analog of~\eqref{eq:densityAR}: at some point we have to replace some coefficients
$\frac{1}{b_k+j}$ by $\frac{1}{b_k}$, which is of no consequence since $j$ remains bounded between 0 and $h-1$ here. To be more precise, \eqref{eq:densityAR} becomes
\begin{align}
\label{eq:densityARlog}
 \begin{split}
  &\frac{1}{L(N_m)} \sum_{b_1\le n<N_m} \frac{1}{n}\ind{A\cap R}(S^nu,x_n) \\
  &= \frac{1}{L(N_m)} \sum_{b_k<N_m} \sum_{0\le j\le h-1} \frac{1}{b_k+j}\sum_{(\bq,\bp):\ Q_j=Q\text{ and }P_j=P} \ind{\bq\times\bp}(S^{b_k}u,\xi_k)\\
  &= \frac{1}{L(N_m)} \sum_{b_k<N_m} \sum_{0\le j\le h-1} \frac{1}{b_k}\sum_{(\bq,\bp):\ Q_j=Q\text{ and }P_j=P} \ind{\bq\times\bp}(S^{b_k}u,\xi_k) + o(1)\\
  &\tend{m}{\infty} \sum_{0\le j\le h-1} \sum_{(\bq,\bp):\ Q_j=Q\text{ and }P_j=P} \rho\bigl((B\cap\bq)\times\bp\bigr).
 \end{split}
\end{align}

\section{Proofs of main theorems} \label{s:cztery}

\subsection{Proof of Theorem~\ref{tA}}
\label{sec:prooftA}

\begin{proof}[Proof of Theorem~\ref{tA}]
Take any topological system $(X,T)\in\mathscr{C}_{\cf }$ and fix $f\in C(X)$, $x\in X$.
Take any increasing sequence $(N_k)$ for which, with no loss of generality, we can assume that $\frac1{N_k}\sum_{n\leq N_k} \delta_{(T^nx,S^n\bfu)}\to\rho$. It follows that
$$
\lim_{k\to\infty}\frac1{N_k}\sum_{n\leq N_k}f(T^nx)\bfu(n)=
\int f\ot \pi_0\,d\rho.
$$
But $\rho$ is a joining of some $T$-invariant measure $\nu\in V(X,T)$ for which $x$ is generic along $(N_k)$, and some Furstenberg system $\kappa$ of $\bfu$. Since $(X,T)\in\mathscr{C}_{\cf }$, the system $(X,\nu,T)$ is in $\cf$, and the integral on the right-hand side above vanishes by the Veech condition and Proposition~\ref{p:sated}.

\end{proof}

\subsection{Proof of Theorem~\ref{tB}}
\label{sec:prooftB}
Before we begin the proof, let us make the following remark concerning topological models. Given an automorphism $(Z,\mathcal{D},\kappa, R)$, and a fixed subset  of full measure of ergodic components of $\kappa$, recall that by a {\em Hansel model} of $R$, we mean a  topological system $(X,T)$ which has a $T$-invariant measure $\nu$ such that, as dynamical systems, $\kappa$ and $\nu$ are isomorphic and such that each point $x\in X$ is generic for one of these chosen ergodic components.
In \cite{Ha}, it is proved that each automorphism has a Hansel model.

We assume that $\bfu\perp\mathscr{C}_{\cf_{\rm ec}}$ for some characteristic class $\cf$.
Take $\kappa\in V(\bfu)$ and fix $(N_m)$ so that $$\frac1{N_m}\sum_{n\leq N_m} \delta_{S^n\bfu}\to\kappa.$$
Denote by $\ca(\kappa)\subset\cb(X_{\bfu})$ the largest $\cf_{\rm ec}$-factor of $(X_{\bfu},\kappa,S)$, i.e.\ $\ca(\kappa)=\cb(X_{\bfu})_{\cf_{\rm ec}}$.

Consider the factor $(X_{\bfu}/\ca(\kappa), \ca(\kappa), \kappa|_{\ca(\kappa)},S)$ and take a Hansel model $(X,\nu,T)$ of it (by choosing in the ergodic decomposition of $\kappa|_{\ca(\kappa)}$ only ergodic measures in $\cf$).
By definition,
\beq\label{goodmod00}
(X,T)\in \mathscr{C}_{\cf_{\rm ec}}.
\eeq

Fix a measure-theoretic factor map $J\colon(X_{\bfu},\kappa,S)\to(X,\nu,T)$ such that $J^{-1}(\cb(X))=\ca(\kappa)$, and let $\nu_J$ denote the corresponding graph joining (of $\nu$ and $\kappa|_{\ca(\kappa)}$). Let $\widehat{\nu}_J$ be the relatively independent extension of $\nu_J$ to a joining of $\nu$ and $\kappa$: for $f\in L^2(\nu)$ and $g\in L^2(\kappa)$, we have
\begin{equation}
 \label{eq:rij}
 \int_{X_{\bfu}\times X} g\otimes f \, d\widehat{\nu}_J = \int_{X_{\bfu}} \E^{\kappa}\bigl(g | \ca(\kappa)\bigr)\,f\circ J \, d\kappa.
\end{equation}
Now, by applying Proposition~\ref{p:momogen}, we can find
$(x_n)\subset X$   such that
\begin{equation}\label{qugen00}
((x_n),\bfu) \text{ is generic for }\widehat{\nu}_J\text{ along some subsequence }(N_{m_\ell}),
\end{equation}
and the set $\{n\ge0:\ x_{n+1}\neq Tx_n\}$ is of the form $\{b_1<b_2<\cdots\}$ with $b_{k+1}-b_k\to\infty$.

Since $\bfu\perp\mathscr{C}_{\cf_{\rm ec}}$, \eqref{goodmod00} and Proposition~\ref{p:mo1} ensure that the system $(X,T)$ satisfies the  strong $\bfu$-MOMO property. Therefore, for each $f\in C(X)$ we have
$$
\lim_{m\to\infty}\frac1{N_m}\sum_{n\leq N_m}f(x_n)\bfu(n)
=$$$$\lim_{m\to\infty}\frac1{b_{K_m}}\sum_{k<K_m}\left(
\sum_{b_k\leq n<b_{k+1}}f(T^{n-b_k}x_{b_k})\bfu(n)\right)
=0,
$$
and it follows from~\eqref{qugen00} that $\int f\otimes \pi_0\, d \hat{\nu}_J=0$. Using~\eqref{eq:rij}, we get
\[
\int_{X_{\bfu}} \E^{\kappa}\bigl(\pi_0 | \ca(\kappa)\bigr)\,f\circ J \, d\kappa=0.
\]
But $\{f\circ J:\ f\in C(X)\}$ is dense in $L^2(\ca(\kappa))$ and therefore $\pi_0\perp L^2(\ca(\kappa))$,
which is the Veech condition for  $\bfu$ with respect to the characteristic class~$\cf_{\rm ec}$.
}

\section{Combinatorics}
\subsection{Orthogonality to zero entropy systems}
\label{s:wnioski}
\subsubsection{Cancellations. Proof of Corollaries~\ref{cA} and~\ref{cB}}

We need the following interpretation of the Veech condition in terms of relative uniform mixing (K-mixing) of the function $\pi_0$. For $n\in\N$, let $\pi_n:=\pi_0\circ S^n$.

\begin{Prop}\label{prop:mich}
Let $\kappa\in V_S(\bfu)$ and $\int \pi_0\,d\kappa=0$. Then the following conditions are equivalent:
\begin{enumerate}
\item[(a)]
$\pi_0\perp L^2(\Pi(\kappa))$,
\item[(b)]
$\pi_0$ is relatively K-mixing, i.e.
for each $\vep>0$, there exists $N$ such that
\begin{equation*}\label{relK}
\left|\int \pi_0 \raz_C\,d\kappa -\int \pi_0\, d\kappa\int \raz_C d\kappa\right|=\left| \int \pi_0 \raz_C\,d\kappa\right|<\vep
\end{equation*}
for each set $C\in \sigma(\pi_n,\pi_{n+1},...)$  and $n\geq N$.
\end{enumerate}
If we additionally assume that $\bfu$ takes values in a finite set $E\subset \C$ and $(M_k)$ is a sequence along which we have a Furstenberg system $\kappa$ then the above conditions are equivalent to
\begin{enumerate}
\item[(c)]
for each $\vep>0$ there exists $N\geq1$ such that for any $s\geq1$ and any function $f$
depending on coordinates $N\leq n,n+1,\ldots,n+s$, $\|f\|_{C(X_{\bfu})}\leq1$, we have
$$
\limsup_{k\to\infty}\left|\frac1{M_k}\sum_{m\leq M_k}\bfu(m)f(S^m\bfu)\right|<\vep.$$
\end{enumerate}
\end{Prop}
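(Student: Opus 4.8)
The plan is to prove the three equivalences in turn, treating (a) $\Leftrightarrow$ (b) as a standard ergodic-theoretic fact and then (b) $\Leftrightarrow$ (c) as the genuinely new content that links the dynamical statement on the Furstenberg system $\kappa$ to a combinatorial statement about $\bfu$.

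\textbf{Equivalence (a) $\Leftrightarrow$ (b).} First I would recall the classical characterization of the Pinsker factor via uniform (K-)mixing: for a process $(\pi_n)_{n\in\Z}$, the condition $\pi_0\perp L^2(\Pi(\kappa))$ is equivalent to the statement that the ``future'' $\sigma$-algebras $\sigma(\pi_n,\pi_{n+1},\dots)$ become asymptotically independent of $\pi_0$, in the sense that $\sup\{|\int \pi_0\raz_C\,d\kappa|:C\in\sigma(\pi_n,\pi_{n+1},\dots)\}\to 0$ as $n\to\infty$. One direction is immediate: if the future $\sigma$-algebras decorrelate with $\pi_0$, then since $\Pi(\kappa)=\bigcap_n\sigma(\pi_n,\pi_{n+1},\dots)$ (up to $\kappa$-null sets, using that $\Pi(\kappa)$ is the tail of the process generating the subshift) and $\E^\kappa(\pi_0\mid\Pi(\kappa))$ is measurable with respect to every $\sigma(\pi_n,\pi_{n+1},\dots)$, we get $\|\E^\kappa(\pi_0\mid\Pi(\kappa))\|_2^2=\int\pi_0\cdot\E^\kappa(\pi_0\mid\Pi(\kappa))\,d\kappa$, which is bounded by the decorrelation quantity (approximating the conditional expectation by simple functions in $\sigma(\pi_n,\dots)$) and hence is $0$. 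For the converse, assume $\pi_0\perp L^2(\Pi(\kappa))$; this is a relative-K situation for the single function $\pi_0$, and one invokes the equivalence between the relative K-property (relative to the Pinsker factor, which here is trivial since we work with $\pi_0$ orthogonal to it) and relative K-mixing — precisely the passage ``from K to K-mixing'' mentioned in the introduction of the paper. The point is that $\pi_0\perp L^2(\Pi(\kappa))$ forces, by a martingale/reverse-martingale argument on $\E^\kappa(\pi_0\mid\sigma(\pi_n,\pi_{n+1},\dots))$, that this conditional expectation tends to $\E^\kappa(\pi_0\mid\Pi(\kappa))=0$ in $L^2$; and $|\int\pi_0\raz_C\,d\kappa|=|\int\E^\kappa(\pi_0\mid\sigma(\pi_n,\dots))\raz_C\,d\kappa|\le\|\E^\kappa(\pi_0\mid\sigma(\pi_n,\dots))\|_2$ uniformly in $C\in\sigma(\pi_n,\dots)$, giving (b).

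\textbf{Equivalence (b) $\Leftrightarrow$ (c).} Now assume $\bfu$ takes values in the finite set $E$ and that $\kappa=\lim_k\frac1{M_k}\sum_{m\le M_k}\delta_{S^m\bfu}$. The key dictionary is: a function $f$ on $X_{\bfu}$ depending on coordinates $n,n+1,\dots,n+s$ corresponds, via the finiteness of $E$, to a finite linear combination of indicators $\raz_C$ with $C\in\sigma(\pi_n,\dots,\pi_{n+s})$, and every such indicator is continuous (clopen cylinder). For (b) $\Rightarrow$ (c): given $\vep$, take $N$ from (b). For any $f$ of norm $\le1$ depending on coordinates in $[N,N+s]$, write $f=\sum_j c_j\raz_{C_j}$ with the $C_j$ disjoint cylinders in $\sigma(\pi_N,\dots,\pi_{N+s})\subset\sigma(\pi_N,\pi_{N+1},\dots)$, $|c_j|\le1$, $\sum_j\raz_{C_j}=1$. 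Then $\frac1{M_k}\sum_{m\le M_k}\bfu(m)f(S^m\bfu)=\int\pi_0\cdot f\,d(\frac1{M_k}\sum\delta_{S^m\bfu})\to\int\pi_0 f\,d\kappa=\sum_j c_j\int\pi_0\raz_{C_j}\,d\kappa$, and each $|\int\pi_0\raz_{C_j}\,d\kappa|<\vep$ by (b); but summing $|c_j|\cdot\vep$ over $j$ could be large, so I must be more careful: instead estimate $|\sum_j c_j\int\pi_0\raz_{C_j}\,d\kappa|=|\int\pi_0\cdot(\sum_j c_j\raz_{C_j})\,d\kappa|$ directly — here one uses that $\sum_j c_j\raz_{C_j}=f$ is a single function and the quantity $\int\pi_0 f\,d\kappa$ should be controlled by testing against the ``best'' cylinder; the clean way is to note (b) is equivalent to $\sup_{\|g\|_\infty\le1,\ g\in\sigma(\pi_n,\dots)\text{-measurable}}|\int\pi_0 g\,d\kappa|<\vep$ (pass from indicators to bounded measurable functions by decomposing into level sets and using that $g$ maps into the unit disc, hence into the convex hull of at most finitely many extreme points — or simply $\int\pi_0 g\,d\kappa$ with $g=\sum c_j\raz_{C_j}$, $|c_j|\le1$: split $\int\pi_0 g\,d\kappa=\int\pi_0\,\mathrm{Re}(g)\,d\kappa+i\int\pi_0\,\mathrm{Im}(g)\,d\kappa$, then $\mathrm{Re}(g)=g_+-g_-$ with $0\le g_\pm\le1$, and $\int\pi_0 g_+\,d\kappa$ is a positive combination of $\int\pi_0\raz_C\,d\kappa$'s integrated over the superlevel sets $\{g_+>t\}\in\sigma(\pi_n,\dots)$ — a layer-cake argument — bounding it by $\sup_C|\int\pi_0\raz_C\,d\kappa|\le\vep$). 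This yields (c) with $4\vep$ in place of $\vep$, harmless. For (c) $\Rightarrow$ (b): given $\vep$, take $N$ from (c). For $C\in\sigma(\pi_n,\dots)$ with $n\ge N$, approximate $C$ in $L^1(\kappa)$ by a cylinder $C'\in\sigma(\pi_n,\dots,\pi_{n+s})$ (possible since these generate $\sigma(\pi_n,\pi_{n+1},\dots)$) with $\kappa(C\triangle C')<\vep$; then $|\int\pi_0\raz_C\,d\kappa|\le|\int\pi_0\raz_{C'}\,d\kappa|+L\vep$ where $L=\max_E|z|$. Now $\raz_{C'}$ is a continuous function of coordinates in $[n,n+s]\subset[N,\infty)$ of norm $\le1$, so $\int\pi_0\raz_{C'}\,d\kappa=\lim_k\frac1{M_k}\sum_{m\le M_k}\bfu(m)\raz_{C'}(S^m\bfu)$, whose absolute value is $\le\limsup_k|\frac1{M_k}\sum\bfu(m)\raz_{C'}(S^m\bfu)|<\vep$ by (c). Hence $|\int\pi_0\raz_C\,d\kappa|<(1+L)\vep$, giving (b).

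\textbf{Expected main obstacle.} The routine-looking but slightly delicate point is the reduction in (b) $\Leftrightarrow$ (c) from the statement about indicator sets $C$ to the statement about general bounded functions $f$ (valued in the disc) depending on finitely many coordinates — i.e.\ showing that testing $\pi_0$ against indicators in a ``future'' $\sigma$-algebra controls testing against all unit-ball functions measurable with respect to it. This is where the layer-cake/extreme-point argument above is needed, together with care about the finitely-many-values hypothesis on $\bfu$ (which makes all relevant cylinder sets clopen, so the cylinder approximations are genuinely by continuous functions and can be fed into the Furstenberg-system limit along $(M_k)$). Everything else — the (a) $\Leftrightarrow$ (b) step and the $L^1(\kappa)$-approximation of future events by finite cylinders — is standard ergodic theory that I would cite or dispatch in a line.
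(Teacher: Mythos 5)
Your proof is correct and follows essentially the same plan as the paper: (a) $\Leftrightarrow$ (b) via reverse martingale convergence of $\E^\kappa(\pi_0\mid\sigma(\pi_n,\pi_{n+1},\dots))$ to $\E^\kappa(\pi_0\mid\Pi(\kappa))$ together with the identification of $\Pi(\kappa)$ with the future tail $\sigma$-algebra; and (c) $\Rightarrow$ (b) via $L^1(\kappa)$-approximation of sets in $\sigma(\pi_n,\pi_{n+1},\dots)$ by finite unions of cylinders (clopen because $E$ is finite), so that the weak-* convergence $\frac1{M_k}\sum_{m\le M_k}\delta_{S^m\bfu}\to\kappa$ can be tested against the continuous functions $\pi_0\,\raz_{C'}$.

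The one place you diverge is the forward implication toward (c): you deduce (c) from (b) by extending the K-mixing bound from indicators to all unit-ball $\sigma(\pi_N,\pi_{N+1},\dots)$-measurable functions via a layer-cake argument over level sets, which is exactly what you flag as the ``slightly delicate point.'' The paper instead proves (a) $\Rightarrow$ (c) directly, observing that $\frac1{M_k}\sum_{m\le M_k}\bfu(m)f(S^m\bfu)\to\int\pi_0 f\,d\kappa=\int f\cdot\E^\kappa\bigl(\pi_0\mid\sigma(\pi_N,\pi_{N+1},\dots)\bigr)\,d\kappa$, whose modulus is bounded by $\int\bigl|\E^\kappa\bigl(\pi_0\mid\sigma(\pi_N,\pi_{N+1},\dots)\bigr)\bigr|\,d\kappa$ once $\|f\|_\infty\le1$ --- i.e.\ it reuses the very same uniform bound already obtained in the proof of (a) $\Rightarrow$ (b), with $\raz_C$ replaced by $f$. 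So the ``main obstacle'' you anticipated evaporates if you deduce (c) from (a) rather than from (b); your layer-cake detour is correct but unnecessary, and it also produces the (harmless) worse constant $4\vep$ in place of $\vep$.
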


\begin{proof}
(a) $\implies$ (b). Assume that $\EE(\pi_0|\Pi(\kappa))=0$. Let $C\in \sigma(\pi_n,\pi_{n+1},\ldots)$. We have
\begin{multline*}
\left|\int \pi_0 \raz_C d\kappa\right|=\left|\int \raz_C \EE(\pi_0|\sigma(\pi_n,\pi_{n+1},...)\, d\kappa\right|\\
\leq \int\left|\EE(\pi_0|\sigma(\pi_n,\pi_{n+1},\ldots))\right|\, d\kappa.
\end{multline*}
Hence, we have an upper bound which does not depend on $C$. Since
\[
\EE(\pi_0|\sigma(\pi_n,\pi_{n+1},...))\to \EE(\pi_0|\Pi(\kappa))=0
\]
$\kappa$-a.e. and thus also in $L^1$, which is precisely the relative K-mixing for $\pi_0$.

(b) $\implies$ (a). Suppose that $\pi_0$ is relatively K-mixing. Then,
in particular, we have~\eqref{relK} for each $C\in \Pi(\kappa)$. In fact, since $\vep>0$ is  arbitrary,
$\int \pi_0 \raz_C\,d\kappa=0$ for each $C\in\Pi(\kappa)$. Whence   $\EE(\pi_0|\Pi(\kappa))=0$.

(a) $\implies$ (c). Since
$$
\frac1{M_k}\sum_{m\leq M_k}\bfu(m)f(S^m\bfu)=\frac1{M_k}\sum_{m\leq M_k}(\pi_0f)(S^m\bfu)\to \int_{X_{\bfu}} \pi_0 f\, d\kappa,
$$
we can repeat the same argument as was used to prove (a) $\implies$ (b) (replacing $\raz_C$ by $f$).

(c) $\implies$ (b). Suppose that $\left|\int_{X_{\bfu}}\pi_0 f\,d\kappa\right|<\vep$, whenever $f$ depending on coordinates $n,n+1,\ldots,n+s$ with $n\geq N$ is bounded by~1. Consider all blocks on coordinates $n,n+1,\ldots,n+s$ that is all $$B=\{x\in X_{\bfu}:\: x_n=b_0,\ldots,x_{n+s}=b_s\}
$$
with $b_j\in E$. Let $C$ be any union of such blocks. Then $\raz_C$ is a (continuous) function depending on coordinates $n,\ldots,n+s$ and is bounded by~1 and, by assumption,
$$\left|\int_{X_{\bfu}}\pi_0 \raz_C\,d\kappa\right|<\vep.$$
Note that with $N$ fixed and $s$ arbitrary, the family of $C$ defined above is dense in the $\sigma$-algebra $\sigma(\pi_N,\pi_{N+1},\ldots)$. Hence, given $D\in \sigma(\pi_N,\pi_{N+1},\ldots)$ and $\vep>0$, we first find $s\geq0$ and then $C$ as above (a union of blocks ``sitting'' on coordinates $N,\ldots,N+s$) such that $\kappa(C\triangle D)<\vep$ and find that
$$
\left|\int \pi_0 \raz_D\,d\kappa\right|\leq
\left|\int \pi_0 \raz_C\,d\kappa\right|+\kappa(C\triangle D)<2\vep.
$$
\end{proof}
Now, since each clopen set is a finite union of cylinders of a fixed length, Corollary~\ref{cB} follows directly by the above proposition. Corollary~\ref{cB} is a special case of Corollary~\ref{cA}.

\subsubsection{Conditional cancellations. Remark~\ref{r:sarnakcomb}}
The ``cancellation law'' of the values of $\bfu$ along large shifts of the return times to a block (for most of the blocks) claimed in Remark~\ref{r:sarnakcomb} is a consequence of a refinement of Proposition~\ref{prop:mich}.

\begin{Prop} Let $\kappa\in V_S(\bfu)$ and $\int \pi_0\,d\kappa=0$. Then the following conditions are equivalent:
\begin{enumerate}
\item[(a)]
$\pi_0\perp L^2(\Pi(\kappa))$,
\item[(d)]
for each $\vep>0$ there exist $N\geq1$ and $L\geq1$ such that for each $\ell\geq L$ there is a family  of ``good'' $\ell$-blocks $C\in\sigma(\pi_N,\pi_{N+1},\ldots)$, i.e. of blocks satisfying
$$
\left|\int \raz_C\cdot\pi_0\,d\kappa\right|\leq \kappa(C)\vep,$$
whose measure is $>1-\vep$. In other words, for a ``good'' $\ell$-block $C$, $\left|\int \pi_0\,d\kappa_C\right|<\vep$, where $\kappa_C$ stands for the conditional measure on $C$.
\end{enumerate}
\end{Prop}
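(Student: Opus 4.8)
The plan is to refine the martingale argument used in the proof of Proposition~\ref{prop:mich}. Write $\mathcal{T}_n:=\sigma(\pi_n,\pi_{n+1},\ldots)$ and, for $N,\ell\ge1$, let $\mathcal{F}_{N,\ell}:=\sigma(\pi_N,\ldots,\pi_{N+\ell-1})$ be the finite algebra whose atoms are the $\ell$-blocks sitting on the coordinates $N,\dots,N+\ell-1$; set $g_n:=\EE^\kappa(\pi_0\mid\mathcal{T}_n)$ and $g_{N,\ell}:=\EE^\kappa(\pi_0\mid\mathcal{F}_{N,\ell})$. Three elementary facts will be used throughout. First, $g_{N,\ell}$ is constant on each such $\ell$-block $C$, with value $\kappa(C)^{-1}\int_C\pi_0\,d\kappa=\int\pi_0\,d\kappa_C$, so that $\|g_{N,\ell}\|_{L^1(\kappa)}=\sum_C\bigl|\int_C\pi_0\,d\kappa\bigr|$, the sum running over all $\ell$-blocks. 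Second, since $\mathcal{F}_{N,\ell}\subset\mathcal{T}_N$ one has $g_{N,\ell}=\EE^\kappa(g_N\mid\mathcal{F}_{N,\ell})$, whence $\|g_{N,\ell}\|_{L^1(\kappa)}\le\|g_N\|_{L^1(\kappa)}$ for every $\ell$, and $g_{N,\ell}\to g_N$ in $L^1(\kappa)$ as $\ell\to\infty$ (the filtration $\mathcal{F}_{N,\ell}$ increases to $\mathcal{T}_N$). Third, $(\|g_n\|_{L^1(\kappa)})_n$ is non-increasing and $\|g_n\|_{L^1(\kappa)}\to\|\EE^\kappa(\pi_0\mid\Pi(\kappa))\|_{L^1(\kappa)}$ as $n\to\infty$, since $\mathcal{T}_n$ decreases to the tail $\sigma$-algebra $\Pi(\kappa)$ (this is already used in the proof of Proposition~\ref{prop:mich}). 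Recall finally that (a) just says $\EE^\kappa(\pi_0\mid\Pi(\kappa))=0$.

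\emph{(a) $\Rightarrow$ (d).} By the third fact, $\|g_n\|_{L^1(\kappa)}\to0$. Given $\vep\in(0,1)$, fix $N$ with $\|g_N\|_{L^1(\kappa)}<\vep^2$; by the second fact $\|g_{N,\ell}\|_{L^1(\kappa)}<\vep^2$ for every $\ell\ge1$, so one may take $L=1$. Fix $\ell\ge1$. By the first fact, the union of the $\ell$-blocks $C$ with $\bigl|\int\pi_0\,d\kappa_C\bigr|\ge\vep$ is precisely $\{|g_{N,\ell}|\ge\vep\}$, which by Markov's inequality has $\kappa$-measure $\le\vep^{-1}\|g_{N,\ell}\|_{L^1(\kappa)}<\vep$. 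Hence the complementary family of ``good'' $\ell$-blocks has total measure $>1-\vep$, and on each such $C$, $\bigl|\int\raz_C\pi_0\,d\kappa\bigr|=\kappa(C)\bigl|\int\pi_0\,d\kappa_C\bigr|<\vep\,\kappa(C)$. This is~(d).

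\emph{(d) $\Rightarrow$ (a).} Fix $\vep>0$ and let $N,L$ be given by (d). For every $\ell\ge L$, splitting the sum from the first fact into good and bad blocks and bounding $\bigl|\int_C\pi_0\,d\kappa\bigr|\le\|\pi_0\|_\infty\kappa(C)$ on the bad ones,
\begin{align*}
\|g_{N,\ell}\|_{L^1(\kappa)}=\sum_C\Bigl|\int_C\pi_0\,d\kappa\Bigr| &\le\vep\sum_{C\text{ good}}\kappa(C)+\|\pi_0\|_\infty\sum_{C\text{ bad}}\kappa(C)\\
&\le\bigl(1+\|\pi_0\|_\infty\bigr)\vep.
\end{align*}
Letting $\ell\to\infty$ and using the second fact gives $\|g_N\|_{L^1(\kappa)}\le(1+\|\pi_0\|_\infty)\vep$, and then the third fact yields $\|\EE^\kappa(\pi_0\mid\Pi(\kappa))\|_{L^1(\kappa)}\le(1+\|\pi_0\|_\infty)\vep$. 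Since $\vep>0$ was arbitrary, $\EE^\kappa(\pi_0\mid\Pi(\kappa))=0$, i.e.\ $\pi_0\perp L^2(\Pi(\kappa))$.

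There is no real obstacle here beyond careful bookkeeping; the two points that need attention are the opposite monotonicity directions of the two martingale limits — the increasing filtration $\mathcal{F}_{N,\ell}\uparrow\mathcal{T}_N$ for the limit in $\ell$ versus the decreasing filtration $\mathcal{T}_n\downarrow\Pi(\kappa)$ for the limit in $n$ — and the (pleasant) fact that in (d) the length threshold $L$ can always be taken to be $1$ once $N$ has been chosen large enough.
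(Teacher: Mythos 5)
Your proof is correct, and it takes a genuinely different and arguably cleaner route than the paper's. Where the paper works pointwise — using a.e.\ convergence of $\EE^\kappa(\pi_0\mid\mathcal{T}_N)$ to $0$, passing to convergence in measure to obtain a set $A_\vep$, approximating $A_\vep$ by a finite-coordinate set $A_\vep^{(M)}$, shifting it into $\sigma(\pi_N,\pi_{N+1},\dots)$, and then approximating that by unions of $\ell$-blocks — you work entirely at the level of $L^1$-norms of conditional expectations: the reverse-martingale limit $\|g_n\|_{L^1}\downarrow\|\EE(\pi_0\mid\Pi(\kappa))\|_{L^1}$ in $n$, the forward-martingale limit $\|g_{N,\ell}\|_{L^1}\uparrow\|g_N\|_{L^1}$ in $\ell$, the identity $\|g_{N,\ell}\|_{L^1}=\sum_C\bigl|\int_C\pi_0\,d\kappa\bigr|$, and Markov's inequality. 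This replaces the paper's set-theoretic approximation gymnastics with a two-line norm computation in each direction, avoids the chain of symmetric-difference estimates, and as a bonus yields the sharper observation that the threshold $L$ in (d) can always be taken to be $1$ once $N$ is chosen. Both arguments rely on the same underlying facts (reverse martingale convergence and the identification of $\bigcap_n\mathcal{T}_n$ with $\Pi(\kappa)$ for a finite-valued process), but your bookkeeping is tighter. One trivial cosmetic point: the statement of (d) uses $\le\kappa(C)\vep$ while the parenthetical ``in other words'' rephrasing uses strict inequality; your proof of (a)$\Rightarrow$(d) produces the strict version, which of course implies the non-strict one, so there is no issue.
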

\begin{proof}
(a) $\implies$ (d). Fix $\vep>0$ and note that $\EE^\kappa(\pi_0|\sigma(\pi_N,\pi_{N+1},\ldots))\to0$ $\kappa$-a.e. This implies convergence in measure, i.e., we can find a set $A_\vep$ of measure at least $1-\vep$ such that for $N$ large enough,
$$
|\EE^\kappa(\pi_0|\sigma(\pi_N,\pi_{N+1},\ldots))(x)|<\vep\text{ for all }x\in A_{\vep}.$$
Fix such an $N$. There is $M\geq 1$ large enough such that
$$
\kappa(A_{\vep}\triangle A^{(M)}_{\vep})<\vep,$$
where $A^{(M)}_\vep\in\sigma(\pi_{-M},\pi_{-M+1},\ldots)$
and note that $S^{N+M}A^{(M)}_\vep\in \sigma(\pi_N,\pi_{N+1},\ldots)$. Now, for $\ell$ large enough, we can approximate $S^{N+M}A^{(M)}_{\vep}$ by a (disjoint) union of $\ell$-blocks belonging to $\sigma(\pi_N,\pi_{N+1},\ldots)$,
$$
\kappa(S^{N+M}A^{(M)}_{\vep}\triangle \bigcup_{j\in J}  C^{(\ell)}_j)<\vep.$$
But $\kappa(S^{N+M}A^{(M)}_{\vep}\triangle A_{\vep})<2\vep$, so
$$
\kappa(\bigcup_{j\in J}  C^{(\ell)}_j \setminus A_{\vep})\leq \kappa(A_{\vep}\triangle \bigcup_{j\in J}  C^{(\ell)}_j)<3\vep.
$$
Consider those $j\in J$ for which $\kappa(C^{(\ell)}_j\setminus A_{\vep})\geq\sqrt{\vep}\kappa(C_j^{(\ell)})$. Then the measure $m$ of the union of such blocks has to satisfy $\sqrt{\vep} m<3\vep$, so $m<3\sqrt{\vep}$. In other words ``most'' (in measure) of the $C_j^{(\ell)}$'s are ``good'', i.e. they satisfy $\kappa(C^{(\ell)}_j\cap A_{\vep})>(1-3\sqrt{\vep})\kappa(C^{(\ell)}_j)$. Take such a ``good'' $C$. We have
\begin{multline*}
\left|\int \raz_C\cdot \pi_0\,d\kappa\right|\leq \int\raz_{C}|\EE^\kappa(\pi_0|\sigma(\pi_N,\pi_{N+1},\ldots))|
\,d\kappa\\
=\int_{A_{\vep}}\raz_{C}|\EE^\kappa(\pi_0|
\sigma(\pi_N,\pi_{N+1},\ldots))|\,d\kappa
+
\int_{A_{\vep}^c}\raz_{C}|\EE^\kappa(\pi_0|\sigma(\pi_N,\pi_{N+1},
\ldots))|\,d\kappa\\
\leq \vep \kappa(C)+3\sqrt{\vep}\kappa(C).
\end{multline*}

(d) $\implies$ (a).  Fix $A\in\Pi(\kappa)$ of positive measure $\kappa$. Then for $\vep>0$, we can find $N$ such that for all $\ell$ large enough ``most'' of the $\ell$-blocks in $\sigma(\pi_N,\pi_{N+1}<\ldots)$ is ``good'' in the sense that
$$\left|\int \raz_C\cdot\pi_0\,d\kappa\right|\leq \kappa(C)\vep.
$$
Since $A\in \sigma(\pi_N,\pi_{N+1},\ldots)$, we can approximate it by unions of $\ell$-blocks (for $\ell$ sufficiently large) and most of the used blocks is ``good''. Whence
$$
\left|\int \raz_A\cdot\pi_0\,d\kappa\right|\leq 2\vep,$$
and since $\vep>0$ was arbitrary, $\pi_0\perp L^2(\Pi(\kappa))$.
\end{proof}

\subsection{Orthogonality to $\mathscr{C}_{\rm {ID}}$. Proof of Corollary~\ref{c:ID}}  We recall that (Proposition~\ref{p:examples})
$$
\mathscr{C}_{\rm ID}=\mathscr{C}_{{\rm ID}_{\rm ec}}.$$
Since the characteristic factor is represented by the $\sigma$-algebra of invariant sets, by Theorems~\ref{tA} and~\ref{tB}, we obtain immediately that:
\begin{Cor}\label{t:bhu} $\bfu\perp \mathscr{C}_{\rm ID}$ if and only if for  each Furstenberg system $\kappa$ of $\bfu$, $\pi_0\perp L^2(\mathcal{I}_\kappa)$.\end{Cor}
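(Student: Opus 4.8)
The plan is to obtain Corollary~\ref{t:bhu} as an immediate specialization of Theorem~\ref{tB} to the characteristic class $\cf={\rm ID}$, once the relevant largest characteristic factor is identified. The argument splits into two short observations, and I expect no genuine obstacle here: all of the real work is already contained in Theorems~\ref{tA} and~\ref{tB}.

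First I would record that ${\rm ID}_{\rm ec}={\rm ID}$, which is part of Proposition~\ref{p:examples}(i): if a.e.\ ergodic component $\kappa_{\ov{x}}$ of $(Z,\cd,\kappa,R)$ yields the identity, then $R=\mathrm{Id}$ on a set of full $\kappa_{\ov{x}}$-measure for a.e.\ $\ov{x}$, hence $R=\mathrm{Id}$ $\kappa$-a.e. Consequently $\mathscr{C}_{\rm ID}=\mathscr{C}_{{\rm ID}_{\rm ec}}$, and Theorem~\ref{tB} applies with $\cf={\rm ID}$ to give: $\bfu\perp\mathscr{C}_{\rm ID}$ if and only if $\bfu$ satisfies the Veech condition~\eqref{veech3} with respect to ${\rm ID}$.

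Second, I would spell out what the Veech condition with respect to ${\rm ID}$ says. For any automorphism $(Z,\cd,\kappa,R)$, an ${\rm ID}$-factor is a sub-$\sigma$-algebra $\ca\subset\cd$ on which $R$ acts trivially, i.e.\ $R^{-1}A=A$ mod $\kappa$ for every $A\in\ca$; the largest such $\ca$ is exactly the $\sigma$-algebra $\mathcal{I}_\kappa$ of $R$-invariant sets (as already recorded in Section~\ref{largDCSec}). Applying this to $(X_{\bfu},\cb(X_{\bfu}),\kappa,S)$ for a Furstenberg system $\kappa\in V_S(\bfu)$ gives $(\cb(X_{\bfu}),\kappa)_{\rm ID}=\mathcal{I}_\kappa$, so condition~\eqref{veech3} for $\cf={\rm ID}$ reads precisely: $\pi_0\perp L^2(\mathcal{I}_\kappa)$ for each Furstenberg system $\kappa$ of $\bfu$.

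Combining these two observations finishes the proof. If one wants only the ``if'' direction, it already follows from Theorem~\ref{tA} applied with $\cf={\rm ID}$; the ``only if'' direction is exactly where Theorem~\ref{tB} is used, and it is applicable here precisely because ${\rm ID}={\rm ID}_{\rm ec}$. (By Remark~\ref{r:ThmAsubsequence}, the same reasoning also yields the subsequence refinement, matching Furstenberg systems along prescribed sequences $(N_k)$.) In short, there is essentially nothing to prove beyond correctly matching the abstract statement of Theorem~\ref{tB} to the case $\cf={\rm ID}$ and invoking the classical description $\cd_{\rm ID}=\mathcal{I}_\kappa$.
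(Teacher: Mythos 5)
Your proposal is correct and follows essentially the same route as the paper: identify ${\rm ID}={\rm ID}_{\rm ec}$ (Proposition~\ref{p:examples}(i)), recall that the largest ${\rm ID}$-factor is $\mathcal{I}_\kappa$, and invoke Theorems~\ref{tA} and~\ref{tB}. The only difference is that you spell out the (short) justification for ${\rm ID}_{\rm ec}\subset{\rm ID}$, which the paper leaves implicit.
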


Let us now pass to a combinatorial characterization of the Veech condition.
Assume that $\kappa$ is given as the limit of $\frac1{N_k}\sum_{n\leq N_k}\delta_{S^n\bfu}$. In view of Corollary~\ref{t:bhu}, we need to decipher $\EE(\pi_0|\mathcal{I}_\kappa)=0$.
By the von Neumann theorem, we have
$$
\frac1H\sum_{h\leq H}\pi_0\circ S^h\to\EE(\pi_0|\mathcal{I}_\kappa) \text{ in }L^2,
$$
i.e.
$$
\lim_{H\to\infty}\int_{X_{\bfu}}\left|\frac1H\sum_{h\leq H}\pi_0\circ S^h\right|^2\,d\kappa=0
$$
as $\EE(\pi_0|\mathcal{I}_\kappa)=0$. So, given $\vep>0$,
$$
\int_{X_{\bfu}}\left| \frac1H\sum_{h\leq H}\pi_0\circ S^h\right|^2\,d\kappa<\vep\text{ for all }H\geq H_{\vep}.$$
The latter is equivalent to
$$
\lim_{k\to\infty}\frac1{N_k}\sum_{n\leq N_k}\left|\frac1H\sum_{h\leq H}\pi_0\circ S^h\right|^2(S^n\bfu)<\vep,$$
that is,
$$\lim_{k\to\infty}\frac1{N_k}\sum_{n\leq N_k}\left|\frac1H\sum_{h\leq H}\bfu(n+h)\right|^2<\vep.$$
The proof of Corollary~\ref{c:ID} follows immediately.\bez

\begin{Remark}\label{r:mrtID}  Hence, the Matom\"aki-Radziwi\l\l \ theorem \cite{Ma-Ra} on the behaviour of a strongly aperiodic multiplicative function $\bfu$ on a typical short interval implies $\bfu\perp \mathscr{C}_{\rm ID}$. However, as shown in \cite{Go-Le-Ru1}, the  aperiodic multiplicative functions defined in \cite{Ma-Ra-Ta} do not satisfy the assertion of Corollary~\ref{c:ID}.\end{Remark}

In Corollary~\ref{t:bhu}, the Veech condition (for $\bfu$)  equivalent to $\bfu\perp \mathscr{C}_{\rm ID}$
is written as $\pi_0\perp L^2(\mathcal{I}_\kappa)$. If we look at it more spectrally, we obtain immediately that
\beq\label{vcID}
\bfu\perp \mathscr{C}_{\rm ID}\text{ if and only if }\sigma_{\pi_0,\kappa}(\{1\})=0\eeq
for all $\kappa\in V_S(\bfu)$,
i.e.\ the spectral measure of $\pi_0$ (with respect to each Furstenberg system) has no atom at $1$.  Classically (by a simple computation), we have:
\begin{Lemma}\label{l:cl1} If  $\sigma$ is a measure on the circle $\bs^1$ then
$$
\frac1H\sum_{h=0}^{H-1} \hat{\sigma}(h)\to\sigma(\{1\}).$$
\end{Lemma}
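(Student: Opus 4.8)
The plan is to write the Fourier coefficient as an integral against $\sigma$, interchange the (finite) sum with the integral, and then pass to the limit by dominated convergence.

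Concretely, with the convention $\hat\sigma(h)=\int_{\bs^1} z^h\,d\sigma(z)$ (the sign convention is irrelevant for what follows), and using that $\sigma$ is a finite measure so that Fubini applies to the finite sum over $0\le h<H$, I would write
\[
\frac1H\sum_{h=0}^{H-1}\hat\sigma(h)=\int_{\bs^1}\Phi_H(z)\,d\sigma(z),\qquad \Phi_H(z):=\frac1H\sum_{h=0}^{H-1}z^h.
\]
The whole point is thus to understand the Fej\'er-type kernel $\Phi_H$ pointwise on the circle. For $z=1$ one has $\Phi_H(1)=1$ for every $H$. For $z\in\bs^1$ with $z\neq1$, summing the geometric progression gives $\Phi_H(z)=\frac1H\cdot\frac{z^H-1}{z-1}$, whence $|\Phi_H(z)|\le\frac{2}{H|z-1|}\tend{H}{\infty}0$. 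Therefore $\Phi_H\to\ind{\{1\}}$ pointwise on $\bs^1$; moreover $|\Phi_H(z)|\le1$ for all $z$ and all $H$, since $\Phi_H(z)$ is an average of numbers of modulus~$1$.

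Finally, the constant function $1$ is $\sigma$-integrable (as $\sigma(\bs^1)<\infty$), so the dominated convergence theorem applies and yields
\[
\frac1H\sum_{h=0}^{H-1}\hat\sigma(h)=\int_{\bs^1}\Phi_H\,d\sigma\ \longrightarrow\ \int_{\bs^1}\ind{\{1\}}\,d\sigma=\sigma(\{1\}),
\]
which is the asserted convergence. There is essentially no obstacle here: the only mildly delicate point is the behaviour of $\Phi_H$ near $z=1$, but for the domination we only need the trivial uniform bound $\|\Phi_H\|_{\infty}\le1$, and away from $z=1$ the explicit geometric-sum estimate suffices, so the argument is entirely routine. (This is of course just the classical computation of Ces\`aro means of Fourier coefficients of a measure, i.e.\ Wiener's lemma in its simplest guise.)
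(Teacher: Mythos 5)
Your proof is correct and is precisely the ``simple computation'' the paper alludes to without spelling it out: the paper gives no proof of Lemma~\ref{l:cl1}, merely invoking it as classical. Your argument — interchanging the finite sum with the integral, observing that $\Phi_H(z)=\frac1H\sum_{h<H}z^h$ converges pointwise to $\ind{\{1\}}$ with the uniform bound $|\Phi_H|\le1$, and invoking dominated convergence — is the standard route and there is nothing to add or correct.
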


Hence, the Veech condition  is equivalent to
$$\frac1H\sum_{h=0}^{H-1}\int \pi_0\cdot\ov{\pi_0}\circ S^h d\kappa \to 0.$$
Combinatorially, we obtain
\beq\label{cl2}
\frac1H\sum_{h=0}^{H-1}\lim_{k\to\infty} \frac1{N_k}\sum_{n\leq N_k} \bfu(n)\ov{\bfu}(n+h) \to 0
\eeq
for each sequence $(N_k)$ defining a Furstenberg system $\kappa$.

It follows that \eqref{cl2} is equivalent to the short interval behaviour~\eqref{ortmsvf}. In other words, condition
$$\lim_{H\to\infty}\left(\lim_{k\to\infty}\frac1{N_k}\sum_{n\leq N_k}\left|\frac1H\sum_{h\leq H} \bfu(n+h)\right|^2\right)=0$$
is equivalent to
$$\lim_{H\to\infty}\frac1H\sum_{h\leq H}\left(\lim_{k\to\infty}\frac1{N_k}\sum_{n\leq N_k}\bfu(n)\ov{\bfu}(n+h)=0\right).$$

\subsection{Orthogonality to $\mathscr{C}_{\rm DISP(G)}$ with $G$ countable}
Let $G\subset\bs^1$ be a countable subgroup and recall that
${{\rm DISP}(G)}$ stands for the (characteristic) class of discrete spectrum automorphisms whose groups of eigenvalues are contained in $G$. Since $z\in\bs^1$ is an eigenvalue of $(Z,\cd,\kappa,R)$ if and only if it is an eigenvalues of a subset of positive measure of ergodic components, it is not hard to see that
$$
\cf_{{\rm DISP}(G)}=(\cf_{{\rm DISP}(G)})_{\rm ec}.$$
It follows that
$$\bfu \perp\mathscr{C}_{\cf_{{\rm DISP}(G)}}
\text{ if and only if }
\sigma_{\pi_0,\kappa}(G)=0,$$
i.e.\ the spectral measure of $\pi_0$ has no atoms belonging to $G$ (for each Furstenberg system $\kappa\in V_S(\bfu)$).

Suppose that $e^{2\pi i\alpha}\in G$. Consider
$\bfv(n):=e^{2\pi in\alpha}\bfu(n)$ for $n\in\N$. Note that
$$
\frac1{N_k}\sum_{n\leq N_k}\bfv(n)\ov{\bfv}(n+h)=e^{-2\pi ih\alpha}\frac1{N_k}\sum_{n\leq N_k}\bfu(n)\ov{\bfu}(n+h).$$
So, if we have a subsequence $(N_k)$ along which both $\frac1{N_k}\sum_{n\leq N_k}\delta_{S^n\bfw}$ with $\bfw=\bfu,\bfv$ converge to $\kappa,\kappa'$ respectively,\footnote{Note that these common sequences yield {\bf all} Furstenberg systems for both $\bfu$ and $\bfv$.} then
$$
\sigma_{\pi_0,\kappa}=\delta_{e^{2\pi i\alpha}}\ast\sigma_{\pi_0,\kappa'},$$
whence
$$
\sigma_{\pi_0,\kappa}(\{e^{2\pi i\alpha}\})=0 \text{ if and only if }
\sigma_{\pi_0,\kappa'}(\{1\}).$$
By our previous subsection it follows that the latter condition is equivalent to:
$$\lim_{H\to\infty}\left(\lim_{k\to\infty}\frac1{N_k}\sum_{n\leq N_k}\left|\frac1H\sum_{h\leq H} \bfv(n+h)\right|^2\right)=0,$$
that is,
$$\lim_{H\to\infty}\left(\lim_{k\to\infty}\frac1{N_k}\sum_{n\leq N_k}\left|\frac1H\sum_{h\leq H}e^{2\pi i(n+h)\alpha} \bfu(n+h)\right|^2\right)=0$$
which is the strong $\bfu$-MOMO condition for the irrational rotation by $\alpha$.\footnote{Note that if $f(t)=e^{2\pi i t}$ then
$$
\frac1{b_K}\sum_{k<K}\left|\sum_{b_k\leq n<b_{k+1}}f(R_\alpha^nx_k)\bfu(n)\right| =\frac1{b_K}\sum_{k<K}\left|\sum_{b_k\leq n<b_{k+1}}e^{2\pi i n\alpha}\bfu(n)\right|.$$}

\subsection{Furstenberg systems and the strong $\bfu$-MOMO property} \label{s:FSMOMO}
The following proposition helps us to exclude some measure-theoretic systems from the list of Furstenberg systems of an arithmetic function.
\begin{Prop}\label{p:franz}
Let $\bfu:\N\to\C$ be a bounded arithmetic function. Then no Furstenberg system $\kappa\in V_S(\bfu)$ has a topological model which is strongly $\bfu$-MOMO.
\end{Prop}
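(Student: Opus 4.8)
The plan is to argue by contradiction, pulling the strong $\bfu$-MOMO property back from a hypothetical topological model to the subshift $(X_{\bfu},S)$ by means of the lifting lemma (Proposition~\ref{p:momogen}). Assume $\kappa\in V_S(\bfu)$ is a Furstenberg system of $\bfu$, say with $\bfu$ generic for $\kappa$ along an increasing sequence $(N_m)$, and assume $(Z,R)$ is a topological model of $\kappa$: there are an $R$-invariant measure $\eta$ on $Z$ and a measure-theoretic isomorphism $\Theta\colon(X_{\bfu},\mathcal{B}(X_{\bfu}),\kappa,S)\to(Z,\mathcal{B}(Z),\eta,R)$. Assume moreover that $(Z,R)$ satisfies the strong $\bfu$-MOMO property. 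The graph of $\Theta$, i.e.\ $\lambda:=(\mathrm{id}_{X_{\bfu}}\times\Theta)_{\ast}\kappa$ on $X_{\bfu}\times Z$, is an $(S\times R)$-invariant joining of $\kappa$ and $\eta$; since under $\lambda$ the second coordinate is almost surely the $\Theta$-image of the first, we have, for every $g\in L^2(\eta)$,
\[
\int_{X_{\bfu}\times Z}\pi_0\otimes\overline g\,d\lambda=\int_{X_{\bfu}}\pi_0\cdot\overline{g\circ\Theta}\,d\kappa .
\]

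Next I would apply Proposition~\ref{p:momogen} to the topological systems $(X_{\bfu},S)$ and $(Z,R)$, the point $\bfu$ (generic for $\kappa$ along $(N_m)$), and the joining $\lambda$. This yields a sequence $(z_n)\subset Z$ and a subsequence $(N_{m_\ell})$ such that $(S^n\bfu,z_n)$ is generic for $\lambda$ along $(N_{m_\ell})$ and such that $\{n\ge0:z_{n+1}\neq Rz_n\}=\{b_1<b_2<\cdots\}$ with $b_{k+1}-b_k\to\infty$; in particular $z_n=R^{n-b_k}z_{b_k}$ whenever $b_k\le n<b_{k+1}$. Now fix $g\in C(Z)$. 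On the one hand, since $\pi_0\otimes\overline g$ is continuous on $X_{\bfu}\times Z$, genericity for $\lambda$ gives
\[
\frac1{N_{m_\ell}}\sum_{n\le N_{m_\ell}}\bfu(n)\,\overline{g(z_n)}=\frac1{N_{m_\ell}}\sum_{n\le N_{m_\ell}}(\pi_0\otimes\overline g)(S^n\bfu,z_n)\tend{\ell}{\infty}\int_{X_{\bfu}}\pi_0\cdot\overline{g\circ\Theta}\,d\kappa .
\]
On the other hand, the relation $z_n=R^{n-b_k}z_{b_k}$ on $[b_k,b_{k+1})$ gives, after taking the supremum over $Z$ (substituting $R^{-b_k}z_{b_k}$), the bound $\bigl|\sum_{b_k\le n<b_{k+1}}\bfu(n)g(z_n)\bigr|\le\bigl\|\sum_{b_k\le n<b_{k+1}}\bfu(n)\,g\circ R^n\bigr\|_{C(Z)}$, and similarly for the incomplete terminal block up to $N_{m_\ell}$, the initial stretch $n<b_1$ contributing only $O(b_1/N_{m_\ell})$. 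Hence the reformulation~\eqref{defmomobis} of the strong $\bfu$-MOMO property of $(Z,R)$ (Remark~\ref{r:strongMOMOsubsequence}, applicable since $b_{k+1}-b_k\to\infty$) forces the left-hand side above to tend to $0$. Comparing the two evaluations, $\int_{X_{\bfu}}\pi_0\cdot\overline{g\circ\Theta}\,d\kappa=0$ for every $g\in C(Z)$; as $\Theta$ is a measure isomorphism, $\{g\circ\Theta:g\in C(Z)\}$ is dense in $L^2(X_{\bfu},\kappa)$, whence $\pi_0\perp L^2(X_{\bfu},\kappa)$, i.e.\ $\pi_0=0$ $\kappa$-a.e. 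Thus $\kappa=\delta_{\underline 0}$ and $\frac1{N_m}\sum_{n\le N_m}|\bfu(n)|^2\to\int|\pi_0|^2\,d\kappa=0$, which is the desired contradiction (in the degenerate situation where $\bfu$ does vanish in density along $(N_m)$, the statement is vacuous).

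I expect the main obstacle to be the bookkeeping in that last comparison: one must make sure that the sequence $(z_n)$ delivered by Proposition~\ref{p:momogen} genuinely splits into orbit segments whose lengths $b_{k+1}-b_k$ tend to infinity, so that the partial sums $\sum_{b_k\le n<b_{k+1}}\bfu(n)g(z_n)$ are dominated by the strong-$\bfu$-MOMO averages $\frac1{b_K}\sum_{k<K}\bigl\|\sum_{b_k\le n<b_{k+1}}\bfu(n)\,g\circ R^n\bigr\|_{C(Z)}$, and that neither the bounded initial block $[0,b_1)$ nor the incomplete terminal block (absorbed by~\eqref{defmomobis}) causes any loss. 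The remaining ingredients — the construction of the graph joining, the transfer of genericity, and the density of $\{g\circ\Theta\}$ in $L^2(\kappa)$ — are routine, and the whole argument carries over verbatim to the logarithmic setting with Proposition~\ref{prop:log_case} replacing Proposition~\ref{p:momogen}.
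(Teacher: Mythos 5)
Your proof is correct and follows essentially the same route as the paper's: form the graph joining $\nu_J$ (your $\lambda$) of $\kappa$ with the model's invariant measure, apply the lifting lemma (Proposition~\ref{p:momogen}) to obtain a quasi-generic orbit-segmented sequence $(z_n)$, invoke the strong $\bfu$-MOMO property of the model (via Remark~\ref{r:strongMOMOsubsequence}) to annihilate $\int f\ot\pi_0\,d\nu_J$ for all continuous $f$, and conclude $\pi_0\perp L^2(\kappa)$ by density of the pullbacks of $C(Z)$ in $L^2(\kappa)$. The one substantive deviation is your endgame: the paper simply declares ``contradiction,'' while you spell out that $\pi_0\perp L^2(\kappa)$ forces $\kappa=\delta_{\underline{0}}$ and then set this aside as ``vacuous'' --- but note that if $\bfu$ vanishes in density (e.g.\ $\bfu\equiv 0$), the proposition as stated actually \emph{fails}, since $\delta_{\underline{0}}$ is then a Furstenberg system and the one-point topological system is a strong $\bfu$-MOMO model of it; so both your argument and the paper's tacitly assume $\kappa\neq\delta_{\underline{0}}$ (equivalently, that $\pi_0$ is not $\kappa$-a.e.\ zero), and describing the degenerate case as ``vacuous'' is the wrong label even though it does not affect the substance.
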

\begin{proof}Suppose  $(X_{\bfu},\kappa, S)$ has a topological model $(Z,\nu,R)$ which satisfies the strong $\bfu$-MOMO property. Let $J:Z\to X_{\bfu}$ settles a  measure-theoretic isomorphism and let $\nu_J$ be the corresponding graph joining. We assume that $\frac1{N_j}\sum_{n\leq N_j}\delta_{S^n\bfu}\to\kappa$. From Proposition~\ref{p:momogen} we can find a sequence $(z_n)\subset Z$ consisting of pieces of orbits of different points:
$\{n:\: Rz_n\neq z_{n+1}\}=\{b_k:\:k\geq1\}$ with $b_{k+1}-b_k\to\infty$,  and a subsequence $(N_{j_\ell})$ such that
$$
\frac1{N_{j_\ell}}\sum_{n\leq N_{j_\ell}}\delta_{(z_n,S^n\bfu)}\to\nu_J.$$
Then, by the strong $\bfu$-MOMO property of $(Z,R)$,
$$
\int f\ot \pi_0\,d\nu_J=\lim_{\ell\to\infty}\frac1{b_{K_\ell}}\sum_{k<K_\ell}\big(
\sum_{b_k\leq n<b_{k+1}}f(R^{n-b_k}z_{b_k})\bfu(n)\big)=0.$$
Hence, $\int \EE^{\nu_J}(f|X_{\bfu})\pi_0\,d\kappa=0$ for each continuous $f$ on $Z$, and we obtain a contradiction since $\EE^{\nu_{J}}(L^2(\nu)|X_{\bfu})=L^2(\kappa)$.
\end{proof}

\begin{Cor}\label{c:franz}
Assume that for each $(b_k)$ with $b_{k+1}-b_k\to\infty$,
\beq\label{momo28}
\lim_{K\to\infty}\frac1{b_K}\sum_{k<K}\sup_{\alpha\in\R}
\left|\sum_{b_k\leq n< b_{k+1}}\bfu(n)e^{2\pi i\alpha n}\right|=0.\eeq
Then the unipotent system $(x,y)\mapsto (x,y+x)$ (on $\T^2$) is not a Furstenberg system of $\bfu$.
\end{Cor}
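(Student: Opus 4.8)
The plan is to recognize that hypothesis~\eqref{momo28} is precisely the statement that the topological system $(X,T)$ with $X=\T^2$ and $T(x,y)=(x,y+x)$ enjoys the strong $\bfu$-MOMO property, and then to quote Proposition~\ref{p:franz}. Lebesgue measure ${\rm Leb}$ on $\T^2$ is $T$-invariant, and the measure-theoretic system $(\T^2,{\rm Leb},T)$ is exactly the unipotent system in the statement; hence $(X,T)$, together with the invariant measure ${\rm Leb}$, is a topological model of it. So if the unipotent system were a Furstenberg system $\kappa\in V_S(\bfu)$, then $(X,T)$ would be a strongly $\bfu$-MOMO topological model of $\kappa$, contradicting Proposition~\ref{p:franz}. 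It therefore suffices to verify the strong $\bfu$-MOMO property for $(X,T)$.

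To do this I would first check the defining condition~\eqref{defmomo} on the characters $\chi_{p,q}(x,y):=e^{2\pi i(px+qy)}$, $(p,q)\in\Z^2$. The crux is the identity $\chi_{p,q}(T^n(x,y))=\chi_{p,q}(x,y)\,e^{2\pi i qnx}$, which shows that on any block $[b_k,b_{k+1})$ the $y$-dependence reduces to a unimodular constant, so that
\[
\Bigl\|\sum_{b_k\le n<b_{k+1}}\bfu(n)\,\chi_{p,q}\circ T^n\Bigr\|_{C(X)}
=\sup_{x\in\T}\Bigl|\sum_{b_k\le n<b_{k+1}}\bfu(n)e^{2\pi i qnx}\Bigr|
\le\sup_{\alpha\in\R}\Bigl|\sum_{b_k\le n<b_{k+1}}\bfu(n)e^{2\pi i\alpha n}\Bigr|.
\]
Thus, for every $(b_k)$ with $b_{k+1}-b_k\to\infty$, hypothesis~\eqref{momo28} directly gives $\frac1{b_K}\sum_{k<K}\|\sum_{b_k\le n<b_{k+1}}\bfu(n)\chi_{p,q}\circ T^n\|_{C(X)}\to 0$. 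By linearity and the triangle inequality the same convergence holds with $\chi_{p,q}$ replaced by an arbitrary trigonometric polynomial on $\T^2$.

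The last step is to pass from trigonometric polynomials to a general $f\in C(X)$. Given $\vep>0$, I would choose by Stone--Weierstrass a trigonometric polynomial $g$ with $\|f-g\|_{C(X)}<\vep$; since $|\bfu|\le L$,
\[
\Bigl\|\sum_{b_k\le n<b_{k+1}}\bfu(n)(f-g)\circ T^n\Bigr\|_{C(X)}\le L(b_{k+1}-b_k)\vep,
\]
and summing over $k<K$ and using the telescoping bound $\sum_{k<K}(b_{k+1}-b_k)\le b_K$ controls the total error by $L\vep$, \emph{uniformly} in $K$. Letting first $K\to\infty$ (for $g$) and then $\vep\to 0$ yields~\eqref{defmomo} for $f$, so $(X,T)$ is strongly $\bfu$-MOMO, which by Proposition~\ref{p:franz} forbids the unipotent system from being a Furstenberg system of $\bfu$. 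I do not expect any genuine obstacle here: the only mildly delicate point is making the approximation error uniform over the infinitely many blocks, which is exactly what the telescoping estimate $\sum_{k<K}(b_{k+1}-b_k)\le b_K$ handles; note also that taking the supremum over all real $\alpha$ in~\eqref{momo28} is more than is needed (only integer frequencies $q$ arise), so the hypothesis is comfortably strong enough.
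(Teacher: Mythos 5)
Your proof is correct and follows exactly the route the paper takes: observe that hypothesis~\eqref{momo28} is the strong $\bfu$-MOMO property of the unipotent system $(\T^2,T)$ and then invoke Proposition~\ref{p:franz}. The paper states this as a one-line observation, whereas you supply the (routine but welcome) verification via characters $\chi_{p,q}$, Stone--Weierstrass, and the telescoping bound $\sum_{k<K}(b_{k+1}-b_k)\le b_K$ to make the approximation uniform in $K$.
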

\begin{proof}
Since condition \eqref{momo28} is the strong $\bfu$-MOMO property of the unipotent system, the result follows from Proposition~\ref{p:franz}.
\end{proof}

\begin{Remark} Corollary \ref{c:franz} brings a better understanding  of Problem 3.1 (due to Frantzikinakis) of the workshop \cite{AIM}:\\
{\em The system $(x,y)\mapsto(x,y+x)$ is not a Furstenberg system of the Liouville function}\\
(see also slide no 6 in \cite{nikos_talk}).

We recall that in \cite{Ma-Ra-Ta} (see Theorem 1.3 therein), it is proved that
$$
\lim_{M\to\infty}\limsup_{N\to\infty}\sup_{\alpha\in\R}\frac1N\sum_{n\leq N}\left|\frac1M\sum_{n\leq m<n+M}\lio(m)e^{2\pi i m\alpha}\right|=0,$$
so the sup has changed the place! The strong $\lio$-MOMO property for the unipotent system remains hence open.

For the equivalence of
$$
\lim_{M\to\infty}\limsup_{N\to\infty}\frac1N\sum_{n\leq N}\sup_{\alpha\in\R}\left|\frac1M\sum_{n\leq m<n+M}\bfu(m)e^{2\pi i m\alpha}\right|=0,$$
with~\eqref{momo28} see the appendix in \cite{Ka-Le-Ul} - only in the arXiv version of the paper.
\end{Remark}

\subsection{Orthogonality to $\mathscr{C}_{{\rm DISP}_{\rm ec}}$. Proof of Corollary~\ref{c:averagedCh'}}\label{s:orthogonality}
In view of Corollary~\ref{pods} (see also \eqref{opo}) and Theorem~\ref{tB},
in order to obtain $\bfu\perp\mathcal{C}_{{\rm DISP}_{ec}}$ it is sufficient and necessary to have $\bfu \perp L^2(\K(\mathcal{I}_\kappa))$ for each Furstenberg system $\kappa$ of $\bfu$.

By our  previous results, for the class of all topological systems whose all {\bf ergodic} measures yield discrete spectra, Sarnak and Veech conditions  are equivalent. We now write the Veech condition combinatorially, i.e.,\ we provide the proof of Corollary~\ref{c:averagedCh'}.

\begin{proof}[Proof of Corollary~\ref{c:averagedCh'}]
By Corollary~\ref{pods}, we need to show that for each $\kappa$ being a Furstenberg system of $\bfu$, we have
\begin{equation}\label{goal}
\int\frac1H\sum_{h\leq H}|\E(\pi_0\circ S^h\cdot\overline{\pi_0}|\mathcal{I}_\kappa)|^2\, d\kappa \to 0.
\end{equation}
By the von Neumann theorem,
$$
\int\Big|\EE(\pi_0\circ S^h\cdot \ov{\pi}_0|\mathcal{I}_\kappa)\Big|^2\,d\kappa=
\lim_{N\to\infty}\frac1N\sum_{n\leq N}\int(\pi_0\circ S^h\cdot \ov{\pi}_0)\circ S^n\cdot \ov{\pi_0\circ S^h\cdot \ov{\pi}_0}\,d\kappa.$$
Therefore,~\eqref{goal} is equivalent to
$$
\lim_{H\to\infty}\frac1H\sum_{h\leq H}\lim_{N\to\infty}\frac1N\sum_{n\leq N}\int(\pi_0\circ S^h\cdot \ov{\pi}_0)\circ S^n\cdot \ov{\pi_0\circ S^h\cdot \ov{\pi}_0}\,d\kappa=0.
$$
Let $(M_k)$ be such that $\kappa=\lim_{k\to\infty}\frac1{M_k}\sum_{m\leq M_k}\delta_{S^m\bfu}$. It follows immediately that~\eqref{goal} is equivalent to
$$
\lim_{H\to\infty}\frac1H\sum_{h\leq H}\lim_{N\to\infty}\frac1N\sum_{n\leq N}\lim_{k\to\infty}\frac1{M_k}\sum_{m\leq M_k}\bfu(m+n+h)\ov{\bfu(m+n)}\ov{\bfu(m+h)}\bfu(m)=0$$
which is precisely $\|\bfu \|_{u^2((M_k))}=0$. Now, it suffices to use~\eqref{prop13fubook}.
\end{proof}

\begin{Remark} \label{r:remark56}
In fact, already Frantzikinakis~\cite{nikos_talk} (see slide no 10) showed that if $\bfu$ is generic then $\|\bfu\|_{u^2}=0$ if and only if
$$
\lim_{M\to\infty}\limsup_{N\to\infty}\frac1N\sum_{n\leq N}\sup_{\alpha\in\R}\left|\frac1M\sum_{n\leq m<n+M}\bfu(m)e^{2\pi i m\alpha}\right|=0.
$$
We recall that this condition is equivalent to the strong $\bfu$-MOMO property of the unipotent system $(x,y)\mapsto (x, y+x)$.

\end{Remark}

\begin{Remark}
Note that for each (bounded) $\bfu\colon\N\to\C$ satisfying $\|\bfu\|_{u^2}=0$  the system
    \beq\label{unip}
    (x,y)\mapsto (x,x+y)\text{ on }(\T^2,{\rm Leb}\,\ot\, {\rm Leb})\eeq
    cannot appear (up to isomorphism) as a Furstenberg system of $\bfu$ (because $\pi_0$ is orthogonal to the $L^2(\K(\mathcal{I}_\kappa))$ but for the unipotent system \eqref{unip} the
whole system is relative Kronecker over the $\sigma$-algebra of invariant sets).

In particular, if $\|\lio\|_{u^2}=0$ holds for the Liouville function then \eqref{unip} is not its Furstenberg system -- this would answer a question by N.\ Frantzikinakis asked in 2016 (it is an official Problem 3.1 in \cite{AIM}). However, the problem of whether $\|\lio\|_{u^2}=0$ (or more generally $\|\lio\|_{u^s}=0$) seems to be difficult. The best known results  \cite{Ma-Ra-Ta2,Ma-Ra-Ta-Te-Zi} require a quantitative dependence between the parameters $M$ and $N$, i.e.\ $M=N^{\theta}$, for arbitrary small, but fixed $\theta>0$.

If $\|\lio\|_{u^2}=0$ holds  then Sarnak's conjecture holds for all (zero entropy) systems whose {\bf ergodic} measures yield discrete spectrum. So far it is only known that Sarnak's conjecture holds for systems whose {\bf all} invariant measure yield discrete spectrum  \cite{Hu-Wa-Zh,Hu-Wa-Ye,Fe-Ku-Le}.

Ruling out~\eqref{unip} (or, more generally, nilpotent type systems) from the list of potential Furstenberg systems of $\lio$ is important in view of Frantzikinakis and Host's results \cite{Fr-Ho1,Fr-Ho2} concerning the structure of Furstenberg systems of multiplicative functions (although, for the moment, this structure is known only for the logarithmic case).

In the light of \cite{Ma-Ra-Ta}, it would be also interesting to know whether $\|\bfu\|_{u^2}=0$ holds for some classical multiplicative functions. Note that this is {\bf not} the case for the class of aperiodic multiplicative functions defined in \cite{Ma-Ra-Ta} since as shown in \cite{Go-Le-Ru1}  they have the unipotent system as a Furstenberg system\footnote{In fact, for such functions $\bfu$ we have already $\|\bfu\|_{u^1((N_k))}>0$ (for some $(N_k)$), see Corollary~6.5 in \cite{Go-Le-Ru1}.} (see also Remark~\ref{r:mrtID}).
\end{Remark}

\subsection{Orthogonality to $\mathscr{C}_{\rm DISP}$. Averaged Chowla property for multiplicative functions} \label{s:avCh} The assertion ``iff''  of Theorem~\ref{tB} cannot be applied  to the class $\mathscr{C}_{\rm DISP}$. In this section we will show however that the assertion of this theorem holds whenever $\bfu\colon\N\to\C$
satisfies the following  additional property:
\begin{equation}\tag{$\ast$}
\mbox{all rotations on the circle satisfy the strong $\bfu$-MOMO property}.
\end{equation}
We will need the following fact (see, e.g.,~\cite{Ed}):
\beq
\label{modeldsc} \mbox{each discrete spectrum automorphism is a factor of $R_\alpha\times {\rm Id}_{[0,1]}$},
\eeq
for some ergodic rotation by $\alpha\in G$ on a compact (Abelian) metric group $G$. Our key tool will be the following lemma.
\begin{Lemma}\label{hasmomo}
Suppose that $(\ast)$ holds. Then $R_\alpha\times {\rm Id}_{[0,1]}$ satisfies the strong $\bfu$-MOMO property.
\end{Lemma}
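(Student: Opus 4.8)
The plan is to reduce, via the Stone--Weierstrass theorem, to continuous functions on $G\times[0,1]$ of the product form $\chi\ot\phi$, where $\chi$ is a character of $G$ and $\phi\in C([0,1])$, and then to observe that for such a function the strong $\bfu$-MOMO property of $R_\alpha\times{\rm Id}_{[0,1]}$ is literally a consequence of the strong $\bfu$-MOMO property of a single circle rotation. Here $R_\alpha$ is, as recalled before the statement, an ergodic rotation by some $\alpha$ on a compact Abelian metric group $G$.

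First I would record two elementary stability properties of the strong $\bfu$-MOMO property of a topological system $(X,T)$, both following from the triangle inequality in $C(X)$ together with $b_{k+1}-b_k\to\infty$ and $|\bfu|\le L$: (i) if $f_1,\dots,f_r\in C(X)$ each satisfy~\eqref{defmomo} for a given sequence $(b_k)$, then so does every linear combination $\sum_j c_jf_j$; and (ii) if $f_m\to f$ uniformly and each $f_m$ satisfies~\eqref{defmomo} for $(b_k)$, then so does $f$ --- for this one uses the bound $\|\sum_{b_k\le n<b_{k+1}}\bfu(n)(f-f_m)\circ T^n\|_{C(X)}\le L\|f-f_m\|_\infty(b_{k+1}-b_k)$, whose average $\frac1{b_K}\sum_{k<K}(\,\cdot\,)$ is at most $L\|f-f_m\|_\infty$. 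Hence it is enough to verify the property on a family of functions whose linear span is uniformly dense in $C(G\times[0,1])$, and the complex algebra spanned by $\{\chi\ot\phi:\chi\in\hat G,\ \phi\in C([0,1])\}$ --- which is unital, separates points of $G\times[0,1]$, and is stable under conjugation since $\overline{\chi}=\chi^{-1}\in\hat G$ --- is such a family by Stone--Weierstrass.

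Then I would carry out the explicit computation for $f=\chi\ot\phi$. Writing $G$ additively and setting $\lambda:=\chi(\alpha)\in\bs^1$, one has $f\bigl((R_\alpha\times{\rm Id})^n(g,t)\bigr)=\chi(g+n\alpha)\phi(t)=\chi(g)\lambda^n\phi(t)$, so, since $|\chi|\equiv1$,
\[
\Bigl\|\sum_{b_k\le n<b_{k+1}}\bfu(n)\,f\circ(R_\alpha\times{\rm Id})^n\Bigr\|_{C(G\times[0,1])}=\|\phi\|_\infty\,\Bigl|\sum_{b_k\le n<b_{k+1}}\bfu(n)\lambda^n\Bigr|.
\]
Now let $R_\lambda$ denote the rotation by $\lambda$ on the circle $\bs^1$ and let $j\in C(\bs^1)$ be $j(z)=z$; evaluating at $1\in\bs^1$ gives $j(R_\lambda^n1)=\lambda^n$, whence $|\sum_{b_k\le n<b_{k+1}}\bfu(n)\lambda^n|\le\|\sum_{b_k\le n<b_{k+1}}\bfu(n)\,j\circ R_\lambda^n\|_{C(\bs^1)}$. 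Summing over $k<K$ and dividing by $b_K$, the strong $\bfu$-MOMO property of $R_\lambda$ (part of hypothesis $(\ast)$, which covers all circle rotations, the rational ones included; when $\lambda=1$ one may instead apply $(\ast)$ to any fixed circle rotation and the constant function $1$) forces the average to $0$. Together with (i), (ii) and the density statement this yields the strong $\bfu$-MOMO property of $R_\alpha\times{\rm Id}_{[0,1]}$.

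The only delicate point is the reduction step: one must make sure that the strong $\bfu$-MOMO property genuinely passes to uniform limits of the test function --- this is precisely where $b_{k+1}-b_k\to\infty$ and the boundedness of $\bfu$ enter --- so that Stone--Weierstrass legitimately reduces the lemma to the product functions $\chi\ot\phi$. After that everything is the one-line computation turning a character evaluation into a circle-rotation orbit, and I do not expect any genuine obstacle.
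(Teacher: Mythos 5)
Your proof is correct and follows essentially the same route as the paper: reduce to test functions of the product form $\chi\otimes\phi$, pull out the modulus-one factor $\chi(g)$, and apply $(\ast)$ to the circle rotation determined by $\lambda=\chi(\alpha)$. The paper states the reduction to products $\chi\otimes f$ without justification, whereas you supply the Stone--Weierstrass argument plus the stability of strong $\bfu$-MOMO under linear combinations and uniform limits, which is precisely the missing detail; the remark about rational $\lambda$ (including $\lambda=1$) is a welcome check but not a substantive deviation.
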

\begin{proof}
It is enough to check the strong $\bfu$-MOMO for functions $F$ of the form $\chi\ot f$, where $\chi\in\widehat{G}$ and $f\in C([0,1])$. We have
\begin{align*}
\frac1{b_K}\sum_{k<K}&\left| \sum_{b_k\leq n<b_{k+1}}F((R_\alpha\times {\rm Id})^n(h_k,u_k))\bfu(n)\right|\\
&=\frac1{b_K}\sum_{k<K}\left| \sum_{b_k\leq n<b_{k+1}}\chi((R_\alpha^n(h_k))f(u_k)\bfu(n)\right|\\
&={\rm O}\left( \frac1{b_K}\sum_{k<K}\left| \sum_{b_k\leq n<b_{k+1}}\chi(n\alpha)\bfu(n)\right| \right).
\end{align*}
Our claim follows from $(\ast)$.
\end{proof}

\begin{Th}\label{t:am} Assume that $\bfu$ enjoys the property $(\ast)$. Then
 $\bfu\perp  \mathscr{C}_{DISP}$ if and only if $\pi_0\perp L^2(\mathcal{K}(\kappa))$ for each $\kappa\in V_S(\bfu)$  (iff the spectral measure $\sigma_{\pi_0}$ is continuous for each Furstenberg system $\kappa$).
 \end{Th}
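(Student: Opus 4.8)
The plan is to run the scheme of the proof of Theorem~\ref{tB}, the one essential change being that a Hansel model of the Kronecker factor need not belong to $\mathscr{C}_{\rm DISP}$, so I would replace it by the concrete algebraic model supplied by~\eqref{modeldsc}, for which the strong $\bfu$-MOMO property is available directly through hypothesis~$(\ast)$ and Lemma~\ref{hasmomo}. For the ``if'' implication there is nothing to add: the largest ${\rm DISP}$-factor of $(X_{\bfu},\kappa,S)$ is exactly its Kronecker factor $\mathcal{K}(\kappa)$, so ``$\pi_0\perp L^2(\mathcal{K}(\kappa))$ for every $\kappa\in V_S(\bfu)$'' is precisely the Veech condition~\eqref{veech3} for $\cf={\rm DISP}$, and Theorem~\ref{tA} then yields $\bfu\perp\mathscr{C}_{\rm DISP}$. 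The parenthetical reformulation is the well-known equivalence between $\pi_0\perp L^2(\mathcal{K}(\kappa))$ and the continuity (absence of atoms) of the spectral measure $\sigma_{\pi_0,\kappa}$ of $\pi_0$ for the unitary induced by $S$ on $L^2(X_{\bfu},\kappa)$.

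For the ``only if'' implication I would prove the stronger statement that, \emph{under $(\ast)$ alone}, $\pi_0\perp L^2(\mathcal{K}(\kappa))$ holds for every $\kappa\in V_S(\bfu)$ (this is consistent with, and in fact reproves, Corollary~\ref{c:averagedCh}). So fix $\kappa\in V_S(\bfu)$, say $\kappa=\lim_m\frac1{N_m}\sum_{n\le N_m}\delta_{S^n\bfu}$, and set $\mathcal{A}:=\mathcal{K}(\kappa)\subset\mathcal{B}(X_{\bfu})$. The factor $(X_{\bfu}/\mathcal{A},\kappa|_{\mathcal{A}},S)$ has discrete spectrum, so by~\eqref{modeldsc} there are an ergodic (hence uniquely ergodic) rotation $R_\alpha$ on a compact abelian metric group $G$ and a measure-theoretic factor map $\phi\colon(X,\nu,T)\to(X_{\bfu}/\mathcal{A},\kappa|_{\mathcal{A}},S)$, where $(X,\nu,T):=(G\times[0,1],\,m_G\otimes{\rm Leb},\,R_\alpha\times{\rm Id})$. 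The topological system $(X,T)=(G\times[0,1],R_\alpha\times{\rm Id})$ lies in $\mathscr{C}_{\rm DISP}$ (its visible measures are exactly the $m_G\otimes\delta_t$, each yielding a discrete spectrum system), and, crucially, by Lemma~\ref{hasmomo} it satisfies the strong $\bfu$-MOMO property. I then take $\rho$ to be the relatively independent joining of $\kappa$ and $\nu$ over their common factor $(X_{\bfu}/\mathcal{A},\kappa|_{\mathcal{A}})$ — identified with $\mathcal{A}$ on the $X_{\bfu}$-side and with $\phi^{-1}\bigl(\mathcal{B}(X_{\bfu}/\mathcal{A})\bigr)$ on the $X$-side — so that for every $g\in L^2(X_{\bfu}/\mathcal{A})$ one has
\[
\int_{X_{\bfu}\times X}\pi_0\otimes(g\circ\phi)\,d\rho=\int_{X_{\bfu}}\E^{\kappa}\bigl(\pi_0\,\big|\,\mathcal{A}\bigr)\cdot g\,d\kappa .
\]

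Next I apply the lifting lemma (Proposition~\ref{p:momogen}) with $Y=X_{\bfu}$, $u=\bfu$ and this joining $\rho$, obtaining a sequence $(x_n)\subset X$ and a subsequence $(N_{m_\ell})$ along which $(S^n\bfu,x_n)$ is generic for $\rho$, with $\{n\ge0:x_{n+1}\neq Tx_n\}=\{b_1<b_2<\cdots\}$ and $b_{k+1}-b_k\to\infty$. Exactly as in the proof of Theorem~\ref{tB}, the strong $\bfu$-MOMO property of $(X,T)$ together with this orbit-piece structure forces $\frac1{N_{m_\ell}}\sum_{n\le N_{m_\ell}}\bfu(n)f(x_n)\to0$ for every $f\in C(X)$ (using Remark~\ref{r:strongMOMOsubsequence} to pass between the $\sum_{k<K}$ and $\sum_{n\le N}$ forms). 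Since $\bfu(n)f(x_n)=(\pi_0\otimes f)(S^n\bfu,x_n)$, genericity gives $\int\pi_0\otimes f\,d\rho=0$ for all $f\in C(X)$, hence — the functional $f\mapsto\int\pi_0\otimes f\,d\rho$ being bounded on $L^2(\nu)$ — for all $f\in L^2(\nu)$, in particular for $f=g\circ\phi$ with $g\in L^2(X_{\bfu}/\mathcal{A})$. Combined with the displayed identity, this gives $\int_{X_{\bfu}}\E^{\kappa}(\pi_0|\mathcal{A})\cdot g\,d\kappa=0$ for all such $g$; taking $g=\overline{\E^{\kappa}(\pi_0|\mathcal{A})}$ forces $\E^{\kappa}(\pi_0|\mathcal{A})=0$, i.e.\ $\pi_0\perp L^2(\mathcal{K}(\kappa))$, as required.

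I expect the genuinely delicate points to be two. First, one must make sure that~\eqref{modeldsc} does supply a topological model in the possibly non-ergodic case (the Kronecker factor $(X_{\bfu}/\mathcal{A},\kappa|_{\mathcal{A}},S)$ need not be ergodic), and that on this model Lemma~\ref{hasmomo} — which only needs $(\ast)$ — indeed applies. Second, one must carry out the relative-independence bookkeeping correctly, i.e.\ justify the displayed identity and the resulting identification of $\int\pi_0\otimes(g\circ\phi)\,d\rho$ with $\int\E^{\kappa}(\pi_0|\mathcal{A})\,g\,d\kappa$. Once these are settled, everything else is the same joinings-and-genericity argument already used for Theorem~\ref{tB}.
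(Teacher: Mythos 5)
Your proposal is correct and follows essentially the same route as the paper: replace the Hansel model used in Theorem~\ref{tB} by the concrete topological system $(G\times[0,1],R_\alpha\times{\rm Id})$ coming from~\eqref{modeldsc}, pass to the relatively independent joining over the Kronecker factor, invoke Proposition~\ref{p:momogen} and Lemma~\ref{hasmomo}, and conclude $\E^\kappa(\pi_0|\mathcal{K}(\kappa))=0$. You also make explicit, as the paper does implicitly (and records afterwards in the proof of Corollary~\ref{c:averagedCh}), that hypothesis $(\ast)$ alone already forces the Veech condition for DISP, so the ``only if'' direction is the real content.
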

\begin{proof}
We only need to show that $\bfu\perp  \mathscr{C}_{DISP}$ implies $\pi_0\perp L^2(\mathcal{K}(\kappa))$ for each $\kappa\in V_S(\bfu)$.\footnote{$\mathcal{K}(\kappa)$ stands for the Kronecker factor of $(X_{\bfu},\kappa,S)$.} Using \eqref{modeldsc}, let
$p$ settle a factor map from $R_\alpha\times {\rm Id}_{[0,1]}$ acting on  $(G\times [0,1],m_G \ot {\rm Leb})$ and $(X_{\bfu}/\mathcal{K}(\kappa),
\mathcal{K}(\kappa),\kappa|_{\mathcal{K}(\kappa)})$. Let $(m_G\ot {\rm Leb})_p$ stand for the corresponding graph joining and $\rho$ for the relatively independent extension of it to a joining of $(G\times [0,1], m_G\ot {\rm Leb},R_\alpha\times {\rm Id})$ with $(X_{\bfu}, \kappa,S)$.

Now, by Proposition~\ref{p:momogen}, the integral $\int F\ot \pi_0\, d\rho$ can be computed using a quasi-generic sequence $((g_n), (S^n\bfu))$. Since, by Lemma~\ref{hasmomo}, our topological system $R_\alpha\times {\rm Id}$ satisfies the strong $\bfu$-MOMO property, this integral vanishes. On the other hand, for each $F\in C(G\times [0,1])$,
$$
\int F\ot \pi_0\,d\rho=\int \EE(F|X_{\bfu})\pi_0\,d\kappa$$
and since $\EE^\rho(C(G\times [0,1])|X_{\bfu})$ is dense in $L^2(\mathcal{K},\kappa|_{\mathcal{K}})$ (in view of the definition of $\rho$), it follows that $\pi_0\perp L^2(\mathcal{K}(\kappa))$.
\end{proof}

\begin{proof}[Proof of Corollary~\ref{c:averagedCh}]
Note that in the proof of Theorem~\ref{t:am},  we have shown that our original assumption $(\ast)$ already implies the Veech condition. In particular,  the Sarnak and the Veech properties are equivalent. Condition~\eqref{avch1} is just rewriting the Wiener condition combinatorially. Finally, the last part~\eqref{avch2} is proved in Appendix~\ref{a:drugi}.
\end{proof}

\begin{proof}[Proof of Corollary~\ref{c:averagedCh2}]
By Corollary~\ref{c:averagedCh}, we only need to show that irrational rotations satisfy the strong $\bfu$-MOMO property. This follows from the fact that irrational rotations satisfy the AOP property \cite{Ab-Le-Ru} and that the AOP property implies the strong $\bfu$-MOMO property \cite{Ab-Ku-Le-Ru2}.
\end{proof}

\section{No strong $\bfu$-MOMO in positive entropy} \label{s:momo}
In this section we discuss  the problem of orthogonality to $\mathscr{C}_{\rm ZE}$ and the reversed problem of the {\bf absence} of orthogonality to an {\bf arbitrary} positive entropy systems, following some ideas from \cite{Ab-Ku-Le-Ru2}. Recall that the following has been proved in \cite{Ab-Ku-Le-Ru2}.
\begin{Prop}[\cite{Ab-Ku-Le-Ru2}]
Let $\bfu\colon\N\to\C$ be a bounded arithmetic function. The following are equivalent:
\begin{enumerate}
\item[(a)] $\bfu\perp \mathscr{C}_{\rm ZE}$.
\item[(b)] For each $(X,T)$ of zero entropy and $f\in C(X)$, \eqref{ort1} holds uniformly in $x\in X$.
\item[(c)] Each zero entropy $(X,T)$ satisfies the strong $\bfu$-MOMO property.
\end{enumerate}
\end{Prop}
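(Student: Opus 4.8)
The plan is to establish the cycle $(c)\Rightarrow(b)\Rightarrow(a)\Rightarrow(c)$, the first two implications being essentially formal and the last one carrying all the weight. For $(c)\Rightarrow(b)$ I would use the observation already recorded right after Definition~\ref{def:strongMOMO}: if $(X,T)$ has the strong $\bfu$-MOMO property, split $\{1,\dots,N\}$ along an auxiliary sequence $(b_k)$ with $b_{k+1}-b_k\to\infty$ and $b_{k+1}-b_k=o(b_k)$ (legitimate by Remark~\ref{r:strongMOMOsubsequence}), bound each block $\sum_{b_k\le n<b_{k+1}}\bfu(n)f\circ T^n$ by its $C(X)$-norm, and conclude that $\frac1N\sum_{n\le N}f(T^nx)\bfu(n)\to0$ uniformly in $x\in X$. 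The implication $(b)\Rightarrow(a)$ requires nothing: uniform convergence to $0$ for every zero entropy $(X,T)$ in particular gives $\bfu\perp(X,T)$ for every such system, that is $\bfu\perp\mathscr{C}_{\rm ZE}$.

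For $(a)\Rightarrow(c)$ the fastest route is to invoke Proposition~\ref{p:mo1} with $\cf={\rm ZE}$. Since the Kolmogorov--Sinai entropy is affine on the simplex of invariant measures, an automorphism has zero entropy iff a.e.\ ergodic component does, i.e.\ ${\rm ZE}={\rm ZE}_{\rm ec}$ (Proposition~\ref{p:examples}(i)); by the variational principle $\mathscr{C}_{\rm ZE}=\mathscr{C}_{{\rm ZE}_{\rm ec}}$ is exactly the class of topological systems of zero topological entropy. Proposition~\ref{p:mo1} then gives $\bfu\perp\mathscr{C}_{{\rm ZE}_{\rm ec}}$ if and only if every member of $\mathscr{C}_{{\rm ZE}_{\rm ec}}$ satisfies the strong $\bfu$-MOMO property, which is precisely $(a)\Leftrightarrow(c)$.

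If one prefers a self-contained argument (this is the substance behind Proposition~\ref{p:mo1}, and identifies the one genuine obstacle), one reruns the proof of Proposition~\ref{p:mo1} in the present case. Given a zero entropy $(Y,S)$, $f\in C(Y)$, points $(y_k)\subset Y$ and an increasing $(b_k)$ with $b_1=1$ and $b_{k+1}-b_k\to\infty$, use the cone Lemma~\ref{l:cone} with phases $e_k\in\mathbb{A}:=\{1,e^{2\pi i/3},e^{4\pi i/3}\}$ to reduce the strong $\bfu$-MOMO convergence (for $f$ along the $y_k$) to $\frac1{b_K}\sum_{k<K}\sum_{b_k\le n<b_{k+1}}e_kf(S^{n-b_k}y_k)\bfu(n)\to0$. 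Encode the data in the point $x\in X:=(Y\times\mathbb{A})^{\Z}$ with $x_n:=(S^{n-b_k}y_k,e_k)$ for $b_k\le n<b_{k+1}$ (and constant for $n\le0$), let $X_x$ be its $T$-orbit closure, and put $F:=f\otimes{\rm Id}$, so that the last average equals $\frac1{b_K}\sum_{k<K}\sum_{b_k\le n<b_{k+1}}F(\pi_0(T^nx))\bfu(n)$. It then suffices to show $(X_x,T)\in\mathscr{C}_{\rm ZE}$ and apply $(a)$. Here is the main obstacle: since $b_{k+1}-b_k\to\infty$, one must argue that every visible measure $\mu$ of $(X_x,T)$ concentrates on the coherent set $\{(v_j,a_j)_{j\in\Z}:\forall j,\ (v_j,a_j)=(S^jv_0,a_0)\}$, so that $(X_x,\mu,T)$ is isomorphic to an $(S\times{\rm Id}_{\mathbb{A}})$-invariant measure on $Y\times\mathbb{A}$; the latter splits over the three copies of $Y$, hence its ergodic components are isomorphic to ergodic measures of $(Y,S)$ and thus have zero entropy, whence $(X_x,T)$ has zero topological entropy. (The finitely many coordinates at which coherence may fail for general limit points of the orbit, and the fixed points arising when $n_r\to\pm\infty$, are handled exactly as in the proof of Proposition~\ref{p:mo1}.) This zero-entropy verification of the spliced system is the only non-bookkeeping point, and it is precisely where the hypothesis $b_{k+1}-b_k\to\infty$ is indispensable, forcing any invariant measure not to ``see'' the splicing times and hence to be coherent with the zero-entropy dynamics of $(Y,S)$.
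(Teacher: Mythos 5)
Your proposal is correct. The paper itself states this result only as a citation to \cite{Ab-Ku-Le-Ru2} and supplies no proof, so the right benchmark is the paper's Proposition~\ref{p:mo1}, which the authors explicitly describe as an adaptation of the very argument from \cite{Ab-Ku-Le-Ru2} underlying the stated equivalence. Your cycle $(c)\Rightarrow(b)\Rightarrow(a)\Rightarrow(c)$ is the natural decomposition: the first implication correctly uses the $C(X)$-norm bound together with the block-normalization of Remark~\ref{r:strongMOMOsubsequence}; the second is immediate; and for the substantive direction $(a)\Rightarrow(c)$ you correctly identify that ${\rm ZE}={\rm ZE}_{\rm ec}$ (affinity of entropy over ergodic decomposition) so that Proposition~\ref{p:mo1} applies with $\cf={\rm ZE}$, and your self-contained rerun of that proposition's proof — the phase trick from Lemma~\ref{l:cone}, the splicing into $X=(Y\times\mathbb{A})^\Z$, the coherence of invariant measures forced by $b_{k+1}-b_k\to\infty$, and the zero-entropy check on the orbit closure via the variational principle — is a faithful specialization of the paper's own argument. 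This is essentially the same route the paper relies on, just traced explicitly.
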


On the other hand, it follows from the results of Downarowicz and Serafin~\cite{Do-Se,Do-Se1} that for each $\bfu\perp \mathscr{C}_{\rm ZE}$ {\bf there exists} $(X,T)$ such that
\beq\label{Do-Se}
\bfu\perp (X,T) \text{ and }(X,T)\not\in {\rm ZE}.
\eeq
In fact, one can get a positive entropy system $(X,T)$ in which for every $f\in C(X)$ \eqref{ort1} holds uniformly in $x\in X$.

We prove however that \eqref{Do-Se} fails if orthogonality is replaced by the strong $\bfu$-MOMO property.
To avoid technical details, we restrict ourselves to the case of an arithmetic function $\bfu$ taking finitely many values.

\begin{Thx}\label{t:thmC}
Let $\bfu:\N\to\C$ be an arithmetic function taking  finitely many values.
Assume that $\bfu\perp \mathscr{C}_{\rm ZE}$. Then no positive entropy topological dynamical system satisfies the strong $\bfu$-MOMO property.
\end{Thx}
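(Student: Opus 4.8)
The plan is to argue by contradiction, following the strategy of the analogous result for the Liouville function (under Chowla) in \cite{Ab-Ku-Le-Ru2}, but feeding in the new equivalence between Sarnak's property for $\mathscr{C}_{\rm ZE}$ and the Veech condition (Theorem~\ref{tB}). Suppose $\bfu$ takes finitely many values, $\bfu\perp\mathscr{C}_{\rm ZE}$, and that some positive entropy topological system $(X,T)$ satisfies the strong $\bfu$-MOMO property. Since $\bfu$ takes finitely many values, the only information about $(X,T)$ that is really used is through joinings with the subshift $(X_{\bfu},S)$, so the first step is to pass to the (measure-theoretic) Pinsker factor: since $h(T,\nu)>0$ for some ergodic $\nu\in M^e(X,T)$, by the non-ergodic Sinai factor theorem (Theorem~\ref{t:Sinai}), a nontrivial Bernoulli system $(B,\beta,\sigma_B)$ of positive entropy $\alpha$ is a measure-theoretic factor of $(X,\nu,T)$, hence is "visible" inside a suitable topological model. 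The strong $\bfu$-MOMO property passes to topological factors and is inherited by any topological system built over this Bernoulli factor; this is the step that requires care, because strong $\bfu$-MOMO is a topological, uniform-in-$x$ statement, and one must realize the Bernoulli factor as a genuine topological factor while keeping the strong $\bfu$-MOMO property. The clean way is: because $(X,T)$ has strong $\bfu$-MOMO, so does every subsystem of its orbit closure viewed symbolically, and in particular a uniquely ergodic topological model of a positive-entropy Bernoulli system with strong $\bfu$-MOMO is obtained.

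Next, the heart of the argument: I would show that strong $\bfu$-MOMO of a positive entropy Bernoulli (or just: positive entropy) topological system forces a \emph{correlation} of $\bfu$ with some deterministic sequence, contradicting $\bfu\perp\mathscr{C}_{\rm ZE}$. This is where Theorem~\ref{tB} enters: $\bfu\perp\mathscr{C}_{\rm ZE}$ is equivalent to the Veech condition, i.e. $\pi_0\perp L^2(\Pi(\kappa))$ for every Furstenberg system $\kappa\in V_S(\bfu)$. Equivalently (Corollary~\ref{c:Sarnakpro}), every stationary process $(\pi_0\circ S^n)$ determined by $\kappa$ is a Sarnak process. The point is that a positive entropy system which is strongly $\bfu$-MOMO would, via Proposition~\ref{p:momogen} (the lifting lemma), produce a joining $\rho$ of a Furstenberg system $\kappa$ of $\bfu$ with the Bernoulli system in which $\pi_0$ is \emph{not} orthogonal to the $L^2$-space of the Bernoulli coordinate — because the strong $\bfu$-MOMO property would let us pick the "good" fibered orbit exactly matching large blocks of $\bfu$, and positive entropy gives enough combinatorial freedom (many blocks of each length) to encode $\bfu$ itself on a set of positive density of return times. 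Concretely: one uses that in a positive entropy Bernoulli shift there are exponentially many names of each length, so one can select a point $z$ whose name along the blocks $[b_k,b_{k+1})$ reproduces (via a measurable code into the finite alphabet of $\bfu$) the corresponding block of $\bfu$; then $\frac1N\sum_{n\le N}\overline{\bfu(n)}\,g(S^{n-b_k}z)$ with $g$ the relevant cylinder function does \emph{not} tend to $0$, contradicting strong $\bfu$-MOMO. Dually, one argues that this forces $\EE^{\rho}(\pi_0\mid \text{Bernoulli factor})\neq 0$, so $\pi_0$ correlates with a positive entropy — in particular \emph{non}-Pinsker-trivial — factor, but by Proposition~\ref{p:sated} applied to the characteristic class ${\rm ZE}$, any such correlation must come through the Pinsker factor $\Pi(\kappa)$, so $\pi_0\not\perp L^2(\Pi(\kappa))$, contradicting the Veech condition.

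The cleanest packaging, which I would follow, is: assume strong $\bfu$-MOMO for a positive entropy $(X,T)$; build (Sinai) a Bernoulli topological factor/subsystem $(B,T_B)$ with strong $\bfu$-MOMO and with a uniquely ergodic model; inside $(B,T_B)$, because $\bfu$ takes finitely many values and the entropy is positive, find a point $b\in B$ and a cylinder function $g\in C(B)$ so that the "coded" orbit of $b$ agrees with $\bfu$ on all blocks $[b_k,b_{k+1})$ — this uses the Krieger/Jewett--Krieger type flexibility of positive entropy systems and is essentially the construction in \cite{Ab-Ku-Le-Ru2}; conclude $\frac1{b_K}\sum_{k<K}\bigl\|\sum_{b_k\le n<b_{k+1}}\bfu(n)\,g\circ T_B^n\bigr\|_{C(B)}\not\to 0$, contradicting the strong $\bfu$-MOMO property. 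The main obstacle, and the step I would spend the most effort on, is the combinatorial construction showing that positive entropy is enough to "carry" a copy of $\bfu$ along arbitrary increasing blocks with $b_{k+1}-b_k\to\infty$ while staying inside the given topological system — one must be careful that the resulting point is genuinely in $X$ (or in a topological subsystem to which strong $\bfu$-MOMO descends), and that the failure of the MOMO average is quantitative, not just along a subsequence that might be killed by the $\frac1{b_K}$ normalization. Everything else — the reduction via Sinai's theorem, the descent of strong $\bfu$-MOMO to factors, and the final invocation of Theorem~\ref{tB}/Proposition~\ref{p:sated} — is routine given the machinery already set up in the paper.
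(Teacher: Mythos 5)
Your proposal assembles the right ingredients (Theorem~\ref{tB}/Veech condition, Sinai's factor theorem, Bernoulli systems, the lifting lemma) but the logical architecture is inverted relative to what the paper actually does, and the inversion creates two gaps that your sketch acknowledges but does not close. You work from the target system $(X,T)$ outward: extract (via Sinai) a Bernoulli \emph{measure-theoretic} factor, try to realize it topologically while keeping strong $\bfu$-MOMO, then try to engineer a point $b$ in that factor whose symbolic name reproduces $\bfu$ along the blocks $[b_k,b_{k+1})$. Neither step is routine. Sinai's theorem is purely measure-theoretic and gives no topological factor map, so "strong $\bfu$-MOMO descends to the Bernoulli factor" needs an argument you don't supply (this is exactly why the paper routes through Corollary~\ref{czy3} of \cite{Ab-Ku-Le-Ru2}, which already packages the uniquely ergodic model / Krieger-type argument needed to compare entropies). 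More seriously, positive entropy of the Bernoulli factor does \emph{not} by itself let you realize an arbitrary prescribed sequence $\bfu$ along long blocks inside its subshift: the Bernoulli alphabet and statistics are unrelated to those of $\bfu$, and you would have to build the measurable code and verify genericity from scratch — a non-trivial task that you flag as "the step I would spend the most effort on" without giving the construction.

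The paper instead works from the $\bfu$ side. Since $\bfu\perp\mathscr{C}_{\rm ZE}$, Theorem~\ref{tB} gives the Veech condition, and from it (for a finitely valued $\bfu$) every non-degenerate Furstenberg system $\kappa\in V_S(\bfu)$ has \emph{positive entropy}, because the $\pi_0$-process generates $\mathcal{B}(X_{\bfu})$, so $\pi_0\perp L^2(\Pi(\kappa))$ with zero entropy would force $\pi_0=0$ a.e. The crucial new ingredient you do not identify is Lemma~\ref{l:omo2}: any positive-entropy finite-valued stationary process $X$ admits a stationary coupling with a prescribed nontrivial Bernoulli process $Y$ so that $\EE[X_0Y_0]\neq\EE[X_0]\EE[Y_0]$. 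Applying this to the $\pi_0$-process under $\kappa$ and a Bernoulli of any target entropy $h>0$, and then using the lifting lemma (Theorem 5.16 of \cite{Be-Do-Va}) to produce a sequence $y$ quasi-generic for that Bernoulli measure and correlating with $\bfu$ along a subsequence, one invokes Corollary~\ref{czy3}: strong $\bfu$-MOMO fails for \emph{every} $(X,T)$ with $h(X,T)>h$. Taking $h<h(X,T)$ for the given system finishes. So the coupling lemma + the quantitative correlation criterion of \cite{Ab-Ku-Le-Ru2} jointly replace the combinatorial encoding you were trying to build by hand, and the Veech condition enters precisely by guaranteeing positive entropy of $\kappa$, not (as you suggest) via a non-orthogonality of $\pi_0$ to a Bernoulli coordinate that would contradict Proposition~\ref{p:sated}. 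Your final invocation of Proposition~\ref{p:sated} is in fact misapplied: a Bernoulli is far from being in ${\rm ZE}$, so that proposition gives you nothing about joinings with a Bernoulli, and the correct contradiction is the failure of strong $\bfu$-MOMO, not a violation of the Veech condition.
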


\subsection{Proof of Theorem~\ref{t:thmC}}
We fix a bounded arithmetic function $\bfu\colon\N\to\C$.
We need a series of results from \cite{Ab-Ku-Le-Ru2} in some modified forms. In~\cite{Ab-Ku-Le-Ru2}, the equivalence of certain three properties (P1), (P2) and (P3) of an ergodic measure-theoretic dynamical system $(Z,\cb(Z),\kappa,R)$ was proved. Condition (P1) was nothing but the strong $\bfu$-MOMO for {\bf some} topological system being a model of the system given by $\kappa$. Instead of recalling (P2), let us formulate red its subsequence version:
\beq\label{czy1}\begin{array}{l}\tag{P2'}
\mbox{Assume that $(X,T)$ is any topological system and let $x\in X$.}\\
\mbox{If $x$ is generic along $(N_k)$ for a measure  which is isomorphic}\\\mbox{(as dynamical systems) to $\kappa$ then}\\
\mbox{$\lim_{k\to\infty}\frac1{N_k}\sum_{n\leq N_k}f(T^nx)\bfu(n)=0$
for each $f\in C(X)$}.\end{array}\eeq
The proof of the implication (P1) $\implies$ (P2') is a repetition of the proof of (P1) implies (P2). In Lemma~17 in \cite{Ab-Ku-Le-Ru2}, we need to consider the sequence $(N_k)$ instead of $\N$ and start with $\limsup$ along this sequence.

As a consequence of the above, we obtain the following version of Corollary~12 from~\cite{Ab-Ku-Le-Ru2}.
\begin{Cor}\label{czy2}
Assume that $\kappa$ is an ergodic shift-invariant measure on $\D_L^{\Z}$, and that there exists $y\in \D_L^{\Z}$, generic along $(N_k)$ for  $\kappa$, correlating with $\bfu$ along $(N_k)$, \textit{i.e.} the sequence $(\frac1{N_k}\sum_{n\leq N_k}y(n)\bfu(n))$ does not go to zero. Then the strong  $\bfu$-MOMO property fails for any uniquely ergodic model of $(\D_L^{\Z},\kappa,S)$.
\end{Cor}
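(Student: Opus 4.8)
The plan is to deduce the statement directly from the subsequence implication $(P1)\Rightarrow(P2')$ recalled above, exactly as Corollary~12 of \cite{Ab-Ku-Le-Ru2} is deduced from $(P1)\Rightarrow(P2)$. Assume, towards a contradiction, that some uniquely ergodic model $(Z,R)$ of $(\D_L^{\Z},\kappa,S)$ satisfies the strong $\bfu$-MOMO property. Since $(Z,R)$ is in particular \emph{a} topological model of the measure-theoretic system determined by $\kappa$, property (P1) holds for $\kappa$, and hence, by the implication $(P1)\Rightarrow(P2')$, property \eqref{czy1} holds as well.

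Next I would feed (P2') a concrete topological system. As $\D_L=\{z\in\C:|z|\le L\}$ is compact, the pair $(\D_L^{\Z},S)$ is a topological dynamical system, and the coordinate projection $\pi_0\colon\D_L^{\Z}\to\D_L\subset\C$ from \eqref{eq:defpi0} is continuous. By hypothesis the point $y\in\D_L^{\Z}$ is generic along $(N_k)$ for $\kappa$, and $\kappa$ is (trivially) isomorphic as a dynamical system to $\kappa$; so \eqref{czy1} applies with $(X,T)=(\D_L^{\Z},S)$, $x=y$ and $f=\pi_0$, which gives
\[
\lim_{k\to\infty}\frac1{N_k}\sum_{n\le N_k}\pi_0(S^ny)\,\bfu(n)
=\lim_{k\to\infty}\frac1{N_k}\sum_{n\le N_k}y(n)\,\bfu(n)=0 .
\]
This contradicts the assumption that $y$ correlates with $\bfu$ along $(N_k)$, i.e.\ that $\bigl(\frac1{N_k}\sum_{n\le N_k}y(n)\bfu(n)\bigr)$ does not converge to $0$. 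Hence no uniquely ergodic model of $(\D_L^{\Z},\kappa,S)$ can have the strong $\bfu$-MOMO property.

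I do not expect a genuine obstacle here: the whole content sits in the already-granted implication $(P1)\Rightarrow(P2')$, and what remains is bookkeeping. The one point worth stressing is that (P2') must be applied to an \emph{arbitrary} topological system on which a copy of $\kappa$ is quasi-generic along $(N_k)$ — here the ambient shift $(\D_L^{\Z},S)$ together with the distinguished point $y$ — and not merely to (uniquely ergodic) models of $\kappa$; this is precisely the generality in which \eqref{czy1} is stated. Equivalently, one may run the argument inside the orbit closure $X_y:=\overline{\{S^ny:n\in\Z\}}$, which is a subshift carrying $\kappa$ and on which $\pi_0$ restricts to a continuous function, with no change to the reasoning.
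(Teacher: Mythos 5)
Your proof is correct and is essentially the argument the paper intends when it writes "As a consequence of the above, we obtain the following version of Corollary~12": assume some uniquely ergodic model of $(\D_L^{\Z},\kappa,S)$ is strong $\bfu$-MOMO, so (P1) holds, hence (P2') holds, and then applying (P2') to $(\D_L^{\Z},S)$, the point $y$, and the continuous function $\pi_0$ yields $\frac1{N_k}\sum_{n\le N_k}y(n)\bfu(n)\to 0$, contradicting the correlation hypothesis. The paper leaves this bookkeeping implicit; you have simply spelled it out, and your remark that one could equivalently work in the subshift $X_y$ is also fine.
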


Then, by repeating the proof from~\cite{Ab-Ku-Le-Ru2}, we obtain the following form of Corollary~14 in~\cite{Ab-Ku-Le-Ru2}.
\begin{Cor}\label{czy3}
Assume that $y$ is generic along $(N_k)$ for a Bernoulli measure $\nu$, and that $y$ and $\bfu$ correlate along $(N_k)$. Then the strong $\bfu$-MOMO property fails for any $(X,T)$ with $h(X,T)>h(\nu)$.
\end{Cor}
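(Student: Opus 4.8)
The plan is to argue by contradiction: assume that $(X,T)$ satisfies the strong $\bfu$-MOMO property while $h(X,T)>h(\nu)$, and deduce that the correlation of $y$ with $\bfu$ vanishes along a subsequence of $(N_k)$ — contradicting the hypothesis. Two degenerate situations are disposed of immediately. If $h(\nu)=\infty$ there is nothing to prove. If $h(\nu)=0$ then, $\nu$ being Bernoulli of zero entropy, $\nu$ is the Dirac mass at a constant sequence $(c,c,\dots)$; genericity forces $\frac1{N_k}\sum_{n\le N_k}|y(n)-c|\to 0$, so the correlation hypothesis forces $c\neq0$ and $\frac1{N_k}\sum_{n\le N_k}\bfu(n)\not\to 0$ (along a suitable subsequence), which already contradicts the strong $\bfu$-MOMO property of $(X,T)$ (applied to $f\equiv 1$, in the form of Remark~\ref{r:strongMOMOsubsequence}, this property forces $\frac1N\sum_{n<N}\bfu(n)\to 0$ by the triangle inequality). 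So from now on assume $0<h(\nu)<\infty$, and, after passing to a subsequence of $(N_k)$, that $\frac1{N_k}\sum_{n\le N_k}y(n)\bfu(n)\to c$ with $c\neq0$.

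First I would use the variational principle to pick an ergodic $\mu\in M^e(X,T)$ with $h(\nu)<h(\mu)<\infty$. By Sinai's factor theorem (Theorem~\ref{t:Sinai}, applied to the ergodic $(X,\mu,T)$ with $\alpha:=h(\nu)$) the Bernoulli automorphism of entropy $h(\nu)$ is a measure-theoretic factor of $(X,\mu,T)$, and by Ornstein's isomorphism theorem this factor is isomorphic to the Bernoulli shift $(\D_L^{\Z},\nu,S)$ (recall $\nu$ is a Bernoulli measure of finite entropy $h(\nu)$). Fix a factor map $\phi\colon(X,\mu,T)\to(\D_L^{\Z},\nu,S)$ and let $\rho$ be the associated graph joining of $\nu$ and $\mu$, i.e.\ the image of $\mu$ under $x\mapsto(\phi(x),x)$; it is an ergodic joining on $\D_L^{\Z}\times X$ carried by the (measurable) graph of $\phi$.

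Next I would feed this into the lifting lemma, Proposition~\ref{p:momogen}, with $(Y,S):=(\D_L^{\Z},S)$ and the generic point $u:=y$, the system $(X,T)$ with its invariant measure $\mu$, and the joining $\rho$. This produces a sequence $(x_n)\subset X$ and a subsequence $(N_{m_\ell})$ of $(N_k)$ with
\[ \tfrac1{N_{m_\ell}}\sum_{0\le n<N_{m_\ell}}\delta_{(S^ny,x_n)}\longrightarrow\rho, \]
such that $\{n\ge0:x_{n+1}\neq Tx_n\}=\{b_1<b_2<\cdots\}$ with $b_{k+1}-b_k\to\infty$; in particular $x_n=T^{n-b_k}x_{b_k}$ on each block $[b_k,b_{k+1})$. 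After enlarging $\{b_k\}$ so that also $(b_{k+1}-b_k)/b_k\to 0$ (legitimate by Remark~\ref{r:strongMOMOsubsequence}), the strong $\bfu$-MOMO property of $(X,T)$, applied block by block via $\bigl|\sum_{b_k\le n<b_{k+1}}\bfu(n)f(x_n)\bigr|\le\bigl\|\sum_{b_k\le n<b_{k+1}}\bfu(n)f\circ T^n\bigr\|_{C(X)}$, yields $\frac1N\sum_{n<N}\bfu(n)f(x_n)\to 0$ for every $f\in C(X)$, in particular along $(N_{m_\ell})$.

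The last step is to bridge the sequences $(x_n)$ and $y$ despite $\phi$ being only measure-theoretic. Fix $\eps>0$. Since $x\mapsto(\phi(x))_0$ lies in $L^1(\mu)$ and $C(X)$ is dense in $L^1(\mu)$, choose $f\in C(X)$ with $\int_X|(\phi(x))_0-f(x)|\,d\mu<\eps$. The function $(v,x)\mapsto|v_0-f(x)|$ is bounded and continuous on $\D_L^{\Z}\times X$, so from the convergence of the empirical measures to $\rho$ (which sits on the graph of $\phi$) we get
\[ \limsup_{\ell}\tfrac1{N_{m_\ell}}\sum_{n<N_{m_\ell}}|y(n)-f(x_n)|=\int_X|(\phi(x))_0-f(x)|\,d\mu<\eps. \]
With $L:=\sup|\bfu|$ this gives $\limsup_\ell\bigl|\frac1{N_{m_\ell}}\sum_{n<N_{m_\ell}}\bfu(n)y(n)-\frac1{N_{m_\ell}}\sum_{n<N_{m_\ell}}\bfu(n)f(x_n)\bigr|\le L\eps$, and combining with the previous step, $\limsup_\ell\bigl|\frac1{N_{m_\ell}}\sum_{n<N_{m_\ell}}\bfu(n)y(n)\bigr|\le L\eps$. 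As $(N_{m_\ell})$ is a subsequence of $(N_k)$, this forces $|c|\le L\eps$ for every $\eps>0$, i.e.\ $c=0$ — the desired contradiction. I expect the genuine difficulty to be exactly this bridge: Sinai–Ornstein only supply a measurable coding $\phi$, so $y(n)$ is not literally a continuous image of $x_n$, and one must exploit that the lifting lemma delivers genericity strong enough to absorb an $L^1$-approximation of $\phi$ by continuous functions; the remaining ingredients (variational principle, Sinai, Ornstein, and the block-by-block manipulation of the definition of strong $\bfu$-MOMO) are routine.
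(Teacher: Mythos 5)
Your proof is correct. Since the paper leaves this argument implicit (it defers to Corollary~14 of \cite{Ab-Ku-Le-Ru2}), let me comment on the route you chose and one inessential blemish.

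Your core idea — use the variational principle and the Sinai–Ornstein machinery to realize $(\D_L^\Z,\nu,S)$ as a measure-theoretic factor of some ergodic $(X,\mu,T)$, form the graph joining $\rho$, and then invoke Proposition~\ref{p:momogen} to lift the genericity of $y$ to a sequence $(S^ny,x_n)$ generic for $\rho$ with $(x_n)$ a piecewise-orbit sequence — is exactly the technique the paper uses in the closely related Proposition~\ref{p:franz} and in the proof of Theorem~\ref{tB}. The paper's stated route goes through Corollary~\ref{czy2} and a lifting lemma from \cite{Ab-Ku-Le-Ru2} (indeed, Lemma~\ref{l:omo3} in the same section quotes the Bergelson–Downarowicz–Vandehey lemma rather than Proposition~\ref{p:momogen}), so you are substituting the paper's own new lifting lemma for the old one, which is a perfectly legitimate — arguably cleaner — internal route. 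The step that genuinely requires care is the one you correctly isolated: the Sinai–Ornstein factor map $\phi$ is only measurable, so $y(n)=(\phi(x))_0$ is not a continuous function of $x_n$; your $L^1$-approximation of $(\phi(\cdot))_0$ by $f\in C(X)$, combined with the fact that $|v_0-f(x)|$ \emph{is} continuous on $\D_L^\Z\times X$ and that $\rho$ lives on the graph of $\phi$, bridges this gap rigorously. Your treatment of the degenerate cases $h(\nu)\in\{0,\infty\}$ is also fine.

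One small remark: you do not actually need to choose $\mu$ with $h(\mu)<\infty$. The variational principle guarantees an ergodic $\mu$ with $h(\mu)>h(\nu)$, and the ergodic Sinai factor theorem already applies under $h(\mu)\ge h(\nu)$; insisting on $h(\mu)<\infty$ requires an argument (it is not automatic that the intermediate entropy range is populated), and it buys you nothing, so it is cleanest to drop that condition.
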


We also need the following crucial probabilistic lemma whose proof we postpone to the next subsection.

\begin{Lemma}
\label{l:omo2}
Assume that $X=(X_n)_{n\in\Z}$ is a a stationary process of positive entropy, taking finitely many complex values. Then for any non-trivial probability distribution $\beta$ concentrated on a finite subset of $\R$, there exists a stationary coupling of $X$ with a Bernoulli process $Y=(Y_n)_{n\in\Z}$ of distribution $\beta^{\otimes\Z}$ such that $\E[X_0 Y_0]\neq\E[X_0]\E[Y_0]$.
\end{Lemma}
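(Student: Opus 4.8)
The plan is to exploit the positive entropy of $X=(X_n)$ to produce, as a factor of $X$, a nontrivial Bernoulli process, and then build the desired coupling by gluing along this common Bernoulli factor. First, since $X$ has positive entropy, the non-ergodic Sinai factor theorem (Theorem~\ref{t:Sinai}) applied to the measure-theoretic dynamical system generated by $X$ yields a Bernoulli factor $B=(B_n)$ of $X$ of some entropy $\alpha>0$; by further factoring (Sinai again, or the classical fact that a Bernoulli shift of positive entropy has Bernoulli factors of all smaller entropies) we may take $B$ to be a $\{0,1\}$-valued (or finite-valued) i.i.d.\ process with $H(B_0)$ as small as we like, in particular $0<H(B_0)\le H(\beta)$. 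Write $\phi$ for the factor map, so $B_n=\phi((X_{n+k})_{k\in\Z})$ is a stationary, finitary function of $X$, and crucially $\mathrm{Cov}(X_0,B_0)\neq 0$ for at least one time shift --- we may re-index so that it is nonzero at lag $0$ --- because if $X_0$ were orthogonal in $L^2$ to the whole shift-invariant $\sigma$-algebra generated by $(B_n)$, then $X_0$ would be orthogonal to this Bernoulli factor, which is impossible to arrange for a single coordinate unless that factor is trivial; a cleaner route is to choose $\phi$ to be a nonconstant function of finitely many coordinates of $X$ that is correlated with $X_0$ and has small enough entropy, which is possible precisely because $h(X)>0$ forces $X$ not to be measurable with respect to any finite piece, hence there is genuine ``new randomness.''

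Second, I would realize $\beta^{\otimes\Z}$ as a factor of the i.i.d.\ process $B$ by a classical splitting/coding argument: any two nontrivial Bernoulli shifts $B$ and $Y$ with $H(Y_0)\le H(B_0)$ admit a joining in which $Y$ is a factor of $B$ (this is a weak form of Sinai's factor theorem for i.i.d.\ sequences, or can be done by hand via a measurable partition of the value space of $B_0$). Let $\psi$ denote this factor map, so $Y_n=\psi((B_{n+k})_k)$ is a stationary deterministic function of $B$, hence of $X$. Then the distribution of $(X_n,Y_n)_{n\in\Z}$ on the same probability space is automatically a stationary coupling of $X$ with $\beta^{\otimes\Z}$. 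It remains to verify $\E[X_0Y_0]\neq\E[X_0]\E[Y_0]$, i.e.\ that $Y_0$ has not ``lost'' all correlation with $X_0$; here one must be a little careful, since passing from $B$ to a factor $Y$ could in principle kill the correlation. The fix is to do the construction the other way around: use Sinai to get a Bernoulli factor $B$ of $X$ with $H(B_0)\ge H(\beta)$, then Sinai/Ornstein to find inside $B$ a copy of $\beta^{\otimes\Z}$ \emph{together with} an independent complement, and choose the copy so that the single coordinate $Y_0$ retains correlation with $X_0$ --- concretely, pick the partition defining $Y_0$ from $B_0$ (and finitely many neighbours) so that $\mathrm{Cov}(X_0,Y_0)\neq0$, which is possible whenever $\mathrm{Cov}(X_0,\cdot)$ does not vanish on the algebra generated by $B$, and that algebra is rich enough precisely because $B$ is a genuine (positive-entropy) factor correlated with $X_0$.

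The main obstacle I anticipate is exactly this last point: guaranteeing that the single coordinate $Y_0$ of the target Bernoulli process remains correlated with $X_0$, rather than merely that the process $(Y_n)$ as a whole is not independent of $(X_n)$. The paper wants $\E[X_0Y_0]\neq\E[X_0]\E[Y_0]$ at lag $0$, which is strictly stronger than non-disjointness of the two processes. I expect the resolution to combine two ingredients: (i) a ``prescribed marginal correlation'' version of the Sinai/Ornstein coupling --- one has enough freedom in the coding $\psi$ to tune the law of $(X_0,Y_0)$ while keeping $(Y_n)$ genuinely i.i.d.\ with the right marginal, because the entropy budget $H(\beta)<h(X)$ leaves room to spare; and (ii) possibly a shift in time, replacing $Y_0$ by $Y_j$ for a suitable $j$ (legitimate since the coupling is stationary, so a lag-$j$ covariance can be renamed a lag-$0$ covariance after re-indexing $Y$). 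Either way, the heart of the matter is the quantitative interplay between positive entropy and the existence of Bernoulli factors carrying a chosen correlation with the observable $X_0$; the rest is routine measure-theoretic joining bookkeeping. The finiteness of the support of $\beta$ and of the range of $X$ is used only to keep all these codings finitary and all covariances well-defined.
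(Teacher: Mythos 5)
There is a genuine gap, and it is exactly the one you flag at the end: nothing in your outline guarantees that after coding a Bernoulli factor $B$ of $X$ down to $\beta^{\otimes\Z}$, the single coordinate $Y_0$ remains correlated with $X_0$. Your proposed fixes do not close it. The claim that one may ``choose $\phi$ to be a nonconstant function of finitely many coordinates of $X$ that is correlated with $X_0$'' and still produce an i.i.d.\ process is unjustified (a finitary function of $X$ is essentially never i.i.d.), and the later claim that the tuning ``is possible whenever $\mathrm{Cov}(X_0,\cdot)$ does not vanish on the algebra generated by $B$'' presupposes exactly what has to be proved. An abstract Bernoulli factor supplied by Sinai's theorem carries no a priori relation to the observable $X_0$; it is perfectly consistent that $X_0\perp L^2(\sigma(B))$, in which case every process $Y$ measurable with respect to $B$ satisfies $\E[X_0Y_0]=\E[X_0]\E[Y_0]$. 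Re-indexing in time does not help either, since orthogonality to the whole factor algebra kills all lags at once. So the route ``$Y$ is a factor of a Bernoulli factor of $X$'' cannot be made to work without an extra argument that is at least as hard as the lemma itself --- and it is also more restrictive than needed, since the lemma only asks for a stationary \emph{coupling}, not for $Y$ to be a factor of $X$.

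The paper avoids Sinai/Ornstein machinery entirely and instead writes down the coupling explicitly using external randomness. One adjoins an i.i.d.\ uniform sequence $V=(V_n)$ independent of $X$, and for each $n$ forms the conditional weights $P_{j,n}=\PP(X_n=x_j\mid (X_m)_{m\le n-1})$. These cut $[0,1)$ into consecutive intervals $I_{1,n},\dots,I_{r,n}$; given $X_n=x_j$, one draws $U_n$ uniformly in $I_{j,n}$ using $V_n$. The tower property shows $(U_n)$ is i.i.d.\ uniform, and then $Y_n$ is defined as a non-decreasing step function of $U_n$ cutting $[0,1)$ according to $\beta$. The whole point of this construction is that, with the past $(X_m)_{m\le n-1}$ and $V_n$ fixed, $Y_n$ is a non-decreasing function of $X_n$; together with $\E[Y_n\mid(X_m)_{m\le n-1}]=\E[Y_n]=0$ this forces $\E[X_nY_n\mid(X_m)_{m\le n-1}]\ge0$, with strict inequality whenever the conditional law of $X_n$ given its past is non-degenerate. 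Positive entropy guarantees the latter occurs with positive probability, hence $\E[X_0Y_0]>0=\E[X_0]\E[Y_0]$. The correlation at lag $0$ is thus built in by monotonicity from the start, rather than hoped for after an abstract factor construction. If you want to pursue a factor-theoretic proof you would first have to show that $X$ admits \emph{some} positive-entropy Bernoulli factor whose $L^2$ is not orthogonal to $X_0$, which is a nontrivial assertion and not addressed in your sketch.
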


We now assume that $\bfu$ takes finitely many values and satisfies the Veech condition: $\pi_0\perp L^2(\Pi(\kappa))$ for each Furstenberg system $\kappa$ of $\bfu$.

\begin{Lemma}\label{l:omo3} For each $h>0$ there exists a sequence $y$, generic for a Bernoulli measure of entropy $h$ along some increasing sequence $(N_k)$, and correlating with $\bfu$ along $(N_k)$.\end{Lemma}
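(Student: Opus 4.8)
The statement to prove is Lemma~\ref{l:omo3}: given $h>0$, we want a sequence $y$ which is generic along some $(N_k)$ for a Bernoulli measure of entropy $h$, and which correlates with $\bfu$ along $(N_k)$ (i.e.\ $\frac1{N_k}\sum_{n\le N_k}y(n)\bfu(n)\not\to0$). The natural strategy is to run the argument through a Furstenberg system of $\bfu$ and to invoke the coupling Lemma~\ref{l:omo2}. First I would fix an increasing sequence $(M_k)$ along which $\bfu$ is generic, producing a Furstenberg system $\kappa\in V_S(\bfu)$, so that $(X_{\bfu},\kappa,S)$ is a stationary process $\underline X=(\pi_n)_{n\in\Z}$, with $\pi_n=\pi_0\circ S^n$. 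The key point is that this process must have \emph{positive entropy}: if $h_{\kappa}(S)=0$ then $\Pi(\kappa)=\mathcal B(X_{\bfu})$, so the Veech condition $\pi_0\perp L^2(\Pi(\kappa))$ would force $\pi_0=0$ $\kappa$-a.e., which is impossible since $\bfu$ takes finitely many values and $\kappa$ is supported on $X_{\bfu}$ (one would need to rule out the degenerate case where $\bfu$ vanishes on a full-density set along $(M_k)$; if it does, replace $\bfu$ by a different quasi-generic sequence, or note the statement becomes vacuous — but actually since $\bfu\perp\mathscr C_{\rm ZE}$ and we are in the interesting case, we may assume $\pi_0\not\equiv 0$ for at least one Furstenberg system). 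Hence $\underline X$ is a stationary process of positive entropy taking finitely many values.

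Next I would apply Lemma~\ref{l:omo2} to $\underline X$ with $\beta$ a non-trivial distribution on a finite subset of $\R$ chosen so that the Bernoulli process $Y=(Y_n)$ of law $\beta^{\otimes\Z}$ has entropy exactly $h$ (this is possible since the entropy of $\beta^{\otimes\Z}$ is $-\sum p_i\log p_i$, which ranges continuously over $(0,\infty)$ as $\beta$ varies over finitely supported distributions). Lemma~\ref{l:omo2} gives a stationary coupling $\lambda$ of $\underline X$ and $Y$ with $\E_\lambda[X_0Y_0]\neq\E_\lambda[X_0]\E_\lambda[Y_0]$. Interpreting $\lambda$ as a shift-invariant joining of $(X_{\bfu},\kappa,S)$ with the Bernoulli system $(E^{\Z},\beta^{\otimes\Z},S)$ (where $E=\mathrm{supp}\,\beta$), I can normalize $\beta$ so that $\E[Y_0]=0$ (subtract the mean; this does not change entropy), whence $\E_\lambda[X_0 Y_0]\neq 0$, i.e.\ $\pi_0$ correlates with the zero-coordinate projection $q_0$ of the Bernoulli factor under $\lambda$.

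The final step is to realize this correlation along a single orbit, which is exactly where the lifting lemma enters. I would apply Proposition~\ref{p:momogen} with $(Y,S)$ the Bernoulli shift $(E^{\Z},S)$, with $(X,T)=(X_{\bfu},S)$, with the generic point $\bfu$ along $(M_k)$ for $\kappa$ — wait, the roles must be arranged so that the \emph{given} generic point is $\bfu$: so take $(Y,S):=(X_{\bfu},S)$, $u:=\bfu$, $\kappa$ its Furstenberg system, $(X,T):=(E^{\Z},S)$, $\nu:=\beta^{\otimes\Z}$, and $\rho:=\lambda$. Proposition~\ref{p:momogen} then produces a sequence $(x_n)\subset E^{\Z}$ such that $(S^n\bfu,x_n)$ is generic for $\lambda$ along a subsequence $(M_{k_\ell})$. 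Setting $y(n):=\pi_0(x_n)=(x_n)_0\in E\subset\R$, genericity gives
\[
\frac1{M_{k_\ell}}\sum_{n\le M_{k_\ell}} y(n)\bfu(n)=\frac1{M_{k_\ell}}\sum_{n\le M_{k_\ell}} \pi_0(x_n)\,\pi_0(S^n\bfu)\longrightarrow \int \pi_0\otimes q_0 \,d\lambda=\E_\lambda[X_0Y_0]\neq0,
\]
so $y$ correlates with $\bfu$ along $(N_k):=(M_{k_\ell})$. However, the $(x_n)$ from Proposition~\ref{p:momogen} is built from \emph{pieces of orbits}, not a single orbit, so $y$ need not be generic for $\nu=\beta^{\otimes\Z}$ — it is generic along $(N_k)$ for the second marginal of $\lambda$, which \emph{is} $\beta^{\otimes\Z}$, hence the second marginal of the empirical measures converges to $\beta^{\otimes\Z}$; but we also need $y$ itself to generate a Bernoulli measure of entropy $h$, i.e.\ the full empirical distribution of blocks of $y$ along $(N_k)$ must converge to $\beta^{\otimes\Z}$. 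This does follow: the projection to the $X$-coordinate of $\frac1{M_{k_\ell}}\sum\delta_{(S^n\bfu,x_n)}$ converges to the $X$-marginal of $\lambda$, which is $\beta^{\otimes\Z}$, and $\frac1{M_{k_\ell}}\sum\delta_{S^n x_{(\cdot)}}$ — careful, $x_n$ is not $S^n x_0$ — but the marginal convergence is precisely the statement that the empirical block frequencies of the sequence $y=(y(n))_n$ converge to those of $\beta^{\otimes\Z}$, hence $y$ is generic along $(N_k)$ for $\beta^{\otimes\Z}$, which is Bernoulli of entropy $h$.

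\textbf{Main obstacle.} The crux is twofold. The genuinely hard input is Lemma~\ref{l:omo2} (the probabilistic coupling lemma) — constructing a stationary coupling of a positive-entropy process with a prescribed Bernoulli process that exhibits nonzero correlation of the zero coordinates; this is the heart of the matter and its proof is deferred in the paper, presumably using a Sinai-factor / relative-product type construction (couple $\underline X$ with a Bernoulli factor it maps onto, then transport). The more delicate bookkeeping issue in the present lemma is ensuring that the lifted sequence $y$ produced by Proposition~\ref{p:momogen} is simultaneously (i) generic along $(N_k)$ for the Bernoulli measure of entropy $h$ and (ii) correlating with $\bfu$ along the \emph{same} $(N_k)$ — both follow from the single genericity statement for $\lambda$, since $\lambda$ has $\beta^{\otimes\Z}$ as a marginal and has nonzero $\E[X_0Y_0]$, so no additional argument is needed beyond reading off marginals of the limiting joining. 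One should also dispatch the trivial case at the start: if \emph{every} Furstenberg system of $\bfu$ has $\pi_0\equiv0$ a.e., then $\bfu$ has density-zero support and the conclusion of Lemma~\ref{l:omo3} may fail, but in that case $\bfu$ does not correlate with anything and the surrounding Theorem~\ref{t:thmC} is handled separately; in the relevant case there is a Furstenberg system with $\pi_0\not\equiv0$, hence (by Veech) of positive entropy, and the argument above goes through.
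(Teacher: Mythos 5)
Your proof is correct and follows the paper's overall skeleton (Furstenberg system of positive entropy $\to$ Lemma~\ref{l:omo2} coupling with a Bernoulli of entropy $h$ $\to$ lift to an orbit correlating with $\bfu$), but it departs genuinely at the lifting step. The paper invokes the external Theorem~5.16 of Bergelson--Downarowicz--Vandehey, which directly produces a \emph{single point} $y$ in the Bernoulli subshift with $(\bfu,y)$ quasi-generic for the joining $\rho$, so that $y$ is immediately generic for $\nu=\beta^{\otimes\Z}$ and the correlation drops out in one line. You instead apply the paper's own Proposition~\ref{p:momogen}, which yields a sequence $(x_n)$ of orbit pieces rather than one orbit, and you then define $y(n):=(x_n)_0$ and argue that $y$ is still generic for $\beta^{\otimes\Z}$ because the orbit breaks $\{b_k\}$ have density zero. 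This extra bookkeeping is sound (for each fixed block length $L$, the fraction of $n\le N_k$ hitting a break in $[n,n+L]$ tends to $0$, so the empirical block statistics of $y$ and the marginal empiricals $\frac1{N_k}\sum\delta_{x_n}$ have the same limit), and the payoff is a more self-contained proof that avoids the external reference; the paper's route is shorter at the cost of importing a result.

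Two small points. First, your parenthetical claim that $\pi_0\equiv 0$ $\kappa$-a.e.\ is ``impossible since $\bfu$ takes finitely many values'' is not right as stated -- a Furstenberg system can perfectly well concentrate on $\{z:z_0=0\}$ -- but you correct yourself immediately afterwards, and the paper's own proof is just as terse here (``by assumption, the entropy\ldots is positive''), so this is not a defect of your argument relative to the paper's. Second, when you normalize $\beta$ to have mean zero you should note that Lemma~\ref{l:omo2} already allows this (its proof reduces to the centered case), and that centering does not change the entropy of $\beta^{\otimes\Z}$, both of which you implicitly use.
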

\begin{proof}
Let $\kappa$ be a Furstenberg system of $\bfu$, and $(M_\ell)$ such that $\bfu$ is generic for $\kappa$ along $(M_\ell)$. By assumption, the entropy of the stationary process defined by $\pi_0$ under $\kappa$ is positive.
Take a real-valued Bernoulli shift of entropy $h$ (Bernoulli measure denoted by $\nu$). Using Lemma~\ref{l:omo2}, find a joining of $\kappa$ and $\nu$ for which $\pi_0$  (in $L^2(X_{\bfu},\kappa)$) is not orthogonal to $\pi_0$ in $L^2(\nu)$:
$\int \pi_0\ot\pi_0\,d\rho\neq0$. Now, use a subsequence version of the lifting lemma (Theorem~5.16 in~\cite{Be-Do-Va}) to find $y$ in the subshift defining the Bernoulli automorphism such that $(\bfu,y)$ is generic, along a subsequence $(N_k)=(M_{\ell_k})$, for $\rho$. Then
$$
0\neq \int \pi_0\ot\pi_0\,d\rho=\lim_{k\to\infty}\frac1{N_{k}}\sum_{n\leq N_{k}}\pi_0(S^n\bfu)\pi_0(S^ny)=\lim_{k\to\infty}\frac1{N_{k}}\sum_{n\leq N_{k}}\bfu(n)y_n$$
which means that $\bfu$ and $y$ correlate along $(N_{k})$.
\end{proof}

Now the proof of Theorem~\ref{t:thmC} is a straightforward consequence of Lemma~\ref{l:omo3} and Corollary~\ref{czy3}.

\subsection{Proof of Lemma~\ref{l:omo2}}

\newcommand{\law}{\mathcal{L}}
\newcommand{\U}{\mathcal{U}}

Let $X=(X_n)_{n\in\Z}$ be a positive entropy stationary process as in the statement of the lemma. Without loss of generality (considering its real or imaginary part), we can assume that this process takes its values in a finite subset $\{x_1<x_2<\cdots <x_r\}$ of $\R$. We also consider a given probability measure $\beta$ supported on a possibly different finite subset of $\R$ $\{y_1<y_2<\cdots <y_s\}$, which is supposed to be non trivial (i.e.\ not reduced to a Dirac measure). Thus we can assume that $s\geq2$, and $\beta(y_j)>0$ for each $1\leq j\leq s$. The purpose of this section is to show how we can construct a stationary coupling of $X$ with a Bernoulli process $Y$ whose distribution is $\beta^{\otimes \Z}$, in such a way that for each $n\in\Z$,
\begin{equation}
 \label{eq:positive_correlation}
 \E[X_n Y_n] > \E[X_n]\, \E[Y_n].
\end{equation}
We observe that the validity of the preceding inequality is unchanged if we replace $Y_n$ by $Y_n+C$ for a fixed $C$. Thus we can and we do assume without loss of generality that the probability $\beta$ is such that $\E[Y_n]=0$.

To construct the announced coupling, we just assume that, on the probability space where the process $X$ is defined, we also have an i.i.d.\ process $V=(V_n)_{n\in\Z}$ such that
\begin{itemize}
 \item each $V_n$ is uniformly distributed on $[0,1]$,
 \item $V$ is independent of $X$.
\end{itemize}
The construction will be divided into two steps: first we construct an auxiliary (uniform i.i.d.) process $U$ and then we use it to construct $Y$ which satisfies the assertion of Lemma~\ref{l:omo2}.

\subsubsection*{Step 1: uniform i.i.d.\ process $U$}

For $n\in\Z$ and $j\in\{1,\ldots,r\}$, we consider the random variable $P_{j,n}$ defined by
\[
 P_{j,n} := \PP\bigl( X_n=x_j \,|\, (X_m)_{m\leq n-1}\bigr).
\]
When $j$ is fixed, $(P_{j,n})_{n\in\Z}$ is a stationary process. On the other hand, if we fix $n$, then $(P_{1,n},\ldots,P_{r,n})$ is the conditional distribution of $X_n$ given $(X_m)_{m\leq n-1}$, in particular we have almost surely $0\leq P_{j,n}\leq 1$, and
\[
 \sum_{j=1}^r P_{j,n} = 1.
\]
This allows us to define a random partition of $[0,1[$ into disjoint subintervals $I_{1,n},\ldots, I_{r,n}$ where for each $j$, $I_{j,n}$ is the interval of length $P_{j,n}$ defined by
\[
 I_{j,n}:=\left[\sum_{1\leq i\leq j-1} P_{i,n}\ ; \sum_{1\leq i\leq j} P_{i,n}\right[.
 \]

 Then we can define the random variable $U_n$ by
 \[
  U_n := \sum_{j=1}^r \ind{X_n=x_j}\left(\sum_{1\leq i\leq j-1} P_{i,n} + V_n P_{j,n}\right).
 \]
\begin{figure}
 \begin{center}
  \includegraphics[width=0.6\linewidth]{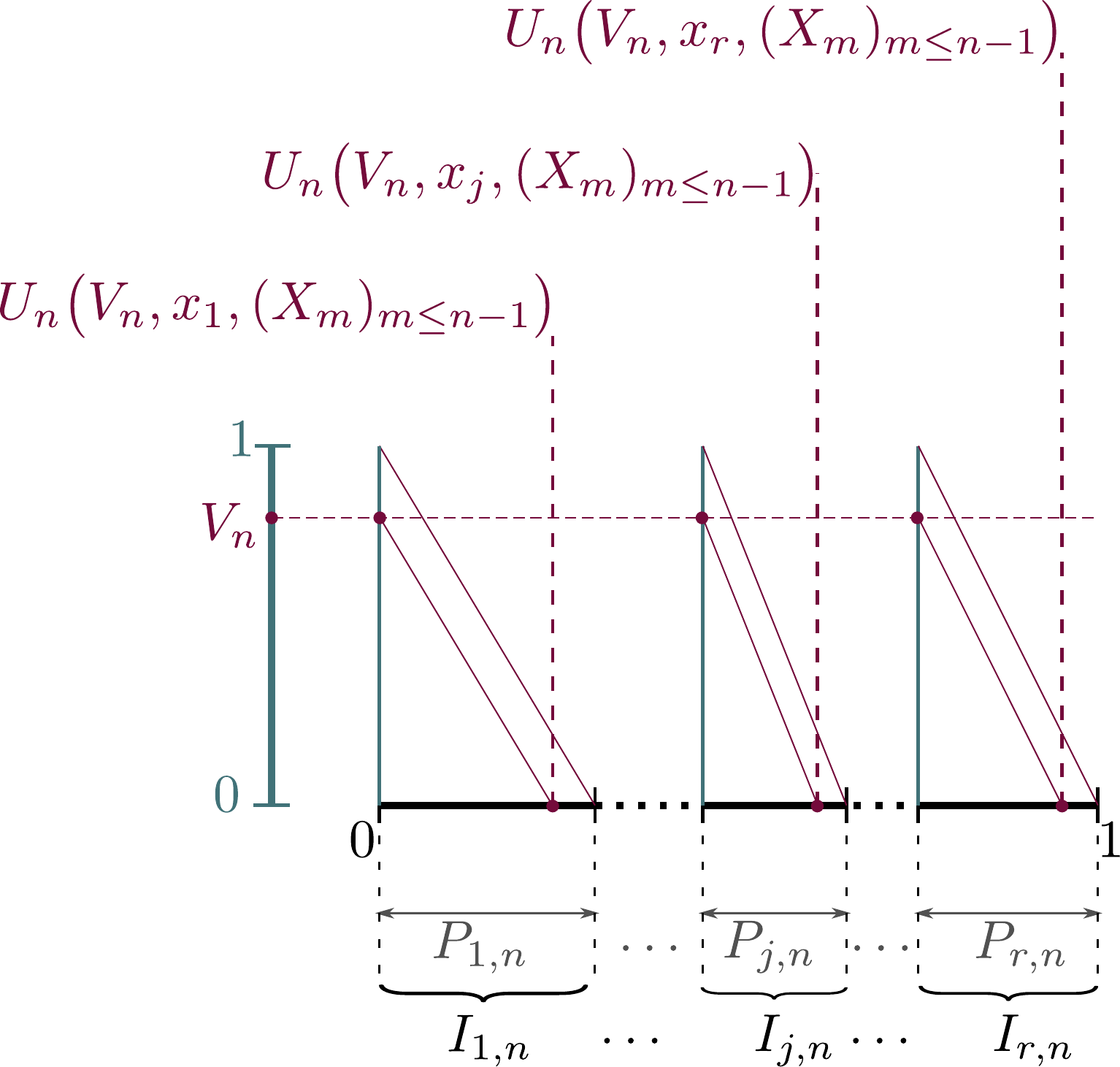}
  \caption{Definition of $U_n$}
  \label{fig:intervals}
\end{center}
\end{figure}
 Informally, if $X_n=x_j$, we pick $U_n$ uniformly at random (using $V_n$) inside $I_{j,n}$ (see Figure~\ref{fig:intervals}). Therefore,
\[
 \law\left(U_n\,|\,(X_m)_{m\leq n-1},(V_m)_{m\leq n-1}\right)=\U_{[0,1]},
\]
i.e., it is uniform on $[0,1]$. But all $U_m$, $m\leq n-1$, are measurable with respect to $(X_m)_{m\leq n-1}$ and $(V_m)_{m\leq n-1}$, thus we also have
 \begin{equation}\label{ti}
 \law\left(U_n\,|\,(U_m)_{m\leq n-1}\right)=\U_{[0,1]} \text{ and } \law\left(U_n\right)=\U_{[0,1]}.
\end{equation}
 Indeed, this is just the application of the tower property of conditional expectations: to obtain the left equality, notice that for any measurable $A\subset [0,1]$, we have
\begin{align*}
\mathbb{P}\bigl(U_n\in A &\,|\, (U_m)_{m\leq n-1}\bigl) \\
&=\mathbb{E}\Bigl[
\underbrace{\mathbb{P}\bigl(U_n\in A\,|\, (X_m)_{m\leq n-1},(V_m)_{m\leq n-1}\bigr)}_{{\rm Leb}(A)}
                            \,|\,  (U_m)_{m\leq n-1}\Bigr]\\
&={\rm Leb}(A).
\end{align*}
 Moreover, it also follows from~\eqref{ti} that $U$ is i.i.d.

Note that by construction, $U_n$ is a measurable function of $V_n$, $X_n$ and $(X_m)_{m\le n-1}$, which we abusively write as
\[
U_n = U_n \left(V_n,X_n,(X_m)_{m\le n-1}\right).
\]
Moreover, whenever we fix realizations $\xi$ of $(X_m)_{m\leq n}$ and $v$ of $V_n$ then $U_n$ as a function of its second argument is \emph{increasing}:
\begin{equation}\label{rem:increasing}
U_n(v,x_{j_1},\xi) < U_n(v,x_{j_2},\xi), \text{ whenever }x_{j_1}<x_{j_2}.
\end{equation}

\subsubsection*{Step 2: process $Y$ as a function of $U$}

We want to define $Y_n$ for a given $n\in\Z$. We use another partition of $[0,1[$ into subintervals, according to the probability distribution $\beta$ intended for $Y_n$: for $1\le k\le s$, set $\beta_k:=\beta(y_k)$ and define the interval $J_k:=\bigl[\beta_1+\cdots+\beta_{k-1};\beta_1+\cdots+\beta_k\bigr[$.
Then we simply define $Y_n$ as a function of $U_n$ by setting
\[ Y_n := \sum_{k=1}^s  y_k\,\ind{J_k}(U_n).
\]
It follows by the choice of the intervals $J_k$ and by $\law(U_n)=\U_{[0,1]}$ that $Y_n$ is distributed according to $\beta$. Moreover, by the independence of $U$, we have the independence of $Y$. Thus, $Y$ is a Bernoulli process with distribution $\beta^{\otimes \mathbb{Z}}$.

It remains to prove the announced inequality~\eqref{eq:positive_correlation}.
Observe that $Y_n$ is, like $U_n$, constructed as a measurable function of $V_n$, $X_n$ and $(X_m)_{m\leq n-1}$, which we also abusively write as
\[ Y_n = Y_n \left(V_n,X_n,(X_m)_{m\le n-1}\right). \]
Since $Y_n$ is a non-decreasing function of $U_n$, we get from~\eqref{rem:increasing} that for a fixed realization $\xi$ of $(X_m)_{m\leq n-1}$ and $v$ of $V_n$, we have for $1\le j_1 < j_2 \le r$
\begin{equation}
 \label{eq:Yn}
 Y_n \left(v,x_{j_1},\xi\right) < Y_n \left(v,x_{j_2},\xi\right)
\end{equation}
and it follows that the map
\[ x\in A \mapsto \E \bigl[ Y_n \,|\, (X_m)_{m\le n-1}=\xi , X_n=x\bigr] \]
is non-decreasing. Moreover, by the construction of $Y$, we have
\[ \law\bigl(Y_n \,|\, (X_m)_{m\le n-1}\bigr) = \law(Y_n)=\beta, \]
whence
\begin{equation}\label{gwi}
\E \bigl[ Y_n \,|\, (X_m)_{m\le n-1}\bigr] = \E[Y_n] =0.
\end{equation}
Thus there exists $j_0\in\{1,\ldots,r\}$ (depending on $\xi$) such that
\begin{equation}
\begin{split}\label{gwi2}
 &\E \bigl[ Y_n \,|\, (X_m)_{m\le n-1}=\xi , X_n=x_j]\leq 0 \text{ for }1\leq j\leq j_0,\\
 &\E \bigl[ Y_n \,|\, (X_m)_{m\le n-1}=\xi , X_n=x_j]> 0\text{ for }j_0+1\leq j\leq r.
 \end{split}
\end{equation}
We then have, using~\eqref{gwi} and~\eqref{gwi2},
\begin{align}
 \E \bigl[ X_n Y_n \,|\, (X_m)_{m\le n-1}=\xi\bigr] & =  \E \bigl[ (X_n-x_{j_0}) Y_n \,|\, (X_m)_{m\le n-1}=\xi\bigr]  \nonumber \\
   &= \sum_{j=1}^{j_0} (x_j-x_{j_0})\, \E \bigl[ Y_n \,|\, (X_m)_{m\le n-1}=\xi , X_n=x_j] \nonumber \\
   &\quad + \sum_{j=j_0+1}^{r} (x_j-x_{j_0})\, \E \bigl[ Y_n \,|\, (X_m)_{m\le n-1}=\xi , X_n=x_j] \label{eq:sum}\\
   & \geq0. \nonumber
\end{align}

Now, we claim that the announced result is a consequence of the following lemma.
\begin{Lemma}
 \label{lemma:non-trivial-distribution}
 If the realization $\xi$ of $(X_m)_{m\leq n-1}$ is such that the conditional distribution $\law(X_n \,|\, (X_m)_{m\le n-1}=\xi)$ is non-trivial, then
 \[ \E \bigl[ X_n Y_n \,|\, (X_m)_{m\le n-1}=\xi\bigr] > 0. \]
\end{Lemma}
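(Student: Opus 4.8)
The plan is to start from the inequality $\E[X_nY_n\mid (X_m)_{m\le n-1}=\xi]\ge 0$ already obtained in~\eqref{eq:sum} and to upgrade it to a strict inequality, using the hypothesis that $\law(X_n\mid (X_m)_{m\le n-1}=\xi)$ is non-trivial. Write $p_j:=\PP(X_n=x_j\mid (X_m)_{m\le n-1}=\xi)$; unfolding the conditional expectation, the right-hand side of~\eqref{eq:sum} equals
\[
\sum_{j:\,p_j>0}(x_j-x_{j_0})\,p_j\,\E\bigl[Y_n\mid (X_m)_{m\le n-1}=\xi,\ X_n=x_j\bigr],
\]
a sum of non-negative terms. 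If it vanishes, every term vanishes; but for $j>j_0$ with $p_j>0$ the corresponding term is a product of three strictly positive factors ($x_j-x_{j_0}>0$, $p_j>0$, and $\E[Y_n\mid\cdots,X_n=x_j]>0$ by~\eqref{gwi2}). So the first step is to conclude that $p_j=0$ for all $j>j_0$, i.e.\ the conditional law of $X_n$ is supported on $\{x_1,\dots,x_{j_0}\}$.

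Next I would feed this back into~\eqref{gwi}: since $\E[Y_n\mid (X_m)_{m\le n-1}=\xi]=0$ is the convex combination, with weights $p_j$ ($j\le j_0$), of the quantities $\E[Y_n\mid\cdots,X_n=x_j]$, all of which are $\le 0$ by~\eqref{gwi2}, each of them must in fact equal $0$ whenever $p_j>0$. Recalling that, conditionally on $\{(X_m)_{m\le n-1}=\xi,\ X_n=x_j\}$, the variable $U_n$ is uniform on the interval $I_j$ of length $p_j$ (the $j$-th cell of the partition of $[0,1)$ determined by the $p_i$'s), and that $Y_n=g(U_n)$ with $g:=\sum_k y_k\ind{J_k}$, this says precisely that $\int_{I_j}g=0$ for every $j$ with $p_j>0$.

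The final step, and the one I expect to be the real crux, is to turn ``$\int_{I_j}g=0$ on every non-empty cell'' into a contradiction. The non-empty cells $I_j$ tile $[0,1)$ consecutively, and by non-triviality there are at least two of them; so the leftmost is $[0,a)$ with $0<a<1$ and the next is $[a,b)$, with $\int_0^a g=\int_a^b g=0$. Since $g$ is non-decreasing, $g(u)\le (b-a)^{-1}\int_a^b g=0$ for every $u<a$, hence $g\le 0$ on $[0,a)$; combined with $\int_0^a g=0$ this forces $g=0$ Lebesgue-a.e.\ on $[0,a)$. But $g\equiv y_1$ on $J_1=[0,\beta_1)$, an interval of positive length (as $\beta_1>0$), and $y_1<0$ because $\sum_k\beta_k y_k=\E[Y_n]=0$ with the $y_k$ strictly increasing and the $\beta_k$ positive; this is the desired contradiction, whence $\E[X_nY_n\mid (X_m)_{m\le n-1}=\xi]>0$.

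The main obstacle is genuinely localized in this last paragraph: it is exactly here that the two non-degeneracy inputs get consumed --- non-triviality of the conditional law (which supplies the second cell $[a,b)$ needed to push $g$ below $0$ on the first cell) and the positivity of all the weights $\beta_k$ (which makes $g$ genuinely negative on a set of positive measure near $0$). Everything preceding it is bookkeeping with the conditional-expectation identities~\eqref{gwi}, \eqref{gwi2}, \eqref{eq:sum} and the explicit description of $U_n$ and $Y_n$ already set up above.
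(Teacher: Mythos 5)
Your proof is correct. After the reduction to showing that the nonnegative sum in~\eqref{eq:sum} cannot vanish, you proceed by contradiction: if it were zero, then every term vanishes, the conditional law of $X_n$ is pushed onto $\{x_1,\ldots,x_{j_0}\}$, and then $\int_{I_j}g=0$ on every non-degenerate cell, from which monotonicity of $g$ forces $g\le 0$ (hence $g\equiv 0$ a.e.) on the leftmost cell $[0,a)$, contradicting $g\equiv y_1<0$ on $[0,\beta_1)$. All of these steps are sound; the key facts you need (positivity of $\beta_1$, strict increase of the $y_k$, strict monotonicity of $U_n$ as a function of $X_n$ conditionally on $(V_n,\xi)$, and the tiling of $[0,1)$ by the cells $I_j$) are all available from the construction.

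The paper takes a genuinely different route to the strictness. Rather than arguing by contradiction, it works directly with the two extreme non-degenerate cells $I_{j_1,n}=[0,P_{j_1,n})$ and $I_{j_2,n}=[1-P_{j_2,n},1)$ and splits into two cases according to whether the leftmost $\beta$-interval $J_1$ overlaps $I_{j_2,n}$; in each case it exhibits a $V_n$-measurable event of positive probability on which $Y_n(V_n,x_{j_2},\xi)>Y_n(V_n,x_{j_1},\xi)$ strictly, which upgrades the non-strict monotonicity in~\eqref{eq:Yn} to the strict inequality~\eqref{eq:strict} of conditional expectations; combined with~\eqref{gwi}, this forces $\E[Y_n\mid\xi,X_n=x_{j_2}]>0$, hence $j_2>j_0$ and the $j_2$-term of~\eqref{eq:sum} is strictly positive. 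So the paper's argument is constructive (it pinpoints a strictly positive term), works at the level of events and discrete probabilities, and needs the case split $J_1\cap I_{j_2,n}=\emptyset$ vs.\ $\neq\emptyset$; yours is a contradiction argument, works at the level of the step function $g$ on $[0,1)$, focuses on the first two cells rather than the extreme ones, and avoids the case analysis at the cost of the measure-theoretic observation ``$g\le 0$ on $[0,a)$ forces $g=0$ a.e.\ there''. Both consume exactly the same two non-degeneracy inputs, but package them differently; your version is arguably a bit slicker, while the paper's is a bit more explicit about where the positive mass comes from. One cosmetic remark: you correctly reinsert the weights $p_j$ into the expansion of~\eqref{eq:sum} (they are suppressed in the paper's display), which is the right reading of that identity.
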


Indeed, since $X$ has positive entropy, $\law(X_n \,|\, (X_m)_{m\le n-1})$ is non-trivial with positive probability, and thus we can conclude that
\[ \E\bigl[ X_n Y_n\bigr] = \E\Bigl[\E \bigl[ X_n Y_n \,|\, (X_m)_{m\le n-1}\bigr]\Bigr] >0. \]

\begin{proof}[Proof of Lemma~\ref{lemma:non-trivial-distribution}]
We fix a realization $\xi$ of $(X_m)_{m\leq n-1}$ such that the conditional distribution $\law\bigl(X_n \,|\, (X_m)_{m\le n-1}=\xi\bigr)$ is non-trivial. Then the random variables $P_{j,n}$ and the intervals $I_{j,n}$ are fixed, because their values only depend on $\xi$. Setting
\begin{align*}
 j_1 &:= \min\bigl\{j\in\{1,\ldots,r\}:\ P_{j,n}>0\bigr\},\\
 \text{and}\quad j_2 &:= \max\bigl\{j\in\{1,\ldots,r\}:\ P_{j,n}>0\bigr\},
\end{align*}
we have $j_1<j_2$.
 Moreover the intervals $I_{j_1,n}$ and $I_{j_2,n}$ are respectively of the form $[0,P_{j_1,n}[$ and $[1-P_{j_2,n},1[$, with $0<P_{j_1,n}\leq 1-P_{j_2,n}<1$.
We now discuss according to the relative position of the interval $I_{j_2,n}$ with respect to the interval $J_1$ (used to define $Y_n$).

\begin{figure}
 \begin{center}
  \includegraphics[width=0.6\linewidth]{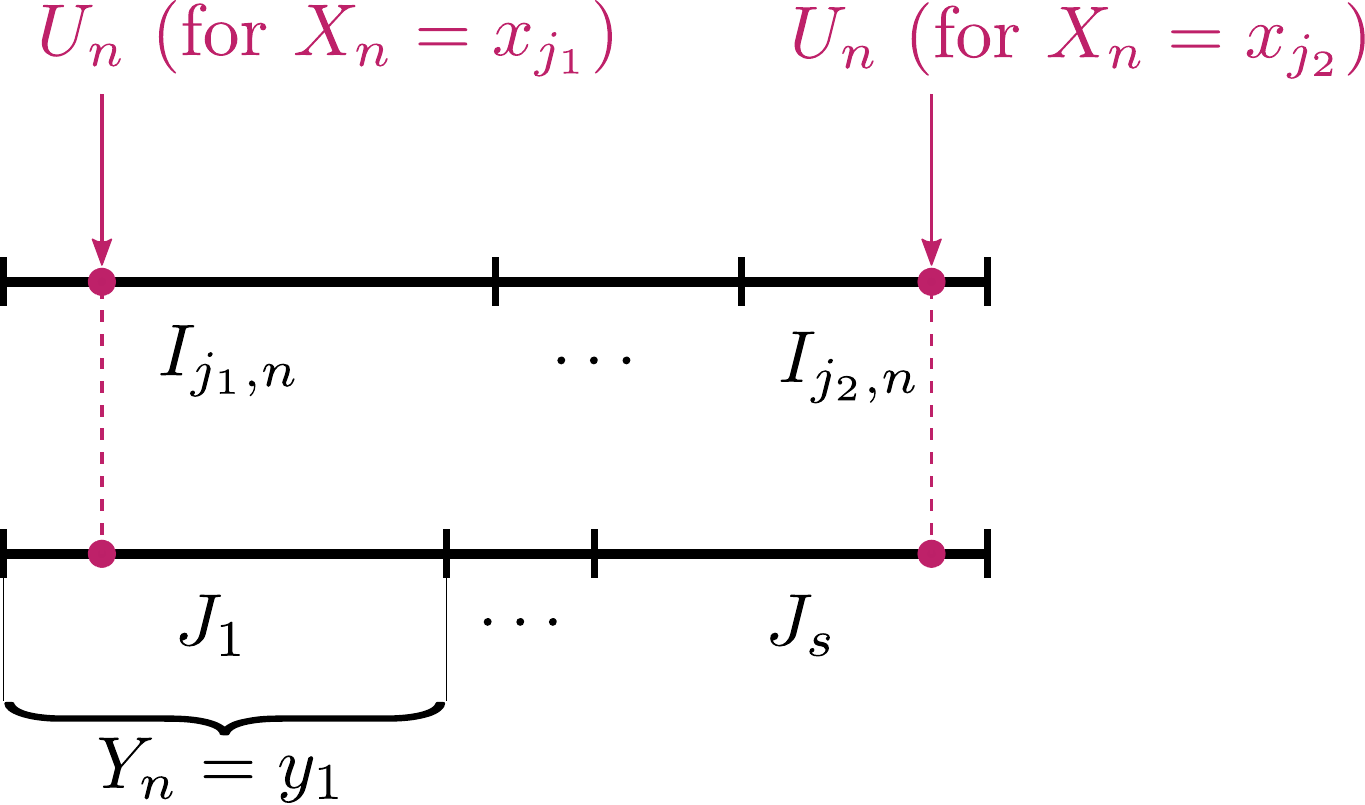}
  \caption{Case 1 ($J_1\cap I_{j_2,n}=\emptyset$)}
  \label{fig:case1}
\end{center}
\end{figure}

\paragraph{Case 1:} $J_1\cap I_{j_2,n}=\emptyset$ (see Figure~\ref{fig:case1}). Then we have
 \begin{equation}\label{a:raz}
  \PP\bigl(Y_n=y_1 \,|\, (X_m)_{m\le n-1}=\xi,X_n=x_{j_2}\bigr)=0,
 \end{equation}
 whereas
 \begin{equation}\label{a:b}
  \PP\bigl(Y_n=y_1 \,|\, (X_m)_{m\le n-1}=\xi,X_n=x_{j_1}\bigr)>0.
 \end{equation}
 Moreover, notice that~\eqref{a:raz} is equivalent to
 \begin{equation}\label{a:bb}
  \PP\bigl(Y_n>y_1 \,|\, (X_m)_{m\le n-1}=\xi,X_n=x_{j_2}\bigr)=1,
 \end{equation}
 It follows from~\eqref{a:b} and~\eqref{a:bb} that there exists a $V_n$-measurable event $A$ of positive probability such that, on $A$,
 \[ Y_n\bigl(V_n,x_{j_2},\xi\bigr) > y_1 = Y_n\bigl(V_n,x_{j_1},\xi\bigr). \]
 Remembering~\eqref{eq:Yn}, we get
 \begin{equation}
  \label{eq:strict}
  \E \bigl[ Y_n \,|\, (X_m)_{m\le n-1}=\xi , X_n=x_{j_2}\bigr] >  \E \bigl[ Y_n \,|\, (X_m)_{m\le n-1}=\xi , X_n=x_{j_1}\bigr].
 \end{equation}

\begin{figure}
 \begin{center}
  \includegraphics[width=0.6\linewidth]{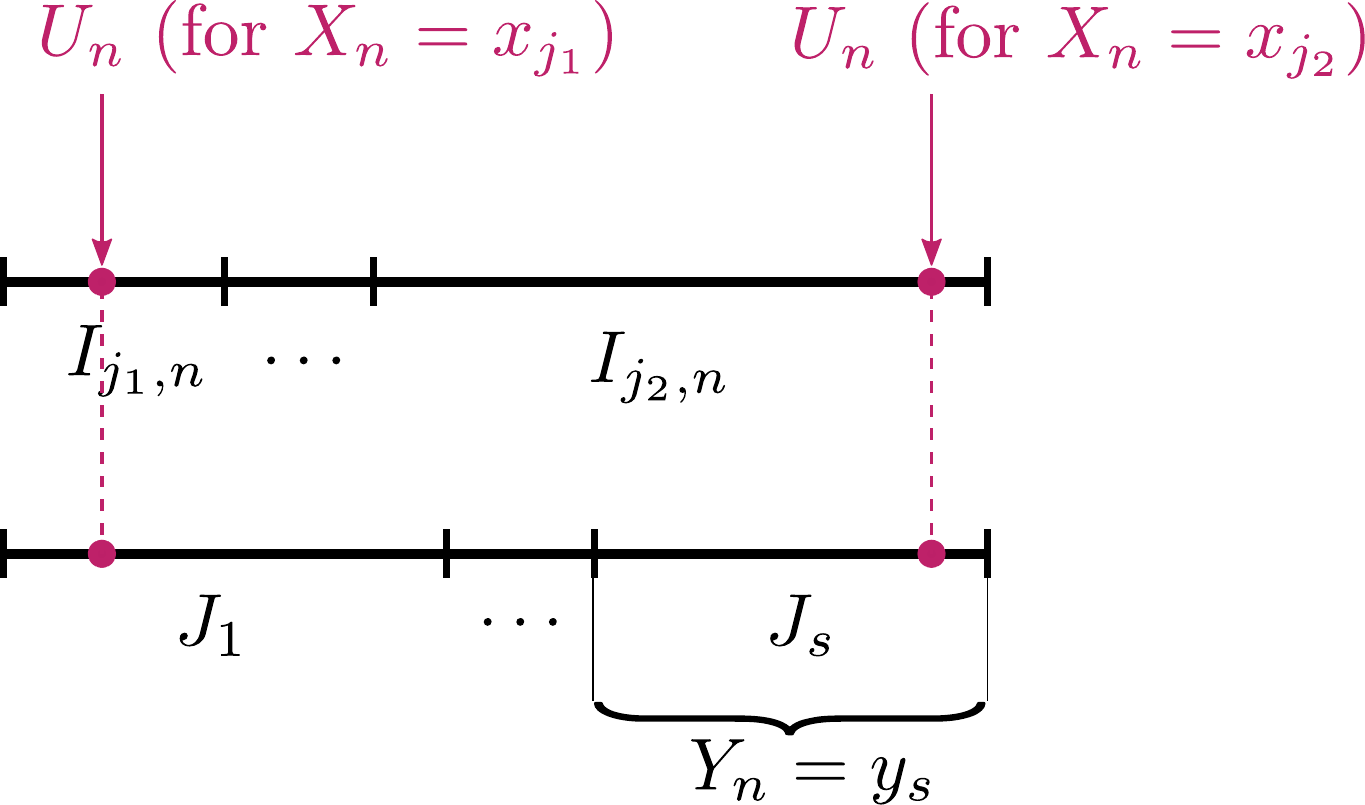}
  \caption{Case 2 ($J_1\cap I_{j_2,n}\neq\emptyset$)}
  \label{fig:case2}
\end{center}
\end{figure}

\paragraph{Case 2:} $J_1\cap I_{j_2,n}\neq\emptyset$ (see Figure~\ref{fig:case2}). Then $I_{j_1,n}\subset J_1$ and $I_{j_1,n}\cap J_s=\emptyset$. It follows that
 \[
  \PP\bigl(Y_n=y_s \,|\, (X_m)_{m\le n-1}=\xi,X_n=x_{j_1}\bigr)=0,
 \]
 whereas
 \[
  \PP\bigl(Y_n=y_s \,|\, (X_m)_{m\le n-1}=\xi,X_n=x_{j_2}\bigr)>0.
 \]
 In this case, we get a $V_n$-measurable event $A$ of positive probability such that, on $A$,
 \[ Y_n\bigl(V_n,x_{j_2},\xi\bigr) = y_s > Y_n\bigl(V_n,x_{j_1},\xi\bigr), \]
 and as before we conclude that~\eqref{eq:strict} holds.

 Now, since~\eqref{eq:strict} always holds, and since
 \[ \E \bigl[ Y_n \,|\, (X_m)_{m\le n-1}=\xi\bigr] = 0 =\sum_{j=1}^rP_{j,n}\,  \E \bigl[ Y_n \,|\, (X_m)_{m\le n-1}=\xi , X_n=x_{j}\bigr], \]
 we deduce that
 \[ \E \bigl[ Y_n \,|\, (X_m)_{m\le n-1}=\xi , X_n=x_{j_2}\bigr]>0. \]
 It follows that in the sum~\eqref{eq:sum}, at least the term corresponding to $j=j_2$ is positive, and this yields
 \[ \E \bigl[ X_n Y_n \,|\, (X_m)_{m\le n-1}=\xi\bigr] > 0. \]
\end{proof}

\pagebreak[4]

\begin{center}
\huge {\bf Appendix}
\end{center}

\appendix

\section{From averaged double to averaged multiple correlations}\label{a:drugi}
This section follows some arguments from \cite{Ma-Ra-Ta}.

\begin{Remark}\label{r:tosamo}
In the proof below we will use the following standard fact: let $(x(n))$ be a sequence of complex number bounded by 1. Then
$$
\sum_{m\leq M}|x(m)|=o(M)
$$
is equivalent to
$$
\sum_{m\leq M}|x(m)|^2=o(M).
$$
The little  ``o'' is uniform with respect to $M$.~\footnote{
If $\vep:=\frac1M\sum_{m\leq M}|x(m)|^2$ then by Markov's inequality
$$
\frac1M|\{m\leq M:\: |c_m|^2\geq \vep^{1/2}\}|\leq\frac1{\vep^{1/2}}\cdot\vep=\vep^{1/2}$$
and then
$$
\frac1M\sum_{m\leq M}|x(m)|=
\frac1M\sum_{m\leq M, |x(m)|\geq e^{1/4}}|x(m)|+\frac1M\sum_{m\leq M, |x(m)|<\vep^{1/4}}|x(m)|\leq \vep^{1/2}+\vep^{1/4}.$$}
\end{Remark}

We have the following general lemma:
\begin{Lemma} Let $(N_\ell)_{\ell \in \N}$ be a sequence of natural numbers. For $k\in\mathbb{N}$  let $a,b_1,\ldots b_k\colon\mathbb{N} \to \mathbb{C}$ be sequences bounded by $1$. Assume that $a$ satisfies
\begin{equation}\label{eq:H1}
\lim_{H\to \infty}\frac{1}{H} \sum_{h\leq H}\lim_{\ell\to \infty}\frac{1}{N_\ell}\Big|\sum_{n\leq N_\ell}a(n)a(n+h)\Big|=0.
\end{equation}
Then
\begin{equation}\label{teza}
\lim_{H\to \infty}\frac{1}{H^k} \sum_{h_1,\ldots, h_k\leq H}\lim_{\ell\to \infty}\frac{1}{N_\ell}\Big|\sum_{n\leq N_\ell}a(n)\prod_{i=1}^kb_i(n+h_i)\Big|=0.
\end{equation}
\end{Lemma}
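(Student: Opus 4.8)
The statement is a Cauchy–Schwarz / van der Corput type reduction: it says that control on the averaged $2$-correlations of $a$ (hypothesis \eqref{eq:H1}) forces vanishing of the averaged $(k+1)$-linear correlations $\frac1{N_\ell}\sum_n a(n)\prod_i b_i(n+h_i)$, averaged over the shifts $h_1,\dots,h_k$. The natural approach, following \cite{Ma-Ra-Ta}, is induction on $k$. For $k=1$ the claim is essentially \eqref{eq:H1} itself after a triangle inequality (bound $|b_1|\le 1$ and observe $|\sum_n a(n) b_1(n+h)| \le |\sum_n a(n)\overline{a(n+h)}|^{1/2}(\dots)$ — actually more simply, the $k=1$ case follows by applying the van der Corput step below with no surviving $b_i$'s and using Remark~\ref{r:tosamo} to pass between $\ell^1$ and $\ell^2$ averages).

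**Key steps.** First I would fix $H$ and, for a ``typical'' tuple $(h_2,\dots,h_k)$, set $c(n) := a(n)\prod_{i=2}^k b_i(n+h_i)$, which is again bounded by $1$; the quantity to estimate is the $h_1$-average of $\frac1{N_\ell}|\sum_n c(n) b_1(n+h_1)|$. By Remark~\ref{r:tosamo} it suffices to bound the $\ell^2$-average over $h_1\le H$, i.e. $\frac1H\sum_{h_1\le H}\big(\frac1{N_\ell}|\sum_{n\le N_\ell} c(n)b_1(n+h_1)|\big)^2$. Expanding the square and swapping the $h_1$-sum inside, this is (up to $o(1)$ errors from edge effects in the ranges of summation) comparable to $\frac1{N_\ell}\sum_{n}\big|\frac1H\sum_{h_1\le H} c(n)\overline{c(n')}\cdot(\dots)\big|$ — more precisely, after the standard van der Corput manipulation one is led to an average over a new shift $h$ of expressions $\frac1{N_\ell}\sum_n c(n)\overline{c(n+h)}$. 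Now the crucial point: $c(n)\overline{c(n+h)} = a(n)\overline{a(n+h)}\cdot \prod_{i=2}^k b_i(n+h_i)\overline{b_i(n+h+h_i)}$, and the product of the $b_i$-factors is bounded by $1$, so $\big|\frac1{N_\ell}\sum_n c(n)\overline{c(n+h)}\big| \le \frac1{N_\ell}\big|\sum_n a(n)\overline{a(n+h)}\big| + (\text{lower order})$ — wait, this inequality is false in general since the $b_i$-factor can have magnitude $1$ but oscillate; instead one keeps $\big|\frac1{N_\ell}\sum_n c(n)\overline{c(n+h)}\big|\le 1$ and estimates its $h$-average directly, or better, one iterates: the averaged $2$-correlation of $c$ is bounded using the averaged $2$-correlation of $a$ after another Cauchy–Schwarz in the $h_i$ variables, reducing $k$ to $k-1$. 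This is exactly where the induction hypothesis enters: the $h$-averaged $2$-correlations of $c = a\cdot\prod_{i=2}^k b_i(\cdot+h_i)$ are, after averaging also over $h_2,\dots,h_k$, controlled by the $(k-1)$-linear averaged correlations of $a$ against the $b_i(\cdot)\overline{b_i(\cdot+h)}$'s, and then by \eqref{eq:H1} via the inductive statement applied to the $k-1$ functions.

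**Main obstacle.** The genuinely delicate part is organizing the multiple applications of Cauchy–Schwarz so that, after each step, the ``pivot'' function stays $a$ (or $a(n)\overline{a(n+h)}$ summed against shifted copies of the $b_i$), so that the hypothesis \eqref{eq:H1} — which is a statement purely about $a$ — can be invoked. One must be careful that: (i) the limits in $\ell$ along $(N_\ell)$ exist at each stage (handled by passing to a further subsequence, or by working with $\limsup$ throughout and noting the final bound is $0$); (ii) the interchange of the $h_i$-averages with the $\limsup_\ell$ is legitimate, which is fine since all quantities are bounded by $1$ and the $h_i$-ranges are finite; and (iii) the van der Corput edge terms (the difference between $\sum_{n\le N_\ell}$ and $\sum_{n\le N_\ell - h}$, and between $\frac1{H}\sum_{h_1\le H}$ applied before vs. after expanding) contribute only $O(H/N_\ell) + O(1/H)$, which vanishes after taking $\ell\to\infty$ then $H\to\infty$. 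Modulo this bookkeeping, the argument is a routine but careful $k$-fold iteration of Cauchy–Schwarz combined with Remark~\ref{r:tosamo} to move between $\ell^1$ and $\ell^2$ averages at each stage; I expect the proof in the paper to set up notation for the iterated correlations and then run the induction, so my plan mirrors that structure.
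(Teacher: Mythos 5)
Your instinct that this is a van der Corput / Cauchy--Schwarz reduction to the $2$-correlation hypothesis on $a$ is right, but the induction on $k$ you sketch does not close, and the obstacle you flag midway is not the only one. After the first Cauchy--Schwarz (in $h_1$) you arrive at averaged $2$-correlations of $c(n)=a(n)\prod_{i\ge 2}b_i(n+h_i)$, i.e.\ at averages over $h$ of $\big|\frac1{N_\ell}\sum_n a(n)\overline{a(n+h)}\prod_{i\ge 2}b_i(n+h_i)\overline{b_i(n+h+h_i)}\big|$. To feed this into the induction with $k-1$ remaining factors you would have to take $a(\cdot)\overline{a(\cdot+h)}$ as the new pivot (equivalently, you would need~\eqref{eq:H1} to hold for each of the sequences $a_h:=a(\cdot)\overline{a(\cdot+h)}$); but the hypothesis on $a_h$ is a statement about averaged $4$-point correlations of $a$, and~\eqref{eq:H1} only gives you $2$-point correlations. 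Trying to reassign the $\overline{a(\cdot+h)}$-factor to the $b$'s instead does not help either, since then it would have to be paired with a shift $h_i$ that is averaged independently, which it is not. So the ``iterate and reduce $k$'' fallback needs a hypothesis that strengthens with $k$, which the lemma does not provide; the argument as proposed has a genuine gap for every $k\ge 2$.

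The paper's proof avoids induction altogether by a shift-absorption device that detaches the $b_i$'s in one stroke. Introduce a much shorter auxiliary parameter $h'\le H'$, shift the summation variable $n\mapsto n+h'$, and then reparametrize $h_i\mapsto h_i-h'$: each $b_i(n+h'+h_i)$ reverts to $b_i(n+h_i)$, at the cost only of moving the $h_i$-ranges to $[h',H+h']$, an error of order $O(H'/N_\ell)+O((H')^k/H)$. Averaging over $h'\le H'$, the only $h'$-dependence now sits on $a(\cdot+h')$, with the entire product $G(n)=\prod_i b_i(n+h_i)$ a fixed bounded weight in $n$. Two applications of Cauchy--Schwarz in $h'$ (together with Remark~\ref{r:tosamo} to pass between $\ell^1$- and $\ell^2$-averages) and the bound $|G|\le1$ lead directly to $\frac1{H'}\sum_{|h|\le H'}\big|\frac1{N_\ell}\sum_n a(n)\overline{a(n+h)}\big|^2$, which is exactly the quantity controlled by~\eqref{eq:H1}. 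No induction on $k$ is needed and no correlations of $a$ beyond order two ever appear.
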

\begin{proof}
Notice first that~\eqref{teza} can be rewritten as the following: for every $\vep>0$, there exists $H_\vep$ such that for $H>H_\vep$ and all $\ell$ sufficiently large (depending on $H$), we have
\[
A:=\frac{1}{H^k}\sum_{h_1,\ldots, h_k\leq H}\frac{1}{N_\ell}\Big|\sum_{n\leq N_\ell}a(n)\prod_{i=1}^kb_i(n+h_i) \Big|<\vep.
\]

Now, notice that for any $H,N_\ell,H'$ and any $h'\leq H'$, by shifting the summation over $n\leq N_\ell$ by $h'$ (for every fixed choice of $h_1,\ldots h_k$), we have
$$
\sum_{h_1,\ldots, h_k\leq H}\Big| \sum_{n\leq N_\ell}a(n)\prod_{i=1}^kb_i(n+h_i)\Big|=
$$
$$
\sum_{h_1,\ldots, h_k\leq H}\Big|\sum_{h'\leq n\leq N_\ell+h'}a(n)\prod_{i=1}^kb_i(n+h_i)\Big|+{\rm O}(H^k\cdot h')=
$$
$$
\sum_{h_1,\ldots, h_k\leq H}\Big|\sum_{n\leq N_\ell}a(n+h')\prod_{i=1}^kb_i(n+h_i+h')\Big|+{\rm O}(H^k\cdot h')
$$
and ${\rm O}(H^k\cdot h')={\rm O}(H^k\cdot H')$.
Notice that as $h_i$ is taken from $[0,H]$, then $h_i+h'$ is taken from $[h',H+h']$ (which is a small shift of $[0,H]$ if $h'$ is much smaller than $H$). So putting $h'$ to the summation over $h_i$, we get
$$
\sum_{h_1,\ldots, h_k\leq H}\Big|\sum_{n\leq N_\ell}a(n+h')\prod_{i=1}^kb_i(n+h_i+h')\Big|=
$$
$$
\sum_{h'\leq h_1,\ldots, h_k\leq H+h'}\Big|\sum_{n\leq N_\ell}a(n+h')\prod_{i=1}^kb_i(n+h_i)\Big|=
$$
$$
\sum_{h_1,\ldots, h_k\leq H}\Big|\sum_{n\leq N_\ell}a(n+h')\prod_{i=1}^kb_i(n+h_i)\Big|+ {\rm O}\left((h' H^{k-1}+(h')^2H^{k-2}+\ldots+(h')^k)N_\ell\right)
$$
and ${\rm O}\left((h' H^{k-1}+(h')^2H^{k-2}+\ldots+(h')^k)N\right)={\rm O}((H')^kH^{k-1}N_\ell)$. Putting the two displayed equations together we get that for every $h'\leq H'$,
\[
A=\frac{1}{H^k}\sum_{h_1,\ldots, h_k\leq H}\frac{1}{N_\ell}\Big|\sum_{n\leq N_\ell}a(n+h')\prod_{i=1}^kb_i(n+h_i) \Big|+{\rm O}\Big(\frac{H'}{N_\ell}\Big)+{\rm O}\Big(\frac{(H')^k}{H} \Big).
\]
Averaging the above equation over all $h'\leq H'$, we get that
$$
A=\frac{1}{H^k}\sum_{h_1,\ldots, h_k\leq H}\frac{1}{H'}\sum_{h'\leq H'}\frac{1}{N_\ell}\Big|\sum_{n\leq N_\ell}a(n+h')G(n)\Big|+{\rm O}\Big(\frac{H'}{N_\ell}\Big)+{\rm O}\Big(\frac{(H')^k}{H} \Big),
$$
where $G(n)=\prod_{i=1}^kb_i(n+h_i)$.

We will now estimate
\begin{multline}\label{B}
\frac{1}{H'}\sum_{h'\leq H'}\frac{1}{N_\ell^2}\Big|\sum_{n\leq N_\ell}a(n+h')G(n) \Big|^2=\\
\frac{1}{N_\ell^2}\sum_{n,n'\leq N_\ell}\Big(\frac{1}{H'}\sum_{h'\leq H'}a(n+h')\overline{a(n'+h')}\Big)G(n)\overline{G(n')},
\end{multline}
which will be easier to handle than the above expression for $A$ (and then use Remark~\ref{r:tosamo} to get rid of the squares). Clearly, to obtain an upper bound for~\eqref{B}, it suffices to obtain an upper bound for
\begin{equation}\label{E}
\frac{1}{N_\ell^2}\sum_{n,n'\leq N_\ell}\frac{1}{H'}\Big|\sum_{h'\leq H'}a(n+h')\overline{a(n'+h')} \Big|.
\end{equation}
Again, it will be easier to deal with
\begin{equation}\label{C}
\frac{1}{N_\ell^2}\sum_{n,n'\leq N_\ell}\frac{1}{H'^2}\Big|\sum_{h'\leq H'}a(n+h')\overline{a(n'+h')} \Big|^2
\end{equation}
(and use Remark~\ref{r:tosamo} to get rid of the squares). Expanding the square again we get that~\eqref{C} is equal to
$$
\frac{1}{N_\ell^2}
\sum_{n,n'\leq N_\ell}\frac{1}{H'^2}\sum_{h',h''\leq H'}a(n+h')\overline{a(n'+h')}\overline{a(n+h'')\cdot \overline{a(n'+h'')}}=
$$
$$
\frac{1}{N_\ell^2}\sum_{n,n'\leq N_\ell}\frac{1}{H'^2}\sum_{h',h''\leq H'}a(n+h')\overline{a(n+h'')}a(n'+h'')\overline{a(n'+h')}.
$$
The sum in the last term by exchanging the order of summation is equal to
$$
\frac{1}{H'^2}\sum_{h',h''\leq H'}\Big|\frac{1}{N_\ell}\sum_{n\leq N_\ell}a(n+h')\overline{a(n+h'')}\Big|^2=
$$$$
\frac{1}{H'^2}\sum_{h',h''\leq H'}\Big|\frac{1}{N_\ell}\sum_{n\leq N_\ell}a(n)\overline{a(n+h''-h')}\Big|^2+ {\rm O}(\frac{H'^2}{N_\ell^2}).
$$
Finally, grouping according to $h=h''-h'$, we get that that the above is equal to
$$
\frac{1}{H'^2}\sum_{|h|\leq H'}|H'-h|\cdot \Big|\frac{1}{N_\ell}\sum_{n\leq N_\ell}a(n)\overline{a(n+h)}\Big|^2+{\rm O}(\frac{H'^2}{N_\ell^2})\leq $$$$
\frac{1}{H'}\cdot \sum_{|h|\leq H'}\Big|\frac{1}{N_\ell}\sum_{n\leq N_\ell}a(n)\overline{a(n+h)}\Big|^2+{\rm O}(\frac{H'^2}{N_\ell^2}).
$$
That is, the expression from~\eqref{C} equals
\begin{equation}\label{D}
\frac{1}{H'}\sum_{|h|\leq H'}\left|\frac{\sum_{n\leq N_\ell}a(n)\overline{a(n+h)}}{N_\ell} \right|^2 +{\rm O}\left(\frac{(H')^2}{N_\ell^2} \right).
\end{equation}

Now, by the assumption of our lemma, it follows that
\[
\frac{1}{H'}\sum_{|h|\leq H'}\left|\frac{\sum_{n\leq N_\ell}a(n)\overline{a(n+h)}}{N_\ell} \right|={\rm o}(1),
\]
which, by Remark~\ref{r:tosamo}, is equivalent to
\[
\frac{1}{H'}\sum_{|h|\leq H'}\left|\frac{\sum_{n\leq N_\ell}a(n)\overline{a(n+h)}}{N_\ell} \right|^2={\rm o}(1).
\]
Therefore, ~\eqref{D} (and, thus, also~\eqref{C}) is of the order of ${\rm o}(1)+{\rm O}\left(\frac{(H')^2}{N_\ell^2} \right)$. Using again Remark~\ref{r:tosamo}, we conclude that also~\eqref{E} is of the same order. It follows immediately that also the order of~\eqref{B} is the same.

Thus, we have proved that
\[
A={\rm o}\left(1\right)+{\rm O}\left(\frac{(H')^2}{N_\ell^2} \right)+{\rm O}\left(\frac{H'}{N_\ell} \right)+{\rm O}\left(\frac{(H')^k}{H}\right).
\]
\end{proof}

\vspace{2ex}

\noindent
{\bf Acknowledgments:}
We would like to thank  Tomasz Downarowicz, Nikos Frantzikinakis and Krzysztof Fr\c{a}czek for useful discussions on the paper.
Research of the second and third authors supported by Narodowe Centrum Nauki grant UMO-2019/33/B/ST1/00364.

\footnotesize

\vspace{1ex}

\bigskip

\noindent
Department of Mathematics, The Maryland University\\
adkanigowski@gmail.com\\
\smallskip

\noindent
Faculty of Mathematics and Computer Science\\
Nicolaus Copernicus University, Toruń, Poland\\
joasiak@mat.umk.pl, mlem@mat.umk.pl\\
\smallskip

\noindent
Laboratoire de Math\'ematiques Rapha\"el Salem, CNRS -- Université de Rouen Normandie\\
Avenue de l’Universit\'e – 76801 Saint \'Etienne du Rouvray, France\\
Thierry.de-la-Rue@univ-rouen.fr

\end{document}